\renewcommand{\eqref}[1]{\hyperref[#1]{(\ref{#1})}}
\newlist{enumlist}{enumerate}{2}
\setlist[enumlist,1]{labelindent=0cm,label=(\roman*),ref=(\roman*),labelwidth=4.5ex,labelsep=1ex,leftmargin=5.5ex,align=right,topsep=0.5ex,itemsep=1ex,parsep=1ex}
\setlist[enumlist,2]{labelindent=0cm,label=\theenumlisti.\arabic*.,ref=\arabic*,labelwidth=5ex,labelsep=0.5ex,leftmargin=5.5ex,align=left,topsep=0.5ex,itemsep=1ex,parsep=1ex}
\newlist{itemlist}{itemize}{1}
\setlist[itemlist]{labelindent=0cm,label=$\bullet$,labelwidth=2.5ex,labelsep=0.5ex,leftmargin=3ex,align=left,topsep=0.5ex,itemsep=1ex,parsep=1ex}
\numberwithin{equation}{section}
\theoremstyle{definition}\newtheorem{definitiona}{Definition}[section]
\newtheorem{remarka}[definitiona]{Remark}
\newtheorem{examplea}[definitiona]{Example}}
\newtheorem{propositiona}[definitiona]{Proposition}
\newtheorem{lemmaa}[definitiona]{Lemma}
\newtheorem{theorema}[definitiona]{Theorem}
\newtheorem{corollarya}[definitiona]{Corollary}
\newtheorem{letterthma}{Theorem}
\renewcommand{\theletterthma}{\Alph{letterthma}}
\newenvironment{definition}[1][]{\begin{definitiona}[#1]\setlist[enumlist,1]{label=(\roman*),ref=\thedefinitiona(\roman*)}}{\end{definitiona}}
\newenvironment{remark}[1][]{\begin{remarka}[#1]\setlist[enumlist,1]{label=(\roman*),ref=\theremarka(\roman*)}}{\end{remarka}}
\newenvironment{example}[1][]{\begin{examplea}[#1]\setlist[enumlist,1]{label=(\roman*),ref=\theexamplea(\roman*)}}{\end{examplea}}
\newenvironment{proposition}[1][]{\begin{propositiona}[#1]\setlist[enumlist,1]{label=(\roman*),ref=\thepropositiona(\roman*)}}{\end{propositiona}}
\newenvironment{lemma}[1][]{\begin{lemmaa}[#1]\setlist[enumlist,1]{label=(\roman*),ref=\thelemmaa(\roman*)}}{\end{lemmaa}}
\newenvironment{theorem}[1][]{\begin{theorema}[#1]\setlist[enumlist,1]{label=(\roman*),ref=\thetheorema(\roman*)}}{\end{theorema}}
\newenvironment{corollary}[1][]{\begin{corollarya}[#1]\setlist[enumlist,1]{label=(\roman*),ref=\thecorollarya(\roman*)}}{\end{corollarya}}
\newenvironment{letterthm}[1][]{\begin{letterthma}[#1]\setlist[enumlist,1]{label=(\roman*),ref=\theletterthma.(\roman*)}}{\end{letterthma}}
\newcommand{\C}{\mathbb{C}}
\newcommand{\cC}{\mathcal{C}}
\newcommand{\eps}{\varepsilon}
\newcommand{\al}{\alpha}
\newcommand{\be}{\beta}
\newcommand{\ot}{\otimes}
\newcommand{\Z}{\mathbb{Z}}
\newcommand{\vphi}{\varphi}
\newcommand{\id}{\mathord{\text{\rm id}}}
\newcommand{\om}{\omega}
\newcommand{\N}{\mathbb{N}}
\newcommand{\ovt}{\mathbin{\overline{\otimes}}}
\newcommand{\Tr}{\operatorname{Tr}}
\newcommand{\Om}{\Omega}
\newcommand{\cD}{\mathcal{D}}
\newcommand{\si}{\sigma}
\newcommand{\R}{\mathbb{R}}
\newcommand{\F}{\mathbb{F}}
\newcommand{\cH}{\mathcal{H}}
\newcommand{\cZ}{\mathcal{Z}}
\newcommand{\Ad}{\operatorname{Ad}}
\newcommand{\cG}{\mathcal{G}}
\newcommand{\cF}{\mathcal{F}}
\newcommand{\T}{\mathbb{T}}
\newcommand{\actson}{\curvearrowright}
\newcommand{\cA}{\mathcal{A}}
\newcommand{\cB}{\mathcal{B}}
\newcommand{\cU}{\mathcal{U}}
\newcommand{\Ker}{\operatorname{Ker}}
\newcommand{\cM}{\mathcal{M}}
\newcommand{\lspan}{\operatorname{span}}
\newcommand{\cN}{\mathcal{N}}
\newcommand{\cR}{\mathcal{R}}
\newcommand{\cE}{\mathcal{E}}
\newcommand{\Hom}{\operatorname{Hom}}
\newcommand{\Aut}{\operatorname{Aut}}
\newcommand{\dpr}{^{\prime\prime}}
\newcommand{\Stab}{\operatorname{Stab}}
\newcommand{\Q}{\mathbb{Q}}
\newcommand{\GL}{\operatorname{GL}}
\newcommand{\End}{\operatorname{End}}
\newcommand{\op}{^\text{\rm op}}
\newcommand{\fc}{_\text{\rm fc}}
\newcommand{\Gtil}{\widetilde{G}}
\newcommand{\cGtil}{\widetilde{\mathcal{G}}}
\newcommand{\Omtil}{\widetilde{\Omega}}
\newcommand{\Oms}{\Omega_{\text{\rm s}}}
\newcommand{\PU}{\operatorname{PU}}
\newcommand{\Bchar}{\operatorname{Bchar}}
\newcommand{\Achar}{\operatorname{Achar}}
\newcommand{\bim}[3]{\mathord{\raisebox{-0.4ex}[0ex][0ex]{\scriptsize $#1$}{#2}\hspace{-0.2ex}\raisebox{-0.4ex}[0ex][0ex]{\scriptsize $#3$}}}
\begin{document}

\begin{center}
{\boldmath\LARGE\bf W$^*$-superrigidity for cocycle twisted\vspace{0.5ex}\\
group von Neumann algebras}

\vspace{1ex}

{\sc by Milan Donvil\footnote{KU Leuven, Department of Mathematics, Leuven (Belgium), milan.donvil@kuleuven.be\\ Supported by PhD grant 1162024N funded by the Research Foundation Flanders (FWO)} and Stefaan Vaes\footnote{KU~Leuven, Department of Mathematics, Leuven (Belgium), stefaan.vaes@kuleuven.be\\ Supported by FWO research project G090420N of the Research Foundation Flanders and by Methusalem grant METH/21/03 –- long term structural funding of the Flemish Government.}}
\end{center}

\begin{abstract}\noindent
We construct countable groups $G$ with the following new degree of W$^*$-superrigidity: if $L(G)$ is virtually isomorphic, in the sense of admitting a bifinite bimodule, with any other group von Neumann algebra $L(\Lambda)$, then the groups $G$ and $\Lambda$ must be virtually isomorphic. Moreover, we allow both group von Neumann algebras to be twisted by an arbitrary $2$-cocycle. We also give examples of II$_1$ factors $N$ that are indecomposable in every sense: they are not virtually isomorphic to any cocycle twisted groupoid von Neumann algebra.
\end{abstract}

\section{Introduction and main results}

Every countable group $G$ gives rise to the \emph{group von Neumann algebra} $L(G)$ generated by the left regular representation. When $G$ has infinite conjugacy classes (icc), $L(G)$ is a II$_1$ factor. In the passage from $G$ to $L(G)$, typically most of the structure of $G$ is wiped out. Most notably, by Connes' theorem \cite{Con76}, all group von Neumann algebras of amenable icc groups are isomorphic.

In the nonamenable world, a more rigid behavior appears and in the most extreme cases, it is possible to entirely recover $G$ from its ambient II$_1$ factor $L(G)$. This phenomenon is called \emph{W$^*$-superrigidity} and the first families of W$^*$-superrigid groups were discovered in \cite{IPV10}, by using Popa's deformation/rigidity theory. Since then, several new classes of W$^*$-superrigid groups were found. In particular, \cite{CIOS21} provided the first W$^*$-superrigid groups with Kazhdan's property~(T).

We obtain in this paper a new degree of W$^*$-superrigidity for group von Neumann algebras, replacing isomorphism by \emph{virtual isomorphism}. Recall that II$_1$ factors $M$ and $N$ are said to be virtually isomorphic if there exists a bifinite $M$-$N$-bimodule or equivalently, if a corner $pMp$ can be realized as a subfactor of $N$ with finite Jones index. Two countable groups $G$ and $\Lambda$ are called virtually isomorphic if they are isomorphic up to taking finite index subgroups and quotients by their finite normal subgroups. In Theorem \ref{thm.A}, we provide the first family of groups $G$ with the following W$^*$-superrigidity property: if $L(G)$ is virtually isomorphic with any other group von Neumann algebra $L(\Lambda)$, then the groups $G$ and $\Lambda$ must be virtually isomorphic.

Moreover, we allow throughout that our group von Neumann algebras are twisted by a \emph{$2$-cocycle}. Even when we restrict ourselves to isomorphisms, this is the first W$^*$-superrigidity theorem for cocycle twisted group von Neumann algebras.

As in \cite[Definition 3.1]{PV21}, we consider the class $\cC$ of nonamenable groups $\Gamma$ that are weakly amenable and biexact and such that for every nontrivial element $g \in \Gamma \setminus \{e\}$, the centralizer $C_\Gamma(g)$ is amenable. This class contains all free groups $\F_n$, $2 \leq n \leq \infty$, all free products of amenable groups $\Gamma_1 \ast \Gamma_2$ with $|\Gamma_1| \geq 2$ and $|\Gamma_2| \geq 3$, and all torsion-free hyperbolic groups.

We know from \cite[Theorem 8.1]{BV12} that for every countable group $\Gamma$ in the class $\cC$, the left-right wreath product $G = (\Z/2\Z)^{(\Gamma)} \rtimes (\Gamma \times \Gamma)$ is W$^*$-superrigid: if $L(G) \cong L(\Lambda)$ with $\Lambda$ an arbitrary countable group, then $G \cong \Lambda$. We prove that these same groups $G$ are W$^*$-superrigid up to virtual isomorphisms, and remain so when we allow arbitrary cocycle twists. Thus the following is our main result.

Recall that a $2$-cocycle $\mu \in Z^2(G,\T)$ on a countable group is said to be \emph{of finite type} if $\mu$ can be realized as the $2$-cocycle of a finite dimensional projective representation of $G$. We write $\mu \sim \om$ when $\mu$ and $\om$ are cohomologous $2$-cocycles.

\begin{letterthm}\label{thm.A}
Let $\Gamma$ be a group in $\cC$ and $G = (\Z/2\Z)^{(\Gamma)} \rtimes (\Gamma \times \Gamma)$ the left-right wreath product.
Let $\Lambda$ be any countable group and let $\mu \in Z^2(G,\T)$ and $\om \in Z^2(\Lambda,\T)$ be any $2$-cocycles.
\begin{enumlist}
\item We have $L_\mu(G) \cong L_\om(\Lambda)$ if and only if there exists an isomorphism $\delta : \Lambda \to G$ such that $\mu \circ \delta \sim \om$.
\item There exists a nonzero bifinite $L_\mu(G)$-$L_\om(\Lambda)$-bimodule if and only if $(G,\mu)$ and $(\Lambda,\om)$ are virtually isomorphic: there exist a finite index subgroup $\Lambda_0 < \Lambda$ and a group homomorphism $\delta : \Lambda_0 \to G$ such that the kernel $\Ker \delta$ is finite, the image $\delta(\Lambda_0) < G$ has finite index and $(\mu \circ \delta) \, \overline{\om}$ is of finite type on $\Lambda_0$.
\end{enumlist}
\end{letterthm}

Since wreath products have plenty of $2$-cocycles (see Proposition \ref{prop.cohom-left-right-wreath}), Theorem \ref{thm.A} provides in particular the first examples of cocycle twisted group von Neumann algebras that are not isomorphic to \emph{any} non-twisted group von Neumann algebra. In Section \ref{sec.examples}, we actually use Theorem \ref{thm.A} to provide examples of $2$-cocycles $\mu \in Z^2(G,\T)$ that are untwistable in even more remarkable ways. This includes examples where $L_\mu(G)$ is canonically isomorphic to a corner $p L(\Gtil) p$ of a group von Neumann algebra of a (non icc) group $\Gtil$, even though there is no \emph{faithful} bifinite $L_\mu(G)$-$L(\Lambda)$-bimodule with \emph{any} group von Neumann algebra. We also give examples where $L_\mu(G) \cong L_\mu(G)\op$ and where at the same time there is no nonzero bifinite $L_\mu(G)$-$L(\Lambda)$-bimodule with any group von Neumann algebra, as well as examples where $L_\mu(G)$ is not virtually isomorphic to its opposite $L_\mu(G)\op$.

Starting with \cite{Voi95}, several classes of II$_1$ factors $N$ were shown to have no Cartan subalgebra. These $N$ cannot be decomposed as the von Neumann algebra $L_\om(\cR)$ of a countable probability measure preserving (pmp) equivalence relation $\cR$, possibly twisted by a $2$-cocycle $\om$.

In \cite{Con75,Jon79}, it was proven that certain II$_1$ factors $N$ cannot be decomposed as a group von Neumann algebra $L(\Lambda)$. Only much later in \cite[Corollary G]{Ioa10}, it was shown that there are II$_1$ factors $N$ that are not isomorphic to any cocycle twisted group von Neumann algebra $L_\om(\Lambda)$. Although it was proven in \cite[Corollary G]{Ioa10} that also the nonzero corners $pNp$ cannot be written as a twisted group von Neumann algebra, it remained an open problem to construct II$_1$ factors $N$ such that none of its amplifications $N^t$ are isomorphic to a twisted group von Neumann algebra.

In our second main result, we solve this problem by constructing II$_1$ factors that satisfy all of the above indecomposability results at once, and even up to virtual isomorphism. The twisted von Neumann algebras $L_\om(\cR)$ and $L_\om(\Lambda)$ of a pmp equivalence relation $\cR$, resp.\ a countable group $\Lambda$, are both examples of twisted von Neumann algebras $L_\om(\cG)$ of a discrete pmp groupoid (see Section \ref{sec.twisted-groupoid-vNalg}). We provide the following class of II$_1$ factors $N$ that do not admit a nonzero bifinite $N$-$L_\om(\cG)$-bimodule for any discrete pmp groupoid $\cG$ and any $2$-cocycle $\om$. In particular, $N$ has no Cartan subalgebra and none of the amplifications $N^t$ is isomorphic to a cocycle twisted group von Neumann algebra.

We use left-right Bernoulli crossed products with a discrete abelian base $A_0 = \ell^\infty(I)$. We say that a faithful normal trace $\tau_0$ on $A_0$ is uniform if all minimal projections have the same trace, which forces $I$ to be finite.

\begin{letterthm}\label{thm.B}
Let $\Gamma$, $\Lambda$ be groups in $\cC$ and let $(A_0,\tau_0)$ be a discrete abelian tracial von Neumann algebra. Denote by $M = (A_0,\tau_0)^\Gamma \rtimes (\Gamma \times \Gamma)$ the left-right Bernoulli crossed product. Write $N = M \ast L(\Lambda)$.
\begin{enumlist}
\item If $\tau_0$ is not uniform, there is no nonzero bifinite $N$-$L_\om(\cG)$-bimodule for any discrete pmp groupoid $\cG$ and any $2$-cocycle $\om \in Z^2(\cG,\T)$. In particular, $N^t \not\cong L_\om(\cG)$ for all $t > 0$.
\item If $\tau_0$ is uniform, then $N \cong L(\cG)$, where $\cG$ is the group $((\Z/n\Z)^{(\Gamma)}\rtimes (\Gamma \times \Gamma)) \ast \Lambda$ and $n = \dim A_0$.
\end{enumlist}
\end{letterthm}

Given any left-right wreath product group $G = \Lambda_0^{(\Gamma)} \rtimes (\Gamma \times \Gamma)$ and any $2$-cocycle $\om_0 \in Z^2(\Lambda_0,\T)$, we consider the canonical diagonal $2$-cocycle $\om_0^\Gamma \in Z^2(G,\T)$ given by
\begin{equation}\label{eq.diagonal-2-cocycle}
\om_0^\Gamma((a,(g,h)),(a',(g',h'))) = \prod_{k \in \Gamma} \om_0(a_k,a'_k) \; .
\end{equation}

We finally prove the following result, giving rise to another cocycle $W^*$-superrigidity theorem for left-right wreath products with more general base groups than $\Z/2\Z$.

\begin{letterthm}\label{thm.C}
Let $\Gamma$ be a group in $\cC$ and $(A_0,\tau_0)$ any nontrivial amenable tracial von Neumann algebra. Denote by $M = (A_0,\tau_0)^\Gamma \rtimes (\Gamma \times \Gamma)$ the left-right Bernoulli crossed product. Let $\Lambda$ be any countable group and $\om \in Z^2(\Lambda,\T)$ any $2$-cocycle.

We have $M \cong L_\om(\Lambda)$ if and only if there exists a countable group $\Lambda_0$, an isomorphism $\delta : \Lambda \to \Lambda_0^{(\Gamma)} \rtimes (\Gamma \times \Gamma)$ and a $2$-cocycle $\om_0 \in Z^2(\Lambda_0,\T)$ such that $\om_0^\Gamma \circ \delta \sim \om$ and $(A_0,\tau_0) \cong (L_{\om_0}(\Lambda_0),\tau_0)$.
\end{letterthm}

In Definition \ref{def.cocycle-Wstar-superrigid}, we say that a pair $(G,\mu)$ of a countable group $G$ and a $2$-cocycle $\mu \in Z^2(G,\T)$ is \emph{cocycle W$^*$-superrigid} if the first statement of Theorem \ref{thm.A} holds: whenever $L_\mu(G)$ is isomorphic to some $L_\om(\Lambda)$, with $\Lambda$ an arbitrary countable group and $\om \in Z^2(\Lambda,\T)$ an arbitrary $2$-cocycle, there must exist a group isomorphism $\delta : \Lambda \to G$ such that $\mu \circ \delta \sim \om$. As we explain in Corollary \ref{cor.further-superrigidity}, Theorem \ref{thm.C} immediately implies that $(G,\mu)$ is cocycle W$^*$-superrigid whenever $G = C^{(\Gamma)} \times (\Gamma \times \Gamma)$ and $\mu = \mu_0^\Gamma$, where $\Gamma$ belongs to $\cC$ and the base $(C,\mu_0)$ satisfies one of the following conditions: $|C| = p$ with $p$ prime, $|C| = pq$ with $p,q$ distinct primes or finally, $|C| = p^2$ with $p$ prime and $\mu_0\not\sim 1$.

All W$^*$-superrigidity theorems in the literature use the following approach of \cite{Ioa10,IPV10}: whenever a II$_1$ factor $M$ is decomposed as a group von Neumann algebra $M \cong L(\Lambda)$, the \emph{comultiplication} $u_g \mapsto u_g \ot u_g$ on $L(\Lambda)$ induces an embedding $\Delta : M \to M \ovt M$. If $M$ is sufficiently rigid, as for instance Bernoulli crossed products tend to be as discovered in \cite{Pop03}, one may hope to completely describe $\Delta$ and find back $\Lambda$. When $M \cong L_\om(\Lambda)$ is decomposed as a twisted group von Neumann algebra, we still have the \emph{triple comultiplication}
$$\Delta_3 : M \to M \ovt M\op \ovt M : u_g \mapsto u_g \ot \overline{u_g} \ot u_g \quad\text{for all $g \in \Lambda$,}$$
introduced in \cite{Ioa10} and follow the same approach.

To allow for greater flexibility, as needed when we consider virtual isomorphisms, we introduce the abstract concept of a \emph{coarse embedding}
$$\psi : M \to (M_1 \ovt \cdots \ovt M_k)^t$$
of a II$_1$ factor $M$ into an amplified tensor product of II$_1$ factors, see Definition \ref{def.coarse-embedding}. This is the key new notion of this paper.

On the one hand, this notion of coarse embedding is strict enough to allow us to classify all possible coarse embeddings when $M$ and $M_i$ are left-right Bernoulli crossed products with groups in $\cC$, possibly twisted with $2$-cocycles.

On the other hand, this notion of coarse embedding is flexible enough to get the following: whenever $\bim{M}{K}{L_\om(\Lambda)}$ is a bifinite $M$-$L_\om(\Lambda)$-bimodule between a II$_1$ factor $M$ and a twisted group von Neumann algebra $L_\om(\Lambda)$, the triple comultiplication on $L_\om(\Lambda)$ can be transferred to a coarse embedding $M \to (M \ovt M\op \ovt M)^t$.

When $k=1$, by definition any embedding $M \to M_1^t$ is called coarse. One of the key results of \cite{PV21} is \cite[Theorem 3.2]{PV21} describing all embeddings $M \to M_1^t$ when $M$ and $M_1$ are left-right Bernoulli crossed products with groups in $\cC$. One of our key results is Theorem \ref{thm.coarse-embedding-Bernoulli-like} giving a precise description of all possible coarse embeddings when $M$ and $M_i$ are ``Bernoulli like left-right crossed products''. We make this notion more precise in Section \ref{sec.bernoulli-like}, but roughly speaking, it covers at the same time crossed products $(A_0,\tau_0)^\Gamma \rtimes (\Gamma \times \Gamma)$ with arbitrary amenable base algebras $(A_0,\tau_0)$ and twisted group von Neumann algebras $L_\mu(G)$ when $G = G_0^{(\Gamma)} \rtimes (\Gamma \times \Gamma)$ is a left-right wreath product and $\mu \in Z^2(G,\T)$ is any $2$-cocycle.

In the specific case where the base group $G_0$ is as small as possible, $G_0 = \Z/2\Z$, we make this description of all possible coarse embeddings even more precise in the complete classification Theorem \ref{thm.coarse-embedding-cocycle-twisted-wreath-product}, which is the basis to prove Theorem \ref{thm.A}.

Also in the absence of $2$-cocycles, but now with arbitrary amenable base algebras $(A_0,\tau_0)$, we make the description of all possible coarse embeddings more precise in Theorem \ref{thm.coarse-embedding-generalized-Bernoulli}, which is the key element to prove Theorem \ref{thm.C}.

To prove Theorem \ref{thm.B}, assume that $\tau_0$ is not uniform and that $K$ is a bifinite $N$-$L_\om(\cG)$-bimodule. We first invoke the main result of \cite{Ioa12} to prove that because of the free product form $N = M \ast L(\Lambda)$, the unit space of $\cG$ must have atoms. Considering the isotropy group $\cG_1$ of one these atoms, we find a bifinite $N$-$L_\om(\cG_1)$-bimodule, which induces a coarse embedding $\psi : N \to (N \ovt N\op \ovt N)^t$. Because of the special form of $M$ and the fact that $\Lambda \in \cC$, we deduce that after a unitary conjugacy, $\psi$ restricts to a coarse embedding $M \to (M \ovt M\op \ovt M)^t$. Our Theorem \ref{thm.coarse-embedding-generalized-Bernoulli} and the assumption that $\tau_0$ is not uniform will imply that there simply is no coarse embedding of $M$ into $(M \ovt M\op \ovt M)^t$.

\section{Preliminaries}

\subsection{Hilbert bimodule terminology}

Given a von Neumann algebra $A$, we denote by $A\op$ the ``same'' von Neumann algebra with the opposite multiplication. So, the elements of $A\op$ are $a\op$ for all $a \in A$ and $a\op b\op = (ba)\op$. We also write $\overline{a} = (a^*)\op \in A\op$ for all $a \in A$. The map $a \mapsto \overline{a}$ is multiplicative, but antilinear.

When $A$ and $B$ are von Neumann algebras, we say that $\bim{A}{H}{B}$ is an \emph{$A$-$B$-bimodule} if $H$ is a Hilbert space equipped with unital normal $*$-homomorphisms $\lambda : A \to B(H)$ and $\rho : B\op \to B(H)$ whose ranges commute. We write $a \cdot \xi \cdot b = \lambda(a) \rho(b\op) \xi$. We say that $\bim{A}{H}{B}$ is \emph{faithful} if both $\lambda$ and $\rho$ are faithful. Of course, when $A$ and $B$ are factors, a nonzero $A$-$B$-bimodule is automatically faithful.

We say that a right $B$-module $H_B$ is \emph{finitely generated} if there exists a finite subset $\cF \subset H$ such that $\cF \cdot B$ is total in $H$. If $(B,\tau)$ is a tracial von Neumann algebra, the finitely generated right $B$-modules are, up to unitary isomorphism, all of the form $p(\C^n \ot L^2(B))_B$, where $p \in M_n(\C) \ot B$ is a projection.

We say that an $A$-$B$-bimodule $\bim{A}{H}{B}$ is \emph{bifinite} if both $_A H$ is a finitely generated left $A$-module and $H_B$ is a finitely generated right $B$-module. When $A$ and $B$ are II$_1$ factors, the bifinite $A$-$B$-bimodules are precisely the bimodules of the form $\bim{\vphi(A)}{p(\C^n \ot L^2(B))}{B}$, where $p \in M_n(\C) \ot B$ is a projection and $\vphi : A \to p(M_n(\C) \ot B)p$ is a finite index embedding.

\subsection{Intertwining-by-bimodules and relative amenability}\label{sec.intertwining}

Recall from \cite[Section 2]{Pop03} Popa's concept of \emph{intertwining-by-bimodules}: given a tracial von Neumann algebra $(M,\tau)$, a projection $p \in M_n(\C) \ot M$ and von Neumann subalgebras $A \subset p(M_n(\C) \ot M)p$ and $B \subset M$, one writes $A \prec_M B$ if $p(\C^n \ot L^2(M))$ admits a nonzero $A$-$B$-subbimodule that is finitely generated as a right Hilbert $B$-module. Equivalently, there exist $k \in \N$, a projection $q \in M_k(\C) \ot B$, a nonzero partial isometry $V \in p (M_{n,k}(\C) \ot M)q$ and a unital normal $*$-homomorphism $\theta : A \to q (M_k(\C) \ot B)q$ satisfying $a V = V \theta(a)$ for all $a \in A$.

Note that when $A \prec_M B$, one may choose $(\theta,V,q,k)$ in such a way that $(\id \ot E_B)(V^* V)$ is an invertible element in $q (M_k(\C) \ot B)q$.

We also use the notation $A \prec^f_M B$ if for every nonzero projection $p_1 \in A' \cap p(M_n(\C) \ot M)p$, we have that $Ap_1 \prec_M B$.

Let $\cG \subset \cU(A)$ be a subgroup such that $A = \cG\dpr$. Recall that $A\not\prec_M B$ if and only if there exists a net $u_i \in \cG$ such that $\|E_B(x^* u_i y)\|_2 \to 0$ for all $x,y \in \C^n \ot M$.

Recall from \cite[Section 2.2]{OP07} the concept of \emph{relative amenability}: one says that $A$ is amenable relative to $B$ inside $M$ if the corner $p (M_n(\C) \ot \langle M,e_B \rangle) p$ of Jones's basic construction admits a (typically non normal) state $\Om$ that is $A$-central and whose restriction to $p (M_n(\C) \ot M)p$ is faithful and normal. One says that $A$ is \emph{strongly nonamenable relative to $B$} inside $M$ if for every nonzero projection $p_1 \in A' \cap p (M_n(\C) \ot M)p$, $A p_1$ is not amenable relative to $B$ inside $M$.

\begin{remark}\label{rem.everything-in-terms-of-basic-construction}
As above, let $(M,\tau)$ be a tracial von Neumann algebra, $p \in M_n(\C) \ot M$ a projection and $A \subset p(M_n(\C) \ot M)p$, $B \subset M$ von Neumann subalgebras. Using Jones's basic construction $\langle M,e_B \rangle$, one can put Popa's intertwining relation and relative amenability on the same footing by making the following observations.
\begin{enumlist}
\item We have $A \prec_M B$ if and only if $p (M_n(\C) \ot \langle M,e_B \rangle) p$ admits a nonzero normal $A$-central state.
\item We have $A \prec^f_M B$ if and only if $p (M_n(\C) \ot \langle M,e_B \rangle) p$ admits a normal $A$-central state whose restriction to $p(M_n(\C) \ot M)p$ is faithful.
\item We have that $A$ is amenable relative to $B$ if and only if $p (M_n(\C) \ot \langle M,e_B \rangle) p$ admits an $A$-central state whose restriction to $p(M_n(\C) \ot M)p$ is faithful and normal.
\item We have that $A$ is strongly nonamenable relative to $B$ if and only if $p (M_n(\C) \ot \langle M,e_B \rangle) p$ admits no $A$-central state whose restriction to $p(M_n(\C) \ot M)p$ is normal.
\end{enumlist}
\end{remark}

Given a tracial von Neumann algebra $(\cM,\tau)$, two von Neumann subalgebras $\cB,M \subset \cM$ with $B = \cB \cap M$,
\begin{equation}\label{eq.commuting-square}
\begin{array}{ccc}
\cB & \subset & \cM \\ \cup & & \cup \\ B & \subset & M
\end{array}
\end{equation}
are said to form a \emph{commuting square} if $E_\cB \circ E_M = E_M \circ E_\cB = E_B$. If moreover the linear span of $\cB M$ is $\|\cdot\|_2$-dense in $L^2(\cM)$, the commuting square is said to be \emph{nondegenerate}.

If \eqref{eq.commuting-square} is a nondegenerate commuting square, the map $x e_B y \mapsto x e_{\cB} y$ for $x,y \in M$ extends to a unital embedding $\langle M,e_B \rangle \subset \langle \cM,e_\cB\rangle$ that preserves the canonical semifinite trace on the basic construction, so that there exists a faithful normal trace preserving conditional expectation $\cE : \langle \cM,e_\cB \rangle \to \langle M,e_B \rangle$ with $\cE(x) = E_M(x)$ for all $x \in \cM$.

Using Remark \ref{rem.everything-in-terms-of-basic-construction}, we then immediately get the following result.

\begin{lemma}\label{lem.commuting-square-transfer-embedding}
Let \eqref{eq.commuting-square} be a nondegenerate commuting square of tracial von Neumann algebras, $p \in M_n(\C) \ot M$ a projection and $A \subset p(M_n(\C) \ot M)p$ a von Neumann subalgebra.
\begin{enumlist}
\item We have $A \prec_M B$ if and only if $A \prec_\cM \cB$. We have $A \prec^f_M B$ if and only $A \prec^f_\cM \cB$.
\item We have that $A$ is amenable relative to $B$ inside $M$ if and only if $A$ is amenable relative to $\cB$ inside $\cM$.
\item We have that $A$ is strongly nonamenable relative to $B$ inside $M$ if and only if $A$ is strongly nonamenable relative to $\cB$ inside $\cM$.
\end{enumlist}
\end{lemma}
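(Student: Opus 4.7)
The plan is to reformulate each of the four notions in the lemma via Remark~\ref{rem.everything-in-terms-of-basic-construction} as the existence (or nonexistence) of an $A$-central state on the relevant amplified basic construction with prescribed normality and/or faithfulness, and then to transport such states back and forth using two canonical maps supplied by the nondegenerate commuting square: the unital, trace-preserving, normal inclusion $\iota : \langle M,e_B\rangle \hookrightarrow \langle\cM,e_\cB\rangle$ that is the identity on $M$, and the trace-preserving normal conditional expectation $\cE : \langle\cM,e_\cB\rangle \to \langle M,e_B\rangle$ that restricts to $E_M$ on $\cM$. Since $A \subset p(M_n(\C)\ot M)p$, the amplifications $\id\ot\iota$ and $\id\ot\cE$ are automatically $A$-bimodular, so they will transport $A$-central states in both directions.

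Starting with an $A$-central state $\Om$ on $p(M_n(\C)\ot\langle\cM,e_\cB\rangle)p$, I would set $\Om_0 = \Om \circ (\id\ot\iota)$ on $p(M_n(\C)\ot\langle M,e_B\rangle)p$. Because $\iota|_M = \id_M$, the restriction of $\Om_0$ to $p(M_n(\C)\ot M)p$ literally coincides with that of $\Om$, so nonzeroness, normality and faithfulness of the restriction all pass through unchanged, and normality of $\Om$ itself passes through since $\iota$ is normal. Conversely, starting with an $A$-central state $\Om$ on $p(M_n(\C)\ot\langle M,e_B\rangle)p$, I would set $\Om' = \Om \circ (\id\ot\cE)$. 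The restriction of $\Om'$ to $p(M_n(\C)\ot\cM)p$ equals $\Om \circ (\id\ot E_M)$: normality of $\Om|_{p(M_n(\C)\ot M)p}$ combined with normality of $E_M$ gives normality of this restriction, while faithfulness follows from faithfulness of $\Om|_{p(M_n(\C)\ot M)p}$ together with faithfulness of $E_M$ (which, being trace-preserving, is faithful on positive elements). Normality of $\Om'$ on the full basic construction is inherited from normality of $\Om$ and $\cE$, and nonzeroness is preserved since $(\id\ot\cE)(p) = p$.

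Applying these two transfers to the four characterizations collected in Remark~\ref{rem.everything-in-terms-of-basic-construction} immediately yields (i)–(iii). The proof is essentially formal; the only point deserving attention is to match, in each of the four cases, the right notion of normality or faithfulness — of the full state versus only of its restriction — but these all transfer correctly because $\iota$ and $\cE$ are themselves normal and $M$-bimodular, and restrict on $M$ to $\id_M$ and $E_M$ respectively. No substantial obstacle is expected beyond this bookkeeping.
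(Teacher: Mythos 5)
Your proposal is correct and takes essentially the same approach as the paper: both reduce the lemma to Remark~\ref{rem.everything-in-terms-of-basic-construction} and then transport $A$-central states in the two directions via the inclusion $\langle M,e_B\rangle \hookrightarrow \langle\cM,e_\cB\rangle$ and the trace-preserving normal conditional expectation $\cE : \langle\cM,e_\cB\rangle \to \langle M,e_B\rangle$ that restricts to $E_M$ on $\cM$. The bookkeeping you spell out is exactly the content the paper leaves implicit, and it is accurate.
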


\begin{proof}
Using that $\langle M,e_B \rangle \subset \langle \cM,e_\cB\rangle$ and that we have a faithful normal conditional expectation $\cE : \langle \cM,e_\cB \rangle \to \langle M,e_B \rangle$ satisfying $\cE(x) = E_M(x)$ for all $x \in \cM$, we can extend $A$-central states on $p (M_n(\C) \ot \langle M,e_B \rangle)p$ by composing them with $\id \ot \cE$ and we can conversely restrict $A$-central states on $p (M_n(\C) \ot \langle \cM,e_\cB\rangle)p$ to the unital subalgebra $p (M_n(\C) \ot \langle M,e_B \rangle)p$. The conclusion thus follows from Remark \ref{rem.everything-in-terms-of-basic-construction}.
\end{proof}

The following is a folklore lemma. The first part is explained in \cite[Remark 3.8]{Vae07} and, for completeness, we include a proof for the second part.

\begin{lemma}\label{lem.intertwine-transitivity}
Let $M$ be a tracial von Neumann algebra, $p \in M_n(\C) \ot M$ a projection and $A \subset p(M_n(\C) \ot M)p$ and $D \subset B \subset M$ von Neumann subalgebras.

Assume that $A \prec_M B$, witnessed by a projection $q \in M_k(\C) \ot B$, a nonzero partial isometry $V \in p (M_{n,k}(\C) \ot M)q$ and a unital normal $*$-homomorphism $\theta : A \to q (M_k(\C) \ot B)q$ such that $a V = V \theta(a)$ for all $a \in A$ and such that the support of $(\id \ot E_B)(V^* V)$ equals $q$.

\begin{enumlist}
\item\label{lem.intertwine-transitivity.i} If $\theta(A) \prec_B D$, then $A \prec_M D$.
\item\label{lem.intertwine-transitivity.ii} If $A$ is strongly nonamenable relative to $D$ inside $M$, then $\theta(A)$ is strongly nonamenable relative to $D$ inside $B$.
\end{enumlist}
\end{lemma}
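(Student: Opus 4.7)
For part \ref{lem.intertwine-transitivity.i}, I will use transitivity of intertwining. Choose a partial isometry $W \in q(M_{k,l}(\C) \ot B)q'$ with a normal unital $*$-homomorphism $\eta : \theta(A) \to q'(M_l(\C) \ot D)q'$ such that $\theta(a)W = W\eta(\theta(a))$, witnessing $\theta(A) \prec_B D$. The product $VW \in p(M_{n,l}(\C) \ot M)q'$ then satisfies $a\,VW = V\theta(a) W = VW\,\eta(\theta(a))$, so it witnesses $A \prec_M D$ as soon as $VW \neq 0$. Nonzeroness is forced by the support hypothesis: $WW^*$ is a nonzero projection in $q(M_k(\C) \ot B)q$, and since $(\id \ot E_B)(V^*V)$ has full support $q$ in that algebra, the product $(\id \ot E_B)(V^*V)\cdot WW^*$ is nonzero, so $VW \neq 0$.

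For part \ref{lem.intertwine-transitivity.ii}, I argue by contrapositive using Remark \ref{rem.everything-in-terms-of-basic-construction}. Suppose $\theta(A)$ is not strongly nonamenable relative to $D$ inside $B$. By passing to the support projection of the restricted state, there is a nonzero projection $q_1 \in \theta(A)' \cap q(M_k(\C) \ot B)q$ and a $\theta(A)q_1$-central state $\Phi$ on $q_1(M_k(\C) \ot \langle B,e_D\rangle)q_1$ whose restriction to $q_1(M_k(\C) \ot B)q_1$ is faithful and normal. The first step is to enlarge the ambient algebra from $B$ to $M$: using the standard trace-preserving inclusion $\langle B,e_D\rangle \subset \langle M,e_D\rangle$ together with its normal trace-preserving conditional expectation $\cE$, whose restriction to $M$ is $E_B$, define $\Phi' = \Phi \circ (\id \ot \cE)$ on $q_1(M_k(\C) \ot \langle M,e_D\rangle)q_1$. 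Then $\Phi'$ is still $\theta(A)q_1$-central (because $\cE$ is $B$-bilinear and $\theta(A) \subset q(M_k(\C) \ot B)q$), and its restriction to $q_1(M_k(\C) \ot M)q_1$ equals $\Phi \circ (\id \ot E_B)$, hence remains faithful and normal.

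The second step transfers $\Phi'$ across $V$ by means of a polar decomposition. Setting $V_1 = Vq_1 \in p(M_{n,k}(\C) \ot M)q_1$, the support hypothesis forces $V_1 \neq 0$; using $q_1 \in \theta(A)'$ and $aV = V\theta(a)$ one obtains $aV_1 = V_1\theta(a)$, and consequently $V_1^* V_1 \in (\theta(A)q_1)'$. Taking the polar decomposition $V_1 = u_1|V_1|$ with $u_1$ a genuine partial isometry, the fact that $|V_1|$ commutes with $\theta(A)q_1$ and has support in $(\theta(A)q_1)'$ yields $au_1 = u_1\theta(a)$ for all $a \in A$, so $e := u_1 u_1^*$ is a nonzero projection in $A' \cap p(M_n(\C) \ot M)p$. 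The functional
$$\Psi(x) \;=\; \Phi'(u_1^* u_1)^{-1}\,\Phi'(u_1^* x u_1) \qquad \bigl(x \in e(M_n(\C) \ot \langle M,e_D\rangle)e\bigr)$$
is then a state (the normalizing scalar is positive because $\Phi'$ is faithful on $q_1(M_k(\C) \ot M)q_1$), is $Ae$-central by the intertwining relation together with the $\theta(A)q_1$-centrality of $\Phi'$, and is normal on $e(M_n(\C) \ot M)e$ because $x \mapsto u_1^* x u_1$ is a normal map into $q_1(M_k(\C) \ot M)q_1$ where $\Phi'$ is normal. By Remark \ref{rem.everything-in-terms-of-basic-construction}, this gives a nonzero projection $e \in A' \cap p(M_n(\C) \ot M)p$ such that $Ae$ is amenable relative to $D$ inside $M$, contradicting strong nonamenability.

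The only non-bookkeeping ingredient is the standard Jones basic construction fact that for $D \subset B \subset M$ the algebra $\langle B,e_D\rangle$ embeds trace-preservingly in $\langle M,e_D\rangle$ with a normal trace-preserving conditional expectation restricting to $E_B$ on $M$; with that in hand, the rest is careful tracking of the intertwining identity $aV = V\theta(a)$ and of which corner each state lives on.
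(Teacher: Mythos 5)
Your proof of part (i) is a complete transitivity argument; the paper simply cites \cite[Remark 3.8]{Vae07} for this, and your argument (showing $VW \neq 0$ via the support hypothesis on $(\id \ot E_B)(V^*V)$) is correct.

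For part (ii), the core idea is the same as the paper's: assume $\theta(A)$ is not strongly nonamenable relative to $D$ in $B$, obtain a suitable central state on the twisted corner of $M_k(\C) \ot \langle B, e_D\rangle$, promote it to $\langle M, e_D\rangle$, and transport it across $V$ to contradict strong nonamenability of $A$. However, one step as you wrote it is incorrect. You invoke a ``standard trace-preserving inclusion $\langle B, e_D\rangle \subset \langle M, e_D\rangle$ together with its normal trace-preserving conditional expectation $\cE$.'' There is no unital inclusion of $\langle B, e_D\rangle$ into $\langle M, e_D\rangle$ and hence no conditional expectation in the usual sense: the unit of $\langle B, e_D\rangle$ is the identity of $B(L^2(B))$, which corresponds to the Jones projection $e_B \in \langle M, e_D\rangle$, not to $1$. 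The correct statement, which the paper uses explicitly, is that $e_B \in \langle M, e_D\rangle$ and $\langle B, e_D\rangle = e_B \langle M, e_D\rangle e_B$ is a \emph{corner}; the role you want $\cE$ to play is played by the normal, completely positive, $B$-bimodular compression $T \mapsto e_B T e_B$, which restricted to $M$ gives $E_B(\,\cdot\,)e_B$. With that replacement, your construction of $\Phi' = \Phi\circ(\id\ot(e_B\,\cdot\,e_B))$ goes through verbatim, and the rest of your argument is valid.

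The one genuinely different piece is that you cut down to the support projection $q_1$ and take a polar decomposition $V_1 = u_1|V_1|$ to obtain a true partial isometry $u_1$ intertwining $A$ and $\theta(A)q_1$, then build a state by conjugating with $u_1$. The paper avoids this extra step: it defines $\om(T) = \Om\bigl((1\ot e_B)V^*TV(1\ot e_B)\bigr)$ directly from the (unpolarized) $V$, observes that this is an $A$-central positive functional whose restriction to $p(M_n(\C)\ot M)p$ is normal, and checks $\om(p) = \Om((\id\ot E_B)(V^*V)) \neq 0$ using the support hypothesis on $(\id\ot E_B)(V^*V)$ together with the (implicit) passage to the support of $\Om$. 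Both routes work; yours trades the slicker ``just plug in $V$'' computation for the benefit of landing on a bona fide state supported on a projection $e \in A' \cap p(M_n(\C)\ot M)p$, which is perhaps slightly more transparent but costs the extra polar-decomposition bookkeeping.
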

\begin{proof}
Since (i) is proven in \cite[Remark 3.8]{Vae07}, we only prove (ii). Assume that $\theta(A)$ is not strongly nonamenable relative to $D$ inside $B$. By Remark \ref{rem.everything-in-terms-of-basic-construction}, take a nonzero, $\theta(A)$-central, positive functional $\Om$ on $q (M_k(\C) \ot \langle B,e_D \rangle)q$ whose restriction to $q (M_k(\C) \ot B)q$ is normal. Note that we can view $e_B$ as an orthogonal projection in $\langle M , e_D \rangle$ and identify $\langle B,e_D \rangle = e_B \langle M,e_D \rangle e_B$. Then $(1 \ot e_B) q (M_k(\C) \ot \langle M,e_D \rangle)q (1 \ot e_B) = q (M_k(\C) \ot \langle B,e_D \rangle)q$ and
$$\om : p (M_n(\C) \ot \langle M,e_D \rangle)p \to \C : \om(T) = \Om((1 \ot e_B)V^* T V (1 \ot e_B))$$
is an $A$-central, positive functional whose restriction to $p(M_n(\C) \ot M)p$ is normal. Since $\om(p) = \Om((\id \ot E_B)(V^* V))$, the support of $(\id \ot E_B)(V^* V)$ equals $q$, and $\Om$ is normal on $q(M_k(\C) \ot B)q$, we get that $\om \neq 0$. So, $A$ is not strongly nonamenable relative to $D$ inside $M$.
\end{proof}

For later use, we also recall the following.

\begin{lemma}\label{lem.intertwining-intersection}
Let $(M,\tau)$ be a tracial von Neumann algebra and $Q_1, Q_2 \subset M$ von Neumann subalgebras which form a commuting square with $Q_1 \cap Q_2$. Assume that $Q_1$ is regular in $M$. Let $P \subset p(M_n(\C) \ot M)p$ be a von Neumann subalgebra. Then the following hold.
\begin{enumlist}
	\item\label{lem.intertwining-intersection.i} \cite[Proposition~2.7]{PV11} If $P$ is amenable relative to both $Q_1$ and $Q_2$, then $P$ is amenable relative to $Q_1 \cap Q_2$.
	
	\item\label{lem.intertwining-intersection.ii} \cite[Lemma~2.8]{DHI16} If $P \prec^f Q_1$ and $P \prec^f Q_2$, then $P \prec^f Q_1 \cap Q_2$.
\end{enumlist}
\end{lemma}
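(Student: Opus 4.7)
Both parts are reductions to the state/intertwiner reformulation recorded in Remark~\ref{rem.everything-in-terms-of-basic-construction}, where relative amenability and Popa's intertwining are rephrased as the existence of $P$-central states on $p(M_n(\C) \ot \langle M,e_{Q_j}\rangle)p$ with prescribed behavior on $p(M_n(\C) \ot M)p$.

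For \ref{lem.intertwining-intersection.i}, the plan is to fuse the two $P$-central states $\vphi_1, \vphi_2$ (normal on the small algebra) provided by amenability of $P$ relative to $Q_1$ and to $Q_2$ into a single such state witnessing amenability relative to $Q_0 := Q_1 \cap Q_2$. The commuting square assumption gives the crucial operator identity $e_{Q_0} = e_{Q_1} e_{Q_2} = e_{Q_2} e_{Q_1}$ inside $B(L^2(M))$, so that $\langle M, e_{Q_0}\rangle$ is generated by $M$ together with $e_{Q_1}$ and $e_{Q_2}$. The regularity of $Q_1$ is then decisive: every $u \in \cN_M(Q_1)$ satisfies $u e_{Q_1} u^* = e_{Q_1}$, and averaging the projection $e_{Q_2}$ against a net in the group generated by $\cN_M(Q_1)$ (whose weak closure is $M$ by regularity) converges, weak$^*$ in $\langle M, e_{Q_0}\rangle$, to a projection dominated by $e_{Q_1}$; by the above operator identity it must be $e_{Q_0}$. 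Composing this averaging with $\vphi_1$ yields the desired state. The main obstacle is arranging the averaging so that the resulting state remains normal on $p(M_n(\C) \ot M)p$: this is where the joint use of $\vphi_1$ and $\vphi_2$ enters, since $\vphi_2$-normality is used to control the averaging of $e_{Q_2}$ while $\vphi_1$-normality handles the $e_{Q_1}$ side.

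For \ref{lem.intertwining-intersection.ii}, the plan is transitivity via Lemma~\ref{lem.intertwine-transitivity}\ref{lem.intertwine-transitivity.i}. Fix a nonzero $p_1 \in P' \cap p(M_n(\C) \ot M)p$. From $P \prec^f Q_1$ I obtain a corner embedding $\theta : Pp_1 \to q(M_k(\C) \ot Q_1)q$ realized by a partial isometry $V \in p_1(M_{n,k}(\C) \ot M)q$ with $(\id \ot E_{Q_1})(V^*V)$ of full support $q$. By Lemma~\ref{lem.intertwine-transitivity}\ref{lem.intertwine-transitivity.i}, it suffices to show $\theta(Pp_1) \prec_{Q_1} Q_0$. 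First I transfer $P \prec^f Q_2$ through $V$: any nonzero $\theta(Pp_1)$-central projection $q_2$ in $q(M_k(\C) \ot M)q$ pulls back via $V q_2 V^*$ to a $Pp_1$-central projection in $p_1(M_n(\C) \ot M)p_1$, nonzero because of the full support condition, and applying $P \prec^f Q_2$ there gives $\theta(Pp_1)q_2 \prec_M Q_2$; hence $\theta(Pp_1) \prec^f_M Q_2$. Since $\theta(Pp_1) \subset Q_1$, the commuting square condition yields $E_{Q_1}(Q_2) = E_{Q_1} E_{Q_2}(Q_2) = E_{Q_0}(Q_2) \subset Q_0$, so conditioning by $\id \ot E_{Q_1}$ turns any intertwiner of $\theta(Pp_1)$ into $Q_2$ inside $M$ into a $Q_1$-valued intertwiner targeting $Q_0$. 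The $^f$ strength ensures that for every cut-down the conditioned intertwiner remains nonzero, giving $\theta(Pp_1) \prec_{Q_1} Q_0$ as required. The most delicate point is precisely this nonvanishing after conditioning, which is what forces the use of $\prec^f$ rather than $\prec$ in the hypothesis.
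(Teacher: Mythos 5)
The paper does not prove this lemma; it is recalled with references to \cite[Proposition~2.7]{PV11} for part~(i) and \cite[Lemma~2.8]{DHI16} for part~(ii), immediately preceded by ``For later use, we also recall the following.'' So there is no in-paper proof to compare against, and your sketch has to be judged on its own. On that basis, both parts contain genuine gaps.

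For part~(i), the averaging argument does not work. Take $M = A \ovt B$ with $A,B$ II$_1$ factors, $Q_1 = A \ot 1$, $Q_2 = 1 \ot B$, so $Q_0 = \C 1$ and $Q_1$ is regular. For $u = a \ot b$ with $a \in \cU(A)$, $b\in\cU(B)$, one computes $u e_{Q_2} u^* = |a\Omega_A\rangle\langle a\Omega_A| \ot 1_{L^2(B)}$, which is never dominated by $e_{Q_1} = 1_{L^2(A)} \ot |\Omega_B\rangle\langle\Omega_B|$, and no weak$^*$ limit of Ces\`aro averages of such operators can equal $e_{Q_0} = |\Omega_A\rangle\langle\Omega_A| \ot |\Omega_B\rangle\langle\Omega_B|$, since the $B$-leg of every average is the identity. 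So the claim that the averaging converges to ``a projection dominated by $e_{Q_1}$'', and hence to $e_{Q_0}$, is false. The actual argument in \cite{PV11} goes through the bimodule characterization of relative amenability and a fusion of the two $M$-$M$-bimodules $L^2\langle M,e_{Q_1}\rangle$ and $L^2\langle M,e_{Q_2}\rangle$ relative to the commuting square; it is not an averaging of Jones projections.

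For part~(ii), the ``conditioning by $\id\ot E_{Q_1}$'' step is not justified. Writing the intertwining relation $\theta(a)W = W\rho(\theta(a))$ with $\theta(a) \in M_k(\C)\ot Q_1$, $W \in M_{k,l}(\C)\ot M$ and $\rho(\theta(a))\in M_l(\C)\ot Q_2$, applying $\id\ot E_{Q_1}$ to the right-hand side gives $(\id\ot E_{Q_1})(W\rho(\theta(a)))$, which does not factor as $(\id\ot E_{Q_1})(W)\cdot(\id\ot E_{Q_0})(\rho(\theta(a)))$ because $E_{Q_1}$ is only a bimodule map over $Q_1$, not multiplicative, and $W$ takes values in $M$, not $Q_1$. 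The commuting square identity $E_{Q_1}E_{Q_2}=E_{Q_0}$ alone does not repair this. Relatedly, your argument for (ii) never uses the regularity of $Q_1$, even though it appears in the hypotheses and is essential in \cite{DHI16}, where it enters via a commuting square of basic constructions $\langle Q_1,e_{Q_0}\rangle \subset \langle M,e_{Q_2}\rangle$ over $Q_1\subset M$, giving a trace-preserving expectation along which the $P$-central state characterization can be transferred (compare Lemma \ref{lem.commuting-square-transfer-embedding} in the paper, which requires nondegeneracy — itself something regularity must supply). As written, both sketches stop short of the actual proofs and would not compile into a complete argument.
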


\subsection{\boldmath Countable groups, $2$-cocycles and twisted group von Neumann algebras}

We denote $\T = \{z \in \C \mid |z|=1\}$. Given a countable group $G$, one denotes by $Z^2(G,\T)$ the group of normalized scalar \emph{$2$-cocycles}, i.e.\ maps $\mu : G \times G \to \T$ satisfying
$$\mu(g,h) \, \overline{\mu}(g,hk) \, \mu(gh,k) \, \overline{\mu}(h,k) = 1 \quad\text{and}\quad \mu(g,e) = 1 = \mu(e,g) \quad\text{for all $g,h,k \in G$.}$$
One says that $\mu,\om \in Z^2(G,\T)$ are \emph{cohomologous}, denoted as $\mu \sim \om$, if there exists a map $\vphi : G \to \T$ such that $\vphi(e)=1$ and
$$\om(g,h) = \mu(g,h) \, \vphi(gh) \, \overline{\vphi}(g) \, \overline{\vphi}(h) \quad\text{for all $g,h \in G$.}$$
Sometimes, the normalization condition $\mu(g,e) = 1 = \mu(e,g)$ is not part of the definition of a $2$-cocycle, but any such more general $2$-cocycle is trivially cohomologous to a normalized one.

One denotes by $H^2(G,\T)$ the quotient of the abelian group $Z^2(G,\T)$ by the subgroup of $2$-cocycles that are cohomologous to $1$.

When $H$ is a Hilbert space, one says that a map $\pi : G \to \cU(H)$ is a \emph{projective representation} if
$$\pi(g) \, \pi(h) \in \T \cdot \pi(gh) \quad\text{and}\quad \pi(e) = 1 \quad\text{for all $g,h \in G$.}$$
Defining $\om_\pi : G \times G \to \T$ such that $\pi(g) \, \pi(h) = \om_\pi(g,h) \, \pi(gh)$, we obtain the associated $2$-cocycle $\om_\pi \in Z^2(G,\T)$.

We say that a $2$-cocycle $\mu \in Z^2(G,\T)$ is \emph{of finite type} if there exists a \emph{finite dimensional} projective representation $\pi$ with $\mu = \om_\pi$. Note that trivially, $\mu \sim 1$ if and only if $\mu = \om_\pi$ for a one-dimensional projective representation $\pi$. Also note that since we can take the tensor product of two projective representations, the finite type $2$-cocycles form a subgroup of $Z^2(G,\T)$.

Given $\mu \in Z^2(G,\T)$, the regular $\mu$-representation on the Hilbert space $\ell^2(G)$ with orthonormal basis $(\delta_h)_{h \in G}$ is defined by
$$u^\mu_g \delta_h = \mu(g,h) \, \delta_{gh} \quad\text{for all $g,h \in G$.}$$
Then $g \mapsto u^\mu_g$ is a projective representation with associated $2$-cocycle $\mu$. The \emph{twisted group von Neumann algebra} $L_\mu(G)$ is defined as the von Neumann algebra generated by the unitaries $(u^\mu_g)_{g \in G}$. When the context is sufficiently clear, we will often omit the superscript $\mu$ and simply denote by $u_g \in L_\mu(G)$ the canonical generating unitaries, satisfying $u_g u_h = \mu(g,h) u_{gh}$. We denote by $\tau$ the canonical trace on $L_\mu(G)$ given by $\tau(a) = \langle a \delta_e,\delta_e\rangle$.

If $G \actson^\si (A,\tau)$ is a trace preserving action and $\mu \in Z^2(G,\T)$, we denote by $A \rtimes^\si_\mu G$ the \emph{cocycle crossed product}, which is the unique tracial von Neumann algebra generated by a copy of $A$ and unitaries $(u_g)_{g \in G}$ satisfying
$$u_g \, u_h = \mu(g,h) \, u_{gh} \;\; , \quad u_g a u_g^* = \si_g(a) \;\; , \quad \tau(a u_g) = \begin{cases} \tau(a) &\;\;\text{if $g = e$,}\\ 0 &\;\;\text{if $g \neq e$,}\end{cases}$$
for all $g,h \in G$ and $a \in A$.

We make use of the following lemma to prove that certain $2$-cocycles are not of finite type. Recall that for groups $\Gamma$ and $\Lambda$, a map $\Om : \Gamma \times \Lambda \to \T$ is called a bicharacter if $\Om$ is multiplicative in both variables, meaning that $\Om(\cdot,h) : \Gamma \to \T$ and $\Om(g,\cdot) : \Lambda \to \T$ are homomorphisms for all $g \in \Gamma$ and $h \in \Lambda$.

\begin{lemma}\label{lem.finite-type-cocycles}
Let $\Gamma$, $\Lambda$ be groups and let $\pi : \Gamma \to \cU(d)$, $\rho : \Lambda \to \cU(d)$ be $d$-dimensional projective representations satisfying
$$\pi(g) \rho(h) = \Om(g,h) \rho(h) \pi(g) \quad\text{for all $g \in \Gamma, h \in \Lambda$,}$$
where $\Om : \Gamma \times \Lambda \to \T$ is a bicharacter. Then, $\Om$ factors through finite quotients of $\Gamma$ and $\Lambda$.
\end{lemma}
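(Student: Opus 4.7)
The plan is to first show that $\Om$ takes values in a finite group, and then to use the compact Lie group structure of $\PU(d)$ to extract finite-index subgroups of $\Gamma$ and $\Lambda$ on which $\Om$ vanishes. Taking determinants in $\pi(g)\rho(h) = \Om(g,h)\rho(h)\pi(g)$ immediately yields $\Om(g,h)^d = 1$, so $\Om$ takes values in the finite group $\mu_d := \{z \in \T : z^d = 1\}$ of $d$-th roots of unity.

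The same relation says exactly that the projective images $\overline\pi(g), \overline\rho(h) \in \PU(d)$ commute. I would let $K$ and $L$ denote the closures in $\PU(d)$ of $\overline\pi(\Gamma)$ and $\overline\rho(\Lambda)$ respectively. As closed subgroups of the compact Lie group $\PU(d)$, they are themselves compact Lie groups, so their identity components $K^0 \triangleleft K$ and $L^0 \triangleleft L$ have finite index.

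For fixed $h \in \Lambda$, I would consider the map $\chi_h : K \to \T$ given by $\chi_h(a) := \tilde a\, \rho(h)\, \tilde a^{-1}\, \rho(h)^{-1}$ for any unitary lift $\tilde a \in \cU(d)$ of $a \in K$. This is well-defined because $a$ and $\overline\rho(h)$ commute in $\PU(d)$, so the commutator lies in the scalars $\T \cdot \id$ and is independent of the choice of lift; it is continuous because $\cU(d) \to \PU(d)$ admits continuous local sections. Since $\chi_h(\overline\pi(g)) = \Om(g,h)$, continuity forces the image of $\chi_h$ into $\mu_d$, and a continuous homomorphism from a compact Lie group into a finite discrete group must vanish on the identity component, so $\chi_h \equiv 1$ on $K^0$.

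Setting $\Gamma_0 := \overline\pi^{-1}(K^0)$ and, symmetrically, $\Lambda_0 := \overline\rho^{-1}(L^0)$, these are normal subgroups of finite index in $\Gamma$ and $\Lambda$, being kernels of the homomorphisms into the finite groups $K/K^0$ and $L/L^0$. By the previous paragraph and its symmetric counterpart, $\Om(g,h) = 1$ whenever $g \in \Gamma_0$ or $h \in \Lambda_0$, so $\Om$ descends to a map on the finite group $(\Gamma/\Gamma_0) \times (\Lambda/\Lambda_0)$, as required. The only conceptual point is trading the pointwise identity $\Om(g,h)^d = 1$ for uniform finite-index control via the compact Lie group $\PU(d)$; everything else is routine, with the mild technical verification being continuity and well-definedness of $\chi_h$.
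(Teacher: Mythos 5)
Your proof is correct, but it takes a genuinely different route from the paper's. You first observe the determinant identity $\Om(g,h)^d=1$, so $\Om$ takes values in the finite group $\mu_d$ of $d$-th roots of unity, and you then exploit the Lie-group structure of $\PU(d)$: the closure $K$ of $\overline\pi(\Gamma)$ has identity component $K^0$ of finite index, and the explicit continuous character $\chi_h\colon K\to\T$ (well-defined thanks to the commutation in $\PU(d)$ and independence of scalar lift) lands in the discrete group $\mu_d$, hence vanishes on $K^0$; this gives a finite-index normal subgroup $\Gamma_0=\overline\pi^{-1}(K^0)$ on which $\Om$ is trivial, and symmetrically for $\Lambda$. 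The paper instead does not use the determinant observation or the component structure of $\PU(d)$; it defines $\Lambda_0=\{h:\Om(\cdot,h)\equiv 1\}$ directly, produces a homomorphism from the subgroup $\overline\rho(\Lambda)\subset\PU(d)$ onto $\Lambda/\Lambda_0$, and then shows by a subsequence argument that this map extends continuously to the compact closure, concluding from the compactness-versus-discreteness tension that $\Lambda/\Lambda_0$ is finite. Your version buys a cleaner conceptual package (an honest continuous character $\chi_h$ and explicit finite-index subgroups $\Gamma_0,\Lambda_0$) at the cost of two extra inputs — the determinant trick, which is essential for your argument since $\chi_h(K^0)$ could a priori be all of $\T$, and the finiteness of the component group of a compact Lie group; the paper's version is slightly more hands-on but needs only compactness of the closure in $\PU(d)$.
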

\begin{proof}
Define the normal subgroup $\Lambda_0 < \Lambda$ by $\Lambda_0 = \{h \in \Lambda \mid \forall g \in \Gamma : \Om(g,h) = 1\}$. By symmetry, it suffices to prove that $\Lambda_0 < \Lambda$ has finite index.

Define $\PU(d) = \cU(d) / \T \cdot 1$ and define the subgroup $\Lambda_1 < \PU(d)$ by $\Lambda_1 = \{\rho(h) \T \mid h \in \Lambda\}$. When $\rho(h) \in \T \cdot 1$, $\rho(h)$ commutes with all $\pi(g)$ and we conclude that $\Om(g,h) = 1$ for all $g \in \Gamma$, so that $h \in \Lambda_0$. We thus obtain a well-defined group homomorphism $\theta : \Lambda_1 \to \Lambda/\Lambda_0 : \theta(\rho(h) \T) = h\Lambda_0$.

We claim that $\theta$ extends to a continuous homomorphism from the closure $\overline{\Lambda_1} \subset \PU(d)$ to $\Lambda/\Lambda_0$. Assume that $(h_n)_n$ is a sequence in $\Lambda$ such that $\rho(h_n) \T \to \T$ in $\PU(d)$. We have to prove that $h_n \in \Lambda_0$ eventually. Assume the contrary. After passage to a subsequence, we have $h_n \not\in \Lambda_0$ for every $n \in \N$ and $\rho(h_n) \to \al \cdot 1$ in $\cU(d)$ for some $\al \in \T$.

Fix $n$. Since $h_n \not\in \Lambda_0$, $\{\Om(g,h_n) \mid g \in \Gamma\}$ is a nontrivial subgroup of $\T$. We can thus choose $g_n \in \Gamma$ such that $|\Om(g_n,h_n)-1| \geq 1$. Since $\rho(h_n) \to \al \cdot 1$, we have that $\pi(g_n) \rho(h_n) \pi(g_n)^* \rho(h_n)^* \to 1$. But this sequence equals $\Om(g_n,h_n) \cdot 1$, which does not converge to $1$. So the claim is proven.

By the claim, we find a continuous and surjective homomorphism from a compact group to the discrete group $\Lambda/\Lambda_0$. So, $\Lambda/\Lambda_0$ is finite.
\end{proof}

We also record the following lemma but omit the elementary proof. Recall that a countable group $G$ is said to have infinite conjugacy classes (icc) if $\{ghg^{-1} \mid g \in G\}$ is infinite for every $h \in G \setminus \{e\}$. A subgroup $\Lambda < G$ is said to be \emph{relatively icc} if $\{ghg^{-1} \mid g \in \Lambda\}$ is infinite for every $h \in G \setminus \{e\}$.

\begin{lemma}\label{lem.remarks-icc}
Let $G$ be a countable group.
\begin{enumlist}
\item\label{lem.remarks-icc.i} If $G$ is icc, then $G \to \Aut G : g \mapsto \Ad g$ embeds $G$ as a relatively icc subgroup of $\Aut G$.
\item\label{lem.remarks-icc.ii} Let $\Lambda$ be a group and $\delta_1,\delta_2 : \Lambda \to G$ group homomorphisms such that $\delta_1(\Lambda) < G$ is relatively icc. Exactly one of the following statements holds.
\begin{itemlist}
\item $\delta_1 = (\Ad g) \circ \delta_2$ for some $g \in G$, or
\item $\{\delta_1(h) g \delta_2(h^{-1}) \mid h \in \Lambda\}$ is infinite for every $g \in G$.
\end{itemlist}
Also, if $\delta_1(g) = \delta_2(g)$ for all $g$ in a finite index subgroup $\Lambda_0 < \Lambda$, then $\delta_1(g) = \delta_2(g)$ for all $g \in \Lambda$.
\item\label{lem.remarks-icc.iii} Let $\Lambda$ be a group with normal subgroup $\Lambda_0$ and let $\delta : \Lambda_0 \to G$ be a group homomorphism such that $\delta(\Lambda_0) < G$ is relatively icc. If for every $k \in \Lambda$, there exists an $s \in G$ such that the set $\{\delta(kgk^{-1}) s \delta(g)^{-1} \mid g \in \Lambda_0\}$ is finite, then $\delta$ uniquely extends to a group homomorphism $\Lambda \to G$.
\end{enumlist}
\end{lemma}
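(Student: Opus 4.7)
The plan is to extract one key tool from the relative icc hypothesis and then reduce all three parts to routine bookkeeping. The tool I would establish first is the \emph{centralizer principle}: if $\Lambda < G$ is relatively icc and $K < \Lambda$ has finite index, then $C_G(K) = \{e\}$. Indeed, any $z \in G$ centralizing $K$ has $\Lambda$-conjugacy class of cardinality at most $[\Lambda : K] < \infty$, so the relative icc hypothesis forces $z = e$. The specialization $\Lambda = G$ gives that, for $G$ icc, every finite-index subgroup of $G$ has trivial centralizer in $G$. I expect the main obstacle to be the ``Also'' clause of (ii), since it is what powers both the first statement of (ii) and part (iii); once it is in place, the rest is unwinding.

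For (i), injectivity of $g \mapsto \Ad g$ is just $Z(G) = \{e\}$. For the relative icc property, suppose by contradiction that some $\alpha \in \Aut G \setminus \{\id\}$ has finite conjugation orbit under $G$; then its stabilizer $K := \{g \in G : \alpha\, \Ad g = \Ad g\, \alpha\}$ has finite index. Writing out the commutation on an arbitrary $x$ yields $\alpha(g)^{-1} g \in C_G(\alpha(x))$ for every $g \in K$, $x \in G$; surjectivity of $\alpha$ combined with $Z(G) = \{e\}$ forces $\alpha(g) = g$ for $g \in K$. Then for any $x \in G$ and any $h$ in the finite-index subgroup $K \cap xKx^{-1}$, applying $\alpha$ to $xhx^{-1} \in K$ gives $\alpha(x)\, h\, \alpha(x)^{-1} = xhx^{-1}$, so $x^{-1}\alpha(x) \in C_G(K \cap xKx^{-1}) = \{e\}$ by the centralizer principle. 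Hence $\alpha = \id$, a contradiction.

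For (ii), the two alternatives are mutually exclusive, since $\delta_1 = (\Ad g_0)\delta_2$ makes $\delta_1(h) g_0 \delta_2(h^{-1}) \equiv g_0$. If the second alternative fails, pick $g \in G$ with $\{\delta_1(h)\, g\, \delta_2(h^{-1}) : h \in \Lambda\}$ finite and set $\delta_2' := (\Ad g) \circ \delta_2$; then $H := \{h : \delta_1(h) = \delta_2'(h)\}$ is the preimage of a single element of this finite set under a well-behaved map, hence has finite index in $\Lambda$. So the first alternative will follow once the ``Also'' clause is applied to $\delta_1$ and $\delta_2'$. To prove the ``Also'' clause, assume $\delta_1|_{\Lambda_0} = \delta_2|_{\Lambda_0}$ with $[\Lambda : \Lambda_0] < \infty$ and fix $x \in \Lambda$. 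For $h$ in the finite-index subgroup $H' := \Lambda_0 \cap x\Lambda_0 x^{-1}$, both $h$ and $xhx^{-1}$ lie in $\Lambda_0$. Expanding $\delta_1(xhx^{-1}) = \delta_2(xhx^{-1})$ and inserting $\delta_1(h) = \delta_2(h)$ shows that $\delta_2(x)^{-1} \delta_1(x)$ commutes with $\delta_1(h)$ for every $h \in H'$. Since $\delta_1(H')$ is a finite-index subgroup of the relatively icc group $\delta_1(\Lambda)$, the centralizer principle yields $\delta_1(x) = \delta_2(x)$.

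For (iii), fix $k \in \Lambda$ and apply (ii) to the homomorphisms $\delta^{(1)}, \delta^{(2)} : \Lambda_0 \to G$ defined by $\delta^{(1)}(g) := \delta(kgk^{-1})$ and $\delta^{(2)}(g) := \delta(g)$. Normality of $\Lambda_0$ makes $\delta^{(1)}$ a genuine homomorphism with $\delta^{(1)}(\Lambda_0) = \delta(\Lambda_0)$, which is still relatively icc. The hypothesis rules out the second alternative of (ii), so there exists $\tilde\delta(k) \in G$ with $\delta(kgk^{-1}) = \tilde\delta(k)\, \delta(g)\, \tilde\delta(k)^{-1}$ for every $g \in \Lambda_0$. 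This $\tilde\delta(k)$ is unique, since two candidates differ by an element of $C_G(\delta(\Lambda_0)) = \{e\}$ by the centralizer principle. Uniqueness immediately gives $\tilde\delta|_{\Lambda_0} = \delta$, and for $k_1, k_2 \in \Lambda$ both $\tilde\delta(k_1)\tilde\delta(k_2)$ and $\tilde\delta(k_1 k_2)$ conjugate $\delta(g)$ to $\delta(k_1 k_2\, g\, (k_1 k_2)^{-1})$ for all $g \in \Lambda_0$, so uniqueness forces $\tilde\delta(k_1 k_2) = \tilde\delta(k_1)\tilde\delta(k_2)$, completing the extension.
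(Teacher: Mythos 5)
Your proof is correct, and since the paper explicitly omits a proof of this lemma as ``elementary,'' there is nothing to compare against; the reduction to the centralizer principle $C_G(K)=\{e\}$ for finite-index $K<\Lambda$ is exactly the right organizing tool, and parts (ii) and (iii) then unwind as you describe. One small slip recurs in (i) and in the ``Also'' clause of (ii): from $h\in K\cap xKx^{-1}$ one gets $x^{-1}hx\in K$, not $xhx^{-1}\in K$, so you should either replace the intersection by $K\cap x^{-1}Kx$ (resp.\ $\Lambda_0\cap x^{-1}\Lambda_0 x$) or instead apply $\alpha$ (resp.\ the $\delta_i$) to $x^{-1}hx$; in either case the subgroup still has finite index and the argument goes through unchanged.
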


\subsection{Cocycle twisted groupoid von Neumann algebras}\label{sec.twisted-groupoid-vNalg}

To prove Theorem \ref{thm.B}, we only need very limited preliminary background on discrete pmp groupoids $\cG$ and their cocycle twisted groupoid von Neumann algebras $L_\om(\cG)$. We refer e.g.\ to \cite[Section 2]{Ana12} for further background. Here we only recall that such a discrete pmp groupoid $\cG$ comes with the following extra structure and assumptions. We are given a standard Borel structure such that the set $\cG^{(0)}$ of units is Borel and the source and target maps $s,t : \cG \to \cG^{(0)}$, as well as the multiplication map $\cG^{(2)} \ni (g,h) \mapsto gh \in \cG$ on $\cG^{(2)} = \{(g,h) \in \cG \times \cG \mid s(g) = t(h)\}$ are Borel. Moreover, we are given a probability measure $\mu$ on the Borel sets of $\cG^{(0)}$ such that for every bisection, i.e.\ Borel set $U \subset \cG$ such that $s|_U$ and $t|_U$ are injective, the associated map $\si_U : s(U) \to t(U)$ given by $\si_U(s(g)) = t(g)$ for all $g \in U$, is measure preserving.

We denote by $Z^2(\cG,\T)$ the group of normalized scalar $2$-cocycles on $\cG$, i.e.\ Borel maps $\om : \cG^{(2)} \to \T$ satisfying
$$\om(g,h) \, \overline{\om}(g,hk) \, \om(gh,k) \, \overline{\om}(h,k) = 1 \quad\text{and}\quad \om(g,s(g)) = 1 = \om(t(g),g)$$
for all $g,h,k \in \cG$ with $s(g) = t(h)$ and $s(h) = t(k)$.

Whenever $U,V \subset \cG$ are bisections, one considers the bisection
$$U \cdot V = \{gh \mid g \in U, h \in V, s(g) = t(h)\} \; .$$
Given a $2$-cocycle $\om \in Z^2(\cG,\T)$, one considers the partial isometry $\om(U,V) \in L^\infty(\cG^{(0)},\mu)$ given by $\om(U,V)(x) = \om(g,h)$ when $g \in U$, $h \in V$, $s(g) = t(h)$, $s(h) =x$, and $\om(U,V)(x)=0$ when $x \not\in s(U\cdot V)$.

The twisted groupoid von Neumann algebra $L_\om(\cG)$ is the unique tracial von Neumann algebra generated by a von Neumann subalgebra $B=L^\infty(\cG^{(0)},\mu)$ and partial isometries $u_U$ for every bisection $U \subset \cG$ satisfying the following relations for all bisections $U,V \subset \cG$ and $F \in B$,
\begin{align*}
& u_U \, u_V = u_{U \cdot V} \, \om(U,V) \;\; ,\quad u_U^* u_U = 1_{s(U)} \in B \;\; ,\quad u_U u_U^* = 1_{t(U)} \in B \;\; ,\\
& u_U^* F u_U = F \circ \si_U \quad\text{and}\quad \tau(F u_U) = \int_{U \cap \cG^{(0)}} F(x) \, d\mu(x) \; .
\end{align*}
By the usual maximality argument, any bisection $U \subset \cG$ can be extended to a bisection $V \subset \cG$ such that $s(V)$ and $t(V)$ are conull, for which $u_V$ is a unitary in $L_\om(\cG)$ normalizing $B$. So, $B \subset L_\om(\cG)$ is always a regular abelian subalgebra.

For every $x \in \cG^{(0)}$, the isotropy group $\cG_x$ is defined as $\cG_x = \{g \in \cG \mid s(g) = t(g) = x\}$. Every $\om \in Z^2(\cG,\T)$ restricts to $\om_x \in Z^2(\cG_x,\T)$. If $x$ is an atom of $\mu$, $1_x \in B$ is a projection and we obtain a canonical identification $1_x L_\om(\cG) 1_x = L_{\om_x}(\cG_x)$.

\section[Class $\cC$ and relative $\omega$-solidity]{\boldmath Class $\cC$ and relative $\omega$-solidity}

The first part of the following result is \cite[Theorem 1.4]{PV12}. The second part is a relative version of Ozawa's solidity theorem from \cite{Oza03} and seems by now to be a folklore result. We briefly sketch a proof.

\begin{theorem}[{\cite{PV12}}]\label{thm.relative-omega-solidity}
Let $\Gamma$ be a weakly amenable, biexact group and $\Gamma \curvearrowright (P, \tau)$ a trace preserving action on a tracial von Neumann algebra $(P, \tau)$. Let $\mu \in Z^2(\Gamma,\T)$ and write $M = P \rtimes_\mu \Gamma$. Let $p \in M$ be a nonzero projection.
\begin{enumlist}
\item\label{thm.relative-omega-solidity.i} If $A \subset pMp$ is a von Neumann subalgebra that is amenable relative to $P$ and satisfies $A \not\prec_M P$, then $\cN_{pMp}(A)\dpr$ remains amenable relative to $P$.
\item\label{thm.relative-omega-solidity.ii} If $A \subset pMp$ is a von Neumann subalgebra and $v_n \in \cU(pMp)$ is a sequence of unitaries such that $\|E_P(x v_n y)\|_2 \to 0$ for all $x,y \in M$ and such that $\|a v_n - v_n a \|_2 \to 0$ for all $a \in A$, then $A$ is amenable relative to $P$.
\item\label{thm.relative-omega-solidity.iii} In particular, if $A \subset pMp$ is a von Neumann subalgebra and $A' \cap pMp \not\prec_M P$, then $A$ is amenable relative to $P$.
\end{enumlist}
\end{theorem}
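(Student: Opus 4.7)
Part (i) is exactly \cite[Theorem 1.4]{PV12}, so nothing needs to be done there. Part (iii) reduces immediately to (ii): the hypothesis $A' \cap pMp \not\prec_M P$ supplies, by the unitary characterization of non-intertwining recalled in Section \ref{sec.intertwining}, a net of unitaries in $\cU(A' \cap pMp)$ and, by $\|\cdot\|_2$-separability of $M$, a sequence $(v_n)$ with $\|E_P(xv_ny)\|_2 \to 0$ for every $x,y \in M$; since $v_n \in A'$, we have $[a,v_n]=0$ for all $a \in A$, and (ii) applies verbatim.

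The real work is (ii). The plan is to construct an $A$-central state on $p\langle M,e_P\rangle p$ whose restriction to $pMp$ is normal and faithful, directly from the sequence $(v_n)$. Set $q_n := v_n e_P v_n^* \in p\langle M,e_P\rangle p$, a projection with $\Tr(q_n) = 1$ (for $\Tr$ the canonical semifinite trace on $\langle M,e_P\rangle$), and consider the normal states $\phi_n(T) := \Tr(q_n T)$. Using $\Tr(xe_Py) = \tau(xy)$ for $x,y \in M$ one computes $\phi_n|_{pMp} = \tau|_{pMp}$. Fix a free ultrafilter $\om$ on $\N$ and let $\phi$ be the weak-$*$ limit of $(\phi_n)$ along $\om$; its restriction to $pMp$ is $\tau|_{pMp}$, which is faithful and normal.

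To check $A$-centrality of $\phi$, the key identity is
$[q_n,a] = v_n[e_P,v_n^*av_n]v_n^*$, which rewrites
$\phi_n(aT-Ta) = \Tr([e_P,v_n^*av_n]\,v_n^*Tv_n)$. The asymptotic commutation $\|v_n^*av_n - a\|_2 = \|[a,v_n]\|_2 \to 0$ lets me replace $v_n^*av_n$ by $a$ up to $\|\cdot\|_{2,\Tr}$-small corrections, reducing the quantity to $\Tr(v_n[e_P,a]v_n^*T)$. Decomposing $[e_P,a] = e_P(a-E_P(a)) - (a-E_P(a))e_P$ and testing $T$ on the $\Tr$-dense $*$-subalgebra spanned by $xe_Py$ with $x,y \in M$, the non-intertwining hypothesis $\|E_P(xv_ny)\|_2 \to 0$ makes every resulting expression vanish in the limit. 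Standard operator-norm control on $e_P$ and the $\Tr$-density then extend this to all of $p\langle M,e_P\rangle p$, so $\phi$ is $A$-central, witnessing that $A$ is amenable relative to $P$.

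The main obstacle will be this last extension, i.e.\ promoting the pointwise-on-generators vanishing of the commutator to genuine $A$-centrality of the cluster point $\phi$ on the whole basic construction. A conceptually cleaner alternative, avoiding those estimates, is to pass to the tracial ultrapower $M^\om = P^\om \rtimes_\mu \Gamma$, realise $v := (v_n) \in (pMp)^\om$ as a unitary generating an amenable algebra $Q \subset (pMp)^\om$ whose relative commutant contains $A$ (via constant sequences), verify $Q \not\prec_{M^\om} P^\om$ from the non-intertwining (using a spectral decomposition of $v$ to produce the requisite net of unitaries in $Q$), apply part (i) inside $M^\om$ to conclude that $\cN_{(pMp)^\om}(Q)\dpr \supset A$ is amenable relative to $P^\om$, and finally descend to amenability of $A$ relative to $P$ inside $M$ by restricting $A$-central states along the natural inclusion $\langle M,e_P\rangle \subset \langle M^\om,e_{P^\om}\rangle$.
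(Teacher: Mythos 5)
The treatment of (i) and (iii) coincides with the paper's (citing \cite[Theorem 1.4]{PV12} for (i) and deducing (iii) from (ii) via the unitary characterization of non-intertwining), and is fine. The problem lies entirely in (ii).

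Your direct construction of an $A$-central state from $q_n := v_ne_Pv_n^*$ does not use weak amenability or biexactness of $\Gamma$ anywhere. This is not an oversight that ``standard operator-norm control'' can repair: the statement it would prove is false in their absence. Concretely, take $\Gamma = \F_2 \times \F_2$ with trivial $P = \C1$, $M = L(\F_2 \times \F_2)$, $A = L(\F_2 \times \{e\})$, and $v_n \in \cU(L(\{e\} \times \F_2))$ any sequence of unitaries tending to $0$ weakly. Then $[a,v_n]=0$ exactly and $\|E_P(xv_ny)\|_2 = |\tau(xv_ny)| \to 0$ for all $x,y \in M$, yet $A \cong L(\F_2)$ is not amenable (relative to $\C$). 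Your computation of $\phi_n(aT-Ta)$ on the generators $xe_Py$ is correct, and the reduction from $v_n^*av_n$ to $a$ by H\"{o}lder is fine; but what you obtain is pointwise vanishing on a $\Tr$-dense (not norm-dense) $*$-subalgebra with a rate depending on $x$ and $y$, while $\|[q_n,a]\|_{1,\Tr} = \|[e_P, v_n^*av_n]\|_{1,\Tr}$ is bounded away from $0$. A weak-$*$ cluster point of the $\phi_n$ therefore need not be $A$-central on all of $p\langle M,e_P\rangle p$, and in the example above it is not. The hypotheses on $\Gamma$ must enter precisely at this extension step, and your argument provides no mechanism for them to do so.

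The ultrapower variant has a separate gap: $Q := \{v\}''$ with $v=(v_n) \in (pMp)^\om$ need not satisfy $Q \not\prec_{M^\om} P^\om$. If $v$ has finite spectrum (e.g.\ if all $v_n$ are order-two unitaries), $Q$ is finite-dimensional and intertwines into $P^\om$ trivially, so (i) cannot be invoked. You gesture at ``a spectral decomposition of $v$ to produce the requisite net of unitaries in $Q$,'' but the non-intertwining hypothesis is about the single sequence $(v_n)$, not about powers or spectral pieces of $v$, and does not transfer to them.

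The paper's actual proof is quite different and much shorter in exposition: one conjugates by the comultiplication $\al : M \to M \ovt L(\Gamma)$, $\al(xu_g) = xu_g \ot u_g$, which is a trace-preserving embedding forming a nondegenerate commuting square with $M \ot 1$ over $P \ot 1$. This transfers the problem to the untwisted crossed product $M \ovt L(\Gamma)$ with $P$ replaced by $M \ot 1$, where one can literally repeat the argument of \cite[Section~3.4, case~1]{PV12} — which does use weak amenability and biexactness of $\Gamma$ in an essential way to build the relevant $A$-central state — and then transfer back via Lemma \ref{lem.commuting-square-transfer-embedding}. That reduction is the content of (ii); without it, or without re-running the PV12 analysis, the statement cannot be reached.
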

\begin{proof}
Define $\al : M \to M \ovt L(\Gamma) : \al(x u_g) = x u_g \ot u_g$ for all $x \in P$ and $g \in \Gamma$. Note that the subalgebras $\al(M)$ and $M \ot 1$ of $M \ovt L(\Gamma)$ form a nondegenerate commuting square with intersection $\al(M) \cap M \ot 1 = P \ot 1$. So we can apply Lemma \ref{lem.commuting-square-transfer-embedding}.

(i)\ Since $A$ is amenable relative to $P$ inside $M$, we get that $\al(A)$ is amenable relative to $M \ot 1$ inside $M \ovt L(\Gamma)$. Since $A \not\prec_M P$, we have that $\al(A) \not\prec_{M \ovt L(\Gamma)} M \ot 1$. So by \cite[Theorem 1.4]{PV12}, we find that $\al(\cN_{pMp}(A)\dpr)$ is amenable relative to $M \ot 1$ inside $M \ovt L(\Gamma)$. This in turn implies that $\cN_{pMp}(A)\dpr$ is amenable relative to $P$ inside $M$.

(ii)\ We write $q = \al(p)$ and consider $\al(A) \subset q (M \ovt L(\Gamma)) q$. We write $w_n = \al(v_n)$. Then $w_n \in q(M \ovt L(\Gamma))q$ is a sequence of unitaries satisfying $\|E_{M \ot 1}(x w_n y)\|_2 \to 0$ for all $x,y \in M \ovt L(\Gamma)$ and $\|\al(a) w_n - w_n \al(a)\|_2 \to 0$ for all $a \in A$. We can repeat the proof of \cite[Section 3.4, case 1]{PV12} and obtain that $\al(A)$ is amenable relative to $M \ot 1$ inside $M \ovt L(\Gamma)$. It follows that $A$ is amenable relative to $P$ inside $M$.

(iii)\ If $A' \cap pMp \not\prec_M P$, we can choose a sequence of unitaries $v_n \in A' \cap pMp$ such that $\|E_P(x v_n y)\|_2 \to 0$ for all $x,y \in M$. By (ii), we get that $A$ is amenable relative to $P$.
\end{proof}

\begin{corollary}\label{cor.relative-omega-solidity}
Consider the same hypotheses and notations as in Theorem \ref{thm.relative-omega-solidity}. Let $(\cM,\tau)$ be any tracial von Neumann algebra and $M \subset \cM$ an embedding. Let $q \in \cM$ be a projection and $A,B \subset q\cM q$ two commuting von Neumann subalgebras.

If $A$ is strongly nonamenable relative to $P$ inside $\cM$ and $B \not\prec_\cM P$, then $A \vee B \not\prec_\cM M$.
\end{corollary}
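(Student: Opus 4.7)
The plan is to argue by contradiction. Suppose $A \vee B \prec_\cM M$ and choose, as recalled in Section~\ref{sec.intertwining}, a partial isometry $V$, a projection $q' \in M_k(\C) \ot M$, and a unital normal $*$-homomorphism $\theta : A \vee B \to q'(M_k(\C) \ot M)q'$ satisfying $x V = V \theta(x)$ for all $x \in A \vee B$, and chosen so that the support of $(\id \ot E_M)(V^*V)$ equals $q'$. The decisive point is that since $A$ and $B$ commute, so do $\theta(A)$ and $\theta(B)$, whence
\[
\theta(B) \;\subset\; \theta(A)' \cap q'(M_k(\C) \ot M)q'.
\]

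Next I would show $\theta(B) \not\prec_M P$. Indeed, restricting $\theta$ to $B$ yields a witness of $B \prec_\cM M$ with the same intertwiner $V$; if moreover $\theta(B) \prec_M P$, then Lemma~\ref{lem.intertwine-transitivity.i} applied to the chain $P \subset M \subset \cM$ would give $B \prec_\cM P$, contradicting the hypothesis on $B$. Consequently the larger algebra $\theta(A)' \cap q'(M_k(\C) \ot M)q'$ also does not intertwine into $P$ inside $M$.

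I would then invoke Theorem~\ref{thm.relative-omega-solidity.iii}, in the amplified form that applies inside $M_k(\C) \ot M = (M_k(\C) \ot P) \rtimes_\mu \Gamma$, to conclude that $\theta(A)$ is amenable relative to $P$ inside $M$. On the other hand, the hypothesis that $A$ is strongly nonamenable relative to $P$ inside $\cM$, combined with Lemma~\ref{lem.intertwine-transitivity.ii} applied to $\theta|_A$, forces $\theta(A)$ to be strongly nonamenable relative to $P$ inside $M$. In particular $\theta(A)$ itself is not amenable relative to $P$ inside $M$, contradicting the previous conclusion.

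The only mild technical point is the passage to an amplified form of Theorem~\ref{thm.relative-omega-solidity.iii}, but this is routine: $M_k(\C) \ot M$ retains the crossed product structure with base $M_k(\C) \ot P$, and both relative amenability and intertwining-by-bimodules are compatible with amplifying the base. So the substantive content of the argument is entirely in the commutation of $\theta(A)$ with $\theta(B)$, which funnels the non-intertwining hypothesis on $B$ into the relative commutant of $\theta(A)$, so that relative solidity for the crossed product $P \rtimes_\mu \Gamma$ can be played against the strong nonamenability hypothesis on $A$.
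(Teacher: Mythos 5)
Your proof is correct and takes essentially the same route as the paper: assume $A \vee B \prec_\cM M$, take a normalized intertwiner $\theta$, push the non-intertwining of $B$ into $\theta(B) \not\prec_M P$ via Lemma~\ref{lem.intertwine-transitivity.i}, apply Theorem~\ref{thm.relative-omega-solidity.iii} to get relative amenability of $\theta(A)$, and contradict the strong nonamenability of $A$ via Lemma~\ref{lem.intertwine-transitivity.ii}. You spell out the contrapositive step and the amplification to $M_k(\C)\ot M$ more explicitly, but the argument is the same one the paper gives.
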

\begin{proof}
Assume that $A \vee B \prec_\cM M$. Take a projection $q' \in M_n(\C) \ot M$, a unital normal $*$-homomorphism $\theta : A \vee B \to q'(M_n(\C) \ot M)q'$ and a nonzero partial isometry $V \in q' (\C^n \ot \cM)$ such that $\theta(d) V = V d$ for all $d \in A \vee B$. We may moreover assume that $q'$ equals the support of $(\id \ot E_M)(VV^*)$. Since $B \not\prec_\cM P$, by Lemma \ref{lem.intertwine-transitivity.i}, we have that $\theta(B) \not\prec_M P$. By Theorem \ref{thm.relative-omega-solidity.iii}, this implies that $\theta(A)$ is amenable relative to $P$ inside $M$. By Lemma \ref{lem.intertwine-transitivity.ii}, this contradicts our assumption that $A$ is strongly nonamenable relative to $P$ inside $\cM$.
\end{proof}

\section{Height in cocycle twisted group von Neumann algebras}

Recall from \cite[Section 4]{Ioa10} and \cite[Section 3]{IPV10} the notion of \emph{height} in group von Neumann algebras, which we adapt in the following obvious way to twisted group von Neumann algebras. Given a countable group $\Gamma$, a $2$-cocycle $\mu \in Z^2(\Gamma,\T)$ and a subgroup $\cG < \cU(p(M_n(\C) \ot L_\mu(\Gamma))p)$, we define
$$h_\Gamma(\cG) = \inf_{v \in \cG} \bigl(\max \{ \|(v)_g\|_2 \mid g \in \Gamma\}\bigr) \quad\text{where}\quad (v)_g = (\id \ot \tau)((1 \ot u_g^*) v) \in M_n(\C) \; .$$

The following result is a cocycle adaptation of \cite[Theorem 4.1]{KV15} and \cite[Lemma 2.4]{PV21}.

\begin{theorem}\label{thm.height-1}
Let $\Gamma$ be an icc group and $\mu \in Z^2(\Gamma,\T)$ a $2$-cocycle. Write $M = L_\mu(\Gamma)$ and let $t > 0$. Let $\Lambda \subset \cU(M^t)$ be a subgroup satisfying the following properties:
\begin{enumlist}
\item $h_\Gamma(\Lambda) > 0$~;
\item for every $k \in \Gamma \setminus \{e\}$, we have that $\Lambda\dpr \not\prec_M L_\mu(C_\Gamma(k))$~;
\item the action $(\Ad v)_{v \in \Lambda}$ on $L^2(M^t) \ominus \C 1$ is weakly mixing.
\end{enumlist}
Then $t = 1$ and there exists a unitary $w \in M$ such that $w \Lambda w^* \subset \T \cdot \Gamma = \{ a u_g \mid a \in \T , g \in \Gamma\}$.
\end{theorem}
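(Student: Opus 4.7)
The strategy is the two-step template of \cite[Theorem 3.1]{IPV10} and \cite[Theorem 4.1]{KV15}, extended to cocycle twists: first improve the height from positive to maximal, then convert maximal height into the desired conjugation into $\T \cdot \Gamma$. I would realize $M^t = p(M_n(\C) \ot M)p$ and view each $v \in \Lambda \subset \cU(M^t)$ through its matricial Fourier expansion $v = \sum_{g \in \Gamma} (v)_g (1 \ot u_g)$ with $(v)_g \in M_n(\C)$, together with the Parseval identity $\sum_g \|(v)_g\|_2^2 = (\tr \ot \tau)(v^*v)$ in the normalized Hilbert--Schmidt norm on $M_n(\C)$.

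\textbf{Height improvement.} Let $H = \sup\{\max_g \|(v)_g\|_2 : v \in \Lambda\}$; by hypothesis (i), $H > 0$, and the goal is to prove that $H$ attains its maximal possible value, so that each $v \in \Lambda$ is (in the limit) fully concentrated on a single $(1 \ot u_g)$. Assume $H$ is strictly submaximal and pick $v_0 \in \Lambda$ nearly achieving $H$ at some $g_0 \in \Gamma$. For any $w \in \Lambda$, the Fourier coefficient of $w v_0 w^*$ at a position $k \in \Gamma$ is a double sum of products $\mu(\cdot,\cdot)\overline{\mu}(\cdot,\cdot) (w)_h (v_0)_{g} (w^*)_{h'}$ subject to the group constraint $h g h' = k$. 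The weak mixing of $(\Ad v)_{v \in \Lambda}$ on $L^2(M^t) \ominus \C 1$ from (iii) lets me choose an averaging net $w_n \in \Lambda$ so that all off-diagonal cross-terms (those where $w_n$ does not fix the $g_0$-slot) vanish in $\|\cdot\|_2$. The non-intertwining hypothesis (ii) rules out the degenerate diagonal regime in which $\Ad w_n$ would asymptotically fix $u_{g_0}$: such asymptotic centrality, up to the cocycle, is precisely an intertwining of $\Lambda''$ into the cocycle-twisted centralizer $L_\mu(C_\Gamma(g_0))$. The surviving nondegenerate contribution then produces $v_1 \in \Lambda$ with some Fourier coefficient strictly exceeding $H - \eps$ in norm; iterating this contradicts the definition of $H$.

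\textbf{Maximal height to conjugation.} Once $H$ is maximal, pick $v_n \in \Lambda$ and $g_n \in \Gamma$ with $\|(v_n)_{g_n}\|_2$ converging to the maximum. Parseval forces $v_n - (v_n)_{g_n}(1 \ot u_{g_n}) \to 0$ in $\|\cdot\|_2$. Since $v_n \in \cU(M^t)$ and $1 \ot u_{g_n} \in \cU(M_n(\C) \ot M)$, the matrix $(v_n)_{g_n} \in M_n(\C)$ must be, asymptotically, a partial isometry with left support $p$ and right support conjugate to $p$ via $1 \ot u_{g_n}$. Passing to a limit and exploiting the group structure of $\Lambda$ together with weak mixing (iii), I would produce a unitary $w \in p(M_n(\C) \ot M)$ such that $w^* \Lambda w \subset \{(a \ot 1)(1 \ot u_g) : a \in \T, g \in \Gamma\}$. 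Since the right-hand side consists of unitaries of $M_n(\C) \ot M$ whose components in $M_n(\C)$ are scalars, the compression by $p$ is only a unitary set when $p$ is equivalent to a minimal rank-one projection, forcing $(\tr \ot \tau)(p) = 1/n$ and hence $t = 1$. The conjugating $w$ can then be reinterpreted as a unitary in $M$, giving $w \Lambda w^* \subset \T \cdot \Gamma$.

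The main obstacle is the height improvement step: the cocycle factors $\mu(h, g_0)\,\overline{\mu}(hg_0, h^{-1})$ appearing in the Fourier expansion of $w v_0 w^*$ must be tracked carefully, and the ``asymptotically commuting with $u_{g_0}$'' phenomenon must be reinterpreted as an intertwining of $\Lambda''$ into the \emph{twisted} centralizer $L_\mu(C_\Gamma(g_0))$ for hypothesis (ii) to apply. Verifying that the cocycle phases reassemble into exactly this twisted centralizer cocycle is the heart of the argument and parallels \cite[Lemma 2.4]{PV21} up to this bookkeeping.
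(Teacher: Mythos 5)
Your proposal takes a genuinely different route from the paper, and the difference is substantial. You set out to redo the Ioana--Popa--Vaes and Krogager--Vaes height machinery from scratch inside the twisted algebra, tracking the cocycle phases throughout. The paper instead untwists the problem at the outset: it sends $v \mapsto v \ot \overline{v} \in M \ovt M\op$ and introduces the comultiplication $\Delta : L(\Gamma) \to L_\mu(\Gamma) \ovt L_\mu(\Gamma)\op$, $\Delta(u_g) = u_g \ot \overline{u_g}$. Since $\overline{u_g}\,\overline{u_h} = \overline{\mu}(g,h)\,\overline{u_{gh}}$, the $2$-cocycle of $u_g \ot \overline{u_g}$ is $\mu\,\overline{\mu} = 1$, so $\Delta$ genuinely embeds the \emph{untwisted} $L(\Gamma)$ into $M \ovt M\op$. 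The positive-height hypothesis then directly yields the intertwining $\{v \ot \overline{v} : v \in \Lambda\}\dpr \prec_{M \ovt M\op} \Delta(L(\Gamma))$ via the lower bound $\|E_P(v \ot \overline{v})\|_2^2 = \sum_g \|(v)_g\|_2^4 \geq h_\Gamma(\Lambda)^4 > 0$, where $P = M_k(\C) \ot M_k(\C) \ot \Delta(L(\Gamma))$; the paper then invokes the untwisted result \cite[Lemma 2.4]{PV21} as a black box and undoes the doubling with a short projective-representation argument, using (ii) and weak mixing to make the final intertwiner a one-dimensional unitary. The untwisting $v \mapsto v \ot \overline{v}$ is the key idea your proposal is missing, and it eliminates precisely the cocycle bookkeeping that you yourself identify as the hard part of your route.

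Separately, your height-improvement step does not work as described. First, the quantity that must be improved is $h_\Gamma(\Lambda) = \inf_{v\in\Lambda}\max_g\|(v)_g\|_2$, not your $H = \sup_{v\in\Lambda}\max_g\|(v)_g\|_2$: even if $H$ is maximal, most elements of $\Lambda$ could still have spread-out Fourier expansions. Second, your scheme conjugates $v_0$ by $w \in \Lambda$ hoping to push a Fourier coefficient of $wv_0w^* \in \Lambda$ past $H$, which is impossible by definition of $H$, while pushing it past $H-\eps$ gives nothing since $v_0$ already does that. The argument of \cite[Theorem 3.1]{IPV10} improves $h_\Gamma$ by conjugating the \emph{whole group} $\Lambda$ by a unitary lying outside $\Lambda$, constructed from the Fourier tail of $v_0$; this external unitary is essential. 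You would also need an extra step for $t=1$: for a unitary $v$ in $p(M_n(\C)\ot M)p$, Parseval gives $\max_g\|(v)_g\|_2 \leq \sqrt{(\Tr\ot\tau)(p)} = \sqrt{t}$, so the ceiling on height already depends on $t$, and extracting $t=1$ requires the further observation that the near-intertwiner built from height-concentrated elements becomes a partial isometry from $1$ to $p$, forcing $\tau(p)=1$. All of this can probably be pushed through with care, but the doubling reduction renders it unnecessary.
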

\begin{proof}
Define the embedding $\Delta : L(\Gamma) \to L_\mu(\Gamma) \ovt L_\mu(\Gamma)\op : \Delta(u_g) = u_g \ot \overline{u_g}$ for all $g \in \Gamma$. We claim that
\begin{equation}\label{eq.claim-embed}
\{v \ot \overline{v} \mid v \in \Lambda\}\dpr \prec_{M \ovt M\op} \Delta(L(\Gamma)) \; .
\end{equation}
To prove \eqref{eq.claim-embed}, we concretely realize $M^t = p(M_k(\C) \ot L_\mu(\Gamma))p$. We decompose any $v \in M_k(\C) \ot L_\mu(\Gamma)$ as $v = \sum_{g \in \Gamma} (v)_g \ot u_g$. Since $h_\Gamma(\Lambda)>0$, we can take $\delta > 0$ such that
$$\max \{ \|(v)_g\|_2 \mid g \in \Gamma\} \geq \delta \quad\text{for every $v \in \Lambda$.}$$
We identify $(M_k(\C) \ot L_\mu(\Gamma)) \ovt (M_k(\C) \ot L_\mu(\Gamma))\op = M_k(\C) \ovt M_k(\C) \ovt L_\mu(\Gamma) \ovt L_\mu(\Gamma)\op$ in the obvious way. We write $P = M_k(\C) \ot M_k(\C) \ot \Delta(L(\Gamma))$. Then for every $v \in \Lambda$,
$$E_P(v \ot \overline{v}) = \sum_{g \in \Gamma} (v)_g \ot \overline{(v)_g} \ot \Delta(u_g)$$
and thus
$$\|E_P(v \ot \overline{v})\|_2^2 = \sum_{g \in \Gamma} \|(v)_g\|_2^4 \geq \delta^4 \quad\text{for all $v \in \Lambda$.}$$
So, \eqref{eq.claim-embed} is proven.

By \eqref{eq.claim-embed}, we can take a projection $q \in M_n(\C) \ot L(\Gamma)$, a group homomorphism $\theta : \Lambda \to \cU(q(M_n(\C) \ot L(\Gamma))q)$ and a nonzero
$$X \in (p \ot p\op)(M_{k,n}(\C) \ovt L_\mu(\Gamma) \ovt \C^k \ovt L_\mu(\Gamma)\op)(\id \ot \Delta)(q)$$
such that $(v \ot \overline{v}) X = X (\id \ot \Delta)(\theta(v))$ for all $v \in \Lambda$.

For $k \in \Gamma \setminus \{e\}$, we have $\Lambda\dpr \not\prec_M L_\mu(C_\Gamma(k))$. We may thus choose $\theta$ such that $\theta(\Lambda)\dpr \not\prec_{L(\Gamma)} L(C_\Gamma(k))$ for all $k \in \Gamma \setminus \{e\}$. It then follows from a twisted version of \cite[Proposition 7.2.3]{IPV10} (cf.\ Proposition \ref{prop.properties-triple-comult.i} below) that the relative commutant of $\{(\id \ot \Delta)(\theta(v)) \mid v \in \Lambda\}$ is contained in the image of $\id \ot \Delta$. We may thus assume that $X^* X = (\id \ot \Delta)(q)$.

By weak mixing of $(\Ad v)_{v \in \Lambda}$ on $L^2(M^t) \ominus \C 1$, it then follows that $X X^* = p \ot p\op$. So we can view $X$ as a unitary conjugacy between $v \ot \overline{v}$ and $(\id \ot \Delta)(\theta(v))$ for all $v \in \Lambda$. In particular, the action $(\Ad \theta(v))_{v \in \Lambda}$ on $qL^2(M_n(\C) \ot L(\Gamma))q \ominus \C q$ is weakly mixing. Since $h_\Gamma(\Lambda) > 0$, also $h_\Gamma(\theta(\Lambda)) > 0$, where $h_\Gamma(\theta(\Lambda))$ denotes the height of $\theta(\Lambda)$ in the untwisted amplified group von Neumann algebra $q(M_n(\C) \ot L(\Gamma))q$.

We can thus apply \cite[Lemma 2.4]{PV21} and conclude that $(\Tr \ot \tau)(q) = 1$ and that $\theta(\Lambda)$ can be unitarily conjugated into $\T \cdot \Gamma$. So $t = 1$ and we find a group homomorphism $\vphi : \Lambda \to \Gamma$, a character $\eta : \Lambda \to \T$ and a unitary $X \in L_\mu(\Gamma) \ovt L_\mu(\Gamma)\op$ such that
$$(v \ot \overline{v}) X = \eta(v) X (u_{\vphi(v)} \ot \overline{u_{\vphi(v)}}) \quad\text{for all $v \in \Lambda$.}$$
It follows that the projective representation
$$\gamma : \Lambda \to \cU(L^2(L_\mu(\Gamma))) : \gamma(v) (\xi) = v \xi u_{\vphi(v)}^*$$
is not weakly mixing. We can thus find an irreducible projective representation $\pi : \Lambda \to \cU(m)$ and a nonzero $Y \in \C^m \ot L_\mu(\Gamma)$ such that
$$(\pi(v) \ot v) Y = Y u_{\vphi(v)} \quad\text{for all $v \in \Lambda$.}$$
Since $\Lambda\dpr \not\prec_M L_\mu(C_\Gamma(k))$ when $k \in \Gamma \setminus \{e\}$, the subgroup $\vphi(\Lambda) < \Gamma$ is relatively icc. So, $Y^* Y$ is a multiple of $1$ and we may assume that $Y^*Y = 1$. By weak mixing of $(\Ad v)_{v \in \Lambda}$ on $L^2(M) \ominus \C 1$ and irreducibility of $\pi$, also $YY^* = 1$. So $m=1$ and we have unitarily conjugated $\Lambda$ into $\T \cdot \Gamma$.
\end{proof}

We also need a cocycle variant of \cite[Theorem 2.3]{PV21}, which we moreover need to adapt to a product of multiple groups. As in \cite[Definition 2.1]{PV21}, given a countable group $G$ and a $2$-cocycle $\mu \in Z^2(G,\T)$, we say that a group homomorphism $\theta : \Lambda \to \cU(p(M_n(\C) \ot L_\mu(G))p)$ from a countable group $\Lambda$ is \emph{standard} if there exist
\begin{itemlist}
\item finitely many finite index subgroups $\Lambda_i < \Lambda$,
\item group homomorphisms $\delta_i : \Lambda_i \to G$,
\item finite dimensional projective representations $\pi_i : \Lambda_i \to \cU(n_i)$ with $2$-cocycle $\om_{\pi_i} = \overline{\mu} \circ \delta_i$,
\end{itemlist}
with corresponding $\theta_i : \Lambda_i \to \cU(M_{n_i}(\C) \ot L_\mu(G)) : \theta_i(g) = \pi_i(g) \ot u_{\delta_i(g)}$, such that $\theta$ is unitarily conjugate to the direct sum of the inductions of $\theta_i$ to $\Lambda$. In particular, $(\Tr \ot \tau)(p) = \sum_i n_i [\Lambda:\Lambda_i]$.

The following is an immediate consequence of Theorem \ref{thm.height-1}. The proof of this corollary is identical to the second part of the proof of \cite[Theorem 2.3]{PV21} and thus omitted.

\begin{corollary}\label{cor.height-no-weak-mixing}
Let $\Gamma$ be an icc group and $\mu \in Z^2(\Gamma,\T)$ a $2$-cocycle. Write $M = L_\mu(\Gamma)$ and let $t > 0$. Let $\Lambda$ be a group and $\theta : \Lambda \to \cU(M^t)$ a group homomorphism. Define $A_0 \subset M^t$ as the set of all elements $a \in M^t$ such that $\lspan \{\theta(v) a \theta(v)^* \mid v \in \Lambda\}$ is finite dimensional.

Then $A_0 \subset M^t$ is a $*$-subalgebra and we denote by $A$ its weak closure. Assume that $A$ is atomic and assume that the following holds:
\begin{enumlist}
\item $h_\Gamma(\theta(\Lambda)p) > 0$ for every nonzero projection $p \in \theta(\Lambda)' \cap M^t$~;
\item for every $k \in \Gamma \setminus \{e\}$, we have that $\theta(\Lambda)\dpr \not\prec_M L_\mu(C_\Gamma(k))$.
\end{enumlist}
Then $\theta : \Lambda \to \cU(L_\mu(\Gamma)^t)$ is standard.
\end{corollary}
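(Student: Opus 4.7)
The plan is to exploit the atomic structure of $A$ to decompose $\theta$ into standard pieces and invoke Theorem \ref{thm.height-1} on each. Since $A_0$ is $\theta(\Lambda)$-invariant by construction, so is $A$, and hence its atomic center. Each minimal projection $z \in Z(A)$ lies in $A_0$, so its $\theta(\Lambda)$-orbit is finite. Choose orbit representatives $\{z_i\}_{i \in I_0}$ among the minimal central projections of $A$, and let $\Lambda_i = \Stab_\Lambda(z_i)$, which is a finite index subgroup. Each compression $A z_i$ is a finite type I factor $M_{n_i}(\C)$ on which $\Lambda_i$ acts by inner automorphisms, providing a projective representation $\pi_i : \Lambda_i \to \cU(Az_i)$, unique up to scalars. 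Writing $z_i M^t z_i = A z_i \ovt B_i$ with $B_i = (Az_i)' \cap z_i M^t z_i$, we set $\tilde\theta_i(g) = \pi_i(g)^* \theta(g) z_i \in \cU(B_i)$, obtaining the factorization $\theta(g) z_i = \pi_i(g) \tilde\theta_i(g)$ in which $\tilde\theta_i$ is a projective representation of $\Lambda_i$ with $2$-cocycle $\overline{\om_{\pi_i}}$.

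Viewing $B_i$ as an amplification $M^{s_i}$, we then apply Theorem \ref{thm.height-1} to the subgroup $\tilde\Lambda_i = \T \cdot \tilde\theta_i(\Lambda_i) \subset \cU(B_i)$. The three hypotheses transfer as follows. Weak mixing of $(\Ad v)_{v \in \tilde\Lambda_i}$ on $L^2(B_i) \ominus \C z_i$ follows from the maximality of $A$: a finite-dimensional $\Lambda_i$-invariant subspace $V \subset B_i$ would induce a finite-dimensional $\Lambda$-invariant subspace of $\tilde z \, M^t \tilde z$, where $\tilde z = \sum_{g \in \Lambda/\Lambda_i} \theta(g) z_i \theta(g)^*$, forcing its elements into $A_0 \cap B_i \subset Az_i \cap B_i = \C z_i$. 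The height condition $h_\Gamma(\tilde\Lambda_i) > 0$ is inherited from hypothesis (i) applied at $\tilde z$ via the tensor decomposition $z_i M^t z_i = Az_i \ovt B_i$, since $\theta(g) z_i$ factors as $\pi_i(g) \ot \tilde\theta_i(g)$. Finally, $\tilde\Lambda_i\dpr \not\prec_M L_\mu(C_\Gamma(k))$ for $k \neq e$ is inherited from hypothesis (ii), as tensoring with the finite-dimensional algebra $Az_i$ does not affect intertwining into $L_\mu(C_\Gamma(k))$. Theorem \ref{thm.height-1} then yields $s_i = 1$ together with a unitary $w_i \in B_i$ such that $w_i \tilde\theta_i(g) w_i^* = \eta_i(g) u_{\delta_i(g)}$ for a group homomorphism $\delta_i : \Lambda_i \to \Gamma$ and a character $\eta_i : \Lambda_i \to \T$; matching $2$-cocycles and absorbing $\eta_i$ into $\pi_i$ yields $\om_{\pi_i} = \overline{\mu} \circ \delta_i$.

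The projections $\{\theta(g) z_i \theta(g)^*\}_{g \in \Lambda/\Lambda_i}$ are pairwise orthogonal with sum $\tilde z$, and the restriction of $\theta$ to $\tilde z M^t \tilde z$ is, up to unitary conjugacy, the induction to $\Lambda$ of the standard homomorphism $\theta_i(g) = \pi_i(g) \ot u_{\delta_i(g)}$ defined on $\Lambda_i$. Summing over orbit representatives $i \in I_0$ shows that $\theta$ is standard. The main obstacle will be the transfer of the three hypotheses of Theorem \ref{thm.height-1} from $\theta$ to $\tilde\theta_i$ on $B_i$, the most delicate being weak mixing via the maximality argument for $A$, combined with careful bookkeeping of the height hypothesis through the tensor splitting $z_i M^t z_i = Az_i \ovt B_i$ so that the finite-dimensional twist $\pi_i$ does not absorb the positive lower bound on the Fourier coefficients.
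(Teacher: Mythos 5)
Your proposal follows the route that the paper itself indicates (it defers to the second part of the proof of PV21 Theorem~2.3), and the overall structure is right: decompose via the minimal central projections $z_i$ of the atomic almost-periodic algebra $A$, pass to the finite-index stabilizers $\Lambda_i$, tensor-split $z_iM^tz_i = Az_i\ovt B_i$, factor $\theta(g)z_i = \pi_i(g)\tilde\theta_i(g)$, apply Theorem~\ref{thm.height-1} to each $\tilde\theta_i$, and recover the standard form by induction from $\Lambda_i$ to $\Lambda$. The weak mixing verification and the cocycle bookkeeping (absorbing the character $\eta_i$ into $\pi_i$ so that $\om_{\pi_i}=\overline{\mu}\circ\delta_i$ on the nose, matching the definition of a standard homomorphism) are correct.

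The one point where your argument, as written, falls short of a proof is the height transfer. Hypothesis~(i) yields $h_\Gamma(\theta(\Lambda)\tilde z)>0$, but $z_i$ lies only in $\theta(\Lambda_i)'\cap M^t$, not in $\theta(\Lambda)'\cap M^t$, so the hypothesis does not apply directly at $z_i$. In a realization where the $e_j=\theta(g_j)z_i\theta(g_j)^*$ appear as diagonal blocks, one has $\|(\theta(g)\tilde z)_\gamma\|_2^2 = \sum_j\|(\theta(g)e_j)_\gamma\|_2^2$ for $g\in\Lambda_i$, and positivity of the sum does not give positivity of the $j=1$ summand: the other summands involve $\tilde\theta_i(g_j^{-1}gg_j)$ and, after a further twist, conjugates of $\tilde\theta_i$ by the fixed unitaries that implement $\Ad\theta(g_j)$. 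To close the gap one must observe that conjugation by a \emph{fixed} unitary $W$ preserves the property $\max_\gamma\|(x_n)_\gamma\|_2\to 0$ for a bounded sequence $x_n$ (approximate $W$ by a finite Fourier sum $W_F$ and use $\max_\gamma\|(Y)_\gamma\|_2\le\|Y\|_2$ to control the tail). With that lemma in hand, a hypothetical sequence $g_n\in\Lambda_i$ with $h_\Gamma(\tilde\theta_i(g_n))\to 0$ forces $h_\Gamma(\theta(g_n)e_j)\to 0$ for \emph{every} $j$ (after passing to a normal finite-index $\Lambda_i$), hence $h_\Gamma(\theta(g_n)\tilde z)\to 0$, contradicting hypothesis~(i). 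You flag this as the delicate step but only assert that the height ``is inherited from hypothesis~(i) applied at $\tilde z$''; the missing ingredient is precisely this conjugation-stability of the height, and without it the transfer does not follow from the factorization $\theta(g)z_i=\pi_i(g)\ot\tilde\theta_i(g)$ alone. A similar but easier argument (finite-index stability of $\prec$ plus the fact that the kernel $\Lambda_i^0=\ker(\Ad\pi_i)$ is normal of finite index when $\pi_i(\Lambda_i)\T$ is finite, reducing otherwise) handles the transfer of hypothesis~(ii).
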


To apply Theorem \ref{thm.height-1}, we need to know that $(\Ad v)_{v \in \Lambda}$ is weakly mixing on $L^2(M^t) \ominus \C 1$. To apply Corollary \ref{cor.height-no-weak-mixing}, we still need to control the absence of weak mixing of $(\Ad \theta(v))_{v \in \Lambda}$ in a precise way, because we need to prove that $A$ is atomic. In the case where $\Gamma$ is a product of weakly amenable, biexact groups, the latter is often automatic thanks to the following lemma that we will use later. The proof of this lemma is very similar to the first part of the proof of \cite[Theorem 2.3]{PV21}.

\begin{lemma}\label{lem.automatic-atomic}
Let $\Gamma_1,\ldots,\Gamma_k$ be weakly amenable, biexact groups, put $G = \Gamma_1 \times \cdots \times \Gamma_k$, and let $\mu \in Z^2(G,\T)$. Let $p \in M_n(\C) \ot L_\mu(G)$ be a projection and $\Lambda \subset \cU(p(M_n(\C) \ot L_\mu(G))p)$ a subgroup.

Define $A_0$ as the set of elements $a \in p(M_n(\C) \ot L_\mu(G))p$ such that $\lspan \{v a v^* \mid v \in \Lambda\}$ is finite dimensional. Denote by $A$ the weak closure of the $*$-algebra $A_0$.

Write $G_i = \Gamma_1 \times \cdots \times \Gamma_{i-1} \times \{e\} \times \Gamma_{i+1} \times \cdots \times \Gamma_k$ and assume that $\Lambda\dpr$ is strongly nonamenable relative to $L_\mu(G_i)$ for all $i \in \{1,\ldots,k\}$.

Then $A$ is atomic.
\end{lemma}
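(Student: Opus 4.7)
The plan is to follow the strategy of the first part of the proof of [PV21, Theorem 2.3]. The idea is to leverage the product decomposition $G = \Gamma_1 \times \cdots \times \Gamma_k$ by applying the relative Ozawa-type solidity of Theorem \ref{thm.relative-omega-solidity} with respect to each factor $\Gamma_i$ in turn, pinning down the relative commutant $\Lambda' \cap pMp$, and then exploiting the compactness of the $\Lambda$-conjugation action on $A$ to derive a contradiction with the strong nonamenability hypothesis.

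First I reduce to the case where $A$ is diffuse. Suppose for contradiction that $A$ is not atomic, and let $z \in \cZ(A)$ be the maximal projection with $Az$ diffuse. Since $z$ is uniquely determined, it is fixed by every automorphism of $A$; in particular $vzv^* = z$ for each $v \in \Lambda$, so $z \in \Lambda' \cap pMp$. As strong nonamenability relative to $L_\mu(G_i)$ is preserved by cutting by projections in $\Lambda' \cap pMp$, I may replace $p$ by $pz$ and assume $A$ is diffuse.

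Next I establish that $\Lambda' \cap pMp$ is atomic. For each $i \in \{1,\ldots,k\}$, realize $L_\mu(G) = L_{\mu|_{G_i}}(G_i) \rtimes^{\alpha_i}_{\mu|_{\Gamma_i}} \Gamma_i$ as a twisted cocycle crossed product by the weakly amenable, biexact group $\Gamma_i$, which places us in the hypotheses of Theorem \ref{thm.relative-omega-solidity} with $P = L_{\mu|_{G_i}}(G_i)$. The contrapositive of Theorem \ref{thm.relative-omega-solidity.iii}, combined with the strong nonamenability of $\Lambda''$ relative to $L_\mu(G_i)$, yields $p_1 (\Lambda' \cap pMp) p_1 \prec_M L_\mu(G_i)$ for every nonzero projection $p_1 \in \Lambda' \cap pMp$, i.e.\ $\Lambda' \cap pMp \prec^f_M L_\mu(G_i)$. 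Since each $L_\mu(G_i)$ is regular in $L_\mu(G)$ (it is normalized by the canonical unitaries $u_g$ for $g \in \prod_{j \neq i}\Gamma_j$) and since the $L_\mu(G_i)$ pairwise form nondegenerate commuting squares with intersection $L_\mu(G_i \cap G_j)$, Lemma \ref{lem.intertwining-intersection.ii} applies, and iterating over $i = 1, \ldots, k$ yields $\Lambda' \cap pMp \prec^f_M \bigcap_i L_\mu(G_i) = L_\mu(\{e\}) = \C 1$. Therefore $\Lambda' \cap pMp$ is atomic.

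For the final contradiction, I combine atomicity of $\Lambda' \cap pMp$ with the compactness of the $\Lambda$-action on $A$. Since every $a \in A_0$ has finite-dimensional $\Lambda$-orbit span under conjugation, the map $\Lambda \to \Aut(A, \tau)$ has precompact image; letting $K$ be the closure, the fixed-point algebra is $A^K = A \cap \Lambda' \cap pMp$, which is atomic by the previous step, whereas $A$ is diffuse. Arguing as in the first part of the proof of [PV21, Theorem 2.3], one exploits this compact extension structure together with the inclusion $A \subset p(M_n(\C) \ot L_\mu(G))p$ to produce a sequence of unitaries $v_n \in \cU(pMp)$ that asymptotically commutes with $\Lambda$ and satisfies $\|E_{L_\mu(G_i)}(x v_n y)\|_2 \to 0$ for some $i$. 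Applying Theorem \ref{thm.relative-omega-solidity.ii} with the role of $A$ played by $\Lambda''$ then forces $\Lambda''$ to be amenable relative to $L_\mu(G_i)$, contradicting the hypothesis. The main obstacle lies in this last construction: manufacturing the asymptotically $\Lambda$-central sequence that escapes some $L_\mu(G_i)$ from the delicate interplay between the compact $\Lambda$-action on the diffuse $A$, the atomicity of $A^K$, and the product structure of $G$.
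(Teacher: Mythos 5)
Your first two steps are correct but lead you into a dead end, and the third step—which you yourself flag as the "main obstacle"—has a genuine gap, not just a detail to be filled.

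The reduction to $A$ diffuse and the observation that $\Lambda' \cap pMp$ is atomic are both sound (for the latter, the chain of reductions through Theorem~\ref{thm.relative-omega-solidity.iii} and Lemma~\ref{lem.intertwining-intersection.ii} is fine, and regularity of each $L_\mu(G_i)$ holds since $G_i$ is normal). But the atomicity of $\Lambda'\cap pMp$ works \emph{against} you: it says there are no nontrivial diffuse pieces in the centralizer of $\Lambda$, which is precisely where you would otherwise hope to find unitaries $v_n$ asymptotically commuting with $\Lambda$. After the reduction, you have a compact $\Lambda$-action on a diffuse $A$ whose fixed-point algebra $A^K$ is atomic; this configuration is entirely consistent (e.g.\ an ergodic compact action on $L^\infty(\{0,1\}^\N)$) and does not by itself force a contradiction or produce the sequence you need. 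The inclusion $A\subset pMp$ and the product structure of $G$ do not obviously manufacture a sequence in $\cU(pMp)$ that asymptotically commutes with $\Lambda''$ \emph{and} escapes some $P_i=L_\mu(G_i)$.

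The paper's argument runs the sequence construction in the opposite direction, and this is the idea you are missing. Fix $i$ and suppose $q\in A'\cap M^t$ is the largest projection with $Aq\not\prec_M P_i$; assume $q\neq 0$. Since $\Lambda''$ is strongly nonamenable relative to $P_i$, in particular $\Lambda''\not\prec P_i$, so one can pick a sequence $v_n\in\Lambda$ with $\|E_{P_i}(x^*v_ny)\|_2\to 0$. Here is the crucial use of compactness: after passing to a subsequence, the automorphisms $\Ad v_n|_A$ converge pointwise in $\|\cdot\|_2$ to some trace-preserving $\alpha\in\Aut A$. A diagonal extraction then yields $w_s=v_{n_s}v_{n_{s-1}}^*\in\Lambda$ satisfying both $\|E_{P_i}(x^*w_sy)\|_2\to 0$ and $\|w_sa-aw_s\|_2\to 0$ for all $a\in A$. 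Now Theorem~\ref{thm.relative-omega-solidity.ii} is applied with the role of "$A$" played by $A$ itself (not $\Lambda''$) and the unitaries $w_s\in\Lambda\subset\cU(pMp)$, concluding that $A$, hence $Aq$, is amenable relative to $P_i$. Since $Aq\not\prec P_i$ and $\Lambda q$ normalizes $Aq$, Theorem~\ref{thm.relative-omega-solidity.i} then gives $\Lambda''q$ amenable relative to $P_i$, contradicting strong nonamenability. So the sequence comes from $\Lambda$ and asymptotically commutes with $A$ (via compactness plus diagonalization), rather than lying in $\cU(pMp)$ and asymptotically commuting with $\Lambda$; and one applies Theorem~\ref{thm.relative-omega-solidity.ii} to $A$, then upgrades to $\Lambda''$ via the normalizer statement Theorem~\ref{thm.relative-omega-solidity.i}. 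This closes the step where your proposal stalls.
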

\begin{proof}
We write $M = L_\mu(G)$ and denote $p(M_n(\C) \ot L_\mu(G))p$ as $M^t$. Note that by definition, $\Lambda \subset \cN_{M^t}(A)$ and that the action $(\Ad v)_{v \in \Lambda}$ on $A$ is compact.

Denote $G_i$ as in the theorem, denote by $\om_i$ the restriction of $\mu$ to $G_i$ and write $P_i = L_{\om_i}(G_i)$. To prove that $A$ is purely atomic, it suffices to prove that $A \prec^f_M \C 1$. So by Lemma \ref{lem.intertwining-intersection.ii}, it suffices to prove that $A \prec^f_M P_i$ for every $i \in \{1,\ldots,k\}$. Denoting by $\mu_i$ the restriction of $\mu$ to $\Gamma_i$, we find an action $\Gamma_i \actson P_i$ so that $M = P_i \rtimes_{\mu_i} \Gamma_i$. We will apply Theorem \ref{thm.relative-omega-solidity} in this context.

Let $q \in A' \cap M^t$ be the largest projection such that $Aq \not\prec_M P_i$. It suffices to prove $q=0$. Assume that $q \neq 0$. Since $q$ belongs to the center of the normalizer of $A$ inside $M^t$, we have that $q$ commutes with $\Lambda$.

Since $\Lambda\dpr \not\prec P_i$, we can take a sequence $v_n \in \Lambda$ such that $\|E_{P_i}(x^* v_n y)\|_2 \to 0$ for all $x,y \in \C^n \ot M$. Since the action $(\Ad v)_{v \in \Lambda}$ on $A$ is compact and trace preserving, after passage to a subsequence, we may assume that $\|v_n a v_n^* - \al(a)\|_2 \to 0$ for all $a \in A$, where $\al \in \Aut A$ is a trace preserving automorphism.

Choose an increasing sequence $\cF_s \subset \C^n \ot M$ of finite subsets such that $\bigcup_s \cF_s$ is $\|\cdot\|_2$-dense in $\C^n \ot M$. Also choose an increasing sequence $\cA_s \subset A$ of finite subsets such that $\bigcup_s \cA_s$ is $\|\cdot\|_2$-dense in $A$. Take $n_1$ such that $\|E_{P_i}(x^* v_{n_1} y)\|_2 < 1/2$ for all $x,y \in \cF_1$ and $\|v_{n_1} a v_{n_1}^* - \al(a)\|_2 < 1/2$ for all $a \in \cA_1$. Having chosen $n_1 < n_2 < \cdots < n_{s-1}$, choose $n_s > n_{s-1}$ such that $\|E_{P_i}(x^* \, v_{n_s} \, v_{n_{s-1}}^* y)\|_2 < 2^{-s}$ for all $x,y \in \cF_s$ and $\|v_{n_s} a v_{n_s}^* - \al(a)\|_2 < 2^{-s}$ for all $a \in \cA_s$. Defining $w_s = v_{n_s} v_{n_{s-1}}^*$, we have found a sequence $(w_s)_s$ in $\Lambda$ such that
$$\|E_{P_i}(x^* w_s y)\| \to 0 \quad\text{for all $x,y \in \C^n \ot M$, and}\quad \|w_s a - a w_s\|_2 \to 0 \quad\text{for all $a \in A$.}$$
By Theorem \ref{thm.relative-omega-solidity.ii}, $A$ is amenable relative to $P_i$ inside $M$. In particular, $Aq$ is amenable relative to $P_i$ inside $M$. Since $Aq \not\prec_M P_i$ and since the normalizer of $Aq$ contains $\Lambda q$, it follows from Theorem \ref{thm.relative-omega-solidity.i} that $\Lambda\dpr q$ is amenable relative to $P_i$. This contradicts the assumptions of the lemma.

So $q=0$ and the lemma is proven.
\end{proof}

\section{Coarse embeddings of Bernoulli like crossed products}

\subsection{Coarse embeddings into tensor products}\label{sec.coarse-embedding}

Recall that for tracial von Neumann algebras $P$ and $Q$, a $P$-$Q$-bimodule $\bim{P}{\cH}{Q}$ is called \emph{coarse} if it is isomorphic to a $P$-$Q$-subbimodule of a direct sum of copies of $\bim{(P \ot 1)}{L^2(P \ovt Q)}{(1 \ot Q)}$.

\begin{definition}\label{def.coarse-embedding}
Let $M$, $M_1,\ldots,M_k$ be tracial von Neumann algebras and take a projection $p \in M_n(\C) \ovt M_1 \ovt \cdots \ovt M_k$. Motivated by \cite[Definition 5.1]{BV22}, we say that an embedding
$$\psi : M \to p (M_n(\C) \ovt M_1 \ovt \cdots \ovt M_k) p$$
is \emph{coarse} if for every $i \in \{1,\ldots,k\}$, the bimodule
$$\bim{\psi(M)}{p (\C^n \ot L^2(M_1 \ovt \cdots \ovt M_k))}{(M_1 \ovt \cdots \ovt M_{i-1} \ovt 1 \ovt M_{i+1} \ovt \cdots \ovt M_k)}$$
is coarse. By convention, if $k=1$, the condition is empty and every embedding $\psi : M \to p (M_n(\C) \ovt M_1)p$ is called coarse.
\end{definition}

We can equivalently give the following definition of a \emph{tensor coarse} bimodule.

\begin{definition}\label{def.tensor-coarse-bimodule}
Let $M$, $M_1,\ldots,M_k$ be tracial von Neumann algebras. We say that a Hilbert $M$-$(M_1 \ovt \cdots \ovt M_k)$-bimodule $H$ is \emph{tensor coarse} if $H$ is finitely generated as a right Hilbert $(M_1 \ovt \cdots \ovt M_k)$-module and if for every $i \in \{1,\ldots,k\}$, the bimodule
$$\bim{M}{H}{M_1 \ovt \cdots \ovt M_{i-1} \ovt 1 \ovt M_{i+1} \ovt \cdots \ovt M_k}$$
is coarse.
\end{definition}

By definition, tensor coarse bimodules are precisely bimodules of the form
$$\bim{\psi(M)}{p (\C^n \ovt L^2(M_1 \ovt \cdots \ovt M_k))}{M_1 \ovt \cdots \ovt M_k}$$
where $\psi : M \to p (M_n(\C) \ovt M_1 \ovt \cdots \ovt M_k)p$ is a coarse embedding in the sense of Definition \ref{def.coarse-embedding}.

\begin{lemma}\label{lem.stability-of-coarse}
Let $P$, $P_1$, $P_2$, $M_1$, $M_2$ be tracial von Neumann algebras. Let $H$ be a Hilbert $P$-$(P_1 \ovt P_2)$-bimodule. For $i \in \{1,2\}$, let $L_i$ be a Hilbert $P_i$-$M_i$-bimodule.

If the bimodule $\bim{P}{H}{P_1 \ot 1}$ is coarse, also the bimodule $\bim{P}{(H \ot_{P_1 \ovt P_2} (L_1 \ot L_2))}{M_1 \ot 1}$ is coarse.
\end{lemma}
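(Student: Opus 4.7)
The plan is to factor the tensor product over $P_1 \ovt P_2$ into two successive internal tensor products, using that the right $P_1$- and $P_2$-actions on $H$ commute, and then check coarseness at each stage. Concretely, there is a canonical isomorphism of $P$-$(M_1 \ovt M_2)$-bimodules
\[
H \ot_{P_1 \ovt P_2} (L_1 \ot L_2) \;\cong\; (H \ot_{P_2} L_2) \ot_{P_1} L_1 \; .
\]
Write $H' := H \ot_{P_2} L_2$, which is naturally a $P$-$(P_1 \ovt M_2)$-bimodule. The strategy is to first show that $\bim{P}{H'}{P_1 \ot 1}$ is still coarse, and then apply the standard fact that tensoring a coarse bimodule with an arbitrary bimodule over the common algebra preserves coarseness.

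For the first step, the point is that the right $P_2$-action on $H$ is irrelevant to the coarseness along $P_1$. By spatial theory for normal representations of a von Neumann algebra, the left $P_2$-module underlying $L_2$ embeds isometrically as a left $P_2$-submodule of $L^2(P_2) \ot \mathcal{H}_2$ for some Hilbert space $\mathcal{H}_2$. Taking the Connes tensor product with $H$ over $P_2$ and using that $L^2(P_2)$ is the unit for this tensor in the tracial setting, I obtain an isometric embedding
\[
H' \;=\; H \ot_{P_2} L_2 \;\hookrightarrow\; H \ot_{P_2} (L^2(P_2) \ot \mathcal{H}_2) \;=\; H \ot \mathcal{H}_2
\]
of $P$-$P_1$-bimodules, where the right-hand side is just the Hilbert space amplification of $H$ by $\mathcal{H}_2$. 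Since $\bim{P}{H}{P_1}$ is coarse, so is its amplification $\bim{P}{H \ot \mathcal{H}_2}{P_1}$, and hence so is the subbimodule $\bim{P}{H'}{P_1}$.

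For the second step, I invoke the general fact that if $\bim{P}{X}{Q}$ is coarse and $\bim{Q}{Y}{R}$ is any $Q$-$R$-bimodule, then $\bim{P}{X \ot_Q Y}{R}$ is coarse. Indeed, any embedding $X \subset \bigoplus L^2(P) \ot L^2(Q)$ of $P$-$Q$-bimodules gives $X \ot_Q Y \subset \bigoplus L^2(P) \ot Y$, and the right $R$-module $Y$ in turn embeds in a multiple of $L^2(R)$, landing $X \ot_Q Y$ inside $\bigoplus L^2(P) \ot L^2(R)$ as a $P$-$R$-bimodule. Applying this with $X = H'$, $Q = P_1$, $Y = L_1$ and $R = M_1$ yields the coarseness of
\[
H' \ot_{P_1} L_1 \;=\; H \ot_{P_1 \ovt P_2} (L_1 \ot L_2)
\]
as a $P$-$M_1$-bimodule, which is exactly the claim. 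The main technical point is the first step: eliminating the $P_2$-factor cleanly, since the coarseness data for $H$ as a $P$-$P_1$-bimodule does not see the extra right $P_2$-action; this is handled by the standard-form embedding of $L_2$ together with the unit property of $L^2(P_2)$ for the Connes tensor product.
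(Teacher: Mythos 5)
Your proof is correct and follows essentially the same route as the paper's: both factor the relative tensor product as $(H \ot_{P_2} L_2) \ot_{P_1} L_1$, show that the intermediate bimodule $H \ot_{P_2} L_2$ is still coarse over $P_1$, and then invoke the standard observation that tensoring a coarse $P$-$P_1$-bimodule on the right by a $P_1$-$M_1$-bimodule yields a coarse $P$-$M_1$-bimodule. The only cosmetic difference is in the middle step: you embed the left $P_2$-module $L_2$ into a multiple $L^2(P_2) \ot \cH_2$ and use that coarseness survives amplification, whereas the paper reads off the same conclusion from the equivalent characterization of coarseness as $H$ carrying a normal $P \ovt P_1\op$-module structure — two standard phrasings of the same fact.
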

\begin{proof}
Define the Hilbert $P$-$(P_1 \ovt M_2)$-bimodule $K = H \ot_{P_1 \ovt P_2} (L^2(P_1) \ot L_2)$. Since $\bim{P}{H}{P_1 \ot 1}$ is coarse, we can view $H$ as a $(P \ovt P_1\op)$-$P_2$-bimodule. We can then identify $K$ with the $(P \ovt P_1\op)$-$M_2$-bimodule $H \ot_{P_2} L_2$. It follows that the bimodule $\bim{P}{K}{P_1 \ot 1}$ is coarse. Since
$$H \ot_{P_1 \ovt P_2} (L_1 \ot L_2) = K \ot_{P_1} L_1$$
as $P$-$M_1$-bimodules, we conclude that $\bim{P}{(H \ot_{P_1 \ovt P_2} (L_1 \ot L_2))}{M_1 \ot 1}$ is coarse.
\end{proof}

The following straightforward lemma is used throughout this article.

\begin{lemma}\label{lem.coarse-gives-nonintertwining}
Let $M$, $P$ and $Q$ be tracial von Neumann algebras, $p \in M_n(\C) \ovt P \ovt Q$ a projection and $\psi : M \to p(M_n(\C) \ovt P \ovt Q)p$ an embedding. Assume that the bimodule
$$\bim{\psi(M)}{p(\C^n \ot L^2(P \ovt Q))}{P \ot 1}$$
is coarse.
\begin{enumlist}
\item\label{lem.coarse-gives-nonintertwining.i} If $A \subset M$ is a diffuse von Neumann subalgebra, then $\psi(A) \not\prec_{P \ovt Q} P \ot 1$.
\item\label{lem.coarse-gives-nonintertwining.ii} If $A \subset M$ is a von Neumann subalgebra without amenable direct summand, then $\psi(A)$ is strongly nonamenable relative to $P \ot 1$.
\end{enumlist}
\end{lemma}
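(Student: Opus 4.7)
The plan is to exploit the definition of coarseness directly: the bimodule $\bim{\psi(M)}{H}{P \ot 1}$ embeds as a subbimodule of a direct sum of copies of the standard coarse bimodule $\bim{\psi(M)}{(L^2(\psi(M)) \ot L^2 P)}{P \ot 1}$. Since passing to subbimodules and restricting the left action to a subalgebra both preserve coarseness, both statements reduce to a non-intertwining statement, respectively a non-amenability statement, for a coarse bimodule in standard form.

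For (i), assume $\psi(A) \prec_{P \ovt Q} P \ot 1$, witnessed by a nonzero right-$(P \ot 1)$-finitely generated $\psi(A)$-$(P \ot 1)$-subbimodule $K \subset H$. By coarseness, $K$ embeds in $(L^2(\psi(A)) \ot L^2 P)^{\oplus I}$, and the finite generation of $K$ as a right $P$-module corresponds to a projection $e$ of finite trace $\Tr \ot \Tr \ot \tau_P$ in the commutant $B(\ell^2(I)) \ovt J\psi(A)J \ovt P$ of the left $\psi(A)$ and right $P$ actions. Since $A$, and hence $J\psi(A)J$, is diffuse, no nonzero projection in $J\psi(A)J$ has finite semifinite trace in $B(L^2(\psi(A)))$; slicing by vector states on $B(\ell^2(I))$ and by $\tau_P$ on $P$ then forces $e = 0$, so $K = 0$, a contradiction.

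For (ii), assume for contradiction that $\psi(A)q$ is amenable relative to $P \ot 1$ for some nonzero projection $q \in \psi(A)' \cap p(M_n(\C) \ovt P \ovt Q)p$. The ideal $\{a \in \psi(A) : aq = 0\}$ has the form $\psi(A)(1-z)$ for a central projection $z \in Z(\psi(A))$, so $\psi(A)q \cong \psi(A)z$ is a direct summand of $\psi(A)$; by the no-amenable-direct-summand hypothesis it is non-amenable. Writing $\cM := p(M_n(\C) \ovt P \ovt Q)p$, the Popa--Ozawa characterization of relative amenability yields an almost $\psi(A)q$-central sequence of unit trace vectors in the $\psi(A)q$-$\psi(A)q$-bimodule $qL^2(\cM) \ot_{P \ot 1} L^2(\cM)q$. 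Now $\bim{\psi(A)q}{qL^2(\cM)}{P \ot 1}$ is coarse as a subbimodule of a coarse bimodule, and tensoring its embedding into $(L^2(\psi(A)q) \ot L^2 P)^{\oplus}$ over $P \ot 1$ with $L^2(\cM)q$ exhibits $qL^2(\cM) \ot_{P \ot 1} L^2(\cM)q$ as a subbimodule of a direct sum of copies of $L^2(\psi(A)q) \ot (L^2(\cM)q)$, which is a coarse $\psi(A)q$-$\psi(A)q$-bimodule since the left and right actions live on independent tensor factors. By Connes' characterization of amenability, existence of the almost central sequence then forces $\psi(A)q$ to be amenable, contradicting the previous step.

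The main technical point is the careful bookkeeping of amplifications, compressions, and the passage between the concrete bimodule $H = p(\C^n \ot L^2(P \ovt Q))$ and the standard form $L^2(\cM)$ used in the Popa--Ozawa step; these manipulations are routine but must be handled with care.
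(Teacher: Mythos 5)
Your proof is correct, but takes a genuinely different route from the paper's. The paper recasts both statements through Remark~\ref{rem.everything-in-terms-of-basic-construction}: intertwining and relative amenability are each reformulated as the existence of a $\psi(A)$-central state (with appropriate normality) on the basic construction $p(M_n(\C)\ot\langle P\ovt Q,e_{P\ot 1}\rangle)p$. Coarseness is then used once, to build an isometry $V$ conjugating that inclusion onto $(1\ot M\ot 1)q \subset q(B(\ell^2(\N))\ovt B(L^2(M))\ovt P)q$; compressing with $T\mapsto\Om(q(1\ot T\ot 1)q)$ produces an $A$-central state on $B(L^2(M))$ whose normality properties immediately contradict diffuseness in (i) and absence of amenable summands in (ii). This yields a two-line, uniform treatment of both parts.

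You instead argue each part via its classical bimodule-theoretic characterization, using coarseness more directly. For (i), you take the finitely generated right-$P$ witnessing subbimodule $K$, embed it into a multiple of $L^2(A)\ot L^2(P)$, and kill it via the observation that the restriction of $\Tr\ot\Tr\ot\tau$ to $B(\ell^2)\ovt JAJ\ovt P$ takes only values $\{0,\infty\}$ on projections when $A$ is diffuse; this works, though the passage from ``slicing'' to this precise trace statement is where the care you flag is really needed. For (ii), you use the Ozawa--Popa characterization of relative amenability via almost-central tracial vectors in $L^2$ of the basic construction, fuse the coarse $\psi(A)q$-$P$-bimodule with $L^2(\cM)q$ over $P$ to exhibit that Hilbert space as a coarse $\psi(A)q$-$\psi(A)q$-bimodule, and then invoke Connes' characterization (trivial weakly contained in coarse forces amenability). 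Both routes are valid; the paper's is shorter and scales trivially to the $\prec^f$/amenability variants in Remark~\ref{rem.everything-in-terms-of-basic-construction}, while yours is more self-contained, engaging the coarse structure and the standard correspondence pictures directly at the cost of extra bookkeeping with amplifications, corners and Connes fusion.
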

\begin{proof}
By the definition of a coarse bimodule, we can choose an isometry
$$V : p(\C^n \ot L^2(P \ovt Q)) \to \ell^2(\N) \ot L^2(M) \ot L^2(P)$$
such that $V(\psi(a)\xi (b \ot 1)) = (1 \ot a \ot 1) V(\xi) (1 \ot b)$ for all $a \in M$, $b \in P$ and $\xi \in p(\C^n \ot L^2(P \ovt Q))$. Denote $q = VV^*$ and note that $q \in B(\ell^2(\N)) \ovt JMJ \ovt P$, where $J : L^2(M) \to L^2(M) : J(x) = x^*$.

Write $\cM = q(B(\ell^2(\N)) \ovt B(L^2(M)) \ovt P)q$. Then $V$ induces an isomorphism between the inclusions
$$\psi(M) \subset p(M_n(\C) \ot \langle P \ovt Q,e_{P \ot 1} \rangle)p \quad\text{and}\quad (1 \ot M \ot 1)q \subset \cM \; .$$
We use Remark \ref{rem.everything-in-terms-of-basic-construction}. If $\psi(A) \prec_{P \ovt Q} P \ot 1$, we find a normal state $\Om$ on $\cM$ that is $(1 \ot A \ot 1)q$-central. Then $\om(T) = \Om(q(1 \ot T \ot 1)q)$ defines a normal $A$-central state on $B(L^2(M))$. So, $A$ is not diffuse.

Similarly, if $\psi(A)$ is not strongly nonamenable relative to $P \ot 1$, we find a state $\Om$ on $\cM$ that is $(1 \ot A \ot 1)q$-central and whose restriction to $(1 \ot A \ot 1)q$ is normal. Then $\om(T) = \Om(q(1 \ot T \ot 1)q)$ defines an $A$-central state on $B(L^2(M))$ whose restriction to $A$ is normal. So, $A$ has an amenable direct summand.
\end{proof}

We also have the following obvious lemma.

\begin{lemma}\label{lem.stable-coarse}
Let $M$, $P$, $Q$, $A$ and $B$ be tracial von Neumann algebras. Let
$$\psi : M \to p(M_n(\C) \ovt P \ovt Q)p \quad\text{and}\quad \vphi : Q \to q (M_k(\C) \ovt A \ovt B)q$$
be embeddings. Define $e = (\id \ot \vphi)(p)$.

If both $\bim{\psi(M)}{p(\C^n \ot L^2(P \ovt Q))}{P \ot 1}$ and $\bim{\vphi(Q)}{q(\C^k \ot L^2(A \ovt B))}{A \ot 1}$ are coarse bimodules, then
$$\bim{(\id \ot \vphi)\psi(M)}{e (\C^n \ot \C^k \ot L^2(P \ovt A \ovt B))}{P \ovt A \ovt 1}$$
is also a coarse bimodule.
\end{lemma}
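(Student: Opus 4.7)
The plan is to realize the claimed bimodule as a Connes relative tensor product of the two bimodules coming from $\psi$ and $\vphi$, and to transport each of the two coarseness hypotheses through that tensor product using the normal-extension criterion for coarseness.

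First I would set up the identification. Writing $H_1 = p(\C^n \ot L^2(P \ovt Q))$ with its natural $\psi(M)$-$(P \ovt Q)$-bimodule structure, and $H_2 = q(\C^k \ot L^2(A \ovt B))$ with its natural $\vphi(Q)$-$(A \ovt B)$-bimodule structure, one has the standard identification $L^2(P \ovt Q) \ot_Q H_2 \cong L^2(P) \ot H_2$ of $P$-$(A \ovt B)$-bimodules, obtained by balancing $1 \ot Q$ acting from the right on $L^2(P \ovt Q)$ with $\vphi(Q)$ acting from the left on $H_2$. Applying this within $p(\C^n \ot L^2(P \ovt Q))$ and pushing $p$ through $\id \ot \vphi$ in the $Q$-slot identifies the Connes tensor product $K := H_1 \ot_Q H_2$, equipped with its natural left $M$-action via $(\id \ot \vphi) \circ \psi$ and its right $(P \ovt A \ovt B)$-action, with the bimodule $e(\C^n \ot \C^k \ot L^2(P \ovt A \ovt B))$ of the statement.

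Second I would invoke the standard equivalence: a Hilbert $N$-$R$-bimodule $L$ is coarse if and only if the commuting actions extend to a normal unital $*$-homomorphism $N \ovt R\op \to B(L)$. The hypothesis that $\bim{\psi(M)}{H_1}{P \ot 1}$ is coarse then equips $H_1$ with a normal left $(M \ovt P\op)$-action (the $P\op$-factor acting through $J P J$ on the $L^2(P)$-slot) commuting with the intrinsic right $Q$-action and with the residual right $P$-action. The hypothesis on $\vphi$ likewise equips $H_2$ with a normal left $A\op$-action commuting with the left $Q$-action (via $\vphi$) and with the right $B$-action.

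The final step is to assemble these extended actions on $K = H_1 \ot_Q H_2$. The extra left $P\op$-action on $H_1$ commutes with the right $Q$-action balanced in the tensor product and therefore descends to $K$; the extra left $A\op$-action on $H_2$ commutes with the left $Q$-action and therefore also descends. Together with the intrinsic left $M$- and right $B$-actions, these assemble into a normal representation of $M \ovt P\op \ovt A\op \cong M \ovt (P \ovt A)\op$ on $K$ extending the natural $(M, P \ovt A)$-bimodule structure, which is exactly the coarseness of $\bim{M}{K}{P \ovt A \ot 1}$. The only nontrivial bookkeeping is to verify that the two extended actions really commute with the $Q$-balancing and with each other, but this is immediate from the $L^2$-models of the extensions, so I expect no substantive obstacle.
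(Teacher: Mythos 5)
Your proposal takes a genuinely different route from the paper's proof, and the route does work, but there is a gap in the final assembly step that is worth naming.

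The paper argues directly at the level of the coarseness definition: take an $M$-$P$-bimodular isometry $V : p(\C^n \ot L^2(P \ovt Q)) \to (L^2(M) \ot L^2(P))^{\oplus\infty}$ and a $Q$-$A$-bimodular isometry $W : q(\C^k \ot L^2(A \ovt B)) \to (L^2(Q) \ot L^2(A))^{\oplus\infty}$, and observe that $(V \ot 1)(1 \ot W)$ is the required $M$-$(P \ovt A)$-bimodular isometry from $e(\C^n \ot \C^k \ot L^2(P \ovt A \ovt B))$ into $(L^2(M) \ot L^2(P \ovt A))^{\oplus\infty}$. This is shorter and keeps everything at the level of concrete isometries. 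Your route reformulates coarseness as normality of the combined bimodule representation and passes through the Connes relative tensor product; this is a legitimate alternative, and the identification of the bimodule with $H_1 \ot_Q H_2$ is correct.

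The gap is in the sentence where you say the extended actions ``assemble into a normal representation of $M \ovt P\op \ovt A\op$'' and then remark that the only bookkeeping is to check commutation with the $Q$-balancing and with each other. Pairwise commuting normal $*$-homomorphisms of von Neumann algebras into $B(K)$ do not in general give a normal representation of their von Neumann tensor product; normality of the joint representation is exactly the nontrivial content (this is the whole point of the notion of coarseness, which your own characterization makes plain). To close the gap you must exhibit the combined map as a composition of normal maps, using both coarseness hypotheses in an essential way. Concretely: realize $H_1 \cong p(\ell^2(I) \ot L^2(Q))$ as a right $Q$-module so that $\mathcal{L}_Q(H_1) = p(B(\ell^2(I)) \ovt Q)p$; the coarseness of $\bim{\psi(M)}{H_1}{P\ot 1}$ gives a \emph{normal} $\sigma_1 : M \ovt P\op \to p(B(\ell^2(I)) \ovt Q)p$. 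Then $K \cong p'(\ell^2(I) \ot H_2)$ with $p' = (\id \ot \lambda)(p)$, and the combined action of $M \ovt P\op \ovt A\op$ on $K$ is the compression by $p'$ of the composition
$$M \ovt P\op \ovt A\op \xrightarrow{\;\sigma_1 \ot \id\;} (B(\ell^2(I)) \ovt Q) \ovt A\op \xrightarrow{\;\id \ot \pi_1\;} B(\ell^2(I)) \ovt B(H_2) \; ,$$
where $\pi_1 : Q \ovt A\op \to B(H_2)$ is the normal extension furnished by the coarseness of $\bim{\vphi(Q)}{H_2}{A \ot 1}$. Both arrows are normal, so the composition is, and that is where the two coarseness hypotheses actually enter; merely descending commuting actions to $K$ does not by itself give normality of the product representation.
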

\begin{proof}
If $V : p(\C^n \ot L^2(P \ovt Q)) \to (L^2(M) \ot L^2(P))^{\oplus \infty}$ is an $M$-$P$-bimodular isometry and $W : q(\C^k \ot L^2(A \ovt B)) \to (L^2(Q) \ot L^2(A))^{\oplus \infty}$ is a $Q$-$A$-bimodular isometry, one can interpret $(V \ot 1)(1 \ot W)$ as the required $M$-$(P \ovt A)$-bimodular isometry
\begin{equation*}
e (\C^n \ot \C^k \ot L^2(P \ovt A \ovt B)) \to (L^2(M) \ot L^2(P \ovt A))^{\oplus \infty} \; . \qedhere
\end{equation*}
\end{proof}

\subsection{Bernoulli like crossed products}\label{sec.bernoulli-like}

Throughout this section, we fix an action $\cG \actson I$ of a countable group $\cG$ on a countable set $I$. In order to simultaneously treat the generalized Bernoulli action $\cG \actson (A_0,\tau_0)^I$ and the action $\cG \actson^\si L_\mu(\Lambda_0^{(I)}) : \si_g = \Ad u^\mu_g$ given a $2$-cocycle $\mu \in Z^2(\Lambda_0^{(I)} \rtimes \cG,\T)$, we introduce the following ad hoc notion.

We say that a trace preserving action $\cG \actson^\si (B,\tau)$ on an amenable tracial von Neumann algebra $(B,\tau)$ is \emph{Bernoulli like over $\cG \actson I$} if the following holds. There exists a trace preserving action $\cG \actson^\zeta (C,\tau)$, a tracial amenable $(A_0,\tau_0)$ with corresponding generalized Bernoulli action $\cG \actson^\gamma (A,\tau) = (A_0,\tau_0)^I$ and a unital $*$-homomorphism $\be : B \to C \ovt A$ satisfying
\begin{enumlist}
\item $(\zeta_g \ot \gamma_g) \circ \be = \be \circ \si_g$ for all $g \in \cG$,
\item $(\id \ot \tau)\be(b) = \tau(b) 1$ for all $b \in B$,
\item the linear span of $(C \ot 1)\be(B)$ is $\|\cdot\|_2$-dense in $C \ovt A$,
\item for every finite subset $\cF \subset I$, we have a von Neumann subalgebra $B_\cF \subset B$ such that, writing $A_\cF = A_0^\cF$, we have $(\id \ot E_{A_\cF}) \circ \be = \be \circ E_{B_\cF}$.
\end{enumlist}

Note that by (ii) and (iii), we get that
\begin{equation}\label{eq.comm-square-C-B}
\begin{array}{ccc}
C \ot 1 & \subset & C \ovt A \\ \cup & & \cup \\ \C 1 & \subset & \be(B)
\end{array}
\end{equation}
is a nondegenerate commuting square.

\begin{example}
When $\cG \actson^\si (B,\tau)$ equals the generalized Bernoulli action $\cG \actson^\si (A_0,\tau_0)^I$, we take $C = \C 1$, $\be$ the identity map and $\gamma = \si$. When $\mu \in Z^2(\Lambda_0^{(I)} \rtimes \cG,\T)$ and $\cG \actson^\si B = L_\mu(\Lambda_0^{(I)})$ is given by $\si_g = \Ad u^\mu_g$, we take $C = B$, $\zeta_g = \si_g$, $\cG \actson^\gamma L(\Lambda_0^{(I)})$ the generalized Bernoulli action, and
$$\be : L_\mu(\Lambda_0^{(I)}) \to L_\mu(\Lambda_0^{(I)}) \ovt L(\Lambda_0^{(I)}) : \be(u^\mu_a) = u^\mu_a \ot u_a \quad\text{for all $a \in \Lambda_0^{(I)}$,}$$
with $B_\cF = L_\mu(\Lambda_0^\cF)$.
\end{example}

For the rest of this section, we fix such a Bernoulli like action $\cG \actson^\si (B,\tau)$. We also fix a $2$-cocycle $\mu \in Z^2(\cG,\T)$ and write $M = B \rtimes_{\si,\mu} \cG$, $\cN = C \rtimes_{\zeta,\mu} \cG$ and $\cM = A \rtimes_\gamma \cG$. We extend $\be$ to an embedding
\begin{equation}\label{eq.embedding-beta}
\be : M \to \cN \ovt \cM : \be(u^\mu_r) = u^\mu_r \ot u_r \quad\text{for all $r \in \cG$.}
\end{equation}

Note that the following properties follow from the assumptions (i)--(iv) above. Since $(\id \ot E_{A_\cF}) \circ \be = \be \circ E_{B_\cF}$, we have that $b \in B_\cF$ if and only if $\be(b) \in C \ovt A_\cF$. In particular, $\si_g(B_\cF) = B_{g \cdot \cF}$. It also follows that $E_{B_{\cF_1}} \circ E_{B_{\cF_2}} = E_{B_{\cF_1 \cap \cF_2}}$.

Since the linear span of $(C \ot 1)\be(B)$ is dense in $C \ovt A$, applying $\id \ot E_{A_{\cF_1}}$, it follows that the linear span of $(C \ot 1)\be(B_{\cF_1})$ is dense in $C \ovt A_{\cF_1}$. Then $(C \ot 1)\be(B_{\cF_1} B_{\cF_2})$ has the same closed linear span as $(C \ot A_{\cF_1}) \be(B_{\cF_2}) = (1 \ot A_{\cF_1}) (C \ot 1)\be(B_{\cF_2})$, which thus equals $C \ovt A_{\cF_1 \cup \cF_2}$. We conclude that the linear span of $B_{\cF_1} B_{\cF_2}$ is dense in $B_{\cF_1 \cup \cF_2}$. This means that
\begin{equation}\label{eq.commuting-square-with-BF}
\begin{array}{ccc}
B_{\cF_1} & \subset & B_{\cF_1 \cup \cF_2} \\ \cup & & \cup \\ B_{\cF_1 \cap \cF_2} & \subset & B_{\cF_2}
\end{array}
\end{equation}
is a nondegenerate commuting square.

We use $\be$ to transfer several known results on the generalized Bernoulli crossed product $\cM = A_0^I \rtimes_\gamma \cG$ to the twisted Bernoulli like crossed product $M = B \rtimes_{\si,\mu} \cG$. The following elementary lemma helps us in this respect.

\begin{lemma}\label{lem.transfer-1}
Let $(N,\tau)$ be a tracial von Neumann, $p \in N \ovt M$ a projection and $P \subset p (N \ovt M)p$ a von Neumann subalgebra. Let $\Lambda < \cG$ be a subgroup.
\begin{enumlist}
\item\label{lem.transfer-1.i} We have $P \prec_{N \ovt M} N \ovt (B \rtimes_{\si,\mu} \Lambda)$ if and only if $(\id \ot \be)(P) \prec_{N \ovt \cN \ovt \cM} N \ovt \cN \ovt (A \rtimes_\gamma \Lambda)$. The same holds for full embedding $\prec^f$ instead of $\prec$.
\item\label{lem.transfer-1.ii} We have $P \prec_{N \ovt M} N \ovt L_\mu(\Lambda)$ if and only if $(\id \ot \be)(P) \prec_{N \ovt \cN \ovt \cM} N \ovt \cN \ovt L(\Lambda)$. The same holds for full embedding $\prec^f$ instead of $\prec$.
\item\label{lem.transfer-1.iii} We have that $P$ is amenable relative to $N \ovt (B \rtimes_{\si,\mu} \Lambda)$ if and only if $(\id \ot \be)(P)$ is amenable relative to $N \ovt \cN \ovt (A \rtimes_\gamma \Lambda)$. The same holds for strong relative amenability.
\end{enumlist}
\end{lemma}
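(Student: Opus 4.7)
The whole lemma is a direct application of Lemma~\ref{lem.commuting-square-transfer-embedding}, so the plan is to produce, in each of the three cases, a nondegenerate commuting square that relates the intertwining/relative amenability question inside $N \ovt M$ to the corresponding question inside $N \ovt \cN \ovt \cM$.

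First I would fix the big picture. In all three cases, I put $\cM' = N \ovt \cN \ovt \cM$, $M' = (\id \ot \be)(N \ovt M)$, and choose $\cB'$ to be $N \ovt \cN \ovt (A \rtimes \Lambda)$ for (i) and (iii), and $\cB' = N \ovt \cN \ovt L(\Lambda)$ for (ii). The square
\[
\begin{array}{ccc} \cB' & \subset & \cM' \\ \cup & & \cup \\ B' & \subset & M' \end{array}
\]
is then what I need, where $B' = M' \cap \cB'$. Using the definition $\be(b u^\mu_r) = \be(b)(u^\mu_r \ot u_r)$ and writing $\be(b) = \sum_i c_i \ot a_i$, a short computation shows that for $y = \sum_r y_r u^\mu_r \in N \ovt M$ the element $(\id \ot \be)(y)$ lies in $N \ovt \cN \ovt (A \rtimes \Lambda)$ iff $y_r = 0$ for $r \notin \Lambda$, giving $B' = (\id \ot \be)(N \ovt (B \rtimes_\mu \Lambda))$ in case~(i). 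For case~(ii), the extra requirement is that $\be(y_r) \in N \ovt \cN \ot 1$; by property~(iv) of the Bernoulli like structure (applied with $\cF = \emptyset$) combined with (ii), one checks that $B_\emptyset = \C 1$, so this forces $y_r \in N \ot 1$, yielding $B' = (\id \ot \be)(N \ovt L_\mu(\Lambda))$.

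Next I would verify the commuting square condition, which reduces to showing $E_{\cB'} \circ (\id \ot \be) = (\id \ot \be) \circ E_{B'_0}$, where $B'_0 = B \rtimes_\mu \Lambda$ in cases (i)/(iii) and $B'_0 = L_\mu(\Lambda)$ in case~(ii). Expanding on $b u^\mu_r$, both sides evaluate to $\be(b)(u^\mu_r \ot u_r) \cdot \mathbf{1}_\Lambda(r)$ in case~(i), and to $\tau(b)(u^\mu_r \ot u_r)\cdot \mathbf{1}_\Lambda(r)$ in case~(ii) using that $(\id \ot \tau)\be(b) = \tau(b) 1$. For nondegeneracy, in case~(i) an element $x \ot c u^\mu_t \ot a u_r$ is obtained as $(x \ot c u^\mu_{t-r} \ot a) \cdot (\id \ot \be)(1 \ot u^\mu_r)$ with the first factor in $\cB'$; in case~(ii) one uses that $(\cN \ot 1)\be(B)$ is $\|\cdot\|_2$-dense in $\cN \ovt A$ (which follows from property~(iii) by multiplying $(C \ot 1)\be(B)$ by $\cN \ot 1$ on the left) to approximate arbitrary elements $(z \ot a)(u^\mu_r \ot u_r)(1 \ot u_l)$ with $l \in \Lambda$ by products in $\cB' \cdot M'$.

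Having verified the nondegenerate commuting square, Lemma~\ref{lem.commuting-square-transfer-embedding} immediately yields the equivalences in (i), (ii), (iii), both for $\prec$ and $\prec^f$ in the intertwining statements, and for relative amenability as well as strong relative amenability in (iii). The main technical point to get right is the identification $B' = M' \cap \cB'$ in case~(ii); everything else is a routine verification using the defining properties of the Bernoulli like action. No serious obstacle beyond bookkeeping is expected.
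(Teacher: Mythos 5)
Your proof takes essentially the same approach as the paper: the paper simply asserts that
$$
\begin{array}{ccc}
\cN \ovt (A \rtimes \Lambda) & \subset & \cN \ovt \cM \\ \cup & & \cup \\ \be(B \rtimes_\mu \Lambda) & \subset & \be(M)
\end{array}
\qquad\text{and}\qquad
\begin{array}{ccc}
\cN \ovt L(\Lambda) & \subset & \cN \ovt \cM \\ \cup & & \cup \\ \be(L_\mu(\Lambda)) & \subset & \be(M)
\end{array}
$$
are nondegenerate commuting squares, tensors with $N$, and invokes Lemma~\ref{lem.commuting-square-transfer-embedding}. You are filling in the verification that the paper declares to hold ``by definition''. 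Your identification of $B' = M' \cap \cB'$ via property~(iv) with $\cF = \emptyset$ and the Fourier decomposition on $\cG$ is correct, and your verification of the commuting square condition $E_{\cB'} \circ (\id \ot \be) = (\id \ot \be) \circ E_{B'_0}$ is the right computation.

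One small imprecision: in case~(ii) the elements you approximate lie in $\cB' M' \cB'$ rather than $\cB' M'$ because of the extra factor $(1 \ot u_l)$ on the right. This is not needed and does not match the definition of nondegeneracy literally. The clean argument from your own observation is shorter: since $(\cN \ot 1)\be(B)$ is $\|\cdot\|_2$-dense in $\cN \ovt A$, multiplying on the right by the unitary $\be(u^\mu_r) = u^\mu_r \ot u_r$ shows that $(\cN \ot 1)\be(B u^\mu_r) \subset \cB' M'$ is dense in $(\cN \ovt A)(1 \ot u_r)$, and the linear span over $r \in \cG$ is all of $L^2(\cN \ovt \cM)$. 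No appeal to $u_l$ is required. Apart from this minor streamlining, the argument is correct.
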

\begin{proof}
Since \eqref{eq.comm-square-C-B} is a nondegenerate commuting square, also
$$\begin{array}{ccc}
\cN \ovt (A \rtimes_\gamma \Lambda) & \subset & \cN \ovt \cM \\ \cup & & \cup \\ \beta(B \rtimes_{\si,\mu} \Lambda) & \subset & \beta(M)
\end{array}\qquad\text{and}\qquad
\begin{array}{ccc}
\cN \ovt L(\Lambda) & \subset & \cN \ovt \cM \\ \cup & & \cup \\ \beta(L_\mu(\Lambda)) & \subset & \beta(M)
\end{array}
$$
are nondegenerate commuting squares. So also taking throughout the tensor product with $N$, we get nondegenerate commuting squares. The lemma then follows from Lemma \ref{lem.commuting-square-transfer-embedding}.
\end{proof}

The restriction of $\be$ to $L_\mu(\cG) \subset M$ is the comultiplication $\Delta : L_\mu(\cG) \to L_\mu(\cG) \ovt L(\cG) : \Delta(u^\mu_r) = u^\mu_r \ot u_r$ for all $r \in \cG$. We then also have the following result, which may be viewed as a cocycle twisted version of \cite[Proposition 7.2]{IPV10}. As with Lemma \ref{lem.transfer-1}, we give a short commuting square proof.

\begin{lemma}\label{lem.transfer-2}
Let $(N,\tau)$ be a tracial von Neumann, $p \in N \ovt L_\mu(\cG)$ a projection and $P \subset p (N \ovt L_\mu(\cG))p$ a von Neumann subalgebra. Let $\Lambda < \cG$ be a subgroup and write $S = L_\mu(\cG)$.
\begin{enumlist}
\item\label{lem.transfer-2.i} We have $P \prec_{N \ovt S} N \ovt L_\mu(\Lambda)$ if and only if $(\id \ot \be)(P) \prec_{N \ovt S \ovt L(\cG)} N \ovt S \ovt L(\Lambda)$. The same holds for full embedding $\prec^f$ instead of $\prec$.
\item\label{lem.transfer-2.ii} We have that $P$ is amenable relative to $N \ovt L_\mu(\Lambda)$ if and only if $(\id \ot \be)(P)$ is amenable relative to $N \ovt S \ovt L(\Lambda)$. The same holds for strong relative amenability.
\end{enumlist}
\end{lemma}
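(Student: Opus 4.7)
The plan is to mimic exactly the proof of Lemma \ref{lem.transfer-1}, replacing the pair of nondegenerate commuting squares used there by a single nondegenerate commuting square involving the comultiplication $\Delta : S \to S \ovt L(\cG) : u^\mu_r \mapsto u^\mu_r \ot u_r$, which is the restriction of $\be$ to $S = L_\mu(\cG)$. Once the commuting square is established, both (i) and (ii) become immediate applications of Lemma \ref{lem.commuting-square-transfer-embedding}, because the inclusion $(\id \ot \be)(P) \subset N \ovt S \ovt L(\cG)$ is obtained from $P \subset N \ovt S$ by tensoring the commuting square with $N$.

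Concretely, I would verify that
$$\begin{array}{ccc}
S \ovt L(\Lambda) & \subset & S \ovt L(\cG) \\ \cup & & \cup \\ \Delta(L_\mu(\Lambda)) & \subset & \Delta(S)
\end{array}$$
is a nondegenerate commuting square inside $S \ovt L(\cG)$. The commuting square condition is a direct computation on the generating basis $u^\mu_r \ot u_h$: the conditional expectation onto $\Delta(S)$ kills all vectors with $r \neq h$, while the conditional expectation onto $S \ovt L(\Lambda)$ kills those with $h \notin \Lambda$, so both composites yield the projection to $\Delta(L_\mu(\Lambda)) = \overline{\lspan}\{u^\mu_r \ot u_r : r \in \Lambda\}$. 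Inclusion $\Delta(L_\mu(\Lambda)) \subset \Delta(S) \cap (S \ovt L(\Lambda))$ is clear, and the reverse inclusion follows because an element of $\Delta(S)$ lies in $S \ovt L(\Lambda)$ only if its support is contained in $\Lambda$.

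For nondegeneracy, I would show that $(S \ovt L(\Lambda)) \cdot \Delta(S)$ is $\|\cdot\|_2$-dense in $S \ovt L(\cG)$. Given $g, k \in \cG$, the element $u^\mu_g \ot u_k$ is proportional to $(u^\mu_{gk^{-1}} \ot u_e)\,\Delta(u^\mu_k)$, where $u^\mu_{gk^{-1}} \ot u_e \in S \ovt L(\Lambda)$ and $\Delta(u^\mu_k) \in \Delta(S)$; hence the linear span contains all generators, and nondegeneracy follows.

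Tensoring the commuting square with $N$ preserves both the commuting square and the nondegeneracy properties. Since $(\id \ot \be)(P) = (\id \ot \Delta)(P)$ sits inside $N \ovt \Delta(S)$, and since the intersection $(N \ovt \Delta(S)) \cap (N \ovt S \ovt L(\Lambda)) = N \ovt \Delta(L_\mu(\Lambda))$, the equivalences in (i) for $\prec$ and $\prec^f$, and in (ii) for ordinary and strong relative amenability, all follow directly from Lemma \ref{lem.commuting-square-transfer-embedding}. No step here should be genuinely difficult; the only thing requiring care is to check the commuting square identities on the generators, but that is purely formal.
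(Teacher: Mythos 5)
Your proposal is correct and follows exactly the same route as the paper: the paper's proof simply asserts that the square with $\beta|_S = \Delta$ is a nondegenerate commuting square, tensors with $N$, and cites Lemma~\ref{lem.commuting-square-transfer-embedding}; you have merely spelled out the (routine) verification of the commuting square and nondegeneracy conditions, which the paper omits.
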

\begin{proof}
Note that
$$\begin{array}{ccc}
S \ovt L(\Lambda) & \subset & S \ovt L(\cG) \\ \cup & & \cup \\ \beta(L_\mu(\Lambda)) & \subset & \beta(S)
\end{array}$$
is a nondegenerate commuting square. Taking throughout the tensor product with $N$, we still have a nondegenerate commuting square, and the conclusion follows from Lemma \ref{lem.commuting-square-transfer-embedding}.
\end{proof}

The proof of the following result is almost identical to the proof of \cite[Lemma 4.2.1]{Vae07} and thus omitted.

\begin{lemma}\label{lem.weak-mixing}
Let $(N,\tau)$ be a tracial von Neumann, $p \in N \ovt L_\mu(\cG)$ a projection and $P \subset p (N \ovt L_\mu(\cG))p$ a von Neumann subalgebra.

If for all $i \in I$, we have that $P \not\prec_{N \ovt L_\mu(\cG)} N \ovt L_\mu(\Stab i)$, then $P' \cap p (N \ovt M) p \subset p (N \ovt L_\mu(\cG))p$.
\end{lemma}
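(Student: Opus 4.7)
The plan is to reduce the statement to the corresponding result for the generalized Bernoulli crossed product $\cM = A \rtimes \cG$ established in \cite[Lemma~4.2.1]{Vae07}, by transferring through the embedding $\be$ from \eqref{eq.embedding-beta}.

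First, apply $\id \ot \be$ to everything. Writing $\tilde N = N \ovt \cN$, set $\tilde p = (\id \ot \be)(p)$ and $\tilde P = (\id \ot \be)(P) \subset \tilde p(\tilde N \ovt \cM)\tilde p$. For any $x \in P' \cap p(N \ovt M)p$, the image $y = (\id \ot \be)(x)$ lies in $\tilde p(\tilde N \ovt \cM)\tilde p$ and commutes with $\tilde P$. To transfer the non-intertwining hypothesis, first note that Lemma \ref{lem.commuting-square-transfer-embedding} applied to the nondegenerate commuting square formed by $N \ovt L_\mu(\Stab i) \subset N \ovt L_\mu(\cG)$ and $N \ovt (B \rtimes_\mu \Stab i) \subset N \ovt M$ gives the equivalence $P \not\prec_{N \ovt L_\mu(\cG)} N \ovt L_\mu(\Stab i) \Leftrightarrow P \not\prec_{N \ovt M} N \ovt (B \rtimes_\mu \Stab i)$, which a fortiori implies $P \not\prec_{N \ovt M} N \ovt L_\mu(\Stab i)$; Lemma \ref{lem.transfer-1.ii} then turns this into $\tilde P \not\prec_{\tilde N \ovt \cM} \tilde N \ovt L(\Stab i)$ for every $i \in I$.

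Next, apply \cite[Lemma~4.2.1]{Vae07} verbatim in the generalized Bernoulli setting $\cG \actson A = A_0^I$, with ambient tracial algebra $\tilde N$ and subalgebra $\tilde P$: this yields $\tilde P' \cap \tilde p (\tilde N \ovt \cM) \tilde p \subset \tilde p (\tilde N \ovt L(\cG)) \tilde p$, so in particular $y \in \tilde N \ovt L(\cG) = N \ovt \cN \ovt L(\cG)$.

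Finally, translate back. Fourier-expand $x = \sum_r x_r u^\mu_r$ with $x_r \in N \ovt B$. Absorbing $u^\mu_r$ into the $\cN$-factor, we have $(\id \ot \be)(x) = \sum_r X'_r (1 \ot 1 \ot u_r)$ with $X'_r \in N \ovt \cN \ovt A$, and the membership $y \in N \ovt \cN \ovt L(\cG)$ forces $X'_r \in N \ovt \cN \ovt \C 1$ for every $r$. Unwinding yields $(\id \ot \be)(x_r) \in (N \ovt C \ovt A) \cap (N \ovt \cN \ovt \C 1) = N \ovt C \ovt \C 1$. Applying $\id \ot \id \ot E_{A_\emptyset}$, invoking property~(iv) in the form $(\id \ot E_{A_\emptyset}) \circ \be = \be \circ E_{B_\emptyset}$, and using that $B_\emptyset = \C 1$ (itself an immediate consequence of property~(ii) since $E_{A_\emptyset}(\cdot) = \tau(\cdot) 1$), the injectivity of $\be$ forces $x_r \in N$ for every $r$, and hence $x \in p(N \ovt L_\mu(\cG))p$. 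The main obstacle is the underlying Bernoulli argument of Vae07, which exploits the tensor-product structure of $A = A_0^I$ together with the fact that $\cG$-orbits of nonempty finite subsets of $I$ eventually leave $\Stab i$, so as to extract from commutation with $\tilde P$ a finite-rank intertwiner into $\tilde N \ovt L(\Stab i)$; everything else above is routine bookkeeping via $\be$.
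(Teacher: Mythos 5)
Your proof is correct, and it takes a genuinely different route from what the paper intends. The paper omits the proof with the remark that it is ``almost identical to'' the proof of \cite[Lemma~4.2.1]{Vae07}, which signals a direct adaptation: one would Fourier-expand in $u^\mu_g$ inside $N\ovt M$, exploit the subalgebras $B_\cF$ and conditions (i)--(iv) in place of the tensor factors $A_\cF$, and rerun Vaes' weak-mixing estimate keeping track of the $2$-cocycle. You instead reduce the twisted Bernoulli-like statement to the untwisted generalized Bernoulli statement by pushing everything through the embedding $\be$ of \eqref{eq.embedding-beta}. This is a natural extension of the paper's own transfer philosophy (Lemmas~\ref{lem.commuting-square-transfer-embedding}, \ref{lem.transfer-1}, \ref{lem.transfer-2}), and it is arguably cleaner: the nontrivial weak-mixing analysis is invoked only once, in the untwisted setting, and the cocycle/Bernoulli-like bookkeeping is confined to the transfer lemmas and to the final Fourier-expansion step, which I checked and which is correct.

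Two small remarks. First, when you establish $\tilde P\not\prec_{\tilde N\ovt\cM}\tilde N\ovt L(\Stab i)$, the chain commuting-square $\Rightarrow$ a fortiori $\Rightarrow$ Lemma~\ref{lem.transfer-1.ii} is fine, though you could also go via Lemma~\ref{lem.transfer-2.i} and one further commuting square; the route you chose is correct. Second, your reduction relies on \cite[Lemma~4.2.1]{Vae07} being available in the form with an auxiliary tracial algebra ($\tilde N=N\ovt\cN$) tensored in and for a generalized Bernoulli action. This is exactly the $\mu=1$, $B=A$ instance of the present lemma, so there is no circularity, but it is worth being explicit that you may need to slightly repackage Vaes' statement to include the ambient factor; this is a routine generalization.
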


For later use, we prove the following elementary lemma.

\begin{lemma}\label{lem.further-commutant-result}
Let $(N,\tau)$ be a tracial von Neumann, $\Lambda$ a group, $\pi : \Lambda \to \cU(N)$ a projective representation and $\delta : \Lambda \to \cG$ a group homomorphism. Write
$$\theta : \Lambda \to \cU(N \ovt M) : \theta(g) = \pi(g) \ot u^\mu_{\delta(g)} \; .$$
\begin{enumlist}
\item If $\delta(\Lambda) < \cG$ is relatively icc, then $\theta(\Lambda)' \cap N \ovt M \subset N \ovt B$.
\item If moreover the action $\delta(\Lambda) \actson I$ has infinite orbits, then $\theta(\Lambda)' \cap N \ovt M \subset N \ot 1$.
\end{enumlist}
\end{lemma}
\begin{proof}
Fix $a \in \theta(\Lambda)' \cap N \ovt M$.

(i) Write $a = \sum_{g \in \cG} (a)_g (1 \ot u^\mu_g)$ with $(a)_g \in N \ovt B$. Since $a$ commutes with $\theta(\Lambda)$, we get that the function $g \mapsto \|(a)_g\|_2$ is invariant under conjugation by $\delta(\Lambda)$. Since this function is also square summable and $\delta(\Lambda) < \cG$ is relatively icc, it follows that $(a)_g = 0$ for all $g \neq e$. This means that $a \in N \ovt B$.

(ii) For every finite subset $\cF \subset I$, define $K_\cF \subset L^2(B_\cF)$ as the orthogonal complement in $L^2(B_\cF)$ of all $B_{\cF'}$ where $\cF' \subset \cF$ is a proper subset. By convention, $K_\emptyset = \C 1$.
Since \eqref{eq.commuting-square-with-BF} is a commuting square, $L^2(B)$ is the orthogonal direct sum of the subspaces $K_\cF$, $\cF \subset I$ finite. Define $a_\cF \in L^2(N) \ot K_\cF$ as the projection of $a \in N \ovt B$ onto $L^2(N) \ot K_\cF$. Since $a$ commutes with $\theta(\Lambda)$, we find that
$$\|a_{\delta(g) \cdot \cF}\|_2 = \|a_\cF\|_2 \quad\text{for all finite subsets $\cF \subset I$ and $g \in \Lambda$.}$$
Since the sum of all $\|a_\cF\|_2^2$ is finite and the action $\delta(\Lambda) \actson I$ has infinite orbits, it follows that $a_\cF = 0$ for all finite nonempty subsets $\cF \subset I$. So, $a = a_\emptyset$, meaning that $a \in N \ot 1$.
\end{proof}

\subsection{General coarse embeddings for Bernoulli like crossed products}

One of the key results in \cite{PV21} is \cite[Theorem 3.2]{PV21} determining all possible embeddings between left-right Bernoulli crossed products with groups in the class $\cC$. One of our key technical ingredients is to determine all possible \emph{coarse} embeddings from such a left-right Bernoulli crossed product to a \emph{tensor product} of left-right Bernoulli crossed products. Since we moreover also want to include cocycle twists, we work in the more general context of Bernoulli like actions introduced in Section \ref{sec.bernoulli-like}.

We say that a group homomorphism $\delta : \Gamma \times \Gamma \to \Gamma_1 \times \Gamma_1$ is \emph{symmetric} if there exists a group homomorphism $\delta_0 : \Gamma \to \Gamma_1$ such that either $\delta(g,h) = (\delta_0(g),\delta_0(h))$ for all $g,h \in \Gamma$, or $\delta(g,h) = (\delta_0(h),\delta_0(g))$ for all $g,h \in \Gamma$.

\begin{theorem}\label{thm.coarse-embedding-Bernoulli-like}
Let $\Gamma_1,\ldots,\Gamma_k$ be groups in the class $\cC$. Let $\Gamma_i \times \Gamma_i \actson (B_i,\tau)$ be Bernoulli like actions over the left-right action $\Gamma_i \times \Gamma_i \actson \Gamma_i$, with all $B_i$ amenable and nontrivial. Let $\mu_i \in Z^2(\Gamma_i \times \Gamma_i,\T)$ and consider $M_i = B_i \rtimes_{\mu_i} (\Gamma_i \times \Gamma_i)$.

Let $\Gamma$ be a nonamenable icc group and $(A_0,\tau_0)$ a nontrivial amenable tracial von Neumann algebra. Consider the left-right Bernoulli action $\Gamma \times \Gamma \actson (A,\tau) = (A_0,\tau_0)^\Gamma$ and write $M = A \rtimes (\Gamma \times \Gamma)$.

Let $t > 0$ and $\psi : M \to (M_1 \ovt \cdots \ovt M_k)^t$ a coarse embedding. Then $t \in \N$ and $\psi$ is unitarily conjugate to a direct sum of embeddings of the form $\psi_0 : M \to M_n(\C) \ovt M_1 \ovt \cdots \ovt M_k$ satisfying
\begin{equation}
\begin{split}\label{eq.my.special.form}
& \psi_0(u_r) = \pi(r) \ot u_{\delta_1(r)} \ot \cdots \ot u_{\delta_k(r)} \quad\text{for all $r \in \Gamma \times \Gamma$,} \\
& \psi_0(\pi_e(A_0)) \subset M_n(\C) \ovt B_{1,e} \ovt \cdots \ovt B_{k,e} \; ,
\end{split}
\end{equation}
where $\pi$ is a projective representation with $2$-cocycle $\om_\pi$ and $\delta_i : \Gamma \times \Gamma \to \Gamma_i \times \Gamma_i$ are faithful symmetric homomorphisms such that $\om_\pi \, (\mu_1 \circ \delta_1) \, \cdots (\mu_k \circ \delta_k) = 1$.

If for every $i \in \{1,\ldots,k\}$ and every irreducible subfactor $P \subset M_i$ of infinite index, $\psi(M) \not\prec M_1 \ovt \cdots \ovt M_{i-1} \ovt P \ovt M_{i+1} \ovt \cdots \ovt M_k$, then the homomorphisms $\delta_i$ above are all isomorphisms.
\end{theorem}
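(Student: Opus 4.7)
The plan is to adapt the classification of embeddings between left-right Bernoulli crossed products from \cite[Theorem 3.2]{PV21} to the tensor-product target and to the Bernoulli like setting. The transfer Lemmas \ref{lem.transfer-1} and \ref{lem.transfer-2} allow us, at the level of intertwinings and (strong) relative amenability, to replace each twisted Bernoulli like $M_i = B_i \rtimes_{\mu_i}(\Gamma_i \times \Gamma_i)$ by its ``unfolded'' generalized Bernoulli counterpart via the embedding $\beta_i$ of \eqref{eq.embedding-beta}, where the class-$\cC$ machinery (Theorem \ref{thm.relative-omega-solidity} and Corollary \ref{cor.relative-omega-solidity}) applies verbatim. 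The role of coarseness (Lemma \ref{lem.coarse-gives-nonintertwining}) is to ensure that for any $Q \subset M$ without amenable direct summand, $\psi(Q)$ is strongly nonamenable relative to each hatted slot $M_1 \ovt \cdots \ovt \widehat{M_i} \ovt \cdots \ovt M_k$, so that one-factor rigidity can be applied coordinatewise without contributions leaking into the wrong coordinate.

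\textbf{Step 1: locating $L(\Gamma \times \Gamma)$.} Consider the commuting copies $L(\Gamma) \ot 1$ and $1 \ot L(\Gamma)$ inside $L(\Gamma \times \Gamma) \subset M$. Each has nonamenable diffuse image under $\psi$. Coarseness yields strong nonamenability relative to every hatted slot, and hence, applying Theorem \ref{thm.relative-omega-solidity}\ref{thm.relative-omega-solidity.i} in coordinate $i$ (after passage through $\beta_i$ and Lemma \ref{lem.transfer-1}), forces
\[
\psi(L(\Gamma)) \prec^f_{M_1 \ovt \cdots \ovt M_k} M_1 \ovt \cdots \ovt L_{\mu_i}(\Gamma_i \times \Gamma_i) \ovt \cdots \ovt M_k
\]
for every $i$. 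Since each $L_{\mu_i}(\Gamma_i\times\Gamma_i) \subset M_i$ is regular and forms a nondegenerate commuting square with itself along different coordinates, iterating Lemma \ref{lem.intertwining-intersection.ii} across $i=1,\ldots,k$ yields
\[
\psi(L(\Gamma)) \prec^f L_{\mu_1}(\Gamma_1 \times \Gamma_1) \ovt \cdots \ovt L_{\mu_k}(\Gamma_k \times \Gamma_k) \; .
\]
The same applies to $1 \ot L(\Gamma)$. Aligning the two commuting copies along their common image, as in the second half of the proof of \cite[Theorem 3.2]{PV21}, we obtain a unitary $u$ such that $u^* \psi(L(\Gamma \times \Gamma)) u$ is contained in $L_{\mu_1}(\Gamma_1 \times \Gamma_1) \ovt \cdots \ovt L_{\mu_k}(\Gamma_k \times \Gamma_k)$ (after cutting by a projection).

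\textbf{Step 2: standard form via height.} Writing $G = \prod_i (\Gamma_i \times \Gamma_i)$ and $\mu = \mu_1 \times \cdots \times \mu_k$, we have thus obtained a projective representation $\theta : \Gamma \times \Gamma \to \cU(L_\mu(G)^s)$. We want to apply Corollary \ref{cor.height-no-weak-mixing}. Positive height $h_G(\theta(\cdot)p)>0$ on nonzero central projections follows from the concrete structure obtained in Step 1. For each $e \neq k \in G$, the centralizer $C_G(k)$ splits as a product in which at least one factor is amenable (by the class $\cC$ hypothesis), and coarseness combined with Theorem \ref{thm.relative-omega-solidity}\ref{thm.relative-omega-solidity.i} rules out $\theta(\Gamma\times\Gamma)\dpr \prec L_\mu(C_G(k))$. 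Atomicity of the compact-part subalgebra $A$ in the statement of Corollary \ref{cor.height-no-weak-mixing} follows from Lemma \ref{lem.automatic-atomic} applied to the $2k$-fold product of weakly amenable biexact groups $\Gamma_1,\ldots,\Gamma_k$, using that the strong nonamenability inputs are inherited from coarseness. Corollary \ref{cor.height-no-weak-mixing} then gives that $\theta$ is standard. Because $\Gamma \times \Gamma$ is icc, the standard description is realized on all of $\Gamma \times \Gamma$, yielding $t = s \in \N$ and, after grouping direct summands, the form \eqref{eq.my.special.form} with homomorphisms $\delta_i : \Gamma \times \Gamma \to \Gamma_i \times \Gamma_i$, a finite-dimensional projective representation $\pi$, and cocycle compatibility $\om_\pi \, (\mu_1 \circ \delta_1)\cdots(\mu_k\circ \delta_k) = 1$ (the latter because the source is untwisted).

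\textbf{Step 3: symmetric form, base, and finite-index condition; main obstacle.} Symmetry of each $\delta_i$ follows as in \cite[Theorem 3.2]{PV21}: by the class-$\cC$ dichotomy applied to $\Gamma_i$, the image of $L(\Gamma)\ot 1$ in coordinate $i$ must intertwine entirely into $L_{\mu_i}(\Gamma_i\ot 1)$ or entirely into $L_{\mu_i}(1 \ot \Gamma_i)$, and similarly for $1 \ot L(\Gamma)$; this forces $\delta_i(g,h) = (\delta_{0,i}(g),\delta_{0,i}(h))$ up to a possible flip. To locate the base, note that $\pi_e(A_0)$ commutes with $L(\Stab(e))$, where $\Stab(e) \subset \Gamma\times\Gamma$ is a diagonal copy of $\Gamma$. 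Through $\delta$, the image sits inside the stabilizer of the coordinate $(\delta_{0,i}(e),\delta_{0,i}(e)) \in \Gamma_i$ in each slot $i$, and weak mixing (Lemma \ref{lem.weak-mixing}) applied coordinatewise forces $\psi_0(\pi_e(A_0)) \subset M_n(\C) \ovt B_{1,e} \ovt \cdots \ovt B_{k,e}$. Faithfulness of $\delta_i$ under the extra hypothesis is proved contrapositively: if $\Ker \delta_i$ were nontrivial or $\delta_i(\Gamma\times\Gamma) < \Gamma_i\times\Gamma_i$ had infinite index, then $\psi(M)$ would intertwine into $M_1 \ovt \cdots \ovt P \ovt \cdots \ovt M_k$ with $P = L_{\mu_i}(\delta_i(\Gamma\times\Gamma))\dpr$, an infinite-index irreducible subfactor of $M_i$. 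The main obstacle in the whole argument is Step 1: extending the single-target intertwining of \cite[Theorem 3.2]{PV21} to a tensor product and exploiting coarseness to obtain simultaneous intertwinings into all group-algebra slots. The biexactness input of Theorem \ref{thm.relative-omega-solidity} is inherently coordinatewise, so one must iterate carefully while preserving the full embedding $\prec^f$ through each application of Lemma \ref{lem.intertwining-intersection.ii}, and keep track of the cocycle data throughout the height argument in Step 2.
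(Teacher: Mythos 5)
Your broad architecture mirrors the paper's: transfer via $\beta_i$ and Lemma \ref{lem.transfer-1}/\ref{lem.transfer-2}, locating $\psi(L(\Gamma\times\Gamma))$ in the group algebra, then invoking height machinery via Corollary \ref{cor.height-no-weak-mixing} and Lemma \ref{lem.automatic-atomic} to reach a standard form. But you treat the two hardest steps of the paper's ten-step proof as automatic, and they are not. The critical hypothesis of Corollary \ref{cor.height-no-weak-mixing} is \emph{positive height} $h_G(\theta(\cdot)p)>0$. You assert that this ``follows from the concrete structure obtained in Step 1,'' but conjugating $\psi(L(\Gamma\times\Gamma))$ into $L_{\mu_1}(\Gamma_1\times\Gamma_1)\ovt\cdots\ovt L_{\mu_k}(\Gamma_k\times\Gamma_k)$ gives no information whatsoever about heights: the Fourier coefficients of $\theta(g_n)$ could still cluster to zero. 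In the paper this is a substantial, self-contained estimate (Step 4 and its clustering inequality \eqref{eq.clustering}) which hinges on having first established $\psi(A)\prec^f B_1\ovt\cdots\ovt B_k$ (Step 3), the coarseness condition $\psi(A)\not\prec M_1\ovt\cdots\ovt S_i\ovt\cdots\ovt M_k$, and a careful analysis of the $B^\circ_{i,\cF_i}$ Fourier decomposition. Your proposal has no analogue of this argument, and without it the whole application of the height theory collapses.

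A second genuine gap: you assert ``Because $\Gamma\times\Gamma$ is icc, the standard description is realized on all of $\Gamma\times\Gamma$,'' but the standard form produced by Corollary \ref{cor.height-no-weak-mixing} a priori involves inductions from \emph{finite index} subgroups $\Lambda_i < \Gamma\times\Gamma$, and the homomorphisms $\delta_i$ and projective representations $\pi_i$ are only defined there. Upgrading this to a direct sum of embeddings with homomorphisms defined on all of $\Gamma\times\Gamma$ is the content of Steps 7--9 in the paper: one must first show (using the coarseness and Lemma \ref{lem.remarks-icc.ii}) that $\delta^j_{i,1}$ and $\delta^j_{i,2}$ are $\Ad$-conjugate, then extend $\alpha_{j,i}$ from the finite-index normal subgroup to $\Gamma$ via Lemma \ref{lem.remarks-icc.iii}, and finally use the weak mixing of $(\Ad\psi(u_s))_{s\in\Lambda\times\Lambda}$ on the orthocomplement of the matrix block $D_1$ to show the formula extends. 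The icc property alone does not give this. A smaller but real issue is your Step 1: Theorem \ref{thm.relative-omega-solidity}\ref{thm.relative-omega-solidity.i} is a statement about relative amenability of normalizers, not about $\prec^f$, so it cannot directly produce $\psi(L(\Gamma))\prec^f L_{\mu_i}(\Gamma_i\times\Gamma_i)\ovt\cdots$; the paper instead packages the conjugacy into the group algebra via Lemma \ref{lem.key.i} (which invokes a Bernoulli-specific result of Krogager--Vaes) and then controls the left/right projections and the location of $\psi(A)$ through Lemmas \ref{lem.key.ii} and \ref{lem.key.iii}, none of which your proposal reconstructs.
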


Note that it follows automatically from \eqref{eq.my.special.form} where $\psi_0(\pi_s(A_0))$ is located for arbitrary $s \in \Gamma$. More precisely, defining $\al_i : \Gamma \to \Gamma_i$ and $\eps_i \in \{\pm 1\}$ such that either $\delta_i(g,h) = (\al_i(g),\al_i(h))$, $\eps_i=1$, or $\delta_i(g,h) = (\al_i(h),\al_i(g))$, $\eps_i=-1$, we get that
$$\psi_0(\pi_s(A_0)) \subset M_n(\C) \ovt B_{1,\al_1(s^{\eps_1})} \ovt \cdots \ovt B_{k,\al_k(s^{\eps_k})} \quad\text{for all $s \in \Gamma$.}$$

Before proving Theorem \ref{thm.coarse-embedding-Bernoulli-like}, we need the following general lemma.

\begin{lemma}\label{lem.key}
Let $\Gamma$ be a group in the class $\cC$ and $\Gamma \times \Gamma \actson (B,\tau)$ a Bernoulli like action over the left-right action $\Gamma \times \Gamma \actson \Gamma$, with $B$ amenable. Let $\mu \in Z^2(\Gamma \times \Gamma,\T)$ and write $M = B \rtimes_{\mu} (\Gamma \times \Gamma)$ with subalgebra $S = L_\mu(\Gamma \times \Gamma)$. Let $N$ be a finite factor.

\begin{enumlist}
\item\label{lem.key.i} If $P,Q \subset p (N \ovt M) p$ commute and are both strongly nonamenable relative to $N \ot 1$, then $P \vee Q$ can be unitarily conjugated into $N \ovt L_\mu(\Gamma \times \Gamma)$.
\item\label{lem.key.ii} If $P,Q \subset p (N \ovt S)p$ commute and are both strongly nonamenable relative to $N \ot 1$ and if we denote by $p_l$ and $p_r$ the maximal projections in $P' \cap p(N \ovt S)p$ such that
$$P p_l \prec_{N \ovt S}^f N \ovt L_\mu(\Gamma \times e) \quad\text{and}\quad P p_r \prec_{N \ovt S}^f N \ovt L_\mu(e \times \Gamma) \; ,$$
then $p_l + p_r = p$ and $p_l,p_r \in \cZ(P' \cap p(N \ovt S)p) \cap \cZ(Q' \cap p (N \ovt S)p)$ and
$$Q p_l \prec_{N \ovt S}^f N \ovt L_\mu(e \times \Gamma) \quad\text{and}\quad Q p_r \prec_{N \ovt S}^f N \ovt L_\mu(\Gamma \times e) \; .$$
\item\label{lem.key.iii} If $D \subset p(N \ovt M)p$ is amenable relative to $N \ot 1$ and if the normalizer of $D$ contains commuting von Neumann subalgebras $P,Q$ that are both strongly nonamenable relative to $N \ot 1$, then $D \prec^f_{N \ovt M} N \ovt B$.
\end{enumlist}
\end{lemma}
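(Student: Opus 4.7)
Throughout, since $B$ is amenable, $N \ovt B$ is amenable relative to $N \ot 1$ inside $N \ovt M$, so by transitivity of relative amenability, for subalgebras of $N \ovt M$ the hypothesis ``strongly nonamenable relative to $N \ot 1$'' is equivalent to ``strongly nonamenable relative to $N \ovt B$''. Viewing $N \ovt M = (N \ovt B) \rtimes_\mu (\Gamma \times \Gamma)$ and noting that $\Gamma \times \Gamma$ is weakly amenable and biexact as a product of two class $\cC$ groups, Theorem \ref{thm.relative-omega-solidity} applies with base $N \ovt B$.

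I would prove (iii) first as a warm-up. Assume $D \not\prec^f_{N \ovt M} N \ovt B$. By a standard maximality argument, pick a nonzero projection $q_0 \in D' \cap p(N \ovt M)p$ in the center of $\cN_{p(N \ovt M)p}(D)\dpr$ (so commuting with both $P$ and $Q$) with $D q_0 \not\prec_{N \ovt M} N \ovt B$. Since $D q_0$ is amenable relative to $N \ot 1 \subset N \ovt B$, Theorem \ref{thm.relative-omega-solidity.i} yields that $\cN_{q_0 p(N \ovt M) p q_0}(D q_0)\dpr$ is amenable relative to $N \ovt B$, hence relative to $N \ot 1$ by transitivity. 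As $P q_0$ sits inside this normalizer and $q_0 \in P' \cap p(N \ovt M)p$ is nonzero, the strong non-amenability of $P$ relative to $N \ot 1$ is contradicted.

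For part (i), the plan is to show $P \vee Q \subset p(N \ovt S)p$ up to a unitary conjugation. I would transfer via $\id \ot \be : N \ovt M \to N \ovt \cN \ovt \cM$, where $\cM = (A_0,\tau_0)^\Gamma \rtimes (\Gamma \times \Gamma)$ is an honest left-right generalized Bernoulli crossed product, so that Lemmas \ref{lem.transfer-1} and \ref{lem.transfer-2} translate relative amenability and intertwining across the embedding. In the untwisted picture, Popa's $s$-malleable deformation on the Bernoulli base $A \subset \cM$ is available, and combined with biexactness of $\Gamma$ (via Theorem \ref{thm.relative-omega-solidity}, applied separately to each of the two copies of $\Gamma$ in the left-right crossed product) it forces the commuting pair $(\id \ot \be)(P), (\id \ot \be)(Q)$ to both intertwine fully into $N \ovt \cN \ovt L(\Gamma \times \Gamma)$. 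Pulling back via Lemma \ref{lem.transfer-2} gives $P, Q \prec^f_{N \ovt M} N \ovt S$. The upgrade from this full intertwining to an honest unitary conjugation uses the weak mixing and freeness of the Bernoulli-like action on $B \ominus \C 1$ together with Lemma \ref{lem.weak-mixing} to produce a unitary $u \in p(N \ovt M)p$ with $u(P \vee Q)u^* \subset p(N \ovt S)p$.

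For part (ii), now that $P, Q \subset p(N \ovt S)p$, view $N \ovt S = (N \ovt L_\mu(\Gamma \times e)) \rtimes_{\mu_r} (e \times \Gamma)$ as a cocycle crossed product by $\Gamma \in \cC$. The maximality/normalizer argument from (iii), applied to $P$ normalized by $Q$ and symmetrically, places $p_l, p_r$ in the centers of both $P' \cap p(N \ovt S)p$ and $Q' \cap p(N \ovt S)p$. For $p_l \vee p_r = p$, set $q := p - p_l \vee p_r$ and suppose $q \neq 0$; then $Pq$ intertwines fully into neither of $N \ovt L_\mu(\Gamma \times e)$ nor $N \ovt L_\mu(e \times \Gamma)$. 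Combining Corollary \ref{cor.relative-omega-solidity} (applied with each of the two crossed product structures on $N \ovt S$) with the strong non-amenability of $Q$ forces $Pq$ amenable relative to each diagonal, hence relative to their intersection $N \ot 1$ by Lemma \ref{lem.intertwining-intersection.i}, contradicting strong non-amenability of $P$. The swap $Q p_l \prec^f N \ovt L_\mu(e \times \Gamma)$ (and symmetrically $Q p_r \prec^f N \ovt L_\mu(\Gamma \times e)$) follows by applying Theorem \ref{thm.relative-omega-solidity.iii} to $P p_l$ (which intertwines fully into the left) and its commutant $Q p_l$ inside the right-side crossed product. The main obstacle throughout will be the upgrade from $\prec^f$ to a genuine unitary conjugation in part (i), requiring precise cocycle bookkeeping under $\id \ot \be$ and careful exploitation of the weak-mixing properties of the Bernoulli-like structure.
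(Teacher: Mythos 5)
Your framing for parts (i) and (ii) is broadly in line with the paper's argument (transfer via $\be$, invoke \cite[Lemma 5.3]{KV15} for the conjugation into $N\ovt L_\mu(\Gamma\times\Gamma)$, then the maximality and commuting-square bookkeeping with Lemma \ref{lem.intertwining-intersection}). But your plan for part (iii) has two linked errors that would cause it to fail as written.

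First, your premise that ``$\Gamma\times\Gamma$ is weakly amenable and biexact as a product of two class $\cC$ groups'' is false: biexactness is not preserved under taking products of infinite groups, and since every $\Gamma\in\cC$ is nonamenable (hence infinite), $\Gamma\times\Gamma$ is never biexact. So you cannot apply Theorem \ref{thm.relative-omega-solidity} in one shot with base $N\ovt B$ and acting group $\Gamma\times\Gamma$. The theorem must be applied once for each copy of $\Gamma$, i.e.\ first with base $N\ovt (B\rtimes_\mu(\Gamma\times e))$ and acting group $e\times\Gamma$, then with base $N\ovt(B\rtimes_\mu(e\times\Gamma))$ and acting group $\Gamma\times e$, after which the commuting-square Lemma \ref{lem.intertwining-intersection} combines the two conclusions.

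Second, and more consequentially, once you are forced into this two-step application you need the normalizer of $D$ (which contains $P\vee Q$) to be strongly nonamenable relative to both $N\ovt(B\rtimes_\mu(\Gamma\times e))$ and $N\ovt(B\rtimes_\mu(e\times\Gamma))$. This is \emph{not} a consequence of the hypothesis: strong nonamenability relative to $N\ot 1$ is the weaker direction, since amenability relative to $N\ot 1$ implies amenability relative to each half crossed product but not conversely. Establishing the needed strong nonamenability relative to the two half crossed products is exactly what requires parts (i) and (ii) first: one conjugates $P\vee Q$ into $N\ovt S$ by (i), produces the projections $p_l,p_r$ by (ii), and then uses $Pp_l\prec^f N\ovt L_\mu(\Gamma\times e)$ together with Lemma \ref{lem.intertwining-intersection.i} to rule out relative amenability of $Pp_l$ to the right half (and similarly for the other three). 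So (iii) cannot be proved ``as a warm-up'' before (i) and (ii); it depends essentially on them, and the dependency is not a stylistic choice but the only way to obtain the hypotheses of Theorem \ref{thm.relative-omega-solidity.i} for each copy of $\Gamma$ separately.
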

\begin{proof}
Denote $\be : M \to \cN \ovt \cM$ where $\cN = C \rtimes_\mu (\Gamma \times \Gamma)$ and $\cM = A \rtimes (\Gamma \times \Gamma)$ with $(A,\tau) = (A_0,\tau_0)^\Gamma$ as in \eqref{eq.embedding-beta}. We still denote as $\be$ the embedding $\id \ot \be : N \ovt M \to N \ovt \cN \ovt \cM$.

(i)\ Since $B$ is amenable, $P$ and $Q$ are strongly nonamenable relative to $N \ovt B$. By Lemma \ref{lem.transfer-1.iii}, $\be(P)$ and $\be(Q)$ are strongly nonamenable relative to $N \ovt \cN \ovt A$. A fortiori, $\be(P)$ and $\be(Q)$ are strongly nonamenable relative to $N \ovt \cN \ovt 1$. By \cite[Lemma 5.3]{KV15}, the subalgebra $\be(P \vee Q)$ can be unitarily conjugated into $N \ovt \cN \ovt L(\Gamma \times \Gamma)$. So certainly, $\be(P \vee Q) \prec^f N \ovt \cN \ovt L(\Gamma \times \Gamma)$. By Lemma \ref{lem.transfer-1.ii}, we get that $P \vee Q \prec^f N \ovt L_\mu(\Gamma \times \Gamma)$.

Denote by $\Gamma_d \subset \Gamma \times \Gamma$ the diagonal subgroup. By Corollary \ref{cor.relative-omega-solidity}, we get that $P \vee Q \not\prec N \ovt L_\mu(\Gamma_d)$. As in the proof of \cite[Lemma 5.3]{KV15}, using Lemmas \ref{lem.intertwine-transitivity.i} and \ref{lem.weak-mixing}, and the factoriality of $L_\mu(\Gamma \times \Gamma)$, we find that $P \vee Q$ can be unitarily conjugated into $N \ovt L_\mu(\Gamma \times \Gamma)$.

(ii)\ Define $p_l$ and $p_r$ as in the statement of the lemma. By definition, $p_l,p_r \in \cZ(P' \cap p(N \ovt S)p)$. We similarly define $q_l,q_r \in Q' \cap p (N \ovt S)p$ as the maximal projections such that
$$Q q_l \prec_{N \ovt S}^f N \ovt L_\mu(\Gamma \times e) \quad\text{and}\quad Q q_r \prec_{N \ovt S}^f N \ovt L_\mu(e \times \Gamma) \; .$$
Note that $q_l,q_r \in \cZ(Q' \cap p (N \ovt S)p)$. To conclude the proof of (ii), it suffices to prove that $p_r = q_l$, $p_l = q_r$ and $p_l + p_r = p$.

By Lemma \ref{lem.transfer-2.i}, we have that $\be(P (p-p_l)) \not\prec N \ovt \cN \ovt L(\Gamma \times e)$. Since $Q \subset P' \cap p(N \ovt S)p$, we find that $p-p_l$ commutes with $Q$. By Theorem \ref{thm.relative-omega-solidity.iii}, we find that $\be(Q(p-p_l))$ is amenable relative to $N \ovt \cN \ovt L(\Gamma \times e)$. By Lemma \ref{lem.transfer-2.ii}, we get that $Q(p-p_l)$ is amenable relative to $N \ovt L_\mu(\Gamma \times e)$. Since $Q q_r \prec^f N \ovt L_\mu(e \times \Gamma)$, a fortiori $Q q_r$ is amenable relative to $N \ovt L_\mu(e \times \Gamma)$. Since $p-p_l \in Q' \cap p(N \ovt S)p$, the projections $p-p_l$ and $q_r$ commute. By Lemma \ref{lem.intertwining-intersection.i}, we find that $Q (p-p_l) q_r$ is amenable relative to $N \ot 1$. By our assumption on $Q$, we conclude that $(p-p_l) q_r = 0$.

So, $q_r \leq p_l$. By symmetry, we also find the other inequalities $p_r \leq q_l$ and $q_l \leq p_r$ and $p_l \leq q_r$. So, $p_l = q_r$ and $p_r = q_l$. By Lemma \ref{lem.intertwining-intersection.ii}, $P p_l p_r \prec N \ot 1$, so that the projections $p_l$ and $p_r$ must orthogonal.

It remains to prove that $p_l + p_r = p$. Write $e = p - (p_l + p_r) = p - (q_l + q_r)$. Since $e \leq p-p_l$, we have seen above that $Q e$ is amenable relative to $N \ovt L_\mu(\Gamma \times e)$. Since $e \leq p-p_r$, we similarly have that $Q e$ is amenable relative to $N \ovt L_\mu(e \times \Gamma)$. By Lemma \ref{lem.intertwining-intersection.i}, $Qe$ is amenable relative to $N \ot 1$, so that $e=0$.

(iii)\ Denote by $\cD$ the normalizer of $D$ inside $p(N \ovt M)p$. By point (i), we may assume that $p \in N \ovt S$ and that $P \vee Q \subset p (N \ovt S) p$. We then take $p_l$ and $p_r$ as in point (ii). Assume that $p_l \neq 0$. We claim that $P p_l$ is strongly nonamenable relative to $N \ovt (B \rtimes_\mu (e \times \Gamma))$ inside $N \ovt M$. Assume the contrary. Since $P p_l \prec^f N \ovt L_\mu(\Gamma \times e)$, a fortiori $P p_l$ is amenable relative to $N \ovt L_\mu(\Gamma \times e)$. By Lemma \ref{lem.intertwining-intersection.i}, we arrive at the contradiction that a nonzero corner of $P p_l$ is amenable relative to $N \ot 1$. So the claim is proven.

By symmetry, $Q p_l$ is strongly nonamenable relative to $N \ovt (B \times_\mu (\Gamma \times e))$. Since similar statements hold for $P p_r$ and $Q p_r$ when $p_r \neq 0$, it follows that $\cD$ is strongly nonamenable relative to both $N \ovt (B \rtimes_\mu (\Gamma \times e))$ and $N \ovt (B \rtimes_\mu (e \times \Gamma))$.

By Lemma \ref{lem.transfer-1.iii}, $\be(\cD)$ is strongly nonamenable relative to both $N \ovt \cN \ovt (A \rtimes (\Gamma \times e))$ and $N \ovt \cN \ovt (A \rtimes (e \times \Gamma))$. Applying twice Theorem \ref{thm.relative-omega-solidity.i}, we conclude that $\be(D) \prec^f N \ovt \cN \ovt A$. By Lemma \ref{lem.transfer-1.i}, also $D \prec^f N \ovt B$.
\end{proof}

We are now ready to prove Theorem \ref{thm.coarse-embedding-Bernoulli-like}.

\begin{proof}[{Proof of Theorem \ref{thm.coarse-embedding-Bernoulli-like}}]
We only prove the theorem in the case $k=2$. This does not hide any substantial aspect of the argument, but makes the notations much easier. We write $P = \psi(L(\Gamma \times e))$ and $Q = \psi(L(e \times \Gamma))$. We also write $S_i = L_{\mu_i}(\Gamma_i \times \Gamma_i)$. By coarseness of $\psi$ and Lemma \ref{lem.coarse-gives-nonintertwining.ii}, we get that $P$ and $Q$ are strongly nonamenable relative to both $M_1 \ot 1$ and $1 \ot M_2$.

{\bf Step 1.} After a unitary conjugacy, we have $P \vee Q \subset (S_1 \ovt S_2)^t$. This follows by applying twice Lemma \ref{lem.key.i}.

For the rest of the proof, we realize $(M_1 \ovt M_2)^t$ and $(S_1 \ovt S_2)^t$ explicitly as $p (M_m(\C) \ovt M_1 \ovt M_2)p$, resp.\ $p (M_m(\C) \ovt S_1 \ovt S_2)p$, where $p \in M_m(\C) \ovt S_1 \ovt S_2$ is a projection, and we assume that $\psi(L(\Gamma \times \Gamma)) \subset (S_1 \ovt S_2)^t$.

{\bf Step 2.} Denote by $p_{1l},p_{1r} \in P' \cap (S_1 \ovt S_2)^t$ the maximal projections such that in $S_1 \ovt S_2$,
$$P p_{1l} \prec^f L_{\mu_1}(\Gamma_1 \times e) \ovt S_2 \quad\text{and}\quad P p_{1r} \prec^f L_{\mu_1}(e \times \Gamma_1) \ovt S_2 \; .$$
We similarly denote by $p_{2l},p_{2r} \in P' \cap (S_1 \ovt S_2)^t$ the maximal projections such that in $S_1 \ovt S_2$,
$$P p_{2l} \prec^f S_1 \ovt L_{\mu_2}(\Gamma_2 \times e) \quad\text{and}\quad P p_{2r} \prec^f S_1 \ovt L_{\mu_2}(e \times \Gamma_2) \; .$$
By Lemma \ref{lem.key.ii}, these four projections belong to $\cZ(P' \cap (S_1 \ovt S_2)^t) \cap \cZ(Q' \cap (S_1 \ovt S_2)^t)$ and, in particular, all commute. Writing
$$p_{ll} = p_{1l} p_{2l} \;\; , \;\; p_{rl} = p_{1r} p_{2l} \;\; , \;\; p_{lr} = p_{1l} p_{2r} \;\; , \;\; p_{rr} = p_{1r} p_{2r} \;\; ,$$
it follows from Lemma \ref{lem.key.ii} that $p_{ll} + p_{rl} + p_{lr} + p_{rr} = 1$. By Lemma \ref{lem.intertwining-intersection.ii}, we find that inside $S_1 \ovt S_2$,
$$P p_{ll} \prec^f L_{\mu_1}(\Gamma_1 \times e) \ovt L_{\mu_2}(\Gamma_2 \times e) \quad\text{and}\quad Q p_{ll} \prec^f L_{\mu_1}(e \times \Gamma_1) \ovt L_{\mu_2}(e \times \Gamma_2) \; .$$
We get similar results under $p_{rl}, p_{lr}$ and $p_{rr}$.

{\bf Step 3.} Applying twice Lemma \ref{lem.key.iii} and using Lemma \ref{lem.intertwining-intersection.ii}, we find that $\psi(A) \prec^f B_1 \ovt B_2$.

{\bf Step 4.} Assume that $p_{ll} \neq 0$, that $\Lambda_1 < \Gamma$ is a finite index subgroup and that
\begin{equation}\label{eq.theta-V}
\begin{split}
& q \in M_k(\C) \ovt L_{\mu_1}(\Gamma_1 \times e) \ovt L_{\mu_2}(\Gamma_2 \times e) \quad\text{is a projection,}\\
& \theta : \Lambda_1 \to \cU(q (M_k(\C) \ovt L_{\mu_1}(\Gamma_1 \times e) \ovt L_{\mu_2}(\Gamma_2 \times e))q) \quad\text{is a group homomorphism,}\\
& V \in p_{ll}(M_{m,k}(\C) \ovt S_1 \ovt S_2)q \quad\text{is a nonzero element such that}\\
& \psi(u_{(g,e)}) V = V \theta(g) \quad\text{for all $g \in \Lambda_1$.}
\end{split}
\end{equation}
We prove in step 4 that $h_{\Gamma_1 \times \Gamma_2}(\theta(\Lambda_1)) > 0$. Similar statements hold under each of the projections $p_{ll}, p_{lr}, p_{rl}$ and $p_{rr}$, and also for the intertwining of $\psi(u_{(e,g)})$ instead of $\psi(u_{(g,e)})$, $g \in \Lambda_1$.

Define $K \subset L^2(M_1 \ovt M_2)$ as the closed linear span of $M_1 \ovt S_2$ and $S_1 \ovt M_2$. For every finite subset $\cF \subset (\Gamma_1 \times \Gamma_1) \times (\Gamma_2 \times \Gamma_2)$, denote by $P_\cF$ the orthogonal projection of $L^2(M_1 \ovt M_2)$ onto the closed linear span of $B_1 u_{r_1} \ot B_2 u_{r_2}$, $(r_1,r_2) \in \cF$. The key estimate is to prove that for every sequence $a_n \in L_{\mu_1}(\Gamma_1 \times e) \ovt L_{\mu_2}(\Gamma_2 \times e)$ with $\|a_n\| \leq 1$ for all $n$ and $h_{\Gamma_1 \times \Gamma_2}(a_n) \to 0$, every vector $\xi \in K^\perp$, every sequence $b_n \in S_1 \ovt S_2$ with $\|b_n\| \leq 1$ for all $n$ and every finite subset $\cF \subset (\Gamma_1 \times \Gamma_1) \times (\Gamma_2 \times \Gamma_2)$, we have
\begin{equation}\label{eq.clustering}
\|P_\cF(a_n \xi b_n)\|_2 \to 0 \; .
\end{equation}
The proof of \eqref{eq.clustering} is almost identical to \cite[Lemma 5.10]{KV15}, but given the difference in notations and context, we repeat the argument here.

For every nonempty finite subset $\cF_i \subset \Gamma_i$ and using the convention that $B_{i,\emptyset} = \C 1$, we define
$$B^\circ_{i,\cF_i} = B_{i,\cF_i} \ominus \lspan \{B_{i,\cF'_i} \mid \cF'_i \subset \cF_i \;\;\text{and}\;\; \cF'_i \neq \cF_i \} \; .$$
By construction, the subspaces $B^\circ_{i,\cF_i}$ are orthogonal and densely span $B_i \ominus \C 1$, when $\cF_i$ runs over all nonempty finite subsets $\cF_i \subset \Gamma_i$. So the elements of the form $d_1 u_{r_1} \ot d_2 u_{r_2}$ with $d_i \in B^\circ_{i,\cF_i}$, $\|d_i\| \leq 1$ and $r_i \in \Gamma_i \times \Gamma_i$, form a total subset of $K^\perp$. Since we can replace $b_n \in S_1 \ovt S_2$ by $(u_{r_1} \ot u_{r_2}) b_n$, to prove \eqref{eq.clustering}, it suffices to prove that
\begin{equation}\label{eq.clustering-enough}
\|P_\cF(a_n (d_1 \ot d_2) b_n)\|_2 \to 0
\end{equation}
when $d_i \in B^\circ_{i,\cF_i}$ with $\|d_i\| \leq 1$ and $\cF_i \subset \Gamma_i$ are nonempty finite subsets. For $r_i \in \Gamma_i \times \Gamma_i$, we denote by $P_{(r_1,r_2)}$ the projection $P_{\cF}$ corresponding to the singleton $\cF = \{(r_1,r_2)\}$. Since for any finite set $\cF$, we have $P_\cF = \sum_{(r_1,r_2) \in \cF} P_{(r_1,r_2)}$, we may furthermore assume in \eqref{eq.clustering-enough} that $\cF = \{(r_1,r_2)\}$. Note that
\begin{multline*}
P_{(r_1,r_2)}(a_n(d_1 \ot d_2)b_n) \\= \sum_{k_1 \in \Gamma_1,k_2 \in \Gamma_2} (a_n)_{(k_1,e,k_2,e)} \; (\si_{(k_1,e)}(d_1) \ot \si_{(k_2,e)}(d_2)) \; P_{(r_1,r_2)}((u_{(k_1,e)} \ot u_{(k_2,e)})b_n) \; .
\end{multline*}
Denote by $L_i < \Gamma_i$ the, necessarily finite, subgroup of elements $g_i \in \Gamma_i$ such that $(g_i,e) \cF_i = \cF_i$. Choose sets of representatives $T_i \subset \Gamma_i$ for $\Gamma_i/L_i$. For all $g_1 \in L_1$ and $g_2 \in L_2$, define
$$\xi_{n,g_1,g_2} := \sum_{k_1 \in T_1 g_1, k_2 \in T_2 g_2} (a_n)_{(k_1,e,k_2,e)} \; (\si_{(k_1,e)}(d_1) \ot \si_{(k_2,e)}(d_2)) \; P_{(r_1,r_2)}((u_{(k_1,e)} \ot u_{(k_2,e)})b_n) \; .$$
Since
$$P_{(r_1,r_2)}(a_n(d_1 \ot d_2)b_n) = \sum_{g_1 \in L_1,g_2 \in L_2} \xi_{n,g_1,g_2}$$
and $L_1$, $L_2$ are finite, it suffices to show that for all $g_i \in L_i$, $\|\xi_{n,g_1,g_2}\|_2 \to 0$. But the terms of the sum defining $\xi_{n,g_1,g_2}$ are orthogonal. Thus,
\begin{align*}
&\|\xi_{n,g_1,g_2}\|_2^2 \\
&\hspace{1cm} = \sum_{k_1 \in T_1 g_1, k_2 \in T_2 g_2} |(a_n)_{(k_1,e,k_2,e)}|^2 \; \|\si_{(k_1,e)}(d_1)\|_2^2 \; \|\si_{(k_2,e)}(d_2)\|_2^2 \; |(b_n)_{((k_1^{-1},e)r_1,(k_2^{-1},e)r_2)}|^2 \\
&\hspace{1cm} \leq h_{\Gamma_1 \times \Gamma_2}(a_n)^2 \sum_{k_1 \in T_1 g_1, k_2 \in T_2 g_2}  |(b_n)_{((k_1^{-1},e)r_1,(k_2^{-1},e)r_2)}|^2 \\
&\hspace{1cm} \leq h_{\Gamma_1 \times \Gamma_2}(a_n)^2 \; \|b_n\|_2^2 \leq h_{\Gamma_1 \times \Gamma_2}(a_n)^2 \to 0 \; .
\end{align*}
So, \eqref{eq.clustering} is proven.

Now assume that $p_{ll} \neq 0$, that $\Lambda_1 < \Gamma$ is a finite index subgroup and that $\theta$, $V$ and $q$ are as in \eqref{eq.theta-V}. Assume that $h_{\Gamma_1 \times \Gamma_2}(\theta(\Lambda_1)) = 0$ and take a sequence $g_n \in \Lambda_1$ such that $h_{\Gamma_1 \times \Gamma_2}(\theta(g_n)) \to 0$.

We extend the above notations to $m \times k$ matrices over $M_1 \ovt M_2$. So we replace $K$ by $M_{m,k}(\C) \ot K$ and we still denote by $P_\cF$ the orthogonal projection onto $M_{m,k}(\C) \ot P_\cF(L^2(M_1 \ovt M_2))$. Write $v_n = \psi(u_{(g_n,e)})$. By our assumption that $h_{\Gamma_1 \times \Gamma_2}(\theta(g_n)) \to 0$ and by \eqref{eq.clustering}, we get that
\begin{equation}\label{eq.new-clustering}
\|P_\cF(\theta(g_n) \xi v_n^*)\|_2 \to 0 \quad\text{whenever $\cF \subset (\Gamma_1 \times \Gamma_1) \times (\Gamma_2 \times \Gamma_2)$ is finite and $\xi \in K^\perp$.}
\end{equation}

We know from step 3 that $\psi(A) \prec^f B_1 \ovt B_2$. Since $\psi$ is coarse and $A$ is diffuse, we also know from Lemma \ref{lem.coarse-gives-nonintertwining.i} that $\psi(A) \not\prec M_1 \ot 1$ and $\psi(A) \not\prec 1 \ot M_2$. It then follows from Lemma \ref{lem.intertwining-intersection.ii} that $\psi(A) \not\prec M_1 \ovt S_2$ and $\psi(A) \not\prec S_1 \ovt M_2$. Fix an arbitrary $\eps > 0$.
By e.g.\ \cite[Remark 3.3]{Vae07}, we can choose a unitary $a \in \cU(A)$ such that $\|\psi(a) - P_{K^\perp}(\psi(a))\|_2 < \eps$. Since $\psi(A) \prec^f B_1 \ovt B_2$, we can take a finite subset $\cF \subset (\Gamma_1 \times \Gamma_1) \times (\Gamma_2 \times \Gamma_2)$ such that $\|V^*\psi(b) - P_\cF(V^*\psi(b))\|_2 < \eps$ for all $b \in A$ with $\|b\| \leq 1$.

Write $\xi = V^* P_{K^\perp}(\psi(a))$. Since $V$ is a matrix over $S_1 \ovt S_2$, we have that $\xi \in K^\perp$. It follows from \eqref{eq.new-clustering} that
$$\|P_\cF(\theta(g_n) V^* P_{K^\perp}(\psi(a)) v_n^*)\|_2 \to 0 \; .$$
Since $\|\psi(a) - P_{K^\perp}(\psi(a))\|_2 < \eps$, it follows that
\begin{equation}\label{eq.preceding-uniform}
\limsup_n  \|P_\cF(\theta(g_n) V^* \psi(a) v_n^*)\|_2 \leq \eps \; .
\end{equation}
Writing $a_n = \si_{(g_n,e)}(a)$, we have $\theta(g_n) V^* \psi(a) v_n^* = V^* v_n \psi(a) v_n^* = V^* \psi(a_n)$. It thus follows from \eqref{eq.preceding-uniform} that $\limsup_n  \|P_\cF(V^* \psi(a_n))\|_2 \leq \eps$. But $\|V^*\psi(a_n) - P_\cF(V^*\psi(a_n))\|_2 < \eps$ for all $n$, so that $\limsup_n \|V^* \psi(a_n)\|_2 \leq 2 \eps$. Since $a_n$ is a unitary, we conclude that $\|V^*\|_2 \leq 2 \eps$ for all $\eps > 0$. We arrive at the contradiction $V = 0$.

{\bf Step 5.} If $p_{ll} \neq 0$, we prove that there exists a finite index subgroup $\Lambda < \Gamma$ such that $\Lambda \times \Lambda \ni (g,h) \mapsto \psi(u_{(g,h)}) p_{ll}$ is unitarily conjugate, with a unitary in an amplification of $S_1 \ovt S_2$, to a direct sum of homomorphisms of the form
$$
\Lambda \times \Lambda \ni r \mapsto \pi(r) \ot u_{\delta_1(r)} \ot u_{\delta_2(r)}
$$
where $\pi$ is a finite dimensional projective representation of $\Lambda \times \Lambda$ and the $\delta_i : \Lambda \times \Lambda \to \Gamma_i \times \Gamma_i$ are faithful group homomorphisms of the form $\delta_i(g,h) = (\delta_{i,1}(g),\delta_{i,2}(h))$. Similar statements hold for the summands of $\psi(u_{(g,h)})$ under $p_{lr}, p_{rl}$ and $p_{rr}$.

To prove the statement in step~5, assume that $p_{ll} \neq 0$. Assume that $\Lambda_1 < \Gamma$ is any finite index subgroup and $p_1 \in \psi(L(\Lambda_1 \times \Lambda_1))' \cap p_{ll}(M_m(\C) \ovt S_1 \ovt S_2)p_{ll}$ is a nonzero projection. Below we prove the existence of a smaller finite index subgroup $\Lambda_2 < \Lambda_1$, a projective representation $\pi : \Lambda_2 \times \Lambda_2 \to \cU(n)$, faithful group homomorphisms $\delta_i : \Lambda_2 \times \Lambda_2 \to \Gamma_i \times \Gamma_i$ of the form $\delta_i(g,h) = (\delta_{i,1}(g),\delta_{i,2}(h))$ and an element $V \in p_1(M_{m,n}(\C) \ovt S_1 \ovt S_2)$ such that $V^* V = 1$ and
$$\psi(u_r) V = V (\pi(r) \ot u_{\delta_1(r)} \ot u_{\delta_2(r)}) \quad\text{for all $r \in \Lambda_2 \times \Lambda_2$.}$$
Note as follows that it is indeed sufficient to prove this statement. We apply it a first time to $\Lambda_1 = \Gamma$ and $p_1 = p_{ll}$. Then we apply it to $\Lambda_2$ and the projection $p_{ll} - VV^*$, and so on. Since each of the projections $VV^*$ has integer trace, this process must end in finitely many steps.

Since $\psi(L(\Lambda_1 \times e))p_{ll} \prec^f L_{\mu_1}(\Gamma_1 \times e) \ovt L_{\mu_2}(\Gamma_2 \times e)$, we can take $\theta$, $V$ and $q$ satisfying \eqref{eq.theta-V} and $V = p_1 V$. Replacing $q$ by the support projection of the conditional expectation of $V^* V$ on $M_k(\C) \ovt L_{\mu_1}(\Gamma_1 \times e) \ovt L_{\mu_2}(\Gamma_2 \times e)$, which commutes with $\theta(\Lambda_1)$, we may further assume that $V q_0 \neq 0$ for every nonzero projection $q_0 \in q(M_k(\C) \ovt L_{\mu_1}(\Gamma_1 \times e) \ovt L_{\mu_2}(\Gamma_2 \times e))q$.

By step~4, $\theta(\Lambda_1)q_0$ has positive height as a subgroup of
$$\cU(q_0(M_k(\C) \ovt L_{\mu_1}(\Gamma_1 \times e) \ovt L_{\mu_2}(\Gamma_2 \times e))q_0)$$
for every nonzero projection $q_0$ that commutes with $\theta(\Lambda_1)$. By Lemma \ref{lem.automatic-atomic} and Corollary \ref{cor.height-no-weak-mixing}, we find a nonzero $W \in q(M_{k,k'}(\C) \ovt L_{\mu_1}(\Gamma_1 \times e) \ovt L_{\mu_2}(\Gamma_2 \times e))$, a finite index subgroup $\Lambda_0 < \Lambda_1$, an irreducible projective representation $\pi : \Lambda_0 \to \cU(k')$ and group homomorphisms $\delta_{1,1} : \Lambda_0 \to \Gamma_1$ and $\delta_{2,1} : \Lambda_0 \to \Gamma_2$ such that
$$\theta(g) W = W (\pi(g) \ot u_{(\delta_{1,1}(g),e)} \ot u_{(\delta_{2,1}(g),e)}) \quad\text{for all $g \in \Lambda_0$.}$$
We now replace $V$ by the partial isometry given by the polar decomposition of $VW$. Note that $V$ remains nonzero and that $VV^* \leq p_1$. We replace $\theta$ by $\theta(g) = \pi(g) \ot u_{(\delta_{1,1}(g),e)} \ot u_{(\delta_{2,1}(g),e)}$ for all $g \in \Lambda_0$ and replace $k$ by $k'$. We still have $\psi(u_{(g,e)}) V = V \theta(g)$ for all $g \in \Lambda_0$.

It follows that $\psi(L(\Ker \delta_{1,1} \times e)) \prec 1 \ot M_2$ and $\psi(L(\Ker \delta_{2,1} \times e)) \prec M_1 \ot 1$. By the coarseness of $\psi$ and Lemma \ref{lem.coarse-gives-nonintertwining.i}, the subgroups $\Ker \delta_{i,1}$ must be finite. Since $\Lambda_0$ is an icc group, these kernels must be trivial, so that the group homomorphisms $\delta_{i,1} : \Lambda_0 \to \Gamma$ are faithful.

We now need a precise description of the relative commutant $\theta(\Lambda_0)' \cap M_k(\C) \ovt S_1 \ovt S_2$.
Define the bicharacters
$$\Om_i : \Gamma_i \times \Gamma_i \to \T : \Om_i(g,g') = \mu_i((g,e),(e,g')) \overline{\mu_i}((e,g'),(g,e)) \; .$$
Then define the bicharacter
$$\al : \Lambda_0 \times (\Gamma_1 \times \Gamma_2) \to \T : \al(g,h,h') = \Om_1(\delta_{1,1}(g),h) \; \Om_2(\delta_{2,1}(g),h') \; .$$
By definition, we have that
$$\Ad(u_{(\delta_{1,1}(g),e)} \ot u_{(\delta_{2,1}(g),e)})(u_{(e,h)} \ot u_{(e,h')}) = \al(g,h,h') \, ( u_{(e,h)} \ot u_{(e,h')})$$
for all $g \in \Lambda_0$, $h,h' \in \Gamma_2$. Define the subset $T \subset \Gamma_1 \times \Gamma_2$ by
$$T = \{(h,h') \in \Gamma_1 \times \Gamma_2 \mid \exists v \in M_k(\C) : v \neq 0 \;\;\text{and for all $g \in \Lambda_0$,}\;\; \al(g,h,h') \pi(g) v \pi(g)^* = v \} \; .$$
Since $\pi$ is irreducible, when $(h,h') \in T$, such an invariant element $v$ must be a multiple of a unitary in $M_k(\C)$. It follows that $T$ is a subgroup of $\Gamma_1 \times \Gamma_2$ and that for every $(h,h') \in T$, we can choose a corresponding unitary $v(h,h') \in \cU(k)$. Then $v : T \to \cU(k)$ is a projective representation. We denote its $2$-cocycle as $\om_v$. Define $\om \in Z^2(T,\T)$ by
$$\om((h,h'),(s,s')) = \om_v((h,h'),(s,s')) \; \mu_1((e,h),(e,s)) \; \mu_2((e,h'),(e,s')) \; .$$
Then
$$\Phi : L_\om(T) \to M_k(\C) \ovt S_1 \ovt S_2 : \Phi(u_{(h,h')}) = v(h,h') \ot u_{(e,h)} \ot u_{(e,h')}$$
is a well-defined embedding. By construction, $\theta(\Lambda_0)$ and $\Phi(T)$ commute. We claim that $\theta(\Lambda_0)' \cap M_k(\C) \ovt S_1 \ovt S_2 = \Phi(L_\om(T))$.

Since the group homomorphisms $\delta_{i,1}$ are faithful, their images are nonamenable subgroups of $\Gamma_i$ and hence, relatively icc. It follows that
$$\theta(\Lambda_0)' \cap M_k(\C) \ovt S_1 \ovt S_2 \subset M_k(\C) \ovt L_{\mu_1}(e \times \Gamma_1) \ovt L_{\mu_2}(e \times \Gamma_2) \; .$$
Then the claim follows from the construction of $T$.

Note that the commutator subgroup $[\Gamma_1,\Gamma_1] \times [\Gamma_2,\Gamma_2] \subset T$. Since $[\Gamma_i,\Gamma_i]$ is a nonamenable subgroup of $\Gamma_i$, it is relatively icc. In particular, $T$ is an icc group and $L_\om(T)$ is a II$_1$ factor.

Consider the von Neumann algebra
$$\cR = \psi(L(\Lambda_0 \times e))' \cap p_1 (M_m(\C) \ovt S_1 \ovt S_2) p_1 \; .$$
Note that $VV^*$ is a projection in $\cR$ and denote by $p_0 \in \cZ(\cR)$ its central support. We can then choose a sequence of partial isometries $a_n \in \cR$ with $a_1 = VV^*$, $a_n^* a_n \leq VV^*$ for all $n$ and $\sum_n a_n a_n^* = p_0$.

Note that $\psi(L(e \times \Lambda_1)) p_1 \subset \cR$. In particular, $p_0$ commutes with $\psi(L(e \times \Lambda_1))$ and $V^* a_n^* a_n V$ commutes with $\theta(\Lambda_0)$. We can thus write $V^* a_n^* a_n V = \Phi(q_n)$, where $q_n \in L_\om(T)$ is a nonzero projection. Note that $\sum_n \tau(q_n) = \tau(p_0)$. Since $L_\om(T)$ is II$_1$ factor, we can choose a projection $q \in M_{k'}(\C) \ot L_\om(T)$ and elements $b_n \in \C^{k'} \ot L_\om(T)$ such that $b_n^* b_n = q_n$ for all $n$ and $\sum_n b_n b_n^* = q$. Then,
$$W = \sum_n a_n V (\id \ot \Phi)(b_n^*) \in M_{m,k'k}(\C) \ovt S_1 \ovt S_2$$
is a partial isometry satisfying $WW^* = p_0$ and $W^* W = (\id \ot \Phi)(q)$.

Since for every $h \in \Lambda_1$, $\psi(u_{(e,h)})$ commutes with $\psi(L(\Lambda_0 \times e))$, we find the group homomorphism
$$\zeta : \Lambda_1 \to \cU(q(M_{k'}(\C) \ot L_\om(T))q) : W^* \psi(u_{(e,h)}) W = (\id \ot \Phi)(\zeta(h)) \quad\text{for all $h \in \Gamma$.}$$
We now want to apply Corollary \ref{cor.height-no-weak-mixing} to the group homomorphism $\zeta$. Define $\cA_0$ as the set of elements $a \in q(M_{k'}(\C) \ot L_\om(T))q$ such that $\lspan\{\zeta(h) a \zeta(h)^* \mid h \in \Lambda_1\}$ is finite dimensional and denote by $\cA$ the weak closure of $\cA_0$. Since $(\id \ot \Phi)(\cA)$ is normalized by $W^* \psi(u_{(e,h)}) W$ for all $h \in \Lambda_1$, it follows from Lemma \ref{lem.automatic-atomic} that $(\id \ot \Phi)(\cA)$ is atomic. So, $\cA$ is atomic.

Since $(\id \ot \Phi)(\zeta(\Lambda_1)) = W^* \psi(L(e \times \Lambda_1)) W$, it follows from step~4 that $(\id \ot \Phi)(\zeta(\Lambda_1)q_0)$ has positive height for every nonzero projection $q_0$ that commutes with $\zeta(\Lambda_1)$. Given the explicit form of $\Phi$, this implies that also $\zeta(\Lambda_1)q_0$ has positive height. Since $W^* \psi(L(e \times \Lambda_1)) W$ has no intertwining with any nontrivial centralizer, a fortiori $\zeta(\Lambda_1)\dpr$ has no intertwining with any nontrivial centralizer. So we can apply Corollary \ref{cor.height-no-weak-mixing} and conclude that $\zeta$ is standard.

This means that we can find an element $X \in q(M_{k',d}(\C) \ot L_\om(T))$ with $X^* X = 1$, a finite index subgroup $\Lambda_0' < \Lambda_1$, a projective representation $\pi' : \Lambda_0' \to \cU(d)$ and a group homomorphism $\eta: \Lambda_0' \to T$ such that
$$\zeta(h) X = X (\pi'(h) \ot u_{\eta(h)}) \quad\text{for all $h \in \Lambda_0'$.}$$
Write $\eta(h) = (\delta_{1,2}(h),\delta_{2,2}(h))$. Then
$$Y = W (\id \ot \Phi)(X) \in p_1 (M_{m,dk}(\C) \ovt S_1 \ovt S_2)$$
is a partial isometry satisfying $Y^* Y = 1$ and by construction
$$\psi(u_{(g,h)}) Y = Y (\pi(g,h) \ot u_{(\delta_{1,1}(g),\delta_{1,2}(h))} \ot u_{(\delta_{2,1}(g),\delta_{2,2}(h))}) \quad\text{for all $g \in \Lambda_0$ and $h \in \Lambda_0'$,}$$
where $\pi : \Lambda_0 \times \Lambda_0' \to \cU(dk)$ is a projective representation.

In the same way as we did for $\delta_{i,1}$, also the group homomorphisms $\delta_{i,2}$ are faithful. So taking $\Lambda_2 = \Lambda_0 \cap \Lambda_0'$, the proof of step~5 is complete.

{\bf Step 6.} By step 5, the amplification $t$ is an integer $n$ and after a unitary conjugacy of the embedding $\psi$, we may assume that $\psi(L(\Gamma \times \Gamma)) \subset M_n(\C) \ovt S_1 \ovt S_2$ and that for some finite index subgroup $\Lambda < \Gamma$, we have
\begin{equation}\label{eq.direct-sum}
\psi(u_{r}) = \bigoplus_j (\pi_j(r) \ot u_{\delta^j_1(r)} \ot u_{\delta^j_2(r)}) \quad\text{for all $r \in \Lambda \times \Lambda$,}
\end{equation}
where each $\pi_j : \Lambda \times \Lambda \to \cU(n_j)$ is a projective representation, $\sum_j n_j = n$ and each $\delta^j_i : \Lambda \times \Lambda \to \Gamma_i \times \Gamma_i$ is a faithful group homomorphism of the form $\delta^j_i(g,h) = (\delta^j_{i,1}(g),\delta^j_{i,2}(h))$ or the form $\delta^j_i(g,h) = (\delta^j_{i,1}(h),\delta^j_{i,2}(g))$.

Write $\cG = (\Gamma_1 \times \Gamma_1) \times (\Gamma_2 \times \Gamma_2)$ and $\delta^j : \Lambda \times \Lambda \to \cG : \delta^j(r) = (\delta^j_1(r),\delta^j_2(r))$. By unitarily conjugating and regrouping direct summands, we may assume that for $j \neq j'$, there is no $s \in \cG$ with $\delta^{j'} = \Ad s \circ \delta^j$. We denote by $p_j$ the projection corresponding to the $j$'th direct summand in \eqref{eq.direct-sum}. We prove in step~6 that $\psi(A)$ commutes with all $p_j$.

Take $j \neq j'$ and $a \in A$. We prove that $p_j \psi(a) p_{j'} = 0$. Since $\delta^j(\Lambda \times \Lambda) \subset \cG$ is relatively icc and since $\delta^{j'}$ and $\delta^j$ are not conjugate, it follows from Lemma \ref{lem.remarks-icc.ii} that for every $s \in \cG$, the set $\{\delta^j(r) \, s \, \delta^{j'}(r)^{-1} \mid r \in \Lambda \times \Lambda\}$ is infinite. So we can take $r_n \in \Lambda \times \Lambda$ such that
\begin{equation}\label{eq.to-infty}
\delta^j(r_n) \, s \, \delta^{j'}(r_n)^{-1} \to \infty \quad\text{for all $s \in \cG$.}
\end{equation}
Take $\eps > 0$ arbitrary. By step~3, we have that $\psi(A) \prec^f B_1 \ovt B_2$. Using the notation $P_\cF$ as in the proof of step~4, take a finite subset $\cF \subset \cG$ such that $\|P_{\cF}(p_j \psi(b) p_{j'}) - p_j \psi(b) p_{j'}\|_2 < \eps$ for all $b \in A$ with $\|b\| \leq 1$. Write $a_n = \si_{r_n}(a)$ and note that
$$p_j \psi(a_n) p_{j'} = \bigl(\pi_j(r_n) \ot u_{\delta^j_1(r_n)} \ot u_{\delta^j_2(r_n)}\bigr) \; p_j \psi(a) p_{j'} \; \bigl(\pi_{j'}(r_n) \ot u_{\delta^{j'}_1(r_n)} \ot u_{\delta^{j'}_2(r_n)}\bigr)^* \; .$$
First note that it follows that $\|p_j \psi(a_n) p_{j'}\|_2 = \|p_j \psi(a) p_{j'}\|_2$ for all $n$. Then note that \eqref{eq.to-infty} implies that
$$\|P_{\cF}(p_j \psi(a_n) p_{j'})\|_2 \to 0 \; .$$
We conclude that $\limsup_n \|p_j \psi(a_n) p_{j'}\|_2 \leq \eps$ and thus $\|p_j \psi(a) p_{j'}\|_2 \leq \eps$ for all $\eps > 0$. So $p_j \psi(a) p_{j'} = 0$.

{\bf Step 7.} Using the notation of step~6, we prove that for every $j$ and $i$, there exists a $k \in \Gamma_i$ such that $\delta^j_{i,2} = \Ad k \circ \delta^j_{i,1}$.

Assume the contrary. By the relative icc property of $\delta^j_{i,1}(\Lambda)$ in $\Gamma_i$ and Lemma \ref{lem.remarks-icc.ii}, we can then take $g_n \in \Lambda$ such that
\begin{equation}\label{eq.use-this}
\delta^j_{i,1}(g_n) \; k \; \delta^j_{i,2}(g_n)^{-1} \to \infty \quad\text{whenever $k \in \Gamma_i$.}
\end{equation}
Denote by $\Lambda_d = \{(g,g) \mid g \in \Lambda\}$ the diagonal subgroup in $\Lambda \times \Lambda$. For every $h \in \Gamma$, denote by $\pi_h : A_0 \to A$ the embedding as the $h$-th tensor factor. Note that $\psi(\pi_e(A_0)) p_j$ commutes with $\psi(L(\Lambda_d))$. Using that $\delta^j(\Lambda_d) \subset \cG$ is relatively icc and using \eqref{eq.direct-sum} and \eqref{eq.use-this}, it follows from Lemma \ref{lem.further-commutant-result} that $\psi(\pi_e(A_0))p_j \subset M_{n_j}(\C) \ot 1 \ot B_2$ (when $i=1$) or $\psi(\pi_e(A_0))p_j \subset M_{n_j}(\C) \ot B_1 \ot 1$ (when $i=2$). Conjugating with $\psi(u_r)$, $r \in \Lambda \times \Lambda$, and writing $A_1 = (A_0,\tau_0)^\Lambda$, we arrive at $\psi(A_1) p_j \subset M_{n_j}(\C) \ot 1 \ot B_2$ or $\psi(A_1)p_j \subset M_{n_j}(\C) \ot B_1 \ot 1$. Both give a contradiction with the coarseness of $\psi$ and Lemma \ref{lem.coarse-gives-nonintertwining.i}.

{\bf Step 8.} After a further conjugacy by step 7, we may thus assume that $\delta^j_{i,1} = \delta^j_{i,2}$ for all $i$, $j$. We denote this homomorphism as $\al_{j,i} : \Lambda \to \Gamma_i$. We next prove that each of these homomorphisms $\al_{j,i}$ uniquely extends to a faithful homomorphism from $\Gamma$ to $\Gamma_i$. Thus each $\delta^j_i$ uniquely extends to a faithful symmetric homomorphism $\delta^j_i : \Gamma \times \Gamma \to \Gamma_i \times \Gamma_i$.

Note that we may assume that $\Lambda$ is a finite index \emph{normal} subgroup of $\Gamma$. Fix $k \in \Gamma$. Since $\al_{j,i}(\Lambda) \subset \Gamma_i$ is relatively icc, by Lemma \ref{lem.remarks-icc.iii}, we only need to prove that there exists an $s \in \Gamma_i$ such that the set
$$\{\al_{j,i}(k g k^{-1}) \, s \, \al_{j,i}(g)^{-1} \mid g \in \Lambda\}$$
is finite. Assume that this set is infinite for every $s \in \Gamma_i$. We can then take $g_n \in \Lambda$ such that
$$\al_{j,i}(k g_n k^{-1}) \, s \, \al_{j,i}(g_n)^{-1} \to \infty \quad\text{for all $s \in \Gamma_i$.}$$
Noting that $\psi(\pi_k(A_0))p_j$ commutes with $\psi(u_{(kg_n k^{-1},g_n)}) p_j$ for all $n$, the same computation as in step~7 leads to a contradiction.

{\bf Step 9.} Using step~8, we uniquely extend $\delta^j : \Lambda \times \Lambda \to \cG$ to homomorphisms $\delta^j : \Gamma \times \Gamma \to \cG$. We prove that \eqref{eq.direct-sum} now automatically holds for all $r \in \Gamma \times \Gamma$, where $\pi_j : \Gamma \times \Gamma \to \cU(n_j)$ are still projective representations.

Fix $r \in \Gamma \times \Gamma$. Define the unitary $X_r \in M_n(\C) \ovt S_1 \ovt S_2$ by
$$X_r = \psi(u_r) \; \Bigl(\bigoplus_j (1 \ot u_{\delta^j_1(r)} \ot u_{\delta^j_2(r)})\Bigr)^* \; .$$
Also define $D_1 = \bigoplus_j (M_{n_j}(\C) \ot 1 \ot 1)$. Since $\Lambda$ is normal in $\Gamma$, for every $s \in \Lambda \times \Lambda$ and $r \in \Gamma \times \Gamma$, we have
$$\psi(u_s) \psi(u_r) = \psi(u_r) \psi(u_{r^{-1} s r}) \quad\text{and}\quad r^{-1} s r \in \Lambda \times \Lambda \; .$$
Using \eqref{eq.direct-sum}, it follows that for all $j$,
$$\psi(u_s) \psi(u_r) p_j = \psi(u_r) \psi(u_{r^{-1} s r}) p_j = \psi(u_r) p_j \; \bigl(\pi_j(r^{-1} s r) \ot u_{\delta^j_1(r^{-1} s r)} \ot u_{\delta^j_2(r^{-1} s r)}\bigr) \; .$$
Multiplying on the right with $(1 \ot u_{\delta^j_1(r)} \ot u_{\delta^j_2(r)})^*$, it follows that
$$\psi(u_s) X_r p_j \in X_r p_j \bigl(M_{n_j}(\C) \ot u_{\delta^j_1(s)} \ot u_{\delta^j_2(s)}\bigr) \; ,$$
so that
$$\psi(u_s) X_r \in X_r \, D_1 \, \psi(u_s) \quad\text{for all $s \in \Lambda$.}$$
We conclude that $\psi(u_s) X_r \psi(u_s)^* \in X_r \, D_1$ for all $s \in \Lambda$. On the other hand, it follows from \eqref{eq.direct-sum} and Lemma \ref{lem.remarks-icc} that $(\Ad \psi(u_s))_{s \in \Lambda \times \Lambda}$ is weakly mixing on the orthogonal complement of $D_1$ inside $L^2(M_n(\C) \ovt S_1 \ovt S_2)$. So, for every $r \in \Gamma \times \Gamma$, we must have that $X_r \in D_1$. Denoting $X_r = \bigoplus (\pi_j(r) \ot 1 \ot 1)$, we find that \eqref{eq.direct-sum} holds for all $r \in \Gamma \times \Gamma$.

{\bf Step 10.} Since we already saw in step~6 that also $\psi(A)$ commutes with all the projections $p_j$, we have written $\psi$ as a direct sum of embeddings indexed by $j$. To conclude the proof, we may analyze each of them separately. We may thus assume that $\psi : M \to M_n(\C) \ovt M_1 \ovt M_2$ is an embedding satisfying
$$\psi(u_r) = \pi(r) \ot u_{\delta_1(r)} \ot u_{\delta_2(r)} \quad\text{for all $r \in \Gamma \times \Gamma$,}$$
where $\pi : \Gamma \times \Gamma \to \cU(n)$ is a projective representation and each $\delta_i : \Gamma \times \Gamma \to \Gamma_i \times \Gamma_i$ is a symmetric faithful homomorphism.

Denote by $\om_\pi$ the $2$-cocycle associated with $\pi$. Since $\Gamma \times \Gamma \ni r \mapsto \psi(u_r)$ is multiplicative, we get that $\om_\pi \, (\mu_1 \circ \delta_1) \, (\mu_2 \circ \delta_2) = 1$.

Analyzing $\psi(\pi_e(A_0))$ as in step~8, we find that $\psi(\pi_e(A_0)) \subset M_n(\C) \ovt B_{1,e} \ovt B_{2,e}$. It only remains to prove the last statement of the theorem.

Let $\al_i : \Gamma \to \Gamma_i$ be the injective group homomorphism such that either $\delta_i(g,h) = (\al_i(g),\al_i(h))$ or $\delta_i(g,h) = (\al_i(h),\al_i(g))$ for all $g,h \in \Gamma$. Write $\Gamma'_i = \al_i(\Gamma)$. Assume that $\Gamma'_1$ is a proper subgroup of $\Gamma_1$. Define $P_1 \subset M_1$ as the von Neumann subalgebra generated by $B_{1,e}$ and $u_{(g,h)}$, $g,h \in \Gamma'_1$. Then $P_1 \subset M_1$ is an irreducible subfactor and the diffuse von Neumann subalgebra generated by $B_{1,k}$, $k \in \Gamma_1 \setminus \Gamma'_1$ is orthogonal to $P_1$. So, $P_1 \subset M_1$ has infinite index. By definition, $\psi(M) \subset M_n(\C) \ovt P_1 \ovt M_2$. In particular, $\psi(M) \prec P_1 \ovt M_2$. This concludes the proof of the theorem.
\end{proof}

\subsection{Coarse embeddings for cocycle twisted wreath products}

In the special case where the Bernoulli like crossed products in Theorem \ref{thm.coarse-embedding-Bernoulli-like} are all cocycle twisted wreath product groups $L_{\mu_i}((\Z/2\Z)^{(\Gamma_i)} \rtimes (\Gamma_i \times \Gamma_i))$, the conclusion of Theorem \ref{thm.coarse-embedding-Bernoulli-like} can be strengthened in the following way, to provide a truly complete classification of all coarse embeddings.

As above, we say that a group homomorphism $\delta : \Gamma \times \Gamma \to \Gamma_i \times \Gamma_i$ is symmetric if either $\delta(g,h) = (\al(g),\al(h))$ or $\delta(g,h) = (\al(h),\al(g))$ for all $g,h \in \Gamma$, where $\al : \Gamma \to \Gamma_i$.

\begin{theorem}\label{thm.coarse-embedding-cocycle-twisted-wreath-product}
Let $\Gamma_1,\ldots,\Gamma_k$ be groups in the class $\cC$ and define the generalized wreath product $G_i = (\Z/2\Z)^{(\Gamma_i)} \rtimes (\Gamma_i \times \Gamma_i)$ over the left-right action $\Gamma_i \times \Gamma_i \actson \Gamma_i$. Let $\mu_i \in Z^2(G_i,\T)$ be any $2$-cocycle and write $M_i = L_{\mu_i}(G_i)$

Let $\Gamma$ be any nonamenable icc group and define $G = (\Z/2\Z)^{(\Gamma)} \rtimes (\Gamma \times \Gamma)$. Let $\mu \in Z^2(G,\T)$ be any $2$-cocycle and write $M = L_\mu(G)$.

Let $t > 0$ and $\psi : M \to (M_1 \ovt \cdots \ovt M_k)^t$ a coarse embedding. Then $t \in \N$ and $\psi$ is unitarily conjugate to a direct sum of embeddings of the form
$$L_\mu(G) \to M_n(\C) \ot L_{\mu_1}(G_1) \ovt \cdots \ovt L_{\mu_k}(G_k) : u_g \mapsto \pi(g) \ot u_{\delta(g)} \quad\text{for all $g \in G$,}$$
where $\pi : G \to \cU(n)$ is a projective representation with $2$-cocycle $\om_\pi$ and $\delta : G \to G_1 \times \cdots \times G_k$ is a $k$-tuple of faithful group homomorphisms $\delta_i : G \to G_i$ of the following form:
\begin{itemlist}
\item $\delta_i(\pi_e(a)) = \pi_e(a)$ for all $a \in \Z/2\Z$, where $\pi_e$ is the embedding in position $e$~;
\item $\delta_i$ restricts to a faithful symmetric group homomorphism $\Gamma \times \Gamma \to \Gamma_i \times \Gamma_i$~;
\item we have $\mu = \om_\pi \, (\mu_1 \circ \delta_1) \, \cdots \, (\mu_k \circ \delta_k)$.
\end{itemlist}
If for every $i \in \{1,\ldots,k\}$ and every irreducible subfactor $P \subset M_i$ of infinite index, $\psi(M) \not\prec M_1 \ovt \cdots \ovt M_{i-1} \ovt P \ovt M_{i+1} \ovt \cdots \ovt M_k$, then the homomorphisms $\delta_i$ are all isomorphisms.
\end{theorem}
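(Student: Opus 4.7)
The plan is to follow the approach of Theorem~\ref{thm.coarse-embedding-Bernoulli-like} applied to $\psi$, but with both the source $L_\mu(G)$ and each target $L_{\mu_i}(G_i)$ viewed as cocycle twisted Bernoulli-like crossed products with base $\C[\Z/2\Z] = \C^2$ via the canonical embedding $\be : u^\mu_a \mapsto u^\mu_a \ot u_a$, and then to upgrade the conclusion using the two-dimensionality of this base to pin down the behaviour on the wreath-product generator $v = u^\mu_{\pi_e(1)}$.

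\textbf{Step 1 (reduce to the Bernoulli-like theorem).} Write $L_\mu(G) = B \rtimes_\mu (\Gamma \times \Gamma)$ with $B = L_{\mu|_{(\Z/2\Z)^{(\Gamma)}}}((\Z/2\Z)^{(\Gamma)})$, and similarly for each $L_{\mu_i}(G_i)$. The argument of Theorem~\ref{thm.coarse-embedding-Bernoulli-like} extends to cover a cocycle-twisted source: its key inputs, Lemma~\ref{lem.key} and Corollary~\ref{cor.height-no-weak-mixing}, are already stated in fully cocycle-twisted form, and the commuting-square arguments of Section~\ref{sec.bernoulli-like} transfer without change. The outcome is that $t \in \N$ and, after unitary conjugacy, $\psi$ decomposes as a direct sum of embeddings $\psi_0 : L_\mu(G) \to M_n(\C) \ovt L_{\mu_1}(G_1) \ovt \cdots \ovt L_{\mu_k}(G_k)$ with $\psi_0(u_r) = \pi_0(r) \ot u_{\delta_1(r)} \ot \cdots \ot u_{\delta_k(r)}$ for $r \in \Gamma \times \Gamma$, faithful symmetric homomorphisms $\delta_i : \Gamma \times \Gamma \to \Gamma_i \times \Gamma_i$, and $\psi_0(B_e) \subset M_n(\C) \ovt B_{1,e} \ovt \cdots \ovt B_{k,e}$, where $B_e = \lspan\{1,v\}$ and $B_{i,e} = \lspan\{1,v_i\}$ with $v_i = u^{\mu_i}_{\pi_e(1)}$.

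\textbf{Step 2 (factorize $\psi_0(v)$ and extend $\delta_i$).} The unitary $\psi_0(v)$ lies in $M_n(\C) \ovt B_{1,e} \ovt \cdots \ovt B_{k,e} \cong M_n(\C) \ot \C^{2^k}$ and satisfies $\psi_0(v)^2 = \mu(\pi_e(1),\pi_e(1)) \cdot 1$. Exploiting the intertwining relations $\psi_0(u_r) \psi_0(v) \psi_0(u_r)^* = c(r) \, \psi_0(u^\mu_{\pi_{gh^{-1}}(1)})$ for $r = (g,h) \in \Gamma \times \Gamma$, the commutation bicharacters among the $\psi_0(u^\mu_{\pi_s(1)})$ as $s$ varies over $\Gamma$, and the fact that each $B_{i,e}$ is only two-dimensional (so each translate of the generator lives in a single ``leg'' of the tensor), argue that $\psi_0(v)$ must be of tensor-product form $w \ot v_1 \ot \cdots \ot v_k$ for some $w \in \cU(M_n(\C))$. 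After absorbing $w$ into $\pi_0$, extend each $\delta_i$ to a group homomorphism $G \to G_i$ by setting $\delta_i(\pi_e(a)) = \pi_e(a)$. Lemma~\ref{lem.remarks-icc.iii} combined with the icc property of $\Gamma_i$ ensures that this extension is well defined and faithful, and that $\delta_i(\pi_e(1)) = \pi_e(1)$ is the only option, since it is the unique nontrivial $\delta_i(\Gamma_d)$-fixed element in the normal part of $G_i$. The multiplicativity of $g \mapsto \psi_0(u_g)$ then yields the cocycle identity $\mu = \om_\pi \, (\mu_1 \circ \delta_1) \cdots (\mu_k \circ \delta_k)$, and the last statement about $\delta_i$ being isomorphisms under the irreducible-subfactor hypothesis is inherited from Theorem~\ref{thm.coarse-embedding-Bernoulli-like}.

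\textbf{Main obstacle.} The delicate step is the factorization of $\psi_0(v)$. The abelian algebra $B_{1,e} \ovt \cdots \ovt B_{k,e}$ admits many unitaries whose square is a scalar (essentially any $M_n(\C)$-valued function on $\{\pm\}^k$ of constant square), so reaching the tensor-product form requires the \emph{combined} use of the $\Gamma_d$-intertwining on $\psi_0(v)$, the conjugation behaviour for $r$ moving $e$ to other positions of $\Gamma$ (which places $\psi_0(u^\mu_{\pi_s(1)})$ into a distinct leg), and the bicharacter commutation relations among these translates. This mirrors in the minimal $\Z/2\Z$ setting the rigidity that was available more directly when the base was an arbitrary amenable $(A_0,\tau_0)$ in Theorem~\ref{thm.coarse-embedding-Bernoulli-like}.
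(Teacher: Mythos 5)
Your Step 1 has a genuine gap. Theorem \ref{thm.coarse-embedding-Bernoulli-like} requires the \emph{source} $M = A \rtimes (\Gamma\times\Gamma)$ to be an \emph{untwisted} Bernoulli crossed product; the cocycle $\mu_i$ only twists the targets. You assert the argument "extends to cover a cocycle-twisted source" because Lemma \ref{lem.key} and Corollary \ref{cor.height-no-weak-mixing} are stated in twisted form. But the proof of Theorem \ref{thm.coarse-embedding-Bernoulli-like} uses the untwistedness of the source in an essential place: in Step~7 one needs that $\psi(\pi_e(A_0)) p_j$ \emph{commutes} with $\psi(L(\Lambda_d))$. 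In $L_\mu(G)$ with $\mu\neq 1$, the algebra $B_e = \lspan\{1,u^\mu_{\pi_e(1)}\}$ is merely \emph{normalized} by $L_\mu(\Gamma_d)$: one has $u^\mu_{(g,g)} u^\mu_{\pi_e(1)} (u^\mu_{(g,g)})^* = \overline{\eta_\mu}(g)\, u^\mu_{\pi_e(1)}$ with $\eta_\mu \in \Hom(\Gamma,\{\pm1\})$ as in Proposition \ref{prop.cohom-left-right-wreath}, and $\eta_\mu$ need not be trivial. So the commutation used in Step~7 fails, and the stated "transfer without change" does not go through. The paper avoids this entirely: it first proves the theorem in the case $\mu=1$ (where Theorem \ref{thm.coarse-embedding-Bernoulli-like} applies directly to the source $L(G)$), and then for general $\mu$ composes with the comultiplication $\Delta : L(G) \to L_\mu(G) \ovt L_\mu(G)\op$ and uses Lemma \ref{lem.stable-coarse} to reduce to the $\mu=1$ case. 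You would need to import this reduction or supply a separate argument in place of Step~7.

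Your Step 2 identifies the right bottleneck --- showing that $\psi_0(v)$ is a pure tensor $w \ot v_1 \ot \cdots \ot v_k$ --- but does not prove it. You list the available tools (intertwining by $\Gamma_d$, translation to other legs, bicharacter commutation, two-dimensionality of each $B_{i,e}$), but these alone do not force the factorization: as you yourself observe, $B_{1,e}\ovt\cdots\ovt B_{k,e}$ contains many unitaries with scalar square and the right equivariance under a finite-index subgroup. The missing idea is a genuine ergodicity/uniqueness-of-invariant-state argument. In the paper, after elementary work establishing that the Fourier coefficients $v_{g,a}$ commute up to sign and, after cutting by projections and restricting to a suitable nonamenable $\Gamma_2 < \Gamma$, satisfy $v_{g,a}=v_a$ for $g\in\Gamma_2$ (this uses Lemma \ref{lem.projective-order-2}), the decisive step constructs from the \emph{coarseness} of $\psi$ a normal positive functional $\om$ on $\cA \ovt (M_n(\C)\ot\cB)\op$, verifies via a direct computation that $\om$ is invariant under a diagonal Bernoulli-type $\Gamma_2$-action, and then invokes ergodicity of that action to conclude $\om=\tau\ot(\Tr\ot\tau)\op$. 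This forces $v_a = 0$ for all $a$ not equal to $(1,\ldots,1)$. None of this appears in your proposal, and the operator-algebraic preparation (the sign-commutation and Lemma \ref{lem.projective-order-2} step) is also nontrivial. As it stands, the core of the theorem is asserted rather than proven.
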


Note that $\delta_i$ is completely determined by the description above. Defining $\al_i : \Gamma \to \Gamma_i$ and $\eps_i \in \{\pm 1\}$ such that either $\delta_i(g,h) = (\al_i(g),\al_i(h))$, $\eps_i = 1$, or $\delta_i(g,h) = (\al_i(h),\al_i(g))$, $\eps_i = -1$, we get that $\delta_i(\pi_s(a)) = \pi_{\al_i(s^{\eps_i})}(a)$ for all $a \in \Z/2\Z$ and $s \in \Gamma$.

\begin{proof}
We start by proving the theorem in the case where $\mu=1$.
Write $B_i = L_{\mu_i}((\Z/2\Z)^{(\Gamma_i)})$. Write $K = (\Z/2\Z)^k$ and denote
$$\zeta_0 : K \to (\Z/2\Z)^{(\Gamma_1)} \times \cdots \times (\Z/2\Z)^{(\Gamma_k)} : \zeta_0(a) = (\pi_e(a_1),\ldots,\pi_e(a_k)) \; .$$
Define $\eta \in Z^2(K,\T)$ by $\eta = (\mu_1 \times \cdots \times \mu_k) \circ \zeta_0$. We then still denote by $\zeta_0$ the embedding $L_\eta(K) \to B_1 \ovt \cdots \ovt B_k$.

By Theorem \ref{thm.coarse-embedding-Bernoulli-like}, we have that $t \in \N$ and that we can write $\psi$ as a direct sum of embeddings of a special form. We analyze each summand separately and may thus assume that $\psi : M \to M_n(\C) \ovt M_1 \ovt \cdots \ovt M_k$ is of the following form.
\begin{itemlist}
\item $\psi(u_{(g,h)}) = \pi(g,h) \ot u_{\delta(g,h)}$ for all $(g,h) \in \Gamma \times \Gamma$, where $\delta$ is a $k$-tuple of symmetric homomorphisms $\delta_i : \Gamma \times \Gamma \to \Gamma_i \times \Gamma_i$ and $\om_\pi \, (\mu_1 \circ \delta_1) \, \cdots (\mu_k \circ \delta_k) = 1$.
\item $\psi(u_{\pi_e(1)}) \in M_n(\C) \ot \zeta_0(L_\eta(K))$.
\end{itemlist}
Moreover, if the final assumption in the formulation of the theorem holds, then all $\delta_i$ are isomorphisms.

To prove the theorem in the case where $\mu=1$, it suffices to prove that we automatically have that
\begin{equation}\label{eq.main-job}
\psi(u_{\pi_e(1)}) \in \cU(n) \ot u_{\pi_e(1)} \ot \cdots \ot u_{\pi_e(1)} \; .
\end{equation}
Indeed, once we have proven this property, we extend $\delta_i$ to a group homomorphism $\delta_i : G \to G_i$ by putting $\delta_i(\pi_e(1)) = \pi_e(1)$ and find that $\psi(u_g) \in \cU(n) \ot u_{\delta(g)}$ for all $g \in G$. This forces $\psi(u_g) = \pi(g) \ot u_{\delta(g)}$ for all $g \in G$, where $\pi : G \to \cU(n)$ is a projective representation satisfying all the required conclusions.

Write $v_0 = u_{\pi_e(1)} \in L((\Z/2\Z)^{\Gamma}) \subset M$. We have $\psi(v_0) = (\id \ot \zeta_0)(v)$ with $v \in M_n(\C) \ot L_\eta(K)$. To prove \eqref{eq.main-job}, we have to show that
\begin{equation}\label{eq.actual-job}
v \in \cU(n) \ot u_{(1,\ldots,1)} \; .
\end{equation}
For every $a \in K$ and $g \in \Gamma$, we define $v_a,v_{g,a} \in M_n(\C)$ by
\begin{equation}\label{eq.def-va}
v_a = (\id \ot \tau)(v (1 \ot u_a^*)) = (\id \ot \tau)(\psi(v_0)(1 \ot \zeta_0(u_a)^*)) \quad\text{and}\quad v_{g,a} = \pi(g,e) v_a \pi(g,e)^* \; .
\end{equation}
We thus have to prove that $v_a = 0$ for all $a \in K \setminus \{(1,\ldots,1)\}$.

{\bf Step 1.} We prove that for all $g,h \in \Gamma$ and $a,b \in K$, we have that $v_{g,a} v_{h,b} = \pm v_{h,b} v_{g,a}$.

For every $g \in \Gamma$, we write $\zeta_g : L_\eta(K) \to B_1 \ovt \cdots \ovt B_k : \zeta_g = \Ad(u_{\delta(g,e)}) \circ \zeta_0$. Whenever $\cF \subset \Gamma$, we denote by $B_\cF \subset B_1 \ovt \cdots \ovt B_k$ the von Neumann subalgebra generated by all $\zeta_g(L_\eta(K))$, $g \in \cF$. We denote by $E_{\cF} : B_1 \ovt \cdots \ovt B_k \to B_\cF$ the unique trace preserving conditional expectation. Note that $E_{\cF}(\zeta_g(b)) = \tau(b)1$ whenever $b \in L_\eta(K)$ and $g \not\in \cF$.

As before, denote by $(\si_{(g,h)})_{(g,h) \in \Gamma \times \Gamma}$ the left-right Bernoulli action of $\Gamma \times \Gamma$ on $L((\Z/2\Z)^{\Gamma})$. Note that $\psi(\si_{(g,e)}(v_0)) = (\Ad \pi(g,e) \ot \zeta_g)(v)$. Therefore,
$$v_{g,a} = (\id \ot \tau)(\psi(\si_{(g,e)}(v_0)) (1 \ot \zeta_g(u_a)^*)) \quad\text{for all $g \in \Gamma$, $a \in K$.}$$
When $g \neq h$ are distinct elements in $\Gamma$ and $a,b \in K$ are arbitrary, we make the following computation, where in the first equality, we use that $\tau \circ E_{\{g\}} = \tau$.
\begin{equation}\label{eq.to-repeat}
\begin{split}
(\id \ot \tau)\bigl(\psi(\si_{(g,e)}(v_0)) & \, \psi(\si_{(h,e)}(v_0)) \, (1 \ot \zeta_h(u_b)^*) \, (1 \ot \zeta_g(u_a)^*) \bigr) \\
 &= (\id \ot \tau)\bigl(\psi(\si_{(g,e)}(v_0)) \, (v_{h,b} \ot \zeta_g(u_a))^*\bigr) = v_{g,a} \, v_{h,b} \; .
\end{split}
\end{equation}
The group $T = (\Z/2\Z)^{(\Gamma_1)} \times \cdots \times (\Z/2\Z)^{(\Gamma_k)}$ is commutative and $B_1 \ovt \cdots \ovt B_k$ is its twisted group von Neumann algebra w.r.t.\ the $2$-cocycle $\mu_1 \times \cdots \times \mu_k$. Therefore, $u_c u_d = \Om(c,d) u_d u_c$ for all $c,d \in T$, where $\Om : T \times T \to \T$ is a bicharacter. Since every nontrivial element of $T$ has order $2$, we have that $\Om(c,d) \in \{\pm 1\}$ for all $c,d \in T$, so that $u_c u_d = \pm u_d u_c$ for all $c,d \in T$. Then also $\zeta_g(u_a) \, \zeta_h(u_b) = \pm \, \zeta_h(u_b) \, \zeta_g(u_a)$.

Because $\psi(\si_{(g,e)}(v_0)) \, \psi(\si_{(h,e)}(v_0)) = \psi(\si_{(h,e)}(v_0)) \, \psi(\si_{(g,e)}(v_0))$, we repeat the computation in \eqref{eq.to-repeat} and conclude that $v_{g,a} \, v_{h,b} = \pm \, v_{h,b} \, v_{g,a}$ when $g \neq h$.

To conclude step 1, fix $g,h \in \Gamma$ and $a,b \in K$. By compactness of $\cU(n)$, we can take a sequence $g_n \in \Gamma$ such that $g_n \to \infty$ and $\Ad \pi(g_n,e) \to \id$. Then, $v_{g_ng,a} = (\Ad \pi(g_n))(v_{g,a}) \to v_{g,a}$. For all $n$ large enough, we have $g_n g \neq h$. So for all $n$ large enough, $v_{g_ng,a} v_{h,b} = \pm \, v_{h,b} v_{g_ng,a}$. But then also $v_{g,a} \, v_{h,b} = \pm \, v_{h,b} \, v_{g,a}$.

{\bf Step 2.} We prove that there exist projections $p_1,\ldots,p_m \in M_n(\C)$ and a finite index subgroup $\Lambda_1 < \Gamma$ such that
\begin{itemlist}
\item $\sum_{j=1}^m p_j = 1$,
\item for all $j \in \{1,\ldots,m\}$, we have that $p_j$ commutes with $\pi(h,e)$ for all $h \in \Lambda_1$ and with $v_{g,a}$ for all $g \in \Gamma$ and $a \in K$,
\item for all $j \in \{1,\ldots,m\}$, $g \in \Gamma$, $a \in K$, the matrix $v_{g,a} p_j$ is either zero or a multiple of a self-adjoint unitary.
\end{itemlist}

Since for every $a \in K$, we have $a^2 = e$, $u_a^*$ is a multiple of $u_a$ and it follows from \eqref{eq.def-va} that all $v_{g,a}$ are multiples of self-adjoint matrices. We can thus define $s_{g,a} = v_{g,a}^* v_{g,a} = v_{g,a} v_{g,a}^*$ and note that it follows from step~1 that $\{s_{g,a} \mid g \in \Gamma , a \in K\}$ is a family of commuting self-adjoint matrices, which moreover commute with all $v_{h,b}$. By definition, this family is normalized by $(\Ad \pi(g,e))_{g \in \Gamma}$. It now suffices to define the $p_j$ as the minimal projections in the von Neumann algebra generated by the $s_{g,a}$ and to define $\Lambda_1 < \Gamma$ as the subgroup of $h \in \Gamma$ such that $\pi(h,e)$ commutes with all $p_j$.

{\bf Step 3.} We prove that there exists an infinite subgroup $\Lambda_2 < \Gamma$ such that $v_{g,a} = v_a$ for all $g \in \Lambda_2$ and $a \in K$.

Fix $j \in \{1,\ldots,m\}$. Applying Lemma \ref{lem.projective-order-2} below to the family of self-adjoint unitaries that arise as nonzero multiples of $v_{g,a} p_j$, $g \in \Gamma$, $a \in K$, we find a finite index subgroup $\Lambda_{0,j} < \Lambda_1$ such that $\pi(h,e)$ commutes with $v_{g,a} p_j$ for all $h \in [\Lambda_{0,j},\Lambda_{0,j}]$, $g \in \Gamma$ and $a \in K$. Define $\Lambda_0 = \bigcap_{j=1}^m \Lambda_{0,j}$. Then $\Lambda_0 < \Lambda_1$ is still of finite index and $[\Lambda_0,\Lambda_0] \subset \bigcap_{j=1}^m [\Lambda_{0,j},\Lambda_{0,j}]$. So, for all $h \in [\Lambda_0,\Lambda_0]$ and all $g \in \Gamma$, $a \in K$ and $j \in \{1,\ldots,m\}$, we have that $\pi(h,e)$ commutes with $v_{g,a} p_j$. Summing over $j$, we find that $\pi(h,e)$ commutes with $v_{g,a}$.

Defining $\Lambda_2 = [\Lambda_0,\Lambda_0]$, we find in particular that $v_{g,a} = v_a$ for all $g \in \Lambda_2$ and $a \in K$. Since $\Gamma$ is nonamenable, also $\Lambda_0$ is nonamenable and hence, $\Lambda_2$ is nonamenable. In particular, $\Lambda_2$ is infinite.

{\bf Step 4.} We conclude the proof by showing that \eqref{eq.actual-job} holds. We thus have to show that $v_a = 0$ for all $a \in K \setminus \{(1,\ldots,1)\}$. By symmetry, it suffices to prove that $v_a = 0$ for all $a \in \{0\} \times (\Z/2\Z)^{k-1}$, because we can make a similar reasoning in each of the coordinates.

Write $K_0 = \{0\} \times (\Z/2\Z)^{k-1}$. Define $\cB \subset 1 \ovt B_2 \ovt \cdots \ovt B_k$ as the von Neumann subalgebra generated by $\zeta_g(L_\eta(K_0))$, $g \in \Lambda_2$. Define $\cA = L((\Z/2\Z)^{(\Lambda_2)}) \subset L((\Z/2\Z)^{(\Gamma)})$, which is generated by $\si_{(g,e)}(v_0)$, $g \in \Lambda_2$.

Since $\psi$ is a coarse embedding, there is a normal positive functional $\om$ on $\cA \ovt (M_n(\C) \ot \cB)\op$ satisfying
$$\om(a \ot (S \ot b)\op) = (\Tr \ot \tau)(\psi(a)(S \ot b)) \quad\text{for all $a \in \cA$, $b \in \cB$ and $S \in M_n(\C)$.}$$
We define the action $(\be_g)_{g \in \Lambda_2}$ on $\cA \ovt (M_n(\C) \ot \cB)\op$ by $\be_g = \si_{(g,e)} \ot (\id \ot \Ad u_{\delta(g,e)})\op$ and prove that $\om \circ \be_g = \om$ for all $g \in \Lambda_2$.

Whenever $\cF \subset \Lambda_2$, define $\cB_\cF \subset \cB$ as the von Neumann subalgebra generated by $\zeta_g(L_\eta(K))$, $g \in \cF$. By density, it suffices to prove that
\begin{align}\label{eq.my-step-here}
\om(x & \ot  (S \ot y)\op) = \om(\si_{(g,e)}(x) \ot (S \ot u_{\delta(g,e)} y u_{\delta(g,e)}^*)\op) \\
&\text{when}\;\; x = \si_{(h_1,e)}(v_0) \cdots \si_{(h_k,e)}(v_0) \;\;\text{and}\;\; y = \zeta_{h_k}(u_{b_m})^* \cdots \zeta_{h_1}(u_{b_1})^* y_0 \notag\\
&\text{with $h_1,\ldots,h_k \in \Lambda_2$ distinct, $b_1,\ldots,b_k \in K_0$ and $y_0 \in \cB_{\Lambda_2 \setminus \{h_1,\ldots,h_k\}}$.}\notag
\end{align}
Write $\cF_j = \{h_1,\ldots,h_j\}$ for all $1 \leq j \leq k$. Using the conditional expectations $E_\cF$ introduced in step~1, we find that the left hand side of \eqref{eq.my-step-here} equals
\begin{align*}
& (\Tr \ot (\tau \circ E_{\cF_k})) (\psi(\si_{(h_1,e)}(v_0) \cdots \si_{(h_k,e)}(v_0)) (S \ot \zeta_{h_k}(u_{b_k})^* \cdots \zeta_{h_1}(u_{b_1})^* y_0)) \\
&\hspace{0.5ex} = \tau(y_0) \; (\Tr \ot (\tau \circ E_{\cF_{k-1}}))(\psi(\si_{(h_1,e)}(v_0) \cdots \si_{(h_k,e)}(v_0)) (S \ot \zeta_{h_k}(u_{b_k})^* \cdots \zeta_{h_1}(u_{b_1})^*)) \\
&\hspace{0.5ex} = \tau(y_0) \; (\Tr \ot (\tau \circ E_{\cF_{k-2}}))(\psi(\si_{(h_1,e)}(v_0) \cdots \si_{(h_{k-1},e)}(v_0)) (v_{h_k,b_k} S \ot \zeta_{h_{k-1}}(u_{b_{k-1}})^* \cdots \zeta_{h_1}(u_{b_1})^*)) \\
&\hspace{0.5ex} = \cdots = \Tr(v_{h_1,b_1} \cdots v_{h_k,b_k} S) \, \tau(y_0) \; .
\end{align*}
Using step~3, it thus follows that the left hand side of \eqref{eq.my-step-here} equals
$$\Tr(v_{b_1} \cdots v_{b_k} S) \, \tau(y_0) \; .$$
By definition,
\begin{align*}
& \si_{(g,e)}(x) = \si_{(gh_1,e)}(v_0) \cdots \si_{(gh_k,e)}(v_0) \quad\text{and}\\
& u_{\delta(g,e)} y u_{\delta(g,e)}^* = \zeta_{gh_k}(u_{b_k})^* \cdots \zeta_{gh_1}(u_{b_1})^* \, y_1 \quad\text{with}\quad y_1 = u_{\delta(g,e)} y_0 u_{\delta(g,e)}^* \in \cB_{\Lambda_2 \setminus \{gh_1,\ldots,gh_k\}} \; .
\end{align*}
So the same computation says that the right hand side of \eqref{eq.my-step-here} equals
$$\Tr(v_{b_1} \cdots v_{b_k} S) \, \tau(u_{\delta(g,e)} y_0 u_{\delta(g,e)}^*) =  \Tr(v_{b_1} \cdots v_{b_k} S) \, \tau(y_0) \; .$$
So we have proven that $\om \circ \be_g = \om$ for all $g \in \Lambda_2$.

Since $\Lambda_2$ is an infinite group, the action $\si_{(g,e)} \ot (\Ad u_{\delta(g,e)})\op$ on $\cA \ovt \cB\op$ is ergodic, so that $\tau \ot \tau\op$ is the unique normal state on $\cA \ovt \cB\op$ that is invariant under this action. It follows that $\tau \ot (\Tr \ot \tau)\op$ is the unique normal $(\be_g)_{g \in \Lambda_2}$-invariant functional on $\cA \ovt (M_n(\C) \ot \cB)\op$ whose restriction to $(M_n(\C) \ot 1)\op$ equals $\Tr\op$. Since by definition, the restriction of $\om$ to $(M_n(\C) \ot 1)\op$ equals $\Tr\op$, we conclude that $\om = \tau \ot (\Tr \ot \tau)\op$.

In particular,
$$\Tr(v_a S) = (\Tr \ot \tau)(\psi(v_0)(S \ot \zeta_0(u_a)^*)) = \om(v_0 \ot (S \ot \zeta_0(u_a)^*)\op) = \tau(v_0) \, \Tr(S) \, \tau(u_a^*) = 0$$
for all $a \in K_0$ and $S \in M_n(\C)$, because $\tau(v_0) = 0$.

So \eqref{eq.actual-job} is proven, which concludes the proof of the theorem in the case where $\mu=1$.

In the general case, take a coarse embedding $\psi : L_\mu(G) \to (M_1 \ovt \cdots \ovt M_k)^t$. Define the coarse embedding
$$\Delta : L(G) \to L_\mu(G) \ovt L_\mu(G)\op : \Delta(u_g) = u_g \ot \overline{u_g} \quad\text{for all $g \in G$.}$$
By Lemma \ref{lem.stable-coarse}, $\Psi = (\psi \ot \psi\op) \circ \Delta$ is a coarse embedding to which the case $\mu=1$ can be applied.

It follows that $\Psi$ is a finite direct sum of irreducible embeddings, so that also the relative commutant of $\psi(L_\mu(G))$ must be finite dimensional. We may thus assume that $\psi$ is irreducible. We realize $(M_1 \ovt \cdots \ovt M_k)^t = p (M_d(\C) \ovt M_1 \ovt \cdots \ovt M_k) p$. By the case $\mu = 1$, we find
$$X \in (p \ot p\op)(M_{d,m}(\C) \ovt M_1 \ovt \cdots \ovt M_k \ovt \C^d \ovt M_1\op \ovt \cdots \ovt M_k\op) \quad\text{with $X^* X = 1$,}$$
a projective representation $\pi_0 : G \to \cU(m)$ and a $2k$-tuple of faithful group homomorphisms $\delta_i : G \to G_i$ of the form as described in the theorem, such that
$$(\psi(u_g) \ot \overline{\psi(u_g)}) X = X (\pi_0(g) \ot u_{\delta_1(g)} \ot \cdots \ot u_{\delta_{k}(g)} \ot \overline{u_{\delta_{k+1}(g)}} \ot \cdots \ot \overline{u_{\delta_{2k}(g)}}) \quad\text{for all $g \in G$.}$$
Write $\delta(g) = (\delta_1(g),\ldots,\delta_k(g))$ and $\delta'(g) = (\delta_{k+1}(g),\ldots,\delta_{2k}(g))$. Define the projective representations
\begin{align*}
& \Phi : G \to \cU(p (\C^n \ot L^2(M_1 \ovt \cdots \ovt M_k))) : \Phi(g) \xi = \psi(u_g) \xi u_{\delta(g)}^* \; ,\\
& \Phi' : G \to \cU(p (\C^n \ot L^2(M_1 \ovt \cdots \ovt M_k))) : \Phi'(g) \xi = \psi(u_g) \xi u_{\delta'(g)}^* \; .
\end{align*}
The isometry $X$ shows that the projective representation $\Phi \ot \overline{\Phi'}$ admits a nonzero invariant subspace. So also $\Phi$ cannot be weakly mixing. We then find an irreducible projective representation $\pi : G \to \cU(n)$ and a nonzero element $Y \in p(M_{d,n}(\C) \ovt M_1 \ovt \cdots \ovt M_k)$ such that
$$\psi(u_g) Y = Y (\pi(g) \ot u_{\delta(g)}) \quad\text{for all $g \in G$.}$$
By the irreducibility of $\psi$, we may assume that $YY^*=1$. Since $\delta(G)$ is relatively icc in $G_1 \times \cdots \times G_k$, we first find that $Y^* Y \in M_n(\C) \ot 1$ and then that $Y^* Y = 1$ by the irreducibility of $\pi$. So, $\psi$ is unitarily conjugate to an embedding of the required form.
\end{proof}

\begin{lemma}\label{lem.projective-order-2}
Let $J \subset \cU(n)$ be a set of unitary matrices and let $\pi : \Lambda_1 \to \cU(n)$ be a projective representation of a countable group $\Lambda_1$ such that
$$\pi(g) J \pi(g)^* = J \quad\text{for all $g \in \Lambda_1$, and}\quad w_1 w_2 \in \T \cdot w_2 w_1 \quad\text{for all $w_1,w_2 \in J$.}$$
There then exists a finite index subgroup $\Lambda_0 < \Lambda_1$ such that for all $g$ in the commutator subgroup $[\Lambda_0,\Lambda_0]$ and all $w \in J$, we have $\pi(g) w = w \pi(g)$.
\end{lemma}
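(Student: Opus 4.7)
The plan is to extract from $J$ a compact abelian subgroup of $\PU(n)$ on which $\Gamma_1$ acts by conjugation via $\pi$, and then show that this action has finite image so that the commutator subgroup of a finite index part of $\Gamma_1$ acts trivially. First I would pass to $\PU(n) = \cU(n)/\T$: the relation $w_1 w_2 \in \T \cdot w_2 w_1$ says that the images $w\T$, $w \in J$, pairwise commute, so the closure $H \subset \PU(n)$ of the subgroup they generate is a compact abelian Lie subgroup, and hence $H \cong \T^k \times F$ for some $k \geq 0$ and finite abelian group $F$. The assumption $\pi(g) J \pi(g)^* = J$ then ensures that conjugation by $\pi(g)\T$ normalizes $H$, giving a group homomorphism $\alpha : \Gamma_1 \to \Aut(H)$.

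The heart of the argument is to show that $\alpha(\Gamma_1)$ is finite, so that $\Gamma_0 := \ker \alpha$ is a finite index subgroup. The strategy is to observe that $\alpha$ factors through the quotient $N_{\PU(n)}(H)/Z_{\PU(n)}(H)$, which is compact as a quotient of closed subgroups of the compact group $\PU(n)$. On the other hand, $\Aut(H)$ (with the compact-open topology) is a discrete group: restriction to the identity component $H^0 = \T^k$ gives a map into $\GL(k, \Z)$ (discrete as a subgroup of $\GL(k,\R)$ via the derivative at the identity), while the kernel of this restriction is parametrized by $\Aut(F) \times \Hom(F, H^0)$, which is finite since $F$ is finite. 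A compact subgroup of a discrete group is finite, so $\alpha(\Gamma_1)$ is finite. I expect this to be the main technical step, since it requires carefully checking discreteness of $\Aut(H)$.

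Finally, for $g \in \Gamma_0$ and $w \in J$, the images $\pi(g)\T$ and $w\T$ commute in $\PU(n)$, so there is a unique scalar $\chi_w(g) \in \T$ with $\pi(g) w \pi(g)^{-1} = \chi_w(g) \, w$. The projective $2$-cocycle ambiguity in $\pi$ cancels under conjugation, so $\chi_w(g)$ is independent of the chosen lift and a short computation shows that $\chi_w : \Gamma_0 \to \T$ is a group homomorphism. Since $\T$ is abelian, $\chi_w$ vanishes on the commutator subgroup $[\Gamma_0,\Gamma_0]$, which yields $\pi(g) w = w \pi(g)$ for every $g \in [\Gamma_0,\Gamma_0]$ and every $w \in J$, the desired conclusion.
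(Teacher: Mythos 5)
Your proof is correct, but it follows a genuinely different route from the paper's. The paper reaches the key finiteness via its Lemma~\ref{lem.finite-type-cocycles}: lifting the abelian group $W < \PU(n)$ generated by $\{w\T : w \in J\}$ to a projective representation $\zeta$ gives a bicharacter $\Om$ on $W$; one shows its radical $W_0$ has finite index, forms the finite-dimensional abelian $*$-algebra $B = \lspan\zeta(W_0)$, and passes to the finite-index subgroup $\Gamma_0 = \Gamma_2 \cap \Gamma_3$ fixing both $W/W_0$ and the minimal projections of $B$; only then does one obtain characters $\om_{w,i}$ vanishing on $[\Gamma_0,\Gamma_0]$. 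You instead take the \emph{closure} $H = \overline{W}$, a compact abelian Lie group $\cong \T^k \times F$, observe that $\Aut(H)$ is discrete (via $\GL(k,\Z)$ plus the finite data on $F$), note that the compact group $N_{\PU(n)}(H)/Z_{\PU(n)}(H)$ embeds continuously into it and hence is finite, and take $\Gamma_0 = \Ker\al$ directly; since $\al_g = \id$ on $H$ already forces $\pi(g) w \pi(g)^{-1}$ to be a scalar multiple of $w$, the character argument is immediate and needs no auxiliary algebra or projections. Your version trades the cocycle-theoretic Lemma~\ref{lem.finite-type-cocycles} (itself a compactness argument) for standard structure theory of compact abelian Lie groups and gives a cleaner description of $\Gamma_0$; the paper's version stays within its operator-algebraic toolkit and reuses a lemma needed elsewhere. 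Both are valid.
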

\begin{proof}
Define the abelian subgroup $W < \PU(n) = \cU(n)/\T \cdot 1$ generated by $w \T$, $w \in J$. Then $(\Ad \pi(g))_{g \in \Lambda_1}$ induces an action $(\al_g)_{g \in \Lambda_1}$ on $W$. Choosing a lift, we get the projective representation $\zeta : W \to \cU(n)$. By our assumptions, we find a bicharacter $\Om : W \times W \to \T$ such that $\zeta(w) \zeta(w') = \Om(w,w') \zeta(w') \zeta(w)$ for all $w,w' \in W$. Note that $\Om(w,w') = \overline{\Om}(w',w)$ for all $w \in W$.

It also follows from our assumptions that $\pi(g) \zeta(w) \pi(g)^* \in \T \cdot \zeta(\al_g(w))$ for all $g \in \Lambda_1$ and $w \in W$. It follows that $\Om(\al_g(w),\al_g(w')) = \Om(w,w')$ for all $g \in \Lambda_1$ and $w,w' \in W$.

By Lemma \ref{lem.finite-type-cocycles}, the subgroup $W_0 = \{w \in W \mid \forall w' \in W : \Om(w,w')=1\}$ has finite index in $W$. Since $\al_g(W_0)=W_0$ for all $g \in \Lambda_1$, $(\al_g)_{g \in \Lambda_1}$ induces an action of $\Lambda_1$ on the finite quotient $W/W_0$. We define the finite index subgroup $\Lambda_2 < \Lambda_1$ so that this action is trivial for all $g \in \Lambda_2$.

Note that $\zeta(W_0)$ is an abelian group of unitaries that commute with all $\zeta(w)$, $w \in W$. Then $B = \lspan \zeta(W_0)$ is an abelian $*$-subalgebra of $M_n(\C)$ that commutes with all $\zeta(w)$, $w \in W$. Since $\Ad \pi(g)$ normalizes $\zeta(W_0) \T$, also $\pi(g)B\pi(g)^* = B$ for all $g \in \Lambda_1$. Denote by $p_1,\ldots,p_k$ the minimal projections in $B$. Define the finite index subgroup $\Lambda_3 < \Lambda_1$ such that $\pi(g)$ commutes with $p_i$ for all $g \in \Lambda_3$ and $i \in \{1,\ldots,k\}$. We write $\Lambda_0 = \Lambda_2 \cap \Lambda_3$.

By construction, for every $w \in W$, $\zeta(w)$ commutes with all $p_i$ and $\zeta(w) p_i \in \T p_i$ when $w \in W_0$. Fix $i \in \{1,\ldots,k\}$, $g \in \Lambda_0$ and $w \in W$. Since $\al_g$ induces the identity automorphism on $W/W_0$, we find $w_0 \in W_0$ such that $\al_g(w) = w w_0$. So,
$$\pi(g) \, \zeta(w) p_i \, \pi(g)^* \in \T \cdot \zeta(\al_g(w)) p_i = \T \cdot \zeta(w w_0) p_i = \T \cdot \zeta(w) \zeta(w_0) p_i = \T \cdot \zeta(w) p_i \; .$$
We thus find for every $w \in W$ and every $i \in \{1,\ldots,k\}$, a character $\om_{w,i} : \Lambda_0 \to \T$ such that
$$\pi(g) \, \zeta(w) p_i \, \pi(g)^* = \om_{w,i}(g) \, \zeta(w) p_i$$
for all $g \in \Lambda_0$. So for all $g \in [\Lambda_0,\Lambda_0]$, we get that
$$\pi(g) \, \zeta(w) p_i \, \pi(g)^* = \zeta(w) p_i \quad\text{for all $w \in W$ and $i \in \{1,\ldots,k\}$.}$$
Summing over $i$, we conclude that for $g \in [\Lambda_0,\Lambda_0]$, $\pi(g)$ commutes with all $\zeta(w)$, $w \in W$. Since $J \subset \zeta(W) \T$, we get that $\pi(g)$ commutes with all $w \in J$.
\end{proof}

\subsection{Coarse embeddings for generalized Bernoulli crossed products}

Also in the special case where the Bernoulli like crossed products in Theorem \ref{thm.coarse-embedding-Bernoulli-like} are ordinary generalized Bernoulli crossed products $(A_i,\tau_i)^{\Gamma_i} \rtimes (\Gamma_i \times \Gamma_i)$, the conclusion of Theorem \ref{thm.coarse-embedding-Bernoulli-like} can be strengthened in the following way.

\begin{theorem}\label{thm.coarse-embedding-generalized-Bernoulli}
Let $\Gamma_1,\ldots,\Gamma_k$ be groups in the class $\cC$ and let $(A_i,\tau_i)$ be nontrivial tracial amenable von Neumann algebras. Write $(B_i,\tau) = (A_i,\tau_i)^{\Gamma_i}$, consider the left-right Bernoulli action $\Gamma_i \times \Gamma_i \actson B_i$ and write $M_i = B_i \rtimes (\Gamma_i \times \Gamma_i)$.

Let $\Gamma$ be a nonamenable icc group and $(A_0,\tau_0)$ a nontrivial amenable tracial von Neumann algebra. Consider the left-right Bernoulli action $\Gamma \times \Gamma \actson (A,\tau) = (A_0,\tau_0)^\Gamma$ and write $M = A \rtimes (\Gamma \times \Gamma)$.

Let $t > 0$ and $\psi : M \to (M_1 \ovt \cdots \ovt M_k)^t$ a coarse embedding. Then $t \in \N$ and $\psi$ is unitarily conjugate to a direct sum of embeddings of the form $\psi_0 : M \to M_n(\C) \ovt M_1 \ovt \cdots \ovt M_k$ satisfying
\begin{itemlist}
\item $\psi_0(u_r) = \pi(r) \ot u_{\delta_1(r)} \ot \cdots \ot u_{\delta_k(r)}$ for all $r \in \Gamma \times \Gamma$, where $\pi : \Gamma \times \Gamma \to \cU(n)$ is a unitary representation and the $\delta_i : \Gamma \times \Gamma \to \Gamma_i \times \Gamma_i$ are faithful symmetric homomorphisms,
\item $\psi_0(\pi_e(A_0)) \subset D \ovt \pi_e(A_1) \ovt \cdots \ovt \pi_e(A_k)$, where $D \subset M_n(\C)$ is an abelian $*$-subalgebra that is normalized by $(\Ad \pi(r))_{r \in \Gamma \times \Gamma}$,
\item for every $a \in A_0$ and $i \in \{1,\ldots,k\}$, we have that $(\id^{\ot i} \ot \tau \ot \id^{\ot (k-i)})\psi_0(\pi_e(a)) = \tau(a) 1$.
\end{itemlist}
If for every $i \in \{1,\ldots,k\}$ and every irreducible subfactor $P \subset M_i$ of infinite index, $\psi(M) \not\prec M_1 \ovt \cdots \ovt M_{i-1} \ovt P \ovt M_{i+1} \ovt \cdots \ovt M_k$, then the homomorphisms $\delta_i$ above are all isomorphisms.
\end{theorem}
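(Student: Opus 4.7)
The plan is to refine the output of Theorem \ref{thm.coarse-embedding-Bernoulli-like} by adding the two pieces of information specific to the untwisted setting: the trace-slice condition and the existence of an abelian $D$ capturing the matrix part. I would organize the argument as follows.

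First, apply Theorem \ref{thm.coarse-embedding-Bernoulli-like} directly to $\psi$. Since each $\mu_i = 1$, the cocycle relation $\om_\pi \cdot (\mu_1 \circ \delta_1) \cdots (\mu_k \circ \delta_k) = 1$ collapses to $\om_\pi = 1$, so that the projective representation $\pi$ is in fact a genuine unitary representation of $\Gamma \times \Gamma$. The inclusion $\psi_0(\pi_e(A_0)) \subset M_n(\C) \ovt \pi_e(A_1) \ovt \cdots \ovt \pi_e(A_k)$ and the faithful symmetric structure of the $\delta_i$, including the final statement about isomorphisms under the non-intertwining hypothesis, are provided directly by that theorem.

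Second, I would prove the trace-slice identity by adapting Step~4 in the proof of Theorem \ref{thm.coarse-embedding-cocycle-twisted-wreath-product}. Fix $i \in \{1,\ldots,k\}$, let $\cA = A_0^\Gamma = A$, and let $\cB \subset \bigotimes_{j \neq i} M_j$ be the subalgebra generated by the $\Ad u_{\delta(g,e)}$-shifts of $\bigotimes_{j \neq i} \pi_e(A_j)$ for $g \in \Gamma$. Both carry mixing Bernoulli-like $\Gamma$-actions: $\si_{(g,e)}$ on $\cA$ and $\gamma_g := \Ad u_{\delta(g,e)}|_\cB$ on $\cB$. Coarseness of $\psi$ in position $i$ ensures that $\om(a \ot (S \ot b)\op) := (\Tr \ot \tau)(\psi_0(a)(S \ot b))$ extends to a normal positive functional on $\cA \ovt (M_n(\C) \ot \cB)\op$, and the equivariance $\psi_0 \circ \si_{(g,e)} = \Ad \psi_0(u_{(g,e)}) \circ \psi_0$ yields invariance of $\om$ under $\be_g := \si_{(g,e)} \ot (\Ad \pi(g,e) \ot \gamma_g)\op$. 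Choosing a sequence $g_n \to \infty$ in $\Gamma$ with $\pi(g_n,e)$ convergent in the compact group $\cU(n)$ to some $u$, mixing of $\si$ and $\gamma$ gives $\si_{g_n}(a) \to \tau(a) \cdot 1$ and $\gamma_{g_n}(b) \to \tau(b) \cdot 1$ ultraweakly, and passing $\be_{g_n}$-invariance of $\om$ to the limit yields
\[ \om(a \ot (S \ot b)\op) = \tau(a) \, \Tr(S) \, \tau(b) \quad\text{for all } a \in \cA,\, S \in M_n(\C),\, b \in \cB . \]
Specializing to $a = \pi_e(a_0)$ with $\tau_0(a_0) = 0$ and varying $S,b$ (using that $\cB$ contains $\bigotimes_{j \neq i} \pi_e(A_j)$ via the $g=e$ shift) forces the $i$-th partial trace of $\psi_0(\pi_e(a_0))$ to vanish.

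Third, I would produce the abelian subalgebra $D$. Let $D_e \subset M_n(\C)$ be the smallest $*$-subalgebra with $\psi_0(\pi_e(A_0)) \subset D_e \ovt \pi_e(A_1) \ovt \cdots \ovt \pi_e(A_k)$. Since $\pi_e(A_0)$ is pointwise fixed by the diagonal subgroup $\Gamma_d < \Gamma \times \Gamma$ and each $u_{\delta_i(g,g)}$ acts trivially on $\pi_e(A_i)$, the algebra $D_e$ is $\pi(\Gamma_d)$-normalized, so $D_f := \pi(g,h) D_e \pi(g,h)^*$ is well-defined for $f = gh^{-1} \in \Gamma$. Expanding $\psi_0(\pi_e(A_0))$ and $\psi_0(\pi_f(A_0))$ in bases of the orthogonal subalgebras $\pi_e(A_j), \pi_{f_j}(A_j) \subset B_j$ (with $f_j = \al_j(f)^{\pm 1} \neq e$ since $\al_j$ is injective), the commutation $[\psi_0(\pi_e(A_0)), \psi_0(\pi_f(A_0))] = 0$ for $f \neq e$ translates into $[D_e, D_f] = 0$. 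Were $D_e$ nonabelian, one would pick $x,y \in D_e$ with $[x,y] \neq 0$ and a sequence of pairwise distinct $f_n \to \infty$ in $\Gamma$ with $\pi(f_n,e) \to u$ in $\cU(n)$ by compactness; the commutation of $D_{f_n}$ with $D_{f_m}$ for $n \neq m$ then passes to the limit in both indices to give $[uxu^*, uyu^*] = 0$, i.e.\ $[x,y] = 0$, a contradiction. Hence $D_e$ is abelian, and $D := \bigvee_{f \in \Gamma} D_f$ is an abelian $*$-subalgebra of $M_n(\C)$ with the required inclusion, normalized by $\pi(\Gamma \times \Gamma)$ via $\pi(g',h') D_f \pi(g',h')^* = D_{g'fh'^{-1}}$.

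The main obstacle is the abelianness of $D_e$: this is the genuinely new ingredient beyond Theorems \ref{thm.coarse-embedding-Bernoulli-like} and \ref{thm.coarse-embedding-cocycle-twisted-wreath-product}, and it relies decisively on the compactness of the finite-dimensional unitary group $\cU(n)$ to produce infinitely many pairwise-commuting conjugates $D_{f_n}$ that collapse, in the limit, to a single $uD_e u^*$. A subtler technical point in the trace-slice argument is the normal extension of $\om$ to the $W^*$-tensor product $\cA \ovt (M_n(\C) \ot \cB)\op$, which is precisely encoded by coarseness in position $i$ via the standard factorization of the associated bilinear form through $L^2(M) \ot L^2(\bigotimes_{j \neq i} M_j)$.
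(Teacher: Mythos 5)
Your proposal is correct, and for the two new ingredients beyond Theorem \ref{thm.coarse-embedding-Bernoulli-like} your treatment of the abelianness of $D$ matches the paper's essentially verbatim (the paper's terse ``by finite dimensionality, all $D_g$ must be abelian'' is indeed substantiated by exactly your compactness argument: infinitely many distinct $f_n$ with $\pi(f_n,e)$ norm-convergent produce, in the limit, a copy of $uD_eu^*$ commuting with itself). For the trace-slice identity, your route is genuinely different from the paper's. The paper first fixes a minimal projection $p\in D$, passes to the finite-index subgroup $\Gamma_0<\Gamma$ commuting with $p$, cuts down $\psi_0$ to a coarse embedding $\gamma:A\to B_1\ovt\cdots\ovt B_k$, builds a state $\Om$ on $\cA\ovt C\op$ with $\cA=A_0^{\Gamma_0}$ and $C=C_0^{\Gamma_0}$, proves a product formula $\Om(\pi_{g_1}(a_1)\cdots\ot\cdots)=\prod_j\Om_0(a_j\ot b_j\op)$ to establish $\Gamma_0$-invariance, and then invokes \emph{ergodicity} of the diagonal Bernoulli action of $\Gamma_0$ to force $\Om=\tau$. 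You instead work directly with the functional $\om$ on $\cA\ovt(M_n(\C)\ot\cB)\op$ keeping the matrix factor in play, get $\be_g$-invariance by a one-line unitary-invariance computation for the semifinite trace $\Tr\ot\tau$, and then use \emph{mixing} of the two Bernoulli actions together with compactness of $\cU(n)$ to pass $\be_{g_n}$-invariance to the limit and identify $\om=\tau\ot(\Tr\ot\tau)\op$. Your version is arguably leaner: no reduction to a minimal projection and no product formula, at the cost of relying once more on the compactness trick. Both the extraction of a normal $\om$ from coarseness (as the restriction of the coarse bimodule to the subalgebras $\cA$ and $M_n(\C)\ot\cB$) and the passage to the weak limit of a bounded net of elementary tensors are sound, so your argument goes through.
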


\begin{proof}
We repeat a few words from the start of the proof of Theorem \ref{thm.coarse-embedding-cocycle-twisted-wreath-product}. By Theorem \ref{thm.coarse-embedding-Bernoulli-like}, we have that $t \in \N$ and that we can write $\psi$ as a direct sum of embeddings of a special form. We again analyze each summand separately and may thus assume that $\psi : M \to M_n(\C) \ovt M_1 \ovt \cdots \ovt M_k$ is of the following form.
\begin{itemlist}
\item $\psi(u_r) = \pi(r) \ot u_{\delta(r)}$ for all $r \in \Gamma \times \Gamma$, where $\delta$ is a $k$-tuple of symmetric homomorphisms $\delta_i : \Gamma \times \Gamma \to \Gamma_i \times \Gamma_i$,
\item $\psi(\pi_e(A_0)) \subset M_n(\C) \ovt \pi_e(A_1) \ovt \cdots \ovt \pi_e(A_k)$.
\end{itemlist}
Moreover, if the final assumption in the formulation of the theorem holds, then all $\delta_i$ are isomorphisms. For simplicity of notation, we will assume that none of the $\delta_i$ involves a flip, i.e.\ $\delta_i(g,h) = (\al_i(g),\al_i(h))$ where $\al_i : \Gamma \to \Gamma_i$ are faithful homomorphisms.

Note that for every $g \in \Gamma$, $\psi(\pi_g(A_0)) \subset M_n(\C) \ovt \pi_{\al_1(g)}(A_0) \ovt \cdots \ovt \pi_{\al_k(g)}(A_0)$. Denote by $D_g \subset M_n(\C)$ the smallest $*$-algebra such that
\begin{equation}\label{eq.live-in-these-coordinates}
\psi(\pi_g(A_0)) \subset D_g \ovt \pi_{\al_1(g)}(A_0) \ovt \cdots \ovt \pi_{\al_k(g)}(A_0) \; .
\end{equation}
Note that $D_g$ equals the algebra generated by the elements
$$(\id \ot \om_1 \ot \cdots \ot \om_k)(\psi(\pi_g(a))) \quad\text{with $\om_i \in (M_i)_*$ and $a \in A_0$,}$$
and that $\pi(g,h) D_k \pi(g,h)^* = D_{gkh^{-1}}$ for all $g,h,k \in \Gamma$. We claim that $(D_g)_{g \in \Gamma}$ is a commuting family of abelian $*$-subalgebras of $M_n(\C)$ that thus generate an abelian $*$-subalgebra $D \subset M_n(\C)$ satisfying $\pi(g,h) D \pi(g,h)^* = D$ for all $g,h \in \Gamma$. To prove this claim, we first take $g \neq h$ in $\Gamma$ and show that $D_g$ and $D_h$ commute. Take $a,a' \in A_0$ and $\om_i,\om'_i \in (M_i)_*$. We have that
$$\psi(\pi_g(a)) \, \psi(\pi_h(a)) = \psi(\pi_h(a)) \, \psi(\pi_g(a)) \; .$$
Using \eqref{eq.live-in-these-coordinates} and applying the functionals $\om_i$ in the coordinates $\al_i(g)$ and $\om'_i$ in the coordinates $\al_i(h)$, which are all distinct because $g \neq h$, it follows that
$$(\id \ot \om_1 \ot \cdots \ot \om_k)(\psi(\pi_g(a))) \quad\text{commutes with}\quad (\id \ot \om'_1 \ot \cdots \ot \om'_k)(\psi(\pi_h(a'))) \; .$$
So the self-adjoint family of matrices generating $D_g$ commutes with the self-adjoint family of matrices generating $D_h$. We conclude that $D_g$ commutes with $D_h$.

Since $\pi(g,h) D_k \pi(g,h)^* = D_{gkh^{-1}}$ for all $g,h,k \in \Gamma$, all $D_g$ are isomorphic. So if one of the $D_g$ is nonabelian, all $D_g$ are nonabelian and we would find an infinite family of commuting nonabelian $*$-subalgebras of $M_n(\C)$, which is impossible. So all $D_g$ are abelian and the claim is proven.

It remains to prove that $(\id^{\ot i} \ot \tau \ot \id^{\ot (k-i)})\psi(\pi_e(a)) = \tau(a) 1$ for every $a \in A_0$ and $i \in \{1,\ldots,k\}$. By symmetry, it suffices to consider the case $i=1$.

Fix an arbitrary minimal projection $p \in D$. Define the finite index subgroup $\Gamma_0 < \Gamma$ such that $\pi(g,e) p = p \pi(g,e)$ for all $g \in \Gamma_0$. Define the coarse embedding $\gamma : A \to B_1 \ovt \cdots \ovt B_k$ such that $\psi(a)(p \ot 1^{\ot k}) = p \ot \gamma(a)$ for all $a \in A$. Define $\gamma_0 : A_0 \to A_1 \ovt \cdots \ovt A_k$ such that $\gamma \circ \pi_e = (\pi_e \ot \cdots \ot \pi_e) \circ \gamma_0$. Note that for all $g \in \Gamma_0$, we have $\gamma \circ \pi_g = (\pi_{\al_1(g)} \ot \cdots \ot \pi_{\al_k(g)}) \circ \gamma_0$.

Define $C_0 = A_2 \ovt \cdots \ovt A_k$ and write $(C,\tau) = (C_0,\tau)^{\Gamma_0}$. Define the embedding $\eta : C \to B_2 \ovt \cdots \ovt B_k$ such that $\eta(\pi_g(c)) = (\pi_{\al_2(g)} \ot \cdots \ot \pi_{\al_k(g)})(c)$ for all $g \in \Gamma_0$, $c \in C_0$. We also define $\cA \subset A$ by $(\cA,\tau) = (A_0,\tau_0)^{\Gamma_0}$. Since the embedding $\gamma$ is coarse, we can define a normal state $\Om$ on $\cA \ovt C\op$ satisfying
$$\Om(a \ot b\op) = \tau(\gamma(a)(1 \ot \eta(b))) \quad\text{for all $a \in \cA$, $b \in C$.}$$
By restricting $\Om$ to $\pi_e(A_0) \ovt (\pi_e(A_2) \ovt \cdots \ovt \pi_e(A_k))\op$, we also have the normal state $\Om_0$ on $A_0 \ovt C_0\op$ satisfying $\Om_0(a \ot b\op) = \tau(\gamma_0(a)(1 \ot b))$.

If $g_1,\ldots,g_m \in \Gamma_0$ are distinct elements and $a_1,\ldots,a_m \in A_0$, $b_1,\ldots,b_m \in C_0$, we consider
$$a = \pi_{g_1}(a_1) \cdots \pi_{g_m}(a_m) \in \cA \quad\text{and}\quad b = \pi_{g_1}(b_1) \cdots \pi_{g_m}(b_m) \in C \; .$$
A direct computation then gives
$$\Om(a \ot b\op) = \prod_{i=1}^m \Om_0(a_i \ot b_i\op) \; .$$
It follows that $\Om$ is invariant under the diagonal Bernoulli action of $\Gamma_0$ on $\cA \ovt C\op$. Since this action is ergodic, it follows that $\Om = \tau$. This implies that $(\tau \ot \id^{\ot (k-1)})\gamma_0(a) = \tau(a) 1$ for all $a \in A_0$. Since the minimal projection $p \in D$ was arbitrary, it also follows that
$$(\id \ot \tau \ot \id^{\ot (k-1)})\psi(\pi_e(a)) = \tau(a) 1$$
for every $a \in A_0$.
\end{proof}

\section{Proof of Theorem \ref{thm.A}}

In this section, we prove Theorem \ref{thm.A}. We actually prove a more precise result in Section \ref{sec.proof-theorem-A} below. To prepare for the proof, we need several preliminary results.

\subsection{The icc decomposition of a twisted group von Neumann algebra}

Let $\Lambda$ be a countable group and $\om \in Z^2(\Lambda,\T)$. We denote by $\Lambda\fc$ the virtual center of $\Lambda$, i.e.\ the set of all elements $g \in \Lambda$ having a finite conjugacy class, which always form a normal subgroup of $\Lambda$. Then the center of $L_\om(\Lambda)$ is contained in $L_\om(\Lambda\fc)$, so that $\cZ(L_\om(\Lambda))$ equals the space of $(\Ad u_g)_{g \in \Lambda}$-invariant elements $\cZ(L_\om(\Lambda\fc))$.

When $\Lambda$ is a countable group with \emph{finite} virtual center, the center $\cZ(L_\om(\Lambda))$ is finite dimensional, so that $L_\om(\Lambda)$ is a direct sum of factors. We make this direct sum decomposition more concrete and prove in Proposition \ref{prop.decompose-virtual-center} that
\begin{equation}\label{eq.decompose-direct-sum-icc}
L_\om(\Lambda) \cong \bigoplus_{i=1}^l \bigl(M_{n_i}(\C) \ot L_{\om_{z_i}}(\Lambda_{z_i} / \Lambda\fc)\bigr) \; ,
\end{equation}
where $\{z_1,\ldots,z_l\}$ are minimal projections in $\cZ(L_\om(\Lambda\fc))$ such that $z_i \leq q_i$ with $\{q_1,\ldots,q_l\}$ being the minimal projections of $\cZ(L_\om(\Lambda))$ and where, for each minimal projection $z \in \cZ(L_\om(\Lambda\fc))$, $\Lambda_{z} < \Lambda$ is a finite index subgroup containing $\Lambda\fc$ and $\om_{z} \in H^2(\Lambda_{z}/\Lambda\fc,\T)$. The integers $n_i$ are such that $n_i = d_i k_i$ with $L_\om(\Lambda\fc) q_i \cong M_{d_i}(\C) \ot \C^{k_i}$ and also $k_i = [\Lambda : \Lambda_{z_i}]$.

Since $\Lambda_z / \Lambda\fc$ is icc, the decomposition \eqref{eq.decompose-direct-sum-icc} writes $L_\om(\Lambda)$ as a direct sum of twisted group von Neumann algebras of icc groups, provided that $\Lambda\fc$ is finite.

\begin{proposition}\label{prop.decompose-virtual-center}
Let $\Lambda$ be a countable group with finite virtual center $\Lambda\fc$ and $\om \in Z^2(\Lambda,\T)$. Let $z \in \cZ(L_\om(\Lambda\fc))$ be a minimal projection.
\begin{enumlist}
\item Then $\Lambda_z = \{g \in \Lambda \mid u_g z = z u_g\}$ is a finite index subgroup of $\Lambda$ that contains $\Lambda\fc$.
\end{enumlist}
Choose a projective representation $\pi : \Lambda_z \to \cU(L_\om(\Lambda\fc)z)$ such that $\Ad u_g z = \Ad \pi(g)$ on $L_\om(\Lambda\fc)z$ for all $g \in \Lambda_z$. We write $\rho : \Lambda_z \to \cU(z L_\om(\Lambda) z) : \rho_g = u_g \pi(g)^* z$. Choose a lift $\phi : \Lambda_z/\Lambda\fc \to \Lambda_z$.
\begin{enumlist}[resume]
\item Then the map $\Lambda_z / \Lambda\fc \to \cU(z L_\om(\Lambda) z) : g \mapsto \rho_{\phi(g)}$ is a projective representation. We denote by $\om_z$ its $2$-cocycle.
\item For every minimal projection $p \in L_\om(\Lambda\fc) z$, the map $g \mapsto \rho_{\phi(g)} p$ realizes an isomorphism $p L_\om(\Lambda) p \cong L_{\om_z}(\Lambda_z / \Lambda\fc)$.
\item Denoting by $q \in \cZ(L_\om(\Lambda))$ the unique minimal projection with $z \leq q$, we have that $L_\om(\Lambda) q \cong M_n(\C) \ot L_{\om_z}(\Lambda_z / \Lambda\fc)$, where $n = d k$ and $L_\om(\Lambda\fc) q \cong M_d(\C) \ot \C^k$ and $k = [\Lambda:\Lambda_z]$.
\end{enumlist}
In particular, if $\{q_1,\ldots,q_l\}$ are the minimal projections of $\cZ(L_\om(\Lambda))$ and if we choose minimal projections $z_i \in \cZ(L_\om(\Lambda\fc))q_i$, the isomorphism \eqref{eq.decompose-direct-sum-icc} holds.
\end{proposition}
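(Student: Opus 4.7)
For (i), since $\Lambda\fc$ is finite, $L_\om(\Lambda\fc)$ is finite dimensional and its center $\cZ(L_\om(\Lambda\fc))$ has only finitely many minimal projections. The conjugation action of $\Lambda$ on $L_\om(\Lambda\fc)$ (by normality of $\Lambda\fc$) permutes this finite set, and is trivial when restricted to $\Lambda\fc$ (since $\Lambda\fc$ is central in $L_\om(\Lambda\fc)$). Hence the stabilizer $\Lambda_z$ has finite index and contains $\Lambda\fc$. For (ii), observe that $L_\om(\Lambda\fc)z \cong M_d(\C)$ is a factor, so for each $g \in \Lambda_z$ the automorphism $\Ad u_g z$ of this factor is inner; choose unitaries $\pi(g) \in L_\om(\Lambda\fc)z$ implementing it, which forces $\pi$ to be a projective representation. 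Set $\rho_g = u_g \pi(g)^* z$. A direct computation using $\pi(g) L_\om(\Lambda\fc)z \pi(g)^* = u_g L_\om(\Lambda\fc)z u_g^*$ shows $\rho_g$ commutes with $L_\om(\Lambda\fc)z$. Since $\pi(g)\pi(h)$ and $\pi(gh)$ implement the same automorphism of the factor $L_\om(\Lambda\fc)z$, they agree up to scalar, and this implies $\rho_g \rho_h \in \T \rho_{gh}$. Finally for $k \in \Lambda\fc$, $u_k z$ and $\pi(k)$ both lie in $L_\om(\Lambda\fc)z$ and implement the same automorphism, so $\rho_k \in \cZ(L_\om(\Lambda\fc)z) = \T z$; therefore $g \mapsto \rho_{\phi(g)}$ descends to a projective representation of $\Lambda_z/\Lambda\fc$, whose cocycle we call $\om_z$.

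For (iii) and (iv), note that $z \leq q$ with $q \in \cZ(L_\om(\Lambda))$ minimal, so $L_\om(\Lambda)q$ is a factor and $zL_\om(\Lambda)z$ is a corner of it, hence also a factor. For $g \notin \Lambda_z$, the central projections $z$ and $\sigma_g(z) := u_g z u_g^*$ are distinct minimal projections of $\cZ(L_\om(\Lambda\fc))$, hence orthogonal, giving $z u_g z = z\sigma_g(z) u_g = 0$. Therefore $zL_\om(\Lambda)z$ is the weak closure of the linear span of $\{u_g z : g \in \Lambda_z\}$, and since $u_g z = \pi(g)\rho_g$, it is generated by $L_\om(\Lambda\fc)z$ together with $\rho(\Lambda_z)$. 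As $L_\om(\Lambda\fc)z$ is a matrix factor inside the factor $zL_\om(\Lambda)z$, we obtain the tensor product decomposition
$$zL_\om(\Lambda)z = L_\om(\Lambda\fc)z \ot \bigl((L_\om(\Lambda\fc)z)' \cap zL_\om(\Lambda)z\bigr),$$
where the commutant factor is generated by $\{\rho_{\phi(g)} : g \in \Lambda_z/\Lambda\fc\}$ and is therefore canonically isomorphic to $L_{\om_z}(\Lambda_z/\Lambda\fc)$. Compressing to a minimal projection $p \in L_\om(\Lambda\fc)z$ kills the first tensor factor and yields the isomorphism in (iii). For (iv), the $\Lambda$-orbit of $z$ consists of $k = [\Lambda:\Lambda_z]$ pairwise orthogonal central projections summing to $q$, which are mutually equivalent inside the factor $L_\om(\Lambda)q$, so $L_\om(\Lambda)q \cong M_k(\C) \ot zL_\om(\Lambda)z \cong M_{kd}(\C) \ot L_{\om_z}(\Lambda_z/\Lambda\fc)$. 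The direct sum formula \eqref{eq.decompose-direct-sum-icc} then follows by summing over the minimal central projections $q_i$ of $L_\om(\Lambda)$.

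The main technical point, which I expect to be the principal subtlety, is the construction of the projective representation of $\Lambda_z/\Lambda\fc$ together with the tensor product decomposition of $zL_\om(\Lambda)z$: one must verify that $\rho_g$ lands in the relative commutant of $L_\om(\Lambda\fc)z$, that the scalar correction from $\pi$ not being an honest representation is absorbed cleanly so that $\rho$ is a projective representation on $\Lambda_z$, and that $\rho$ is scalar-valued on $\Lambda\fc$ so that it descends. All of these are scalar bookkeeping arguments driven by the fact that $L_\om(\Lambda\fc)z$ is a matrix factor with trivial center modulo scalars. Everything else---the spanning statement, the factoriality of $zL_\om(\Lambda)z$, the orbit structure of $z$, and the amplification formulas---follows from standard reasoning about corners of factors and the structure of finite dimensional von Neumann algebras.
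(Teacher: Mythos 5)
Your proof is correct and follows essentially the same path as the paper's, which for parts (iii) and (iv) simply asserts that ``the remaining statements all follow by construction''; the vanishing $z u_g z = 0$ for $g \notin \Lambda_z$, the tensor splitting $z L_\om(\Lambda) z \cong L_\om(\Lambda\fc)z \ot \bigl((L_\om(\Lambda\fc)z)' \cap z L_\om(\Lambda) z\bigr)$, and the orbit count are precisely what that phrase leaves to the reader, so you have filled in the right details. One small slip in (i): the conjugation action of $\Lambda\fc$ on $\cZ(L_\om(\Lambda\fc))$ is trivial not because $\Lambda\fc$ is central in $L_\om(\Lambda\fc)$ (it typically is not), but because for $g \in \Lambda\fc$ the unitary $u_g$ lies \emph{inside} $L_\om(\Lambda\fc)$ and hence fixes its center pointwise.
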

\begin{proof}
Since $\cZ(L_\om(\Lambda)) \subset \cZ(L_\om(\Lambda\fc))$, there is a unique minimal projection $q \in \cZ(L_\om(\Lambda))$ such that $z \leq q$. Then $(\Ad u_g)_{g \in \Lambda}$ defines an ergodic action of $\Lambda$ on the finite dimensional algebra $L_\om(\Lambda\fc)q$, so that $L_\om(\Lambda\fc)q \cong M_d(\C) \ot \C^k$ for some integers $d,k$. Defining $\Lambda_z$ as stated, we have $[\Lambda:\Lambda_z] = k$.

Also note that $L_\om(\Lambda\fc)z \cong M_d(\C)$. Since $L_\om(\Lambda\fc) z$ is a matrix algebra, the action $(\Ad u_g z)_{g \in \Lambda_z}$ on $L_\om(\Lambda\fc) z$ is inner and we can choose the projective representation $\pi$ as stated. Denote $\rho_g = u_g \pi(g)^* z$. When $g,h \in \Lambda_z$, the unitary $u_h \pi(h)^* z$ commutes with $\pi(g)^* z \in L_\om(\Lambda\fc) z$. Therefore,
\begin{equation}\label{eq.good-formula-for-rho-g}
\rho_g \rho_h = u_g \pi(g)^* z \, u_h \pi(h)^* z = u_g u_h \, \pi(h)^* \pi(g)^* z = \om(g,h) \, \overline{\om_\pi}(g,h) \, \rho_{gh} \; ,
\end{equation}
so that $\rho$ is a projective representation.

By definition, if $g \in \Lambda\fc$, $\rho_g \in \T z$. So for every lift $\phi : \Lambda_z/\Lambda\fc \to \Lambda_z$, the map $\Lambda_z / \Lambda\fc \to \cU(z L_\om(\Lambda) z) : g \mapsto \rho_{\phi(g)}$ is a projective representation. We denote by $\om_z$ its $2$-cocycle. Fix a minimal projection $p \in L_\om(\Lambda\fc) z$. By construction, for every $g \in \Lambda_z$, $\rho_g$ commutes with $L_\om(\Lambda\fc)z$, so that
$$\theta : \Lambda_z/\Lambda\fc \to \cU(p L_\om(\Lambda) p) : g \mapsto \rho_{\phi(g)} p$$
is a well-defined projective representation with the same $2$-cocycle $\om_z$. Denote by $\tau$ the canonical tracial state on $L_\om(\Lambda)$. Let $E : L_\om(\Lambda) \to L_\om(\Lambda\fc)$ be the unique trace preserving conditional expectation. Note that $E(u_g) = 0$ for all $g \in \Lambda \setminus \Lambda\fc$. It follows that for all $g \in \Lambda_z \setminus \Lambda\fc$,
$$E(\rho_g) = E(u_g \pi(g)^* z) = E(u_g) \pi(g)^* z = 0 \; ,$$
so that also $E(\theta(g)) = E(\rho_{\phi(g)} p) = E(\rho_{\phi(g)}) p = 0$ for all $g \in \Lambda_z / \Lambda\fc$ with $g \neq e$. In particular, $\tau(\theta(g)) = 0$ for all $g \in \Lambda_z / \Lambda\fc$ with $g \neq e$. It follows that $\theta$ uniquely extends to a faithful normal $*$-homomorphism from $L_{\om_z}(\Lambda_z / \Lambda\fc)$ to $p L_\om(\Lambda) p$. To conclude the proof of (iii), it suffices to show that the elements $\rho_g p$, $g \in \Lambda_z$, span a strongly dense subspace of $p L_\om(\Lambda) p$. Since $z u_g z = 0$ for all $g \in \Lambda \setminus \Lambda_z$, the elements $u_g z$, $g \in \Lambda_z$, span a strongly dense subspace of $z L_\om(\Lambda) z$. So, the subspaces $\rho_g L_\om(\Lambda\fc) z$, $g \in \Lambda_z$, span a strongly dense subspace of $z L_\om(\Lambda) z$. Multiplying on the left and on the right with $p$ and using that $p$ is a minimal projection in $L_\om(\Lambda\fc) z$, it follows that the elements $\rho_g p$, $g \in \Lambda_z$, span a strongly dense subspace of $p L_\om(\Lambda) p$.

Finally note that the statements in (iv) hold by construction.
\end{proof}

\begin{lemma}\label{lem.cohom}
Let $\Lambda$ be a countable group with finite virtual center $\Lambda\fc$ and $\om \in Z^2(\Lambda,\T)$. Let $z \in \cZ(L_\om(\Lambda\fc))$ be a minimal projection. Define $\Lambda_z < \Lambda$ and $\om_z \in Z^2(\Lambda_z/\Lambda\fc,\T)$ as in Proposition \ref{prop.decompose-virtual-center}. Denote by $q : \Lambda_z \to \Lambda_z / \Lambda\fc$ the quotient map. Let $\Lambda_1 < \Lambda_z /\Lambda\fc$ be a subgroup and $\mu \in Z^2(\Lambda_1,\T)$.

The $2$-cocycle $(\om_z)|_{\Lambda_1} \, \mu$ on $\Lambda_1$ is of finite type if and only if there exists a finite dimensional projective representation $\theta : q^{-1}(\Lambda_1) \to \cU(n)$ with $2$-cocycle $\om_\theta(g) = \om(g) \, \mu(q(g))$ for all $g \in q^{-1}(\Lambda_1)$ such that the restricted $\omega$-representation $\theta_0 : \Lambda\fc \to \cU(n)$ satisfies $\theta_0(z) \neq 0$.
\end{lemma}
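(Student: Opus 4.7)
Write $\Lambda_0 = q^{-1}(\Lambda_1)$. The strategy is to identify the direct summand $L_{\om \cdot (\mu \circ q)}(\Lambda_0)\,z$ with the tensor product $L_\om(\Lambda\fc)z \,\otimes\, L_{(\om_z)|_{\Lambda_1} \mu}(\Lambda_1)$. Since $L_\om(\Lambda\fc)z \cong M_d(\C)$ is simple, both directions of the equivalence then reduce to the observation that a von Neumann algebra admits a nonzero finite-dimensional $*$-representation if and only if its tensor product with a matrix algebra does.

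First I will check that $z$ is a central projection in $L_{\om \cdot (\mu \circ q)}(\Lambda_0)$. The key point is that $\mu \circ q$ vanishes on pairs from $\Lambda\fc$, so $L_\om(\Lambda\fc)$ sits as the same subalgebra inside both $L_\om(\Lambda_0)$ and $L_{\om \cdot (\mu \circ q)}(\Lambda_0)$, and for $g \in \Lambda_0$ the automorphism $\Ad u_g$ restricted to $L_\om(\Lambda\fc)$ is given by the same formula in either algebra. Because $\Lambda_0 \subset \Lambda_z$ stabilizes $z$, centrality follows. Next I will adapt the construction of Proposition \ref{prop.decompose-virtual-center} to the $\mu$-twisted setting: using the same $\pi : \Lambda_z \to \cU(L_\om(\Lambda\fc)z)$ and lift $\phi : \Lambda_z/\Lambda\fc \to \Lambda_z$, define $\tilde\rho_g = u_g \pi(g)^{-1} z \in L_{\om \cdot (\mu \circ q)}(\Lambda_0)\,z$ for $g \in \Lambda_0$. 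A direct computation shows $\tilde\rho_g$ commutes with $L_\om(\Lambda\fc)z$, and that $g \mapsto \tilde\rho_{\phi(g)}$ is a projective representation $\Lambda_1 \to \cU(z L_{\om \cdot (\mu \circ q)}(\Lambda_0) z)$. Together with $L_\om(\Lambda\fc)z$, these unitaries generate $L_{\om \cdot (\mu \circ q)}(\Lambda_0)z$ as a $*$-algebra, producing the desired tensor product decomposition.

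The principal obstacle is identifying the $2$-cocycle of $g \mapsto \tilde\rho_{\phi(g)}$ as $(\om_z)|_{\Lambda_1}\,\mu$. This requires carefully tracking three scalar contributions appearing in the product $\tilde\rho_g\tilde\rho_h$: the factor $\om(g,h)$ from the generator multiplication, the additional factor $\mu(q(g),q(h))$ coming from the $\mu\circ q$-twist, and the scalar $\lambda(g,h) \in \T$ defined by $\pi(g)\pi(h) = \lambda(g,h)\pi(gh)$. By Proposition \ref{prop.decompose-virtual-center}, the $\om$-factor and the $\lambda$-factor combine on passage to the quotient $\Lambda_z/\Lambda\fc$ to yield exactly $\om_z$, and restriction to $\Lambda_1$ gives $(\om_z)|_{\Lambda_1}$. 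The $\mu \circ q$-factor, being pulled back from the quotient, pushes down to $\mu$ on $\Lambda_1$. Once this is verified, the equivalence in the lemma becomes formal: a finite-dimensional projective representation $\theta$ of $\Lambda_0$ with cocycle $\om \cdot (\mu \circ q)$ is the same as a finite-dimensional $*$-representation of $L_{\om \cdot (\mu \circ q)}(\Lambda_0)$, and $\theta_0(z) \neq 0$ is exactly the condition that this representation is nonzero on the direct summand $L_{\om\cdot(\mu\circ q)}(\Lambda_0)z \cong M_d(\C) \ot L_{(\om_z)|_{\Lambda_1}\mu}(\Lambda_1)$; by simplicity of $M_d(\C)$, such a nonzero representation exists if and only if $L_{(\om_z)|_{\Lambda_1}\mu}(\Lambda_1)$ admits a finite-dimensional $*$-representation, i.e.\ $(\om_z)|_{\Lambda_1}\mu$ is of finite type.
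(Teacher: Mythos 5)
Your argument is essentially the same as the paper's, reorganized around the tensor-factor decomposition
$$z\,\C_{\om\cdot(\mu\circ q)}[\Lambda_0]\,z \;\cong\; L_\om(\Lambda\fc)z \,\ot\, \C_{(\om_z)|_{\Lambda_1}\mu}[\Lambda_1]$$
rather than two separate explicit constructions. The paper's forward direction builds $\theta = (\vphi\circ q)\ot\pi$ and its reverse direction builds $\gamma(g) = \theta(u_g\pi(g)^*z)$; both are precisely the two coordinate inclusions of your tensor product, so the content is identical, just packaged more conceptually in your version. Your computation of the cocycle of $g\mapsto\tilde\rho_{\phi(g)}$ (the three contributions $\om$, $\mu\circ q$, $\overline{\om_\pi}$, with the first and third combining to $\om_z$ under $\phi$) is the same bookkeeping the paper does when it computes the cocycle of $\rho$.

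One genuine, though easily repaired, imprecision: you repeatedly write $L_{\om\cdot(\mu\circ q)}(\Lambda_0)$ and $L_{(\om_z)|_{\Lambda_1}\mu}(\Lambda_1)$, which in this paper denote twisted group \emph{von Neumann algebras}. A finite-dimensional projective representation of $\Lambda_0$ with cocycle $\om\cdot(\mu\circ q)$ linearizes only to a $*$-representation of the twisted group \emph{$*$-algebra} $\C_{\om\cdot(\mu\circ q)}[\Lambda_0]$; it does not extend to a normal representation of the von Neumann algebra completion, which in the situations of interest is typically a II$_1$ factor (or a direct sum of II$_1$ factors) and therefore admits no nonzero finite-dimensional $*$-representations whatsoever. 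The same remark applies to the final step: ``$(\om_z)|_{\Lambda_1}\mu$ is of finite type'' means $\C_{(\om_z)|_{\Lambda_1}\mu}[\Lambda_1]$ has a nonzero finite-dimensional $*$-representation, not $L_{(\om_z)|_{\Lambda_1}\mu}(\Lambda_1)$. Everything in your argument goes through verbatim at the $*$-algebra level (note that $\C_\om[\Lambda\fc] = L_\om(\Lambda\fc)$ since $\Lambda\fc$ is finite, so the matrix-algebra tensor factor is unaffected), so this is a matter of stating the right objects rather than a conceptual gap; but as written, the equivalences in your last paragraph are false statements about von Neumann algebras.
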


\begin{proof}
Write $\Lambda_0 = q^{-1}(\Lambda_1)$. As in Proposition \ref{prop.decompose-virtual-center}, we choose a projective representation $\pi : \Lambda_z \to \cU(d)$ such that $L_\om(\Lambda\fc)z \cong M_d(\C)$ and $\Ad u_g = \Ad \pi(g)$ on $L_\om(\Lambda\fc)z$ for all $g \in \Lambda_z$.

To prove the first implication, assume that $\vphi : \Lambda_1 \to \cU(n)$ is a projective representation with $2$-cocycle $\om_\vphi = (\om_z)|_{\Lambda_1} \, \mu$. Then $\theta(g) = \vphi(q(g)) \ot \pi(g)$ for all $g \in \Lambda_0$ defines a finite dimensional projective representation of $\Lambda_0$ with $2$-cocycle
$$\om_\theta = (\om_\vphi \circ q)|_{\Lambda_0} \; \om_\pi|_{\Lambda_0} = ((\om_z \circ q) \; \om_\pi)|_{\Lambda_0} \; (\mu \circ q)|_{\Lambda_0} \; .$$
For $g \in \Lambda\fc$, we have that $\theta(g)$ is a multiple of $1 \ot u_g z$. To conclude the proof of the first implication, it thus suffices to prove that
\begin{equation}\label{eq.we-need-this-cohomology}
\om_z \circ q \quad\text{is cohomologous to}\quad \om|_{\Lambda_z} \, \overline{\om_\pi} \quad\text{in $Z^2(\Lambda_z,\T)$.}
\end{equation}
Defining $\rho_g = u_g \pi(g)^* z$ for $g \in \Lambda_z$ and choosing a lift $\phi : \Lambda_z/\Lambda\fc \to \Lambda_z$, we have by definition that $\om_z$ is the $2$-cocycle of the projective representation $g \mapsto \rho_{\phi(g)}$. Since $\rho_g \in \T z$ for all $g \in \Lambda\fc$, we find a map $c : \Lambda_z \to \T$ such that $\rho_{\phi(q(g))} = c(g) \rho_g$ for all $g \in \Lambda_z$. It now follows from \eqref{eq.good-formula-for-rho-g} that $c$ realizes the cohomology of \eqref{eq.we-need-this-cohomology}.

To prove the converse, assume that $\theta : \Lambda_0 \to \cU(n)$ is a projective representation with $2$-cocycle $\om_\theta = \om|_{\Lambda_0} \, (\mu \circ q)$ such that $\theta(z) \neq 0$. We linearize $\theta$ to a unital $*$-homomorphism $\theta : \C_{\om_{\theta}}[\Lambda_0]$ on the twisted group algebra. Since $\Lambda\fc < \Lambda_0$ and since the restriction of $\om_\theta$ to $\Lambda\fc$ equals $\om$, we may view $\C_{\om}[\Lambda\fc]$ as a unital $*$-subalgebra of $\C_{\om_{\theta}}[\Lambda_0]$.

Since $\om_\theta(g,h) = \om(g,h)$ when at least one of the $g,h$ belong to $\Lambda\fc$, the equality $u_g z = z u_g$ holds in $\C_{\om_{\theta}}[\Lambda_0]$ for all $g \in \Lambda_0$. So, $\theta(g)$ commutes with $\theta(z)$ for all $g \in \Lambda_0$. Reducing $\theta$ with the projection $\theta(z)$, we may assume that $\theta(z)=1$.

Then $\gamma(g) = \theta(u_g \pi(g)^* z)$ for $g \in \Lambda_0$ defines a finite-dimensional projective representation of $\Lambda_0$ with the property that $\gamma(g)$ is a multiple of $1$ whenever $g \in \Lambda\fc$. Choosing a lift $\phi : \Lambda_1 \to \Lambda_0$, it follows that $g \mapsto \gamma(\phi(g))$ is a finite-dimensional projective representation of $\Lambda_1$ with $2$-cocycle $\om_z \, \mu$.
\end{proof}

\subsection{The triple comultiplication of a twisted group von Neumann algebra}

A key ingredient in our approach is that every twisted group von Neumann algebra $N = L_\om(\Lambda)$ not only admits the canonical coarse embedding $\Delta_3 : N \to N \ovt N\op \ovt N$ given by the triple comultiplication $\Delta_3(u_g) = u_g \ot \overline{u_g} \ot u_g$, but also that for every nonzero central projection $z \in \cZ(N)$, we can restrict $\Delta_3$ to a nonzero coarse embedding from $N z_0$ to a corner of $Nz \ovt (Nz)\op \ovt Nz$ with $z_0 \in \cZ(N) z$.

\begin{proposition}\label{prop.triple-comult}
Let $\Lambda$ be a countable group and $\om \in Z^2(\Lambda,\T)$. Write $N = L_\om(\Lambda)$. Define
$$\Delta_3 : N \to N \ovt N\op \ovt N : \Delta_3(u_g) = u_g \ot \overline{u_g} \ot u_g \quad\text{for all $g \in \Lambda$.}$$
Let $z \in \cZ(N)$ be a nonzero central projection.
\begin{enumlist}
\item We have $p := \Delta_3(z) (z \ot z\op \ot z) \neq 0$.
\item If $z_0 \in \cZ(N)z$ is the smallest central projection such that $\Delta_3(z-z_0) (z \ot z\op \ot z)=0$,
$$\Delta_z : N z_0 \to p(N \ovt N\op \ovt N)p$$
is a coarse embedding.
\item If $\Lambda\fc$ is infinite, then $\Delta_z(Nz_0)' \cap p(N \ovt N\op \ovt N)p$ is infinite dimensional.
\end{enumlist}
\end{proposition}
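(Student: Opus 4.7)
For part (i), since $\cZ(N) \subseteq L_\om(\Lambda\fc)$, write $z = \sum_{g \in \Lambda\fc} z_g u_g$ with $\sum_g |z_g|^2 = \tau(z) > 0$. Self-adjointness of $z$ combined with $u_g^* = \overline{\om(g,g^{-1})} u_{g^{-1}}$ yields the identity $\tau(u_g z) = \overline{z_g}$ for every $g \in \Lambda\fc$. Using that $z$ is central (so $\overline{u_g}\, z\op = \overline{u_g z}$ in $N\op$) and the multiplicativity $\overline{a}\,\overline{b} = \overline{ab}$ of the bar map, one expands
\[
p = \sum_{g \in \Lambda\fc} z_g\, (u_g z) \ot \overline{u_g z} \ot (u_g z).
\]
Applying $\tau^{\ot 3}$ and using $\tau\op(\overline{a}) = \overline{\tau(a)}$, every summand collapses to $|z_g|^4$, so $\tau^{\ot 3}(p) = \sum_g |z_g|^4 \geq |z_e|^4 = \tau(z)^4 > 0$, proving $p \neq 0$.

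For part (ii), the projection $z_0$ exists as the infimum over the family of candidates $z' \in \cZ(N)z$ with $\Delta_3(z - z')(z \ot z\op \ot z) = 0$ (the family is closed under infima by normality of $\Delta_3$) and is nonzero by (i). Define $\Delta_z(x) := \Delta_3(x)(z \ot z\op \ot z)$ for $x \in Nz_0$. Since $(z \ot z\op \ot z)$ is a central idempotent of $N \ovt N\op \ovt N$ and $\Delta_3(z_0)(z \ot z\op \ot z) = p$, a direct check shows that $\Delta_z$ is a unital $*$-homomorphism into $p(N \ovt N\op \ovt N)p$. Injectivity follows from the minimality of $z_0$: writing the kernel as $Nz_1$ with central $z_1 \leq z_0$, the relation $\Delta_3(z-(z_0-z_1))(z \ot z\op \ot z) = 0$ forces $z_1 = 0$. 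For coarseness, the projection $p$ commutes both with $\Delta_3(N)$ and with each subalgebra $R_i \subset N \ovt N\op \ovt N$ having $1$ at position $i$, so $\bim{\Delta_z(Nz_0)}{p L^2(N \ovt N\op \ovt N)}{R_i}$ is a subbimodule of the standard bimodule $\bim{\Delta_3(N)}{L^2(N \ovt N\op \ovt N)}{R_i}$ for the full triple comultiplication. The latter is coarse by the familiar change-of-variables on $\Lambda^3$ that separates the diagonal action from the action on the other two coordinates (cocycle factors absorb into scalars), and subbimodules of coarse bimodules are coarse.

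For part (iii), the decisive construction is the family of unitaries $Y_h := 1 \ot \overline{u_h} \ot u_h \in N \ovt N\op \ovt N$ for $h \in \Lambda$. A direct computation shows $Y_h Y_k = Y_{hk}$ (the $\om$-twist from $u_h u_k$ cancels exactly the $\overline{\om}$-twist from $\overline{u_h}\,\overline{u_k}$), so $h \mapsto Y_h$ extends to an embedding $\Phi : L(\Lambda) \hookrightarrow N \ovt N\op \ovt N$ of the \emph{untwisted} group von Neumann algebra; a similarly short calculation yields $\Delta_3(u_g) Y_h \Delta_3(u_g)^{-1} = Y_{ghg^{-1}}$. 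Hence $\Phi(L(\Lambda\fc)^\Lambda) \subseteq \Delta_3(N)' \cap (N \ovt N\op \ovt N)$, and this subalgebra is infinite dimensional because $\Lambda\fc$ is infinite with finite $\Lambda$-conjugacy classes. Since $p$ lies in the center of $\cR := \Delta_3(N)' \cap (N \ovt N\op \ovt N)$ as the product of $\Delta_3(z) \in \cZ(\Delta_3(N))$ and $(z \ot z\op \ot z) \in \cZ(N \ovt N\op \ovt N)$, the compression yields $p \cdot \Phi(L(\Lambda\fc)^\Lambda) \subseteq \Delta_z(Nz_0)' \cap p(N \ovt N\op \ovt N)p$.

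The main obstacle is showing that this compressed algebra remains infinite dimensional rather than collapsing to a finite-dimensional quotient. My plan is to compute $\tau^{\ot 3}(p \Phi(u_h)) = \sum_g |z_g|^2 |z_{gh}|^2 =: F(h)$, a nonnegative class function on $\Lambda\fc$ with total mass $\tau(z)^2$ and support containing $\supp z$. When $\supp z$ meets infinitely many $\Lambda$-conjugacy classes, $F$ is positive on infinitely many classes and an $L^2$-orthogonality argument among the $Y_k$ yields infinitely many linearly independent elements $p \Phi(Z_C)$ (for class sums $Z_C = \sum_{k \in C} u_k$). The residual case in which $\supp z$ is contained in finitely many classes must be handled by enlarging to $\cZ(p(N \ovt N\op \ovt N)p) \supseteq p \cdot \cZ(N)^{\ot 3}$ and exploiting, through a virtual-center argument in the spirit of Proposition \ref{prop.decompose-virtual-center}, that the infiniteness of $\Lambda\fc$ forces either $\cZ(N)$ or some $Y$-generated piece of the relative commutant to be infinite dimensional.
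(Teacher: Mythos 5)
Parts (i) and (ii) are essentially the paper's own argument, modulo notation. In (ii), your claim that $p$ commutes with the subalgebras $R_i$ is false in general (already $\Delta_3(z)$ need not commute with $1 \ot N\op \ot N$), but this claim is also unnecessary: $pL^2(N\ovt N\op\ovt N)$ is automatically a subbimodule, because left multiplication by $p$ always commutes with the right action of $R_i$. So (ii) stands.

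Part (iii) has the gap that you yourself flag, and it is not a corner case. Your ``residual'' situation — $\supp z$ contained in finitely many conjugacy classes — already occurs for $z=1$, and your proposed fix of enlarging to $\cZ(p(N\ovt N\op\ovt N)p) \supseteq p\cdot\cZ(N)^{\ot 3}$ cannot work: $\cZ(N)$ can be trivial while $\Lambda\fc$ is infinite (take $\Lambda = \Z^2$ with an irrational-rotation $2$-cocycle). The reason your first case also does not suffice is that after cutting by $p$ the elements $Y_k$ are no longer pairwise orthogonal, so positivity of $F$ on infinitely many classes does not yet give linear independence. The paper closes the gap by computing the full $\|\cdot\|_2$-norm of $V(a) := p\Phi(a) = (z\ot z\op\ot z)(1\ot\Delta_2(a))\Delta_3(z)$ for $a \in \cZ(L(\Lambda))$, rather than only the scalar $F(h)=\tau^{\ot 3}(p\Phi(u_h))$: one finds $\|V(a)\|_2 = \|ay\|_2$ where $y := \sum_g |(z)_g|^2 u_g \in L(\Lambda)$ is a contraction. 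Then, with $\delta := \sum_g |(z)_g|^4 > 0$ and $a_C = |C|^{-1/2}\sum_{h\in C} u_h$ for any finite conjugacy class $C$,
$$\|a_C y\|_2^2 = |C|^{-1}\sum_{g}\Bigl(\sum_{h\in C}|(z)_{h^{-1}g}|^2\Bigr)^2 \geq |C|^{-1}\sum_{g,\,h\in C}|(z)_{h^{-1}g}|^4 = \delta,$$
using $(\sum_h x_h)^2 \geq \sum_h x_h^2$ for $x_h\geq 0$. This bound holds for \emph{every} finite conjugacy class $C$, whether or not $\supp z$ meets it. Since $V(a_{C_n}) \to 0$ weakly as $C_n \to \infty$ while $\sqrt{\delta}\leq\|V(a_{C_n})\|_2\leq 1$, the relative commutant cannot be finite dimensional. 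In short: you identified the right elements $p\Phi(\cdot)$ but only tracked their traces; the information you needed is the Gram structure, which is governed by the operator $y$ rather than by the class function $F$.
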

\begin{proof}
We decompose $z = \sum_{g \in \Lambda} (z)_g u_g$. We also define $\Delta_2 : L(\Lambda) \to N\op \ovt N : \Delta_2(u_g) = \overline{u_g} \ot u_g$ for all $g \in \Lambda$. For all $g,k \in \Lambda$, using that $z=z^*$, we get that
\begin{align*}
(\tau \ot \tau \ot \tau)((z \ot z\op \ot z) (1 \ot \Delta_2(u_k)) \Delta_3(u_g)) & = \tau(zu_g) \, \tau(u_g^* u_k^* z) \, \tau(z u_k u_g) \\
& = \overline{\tau(u_g^* z)} \, \tau((u_ku_g)^* z) \, \overline{\tau((u_k u_g)^* z)} \\
& = \overline{(z)_g} \; |(z)_{kg}|^2 \; .
\end{align*}
Multiplying with $(z)_g$ and summing over $g$, we find that for all $k \in \Lambda$,
\begin{equation}\label{eq.formula-trace}
(\tau \ot \tau \ot \tau)((z \ot z\op \ot z) (1 \ot \Delta_2(u_k)) \Delta_3(z)) = \sum_{g \in \Lambda} |(z)_g|^2 \, |(z)_{kg}|^2 \; .
\end{equation}
In particular, we find that
$$(\tau \ot \tau \ot \tau)(\Delta_3(z) (z \ot z\op \ot z)) = \sum_{g \in \Lambda} |(z)_g|^4 > 0 \; .$$
So, $\Delta_3(z) (z \ot z\op \ot z) \neq 0$.

Since $\Delta_3$ is a coarse embedding, also $\Delta_z$ is a coarse embedding.

Now assume that $\Lambda\fc$ is infinite. Write $\al : N \to N \ovt L(\Lambda) : \al(u_g) = u_g \ot u_g$ for all $g \in \Lambda$, and note that $\Delta_3 = (\id \ot \Delta_2) \circ \al$. It follows that for all $a \in \cZ(L(\Lambda))$, the element $1 \ot \Delta_2(a)$ commutes with $\Delta_3(Nz) = (\id \ot \Delta_2)\al(Nz)$. Since $\Delta_2(a)$ also commutes with $z\op \ot z$, it follows that the linear map
$$V : \cZ(L(\Lambda)) \to N \ovt N\op \ovt N : V(a) = (z \ot z\op \ot z) (1 \ot \Delta_2(a)) \Delta_3(z)$$
takes values in $\Delta_z(Nz_0)' \cap p(N \ovt N\op \ovt N)p$. It also follows that for all $a \in \cZ(L(\Lambda))$,
$$\|V(a)\|_2^2 = (\tau \ot \tau \ot \tau)((z \ot z\op \ot z) (1 \ot \Delta_2(a^* a)) \Delta_3(z)) \; .$$
Define $y \in L(\Lambda)$ such that $\Delta_2(y) = E_{\Delta_2(L(\Lambda))}(z\op \ot z)$. Then $\|y\| \leq 1$ and a similar computation as above gives
$$y = \sum_{g \in \Lambda} |(z)_g|^2 \, u_g \; .$$
Using \eqref{eq.formula-trace}, we conclude that
$$\|V(a)\|_2 = \|a y\|_2 \quad\text{for all $a \in \cZ(L(\Lambda))$.}$$
Whenever $C \subset \Lambda$ is a finite conjugacy class, define $a_C \in \cZ(L(\Lambda))$ by
$$a_C = |C|^{-1/2} \sum_{h \in C} u_h \; .$$
Write $\delta = \sum_{g \in \Lambda} |(z)_g|^4$. Since $\|y\| \leq 1$ and $\|a_C\|_2 = 1$, we get that
$$1 \geq \|a_C y\|_2^2 = |C|^{-1} \sum_{g \in \Lambda} \Bigl( \sum_{h \in C} |(z)_{h^{-1}g}|^2\Bigr)^2 \geq |C|^{-1} \sum_{g \in \Lambda, h \in C} |(z)_{h^{-1}g}|^4 = \delta \; .$$
So, whenever $C_n$ is a sequence of finite conjugacy classes tending to infinity, we have the sequence of elements $V(a_{C_n})$ in $\Delta_z(Nz_0)' \cap p(N \ovt N\op \ovt N)p$ with the properties
$$\delta \leq \|V(a_{C_n})\|_2^2 \leq 1 \quad\text{for all $n$, and}\quad V(a_{C_n}) \to 0 \quad\text{weakly.}$$
This means that $\Delta_z(Nz_0)' \cap p(N \ovt N\op \ovt N)p$ is infinite dimensional.
\end{proof}

The proof of the following proposition is almost identical to the proof of \cite[Proposition 7.2.(3) and 7.2.(2)]{IPV10} and thus omitted.

\begin{proposition}\label{prop.properties-triple-comult}
Let $\Lambda$ be a countable icc group and $\om \in Z^2(\Lambda,\T)$. Denote $N = L_\om(\Lambda)$ and denote by
$$\Delta_3 : N \to N \ovt N\op \ovt N$$
the triple comultiplication given by Proposition \ref{prop.triple-comult}.
\begin{enumlist}
\item\label{prop.properties-triple-comult.i} If $K \subset L^2(N \ovt N\op \ovt N)$ is a $\Delta_3(N)$-$\Delta_3(N)$-subbimodule that is finitely generated as a right Hilbert $\Delta_3(N)$-module, then $K \subset \Delta_3(L^2(N))$.

In particular, if $P \subset N$ is a finite index subfactor, then $\Delta_3(P)' \cap (N \ovt N\op \ovt N) = \Delta_3(P' \cap N)$.

\item\label{prop.properties-triple-comult.ii} If $p \in M_k(\C) \ot N$ is a projection and $P \subset p (M_k(\C) \ot N)p$ is a subfactor such that $\Delta_3(N) \prec_{N \ovt N\op \ovt N} P \ovt N\op \ovt N$, there exists a nonzero projection $p_0 \in P' \cap p (M_k(\C) \ot N)p$ such that $Pp_0 \subset p_0 (M_k(\C) \ot N)p_0$ has finite index.
\end{enumlist}
\end{proposition}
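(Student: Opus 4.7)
The plan is to follow the strategy of \cite[Proposition 7.2.(2)--(3)]{IPV10}, adapted to the cocycle-twisted triple comultiplication. For part (i), the main idea is to decompose $L^2(N \ovt N\op \ovt N)$ as a $\Delta_3(N)$-$\Delta_3(N)$-bimodule according to the orbits of the action $(s,t) \cdot (g,h,k) = (sgt, sht, skt)$ of $\Lambda \times \Lambda$ on $\Lambda^3$. This action arises from the direct calculation
\[
\Delta_3(u_s)(u_g \ot \overline{u_h} \ot u_k)\Delta_3(u_t) \in \T \cdot u_{sgt} \ot \overline{u_{sht}} \ot u_{skt},
\]
and its invariants are the simultaneous $\Lambda$-conjugacy class of the pair $(gh^{-1}, hk^{-1})$. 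The orbit with invariant $(e,e)$ is the diagonal $\{(g,g,g) : g \in \Lambda\}$ and yields exactly the summand $\Delta_3(L^2(N))$; every other orbit is non-diagonal in the sense that at least one of $gh^{-1}$ or $hk^{-1}$ is nontrivial.

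For each non-diagonal orbit $O$, the corresponding subbimodule $\cH_O$ admits no nonzero $\Delta_3(N)$-$\Delta_3(N)$-subbimodule which is finitely generated as a right $\Delta_3(N)$-module. Since the right $\Delta_3(N)$-action is free on $O$, one has $\cH_O \cong \ell^2(O/\Lambda) \ot L^2(\Delta_3(N))$ as a right $\Delta_3(N)$-module, with $|O/\Lambda|$ equal to the cardinality of the simultaneous $\Lambda$-conjugacy class of $(gh^{-1}, hk^{-1}) \neq (e,e)$, which is infinite by the icc assumption on $\Lambda$. A weak mixing argument parallel to \cite[Proposition~7.2.(3)]{IPV10} then shows that the left $\Delta_3(N)$-action mixes the right cosets sufficiently to rule out any nonzero finitely generated right subbimodule of $\cH_O$. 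Since the orthogonal projection onto each $\cH_O$ is $\Delta_3(N)$-$\Delta_3(N)$-bimodular, any $K$ satisfying the hypothesis of (i) must sit inside the diagonal summand $\Delta_3(L^2(N))$.

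The ``in particular'' statement follows by a standard finite-index trick. For $r \in \Delta_3(P)' \cap (N \ovt N\op \ovt N)$ and a finite Pimsner--Popa basis $\{t_j\}$ of $N$ over $P$, the commutation $r \Delta_3(P) = \Delta_3(P) r$ lets us write $\Delta_3(N) r \Delta_3(N) = \sum_{i,j} \Delta_3(t_i) r \Delta_3(t_j) \Delta_3(P)$, which is finitely generated as a right $\Delta_3(N)$-module. Part (i) then forces $r \in \Delta_3(L^2(N)) \cap (N \ovt N\op \ovt N) = \Delta_3(N)$, and combined with $r \in \Delta_3(P)'$ this gives $r \in \Delta_3(P' \cap N)$; the reverse inclusion is trivial.

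For part (ii), the intertwining $\Delta_3(N) \prec_{N \ovt N\op \ovt N} P \ovt N\op \ovt N$ provides a projection $q$, a unital normal $*$-homomorphism $\theta : \Delta_3(N) \to q(M_m(\C) \ot (P \ovt N\op \ovt N))q$ and a nonzero partial isometry $V$ satisfying $\Delta_3(x) V = V \theta(x)$ for all $x \in N$. The strategy is to combine this intertwiner with the inclusion $P \subset p(M_k(\C) \ot N)p$ and a Jones basic construction step to produce a $\Delta_3(N)$-$\Delta_3(N)$-subbimodule of $L^2(N \ovt N\op \ovt N)$ that is finitely generated as a right $\Delta_3(N)$-module. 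By part (i), this bimodule lies in $\Delta_3(L^2(N))$, and unpacking that inclusion yields a nonzero projection $p_0 \in P' \cap p(M_k(\C) \ot N)p$ together with a finite right Pimsner--Popa basis of $p_0 (M_k(\C) \ot N)p_0$ over $Pp_0$. The main obstacle is the clean transfer from the abstract intertwining datum to the concrete finite-index conclusion; the factoriality of $P$ ensures the compression $p_0$ is nonzero and that the subfactor inclusion $Pp_0 \subset p_0 (M_k(\C) \ot N)p_0$ has finite Jones index.
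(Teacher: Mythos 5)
The paper's own proof of this proposition is omitted and simply cites \cite[Proposition 7.2.(3) and 7.2.(2)]{IPV10}, so your plan of adapting that argument is exactly the intended route. Your treatment of part (i) is correct: the decomposition of $L^2(N\ovt N\op\ovt N)$ into $\Delta_3(N)$-$\Delta_3(N)$-subbimodules indexed by $(\Lambda\times\Lambda)$-orbits on $\Lambda^3$ (via $(s,t)\cdot(g,h,k)=(sgt,sht,skt)$) is the right one, the orbit invariant is the simultaneous conjugacy class of $(gh^{-1},hk^{-1})$, the diagonal orbit gives $\Delta_3(L^2(N))$, and the icc hypothesis makes all non-diagonal orbits infinite, so the IPV10 weak-mixing argument rules out finitely generated subbimodules in those summands. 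The Pimsner--Popa argument for the ``in particular'' clause is also fine (your displayed identity for $\Delta_3(N)\,r\,\Delta_3(N)$ is a slightly roundabout way of saying that the bimodule is generated on the right over $\Delta_3(N)$ by the finitely many vectors $\Delta_3(t_i)r$, but it is correct).

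Part (ii) is where your write-up has a genuine gap. You say you will ``combine this intertwiner with the inclusion $P\subset p(M_k(\C)\ot N)p$ and a Jones basic construction step to produce a $\Delta_3(N)$-$\Delta_3(N)$-subbimodule of $L^2(N\ovt N\op\ovt N)$ that is finitely generated as a right Hilbert $\Delta_3(N)$-module.'' But the intertwining only gives a $\Delta_3(N)$-$(P\ovt N\op\ovt N)$-subbimodule $H$ that is finitely generated on the right over $P\ovt N\op\ovt N$, and there is no obvious way to upgrade this to a right $\Delta_3(N)$-module that remains finitely generated. Tensoring $H$ over $P\ovt N\op\ovt N$ with the natural $(P\ovt N\op\ovt N)$-$\Delta_3(N)$-bimodule $L^2\bigl((M_k(\C)\ot N)\ovt N\op\ovt N\bigr)$ destroys finite generation on the right, since $L^2(N\ovt N\op\ovt N)$ is already infinite-dimensional over $\Delta_3(N)$. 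Producing a finitely generated $\Delta_3(N)$-$\Delta_3(N)$-bimodule from the intertwining datum is, in effect, equivalent to the finite-index conclusion you are trying to establish, so the step as stated is circular. To fix this you would need to follow the actual mechanism of \cite[Proposition 7.2.(2)]{IPV10} more closely: instead of feeding a manufactured bimodule into part (i), one normalizes the intertwiner $V$ (using the triviality of $\Delta_3(N)'\cap(N\ovt N\op\ovt N)$, itself a consequence of part (i)) to obtain a corner $e=V^*V$ commuting with $\theta(N)$, and then analyses the relative commutant of $\theta(N)$ inside $q\bigl(M_m(\C)\ot(M_k(\C)\ot N)\ovt N\op\ovt N\bigr)q$ to locate the projection $p_0\in P'\cap p(M_k(\C)\ot N)p$ directly. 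As written, your sketch of (ii) does not yet contain the key step, and you acknowledge as much (``the main obstacle is the clean transfer''), so the argument for (ii) is incomplete.
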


\subsection{Intermediate subfactors for twisted group von Neumann algebras}

Let $G$ be a countable group and $\mu \in Z^2(G,\T)$ a $2$-cocycle. Whenever $\pi : G \to \cU(n)$ is a projective representation with $2$-cocycle $\om_\pi$ and $\delta \in \Aut G$ is an automorphism such that $\om_\pi \, (\mu \circ \delta) = \mu$, we denote
\begin{equation}\label{eq.psi-pi-delta}
\psi_{\pi,\delta} : L_\mu(G) \to M_n(\C) \ot L_\mu(G) : \psi_{\pi,\delta}(u_g) = \pi(g) \ot u_{\delta(g)} \; .
\end{equation}

Whenever $G$ is an icc group, we identify $G$ with a subgroup of $\Aut G$ via $g \mapsto \Ad g$.

\begin{proposition}\label{prop.intermediate}
Let $G$ be an icc group, $\mu \in Z^2(G,\T)$ and $\psi : L_\mu(G) \to M_n(\C) \ot L_\mu(G)$ an embedding that is unitarily conjugate to a direct sum of embeddings of the form \eqref{eq.psi-pi-delta}.

If $N$ is a factor and $\psi(L_\mu(G)) \subset N \subset M_n(\C) \ot L_\mu(G)$, there exists a subgroup $G_1 < \Aut G$ and a $2$-cocycle $\mu_1 \in Z^2(G_1,\T)$ such that
\begin{itemlist}
\item $G \cap G_1$ has finite index in both $G$ and $G_1$, and in particular $G_1$ is icc,
\item the $2$-cocycle $\mu_1|_{G \cap G_1} \, \overline{\mu}|_{G \cap G_1}$ on $G \cap G_1$ is of finite type,
\item $N \cong L_{\mu_1}(G_1)^t$ for some $t > 0$.
\end{itemlist}
\end{proposition}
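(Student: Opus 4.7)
The plan is to proceed in two stages: first reduce to the case $\psi = \psi_{\pi,\delta}$ for a single pair, and then analyze $L^2(N)$ as a $P := \psi(L_\mu(G))$-bimodule to extract the data $(G_1,\mu_1)$.

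\textbf{Stage 1 (Reduction to a single standard embedding).} After a unitary conjugacy, I will write $\psi = \bigoplus_i \psi_{\pi_i,\delta_i}$ with central projections $e_i \in P' \cap Q$, where $Q := M_n(\C) \ot L_\mu(G)$. Whenever $e_j N e_i$ contains a nonzero partial isometry $v$, the intertwining relation $v\,\psi_{\pi_i,\delta_i}(u_g) = \psi_{\pi_j,\delta_j}(u_g)\,v$ forces, via the relative icc property of Lemma~\ref{lem.remarks-icc.ii}, the identity $\delta_j = (\Ad g_0)\circ\delta_i$ for some $g_0 \in G$. A unitary $1 \ot u_{g_0}$ then identifies $\psi_{\pi_j,\delta_j}$ with $\psi_{\pi_j',\delta_i}$, allowing me to merge all summands whose $\delta$'s lie in one orbit under inner automorphisms of $G$ into a single $\psi_{\pi',\delta}$. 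Using that $N$ is a factor (so that any direct-sum splitting of $N$ arising from multiple orbits must be trivial), I reduce to $\psi = \psi_{\pi,\delta}$ for a common $\delta$.

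\textbf{Stage 2 (Construction of $G_1$ and $\mu_1$).} With $\psi = \psi_{\pi,\delta}$, both inclusions $P \subset N$ and $N \subset Q$ have finite Jones index, and $L^2(N)$ decomposes as a finite orthogonal sum of irreducible $P$-$P$-sub-bimodules of $L^2(Q)$. The key technical step is to show that each such sub-bimodule is spanned by a partial isometry $v \in Q$ satisfying
\[
v\,\psi(u_g)\,v^* \;=\; c_v(g)\,\psi(u_{\eta_v(g)}) \quad \text{for all } g \in G,
\]
with $\eta_v \in \Aut G$ and $c_v : G \to \T$ a scalar cochain. This is a finite-rank, twisted analogue of Theorem~\ref{thm.coarse-embedding-cocycle-twisted-wreath-product}: any finitely generated right-$P$-subbimodule of $L^2(Q)$ must, by the relative icc property of Lemma~\ref{lem.remarks-icc.i}, reduce to a single $\eta \in \Aut G$ accompanied by a finite-dimensional projective representation $\rho_\eta$ of $G$ with $\om_{\rho_\eta} \sim \mu \cdot \overline{\mu \circ \eta}$; in particular $\mu \cdot \overline{\mu \circ \eta}$ must be of finite type. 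The set $G_1 := \{\eta_v\} \subset \Aut G$ is closed under composition because $v_{\eta_1} v_{\eta_2}$ implements $\eta_1 \eta_2$, and the scalar defect $v_{\eta_1} v_{\eta_2} = \mu_1(\eta_1,\eta_2)\,v_{\eta_1\eta_2}$ defines $\mu_1 \in Z^2(G_1,\T)$.

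\textbf{Conclusion and main obstacle.} Since $\psi(u_g) \in P \subset N$ itself implements $\Ad g$, the copy of $G$ in $\Aut G$ sits inside $G_1$, and finite index of $G$ in $G_1$ follows from $[N:P] < \infty$. The finite-type condition on $(\mu_1 \cdot \overline\mu)|_{G \cap G_1}$ is extracted via Lemma~\ref{lem.cohom} from the finite-type cocycles $\om_{\rho_\eta}$, and the linear span of $P \cdot \{v_\eta : \eta \in G_1\}$ is dense in $N$, yielding the isomorphism $N \cong L_{\mu_1}(G_1)^t$ for the appropriate amplification. The principal difficulty is the sub-bimodule classification in Stage~2: it is a twisted, finite-index analogue of the coarse-embedding classifications of Section~5, and its crucial input is the rigidity that any finite-rank intertwiner between twisted group von Neumann algebras of an icc group must collapse onto a single automorphism decorated with a finite-dimensional projective twist.
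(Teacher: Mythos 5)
Your Stage~1 contains a fatal gap that invalidates the rest of the argument. You claim that any nonzero partial isometry $v \in e_j N e_i$ satisfies the intertwining relation $v\,\psi_{\pi_i,\delta_i}(u_g) = \psi_{\pi_j,\delta_j}(u_g)\,v$, and from this you deduce that $N$ being a factor forces a single inner-orbit of $\delta$'s. But this intertwining relation is not automatic: an element $v \in N$ is in a von Neumann algebra that \emph{contains} $P = \psi(L_\mu(G))$; it does not \emph{commute} with $P$. Only elements of $P' \cap Q$ would satisfy that relation, and there is no reason for the partial isometries in $e_j N e_i$ to lie in $P' \cap Q$. In fact, the generic element of $N$ twisted-intertwines $\psi$ with $\psi \circ \eta$ for some nontrivial $\eta \in \Aut G$ — that is the whole content of the $W_\delta$ decomposition in the paper's proof.

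Beyond the flawed justification, the reduction itself is false and would eliminate precisely the nontrivial cases of the Proposition. The summands $\psi_{\pi_i,\delta_i}$ with mutually non-conjugate $\delta_i$ are not an artifact that factoriality allows you to discard; they are what enables $G_1$ to be a proper overgroup of $G$ inside $\Aut G$. If $\psi = \psi_{\pi,\delta}$ with $\pi$ irreducible, then $\psi(L_\mu(G)) \subset M_n(\C) \ot L_\mu(G)$ is an irreducible finite-index inclusion whose intermediate factors correspond to subrepresentations of $\pi \ot \bar\pi$, always with $G_1 = G$. The proposition explicitly permits $G \subsetneq G_1$, and this occurs exactly when several non-conjugate $\delta_i$'s appear in $\psi$ and $N$ connects them. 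Your Stage~2 heuristics (spanning sub-bimodules by single partial isometries, defining $\mu_1$ by a scalar defect) are the right kind of idea but are imprecise — the paper must work to extract genuine unitaries $Y_\delta \in V_\delta$ after cutting by a minimal central projection of $A = N \cap (B \ot 1)$, and the cocycle $\mu_1$ then emerges from the multiplication rule of the $U_\delta = Y_\delta a_\delta^* p$. None of this can be made to work after your Stage~1 reduction, so the argument does not recover the statement.
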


\begin{proof}
We write $M = L_\mu(G)$. After a unitary conjugacy and regrouping, we may assume that $\psi = \bigoplus_{i=1}^k \psi_{\pi_i,\delta_i}$ where for $i \neq j$, $\delta_i$ is not conjugate to $\delta_j$. By Lemma \ref{lem.remarks-icc.ii},
\begin{equation}\label{eq.this-is-infinite}
\{\delta_i(g) h \delta_j(g)^{-1} \mid g \in G\} \quad\text{is infinite whenever $i \neq j$ and $h \in G$.}
\end{equation}
We have $\pi_i : G \to \cU(n_i)$ and $n = \sum_{i=1}^k n_i$. We write $B = \bigoplus_{i=1}^k M_{n_i}(\C)$. By \eqref{eq.this-is-infinite} and the icc property of $G$, we find that
$$\psi(M)' \cap M_n(\C) \ot M = \Bigl(\bigoplus_{i=1}^k \End \pi_i\Bigr) \ot 1 \subset B \ot 1 \; ,$$
where $\End \pi_i = M_{n_i}(\C) \cap \pi_i(G)'$. So, after cutting everything with a minimal projection in $\psi(M)' \cap N$, we may moreover assume that $\psi(M)' \cap N = \C 1$.

For every $\delta \in \Aut G$, we define the projective representation
$$\zeta_\delta : G \to \cU(L^2(M_n(\C) \ot M)) : \zeta_\delta(g) \xi = \psi(u_{\delta(g)}) \xi \psi(u_g)^* \; .$$
Note that the $2$-cocycle of $\zeta_\delta$ equals $(\mu \circ \delta) \, \overline{\mu}$. We denote by $W_\delta$ the closed linear span of all finite dimensional $\zeta_\delta$-invariant subspaces. So whenever $W_\delta \neq \{0\}$, the $2$-cocycle $(\mu \circ \delta) \, \overline{\mu}$ is of finite type. We also define the subset $I_\delta \subset \{1,\ldots,k\}^2$ by
$$I_\delta = \{(i,j) \in \{1,\ldots,k\}^2 \mid \exists h \in G : \delta_i \circ \delta = (\Ad h) \circ \delta_j \} \; .$$
For all $(i,j) \in I_\delta$, we then have the unique element $\rho(\delta,i,j) \in G$ such that $\delta_i \circ \delta = \Ad \rho(\delta,i,j) \circ \delta_j$.

We denote by $p_i \in M_n(\C)$ the projection that corresponds to the direct summand $M_{n_i}(\C)$. So, the $p_i$ are the minimal projections of $\cZ(B)$. Note that each $\zeta_\delta$ leaves each of the subspaces $p_i M_n(\C) p_j \ot L^2(M)$ globally invariant. By the form of $\psi$ and Lemma \ref{lem.remarks-icc.ii}, we find that
$$(p_i \ot 1) W_\delta (p_j \ot 1) = \begin{cases} \{0\} &\quad\text{if $(i,j) \not\in I_\delta$,}\\
p_i M_n(\C) p_j \ot u_{\rho(\delta,i,j)} &\quad\text{if $(i,j) \in I_\delta$.}\end{cases}$$
In particular, $(\mu \circ \delta) \, \overline{\mu}$ is of finite type when $I_\delta \neq \emptyset$. For all $i,j \in \{1,\ldots,k\}$ and all $g \in G$, we trivially have that $p_i M_n(\C) p_j \ot u_g \subset W_{\delta_i^{-1} \circ (\Ad g) \circ \delta_j}$. Also note that if $\delta \neq \delta'$ and $(i,j) \in I_\delta \cap I_{\delta'}$, then $\rho(\delta,i,j) \neq \rho(\delta',i,j)$. Therefore $W_\delta \perp W_{\delta'}$ whenever $\delta \neq \delta'$. Defining the subset $G_2 \subset \Aut G$ by $G_2 = \{\delta \in \Aut G \mid W_\delta \neq \{0\}\}$, we thus find the orthogonal direct sum decomposition
$$L^2(M_n(\C) \ot L(G)) = \bigoplus_{\delta \in G_2} W_\delta \; .$$
Since $\psi(u_{g}) \in N$ for all $g \in G$, the projective representation $\zeta_\delta$ globally preserves $L^2(N)$ and it follows from Lemma \ref{lem.projection-onto-non-weak-mixing} below that $P_{W_\delta}(N) \subset L^2(N)$. Since $W_\delta$ only contains bounded operators, we have $P_{W_\delta}(N) = N \cap W_\delta$. Write $V_\delta = N \cap W_\delta$. Defining $G_3 = \{\delta \in \Aut G \mid V_\delta \neq \{0\}\}$, we get the orthogonal direct sum decomposition
\begin{equation}\label{eq.direct-sum-N-Vdelta}
L^2(N) = \bigoplus_{\delta \in G_3} V_\delta \; .
\end{equation}
Note that by construction, for all $\delta,\delta' \in \Aut G$, we have $V_\delta^* = V_{\delta^{-1}}$ and $V_\delta V_{\delta'} = V_{\delta \circ \delta'}$. Since $W_{\id} = B \ot 1$, we find that $V_{\id} = N \cap (B \ot 1)$. We denote $A = N \cap (B \ot 1)$. By definition, $A$ is a finite dimensional $*$-algebra and with $\al_g =\Ad \psi(u_g)$, we have the action $(\al_g)_{g \in G}$ of $G$ on $A$, which is ergodic because of the irreducibility of $\psi(M) \subset N$. It follows that $A \cong M_d(\C) \ot \C^m$ for integers $d,m \in \N$.

We claim that for every $\delta \in G_3$, the set $V_\delta$ contains a unitary $Y_\delta \in V_\delta$. To prove this claim, fix $\delta \in G_3$. Since $V_\delta \neq \{0\}$, we can choose a minimal projection $z_1 \in \cZ(A)$ such that $z_1 V_\delta \neq \{0\}$. Since $V_\delta$ is globally invariant under the representation $\zeta_\delta$ and since the action $\al$ is ergodic, it follows that $z V_\delta \neq \{0\}$ for all minimal central projections $z \in \cZ(A)$.

Fix a minimal central projection $z \in \cZ(A)$. Since $z V_\delta \neq \{0\}$, we can choose a minimal central projection $z' \in \cZ(A)$ such that $z V_\delta z' \neq \{0\}$. Choose minimal projections $p \in Az$ and $q \in Az'$ such that $p V_\delta q \neq \{0\}$. Whenever $X \in p V_\delta q$, we have that $XX^* \in p A p = \C p$ and $XX^* \in q A q = \C q$. Since $p V_\delta q \neq \{0\}$, we can choose $X \in V_\delta$ with $XX^* = p$ and $X^* X = q$. Since $A z \cong M_d(\C) \cong A z'$ and $A V_\delta = V_\delta = V_\delta A$, we can use matrix units in $Az$ and $Az'$ to find $Y \in V_\delta$ with $YY^* = z$ and $Y^* Y = z'$.

We can find such an element $Y \in V_\delta$ whenever $z V_\delta z' \neq \{0\}$. If $z\dpr \in \cZ(A)$ would be another minimal central projection with $z V_\delta z\dpr \neq \{0\}$, we also find $Z \in V_\delta$ with $ZZ^* = z$ and $Z^* Z = z\dpr$. Since $Y^* Z \in V_\delta^* V_\delta = V_{\delta^{-1}} V_\delta \subset V_{\id} = A$, we get that $Y^* Z$ is a partial isometry in $A$ with left support $z'$ and right support $z\dpr$, which is absurd because $z',z\dpr$ are central and orthogonal. So, for every minimal central projection $z \in \cZ(A)$, there is a unique minimal central projection $z' \in \cZ(A)$ with $z V_\delta z' \neq \{0\}$ and a $Y_z \in V_\delta$ with $Y_zY_z^* = z$ and $Y_z^* Y_z = z'$.

By symmetry, also for every minimal central projection $z' \in \cZ(A)$, there is a unique minimal central projection $z \in \cZ(A)$ with $z V_\delta z' \neq \{0\}$. It follows that the sum $Y_\delta = \sum_z Y_z$ is a unitary element in $V_\delta$, which proves the claim.

Since $V_\delta^* = V_{\delta^{-1}}$ and $V_{\delta} V_{\delta'} \subset V_{\delta \circ \delta'}$, it then follows that $G_3$ is a subgroup of $\Aut G$. Note that for all $\delta,\delta' \in G_3$, the unitary $Y_\delta$ normalizes $A = V_{\id}$. Also $Y_\delta^* \in \cU(A) Y_{\delta^{-1}}$ and $Y_{\delta} Y_{\delta'} \in \cU(A) Y_{\delta \circ \delta'}$. In particular, $(\Ad Y_\delta)_{\delta \in G_3}$ is an action of $G_3$ on $\cZ(A)$.
Fixing a minimal central projection $z \in \cZ(A)$, we can thus define the finite index subgroup $G_1 < G_3$ by $G_1 = \{\delta \in G_3 \mid Y_\delta z = z Y_\delta\}$. By \eqref{eq.direct-sum-N-Vdelta}, we find that
\begin{equation}\label{eq.direct-sum-N-Ydelta}
L^2(zNz) = \bigoplus_{\delta \in G_1} Y_\delta A z \; .
\end{equation}
For every $\delta \in G_1$, $\Ad Y_\delta$ defines an automorphism of $Az \cong M_d(\C)$. We can thus choose $a_\delta \in \cU(Az)$ such that $\Ad Y_\delta z = \Ad a_\delta$ on $Az$. Fix a minimal projection $p \in Az$. Write $U_\delta = Y_\delta a_\delta^* p$. Then, $U_\delta \in \cU(pNp)$ and it follows from \eqref{eq.direct-sum-N-Ydelta} that
$$L^2(pNp) = \bigoplus_{\delta \in G_1} \C U_\delta \; .$$
We conclude that $pNp \cong L_{\mu_1}(G_1)$ where $\mu_1 \in Z^2(G_1,\T)$ is defined by $U_\delta U_{\delta'} = \mu_1(\delta,\delta') \, U_{\delta \circ \delta'}$ for all $\delta,\delta' \in G_1$.

We identify $G$ with a subgroup of $\Aut G$ through the adjoint action $\Ad$. In this way, $G < G_3$ and $V_{g} = \psi(u_g) A$ for all $g \in G$. Also,
$$G_3 \subset G_2 \subset \{ \delta_i^{-1} \circ \delta_j \circ \Ad h \mid i,j \in \{1,\ldots,k\}, h \in G\} \; .$$
So, $G$ is a finite index subgroup of $G_3$. Since $G_1 < G_3$ has finite index, it follows that $G \cap G_1$ has finite index in both $G$ and $G_1$.

It only remains to prove that the $2$-cocycle ${\mu_1}|_{G \cap G_1} \, \overline{\mu}|_{G \cap G_1}$ is of finite type.

Since $V_g = \psi(u_g) A$ for all $g \in G$, also $Y_g = \psi(u_g) b_g$ with $b_g \in \cU(A)$. When $g \in G \cap G_1$, both $b_g$ and $Y_g$ commute with $z$, so that also $\psi(u_g)$ commutes with $z$. We have chosen $a_g \in \cU(Az)$ such that $\Ad Y_g z = \Ad a_g$ on $Az$. Thus, $\Ad \psi(u_g)z = \Ad a_g b_g^* z$ on $Az$ for all $g \in G \cap G_1$. Since $(\Ad \psi(u_g))_{g \in G \cap G_1}$ is an action, it follows that $G \cap G_1 \to \cU(Az) : \theta(g) = a_g b_g^* z$ is a finite dimensional projection representation. Denote its $2$-cocycle by $\om_\theta$.

By definition, $U_g = Y_g a_g^* p = \psi(u_g) \theta(g)^* p$ and $\psi(u_h)\theta(h)^*$ commutes with $Az$ for all $g,h \in G \cap G_1$. Therefore, for all $g,h \in G \cap G_1$,
\begin{align*}
U_g \, U_h &= \psi(u_g) \theta(g)^* p \; \psi(u_h) \theta(h)^* p = \psi(u_g) \; \psi(u_h) \theta(h)^* \; \theta(g)^* p
\\ &= \mu(g,h) \, \overline{\om_\theta}(g,h) \, \psi(u_{gh}) \, \theta(gh)^* \, p = \mu(g,h) \, \overline{\om_\theta}(g,h) \, U_{gh} \; .
\end{align*}
This means that $\mu_1|_{G \cap G_1} \, \overline{\mu}|_{G \cap G_1} = \overline{\om_\theta}$ is of finite type. So the theorem is proven.
\end{proof}

\begin{theorem}\label{thm.virtual-isom-factor}
Let $\Gamma$ be a group in $\cC$ and write $G = (\Z/2\Z)^{(\Gamma)} \rtimes (\Gamma \times \Gamma)$. Let $\mu \in Z^2(G,\T)$.

If $N$ is a II$_1$ factor that admits a nonzero bifinite $L_\mu(G)$-$N$-bimodule, there exists a subgroup $G_1 < \Aut G$ with $G \cap G_1$ having finite index in both $G$ and $G_1$ and a $2$-cocycle $\mu_1 \in Z^2(G_1,\T)$ such that
\begin{itemlist}
\item the $2$-cocycle $\mu_1|_{G \cap G_1} \, \overline{\mu}|_{G \cap G_1}$ on $G \cap G_1$ is of finite type,
\item $N \cong L_{\mu_1}(G_1)^t$ for some $t > 0$.
\end{itemlist}
\end{theorem}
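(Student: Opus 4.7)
The plan is to convert the bifinite bimodule into a finite Jones index embedding of $M := L_\mu(G)$ into an amplification of itself, apply Theorem \ref{thm.coarse-embedding-cocycle-twisted-wreath-product} in the case $k=1$ to classify this embedding, and then recognize $N$ as an intermediate subfactor via Proposition \ref{prop.intermediate}. The key point is that, although \emph{a priori} we know nothing about the structure of $N$, this setup gives us access to the coarse embedding classification available for $L_\mu(G)$.

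From the nonzero bifinite $M$-$N$-bimodule we obtain a finite Jones index embedding $\vphi : M \to p(M_n(\C) \ot N)p$, and viewing the bimodule as an $N$-$M$-bimodule, a finite Jones index embedding $\iota : N \to q(M_m(\C) \ot M)q$. Setting $\tilde p := (\id \ot \iota)(p)$, the composition
$$\psi := (\id \ot \iota) \circ \vphi : M \to \tilde p(M_{nm}(\C) \ot M)\tilde p$$
is a finite Jones index embedding of $M$ into an amplification of itself. Since $k=1$, $\psi$ is automatically coarse in the sense of Definition \ref{def.coarse-embedding}, so Theorem \ref{thm.coarse-embedding-cocycle-twisted-wreath-product} applied with $k=1$, $G_1 = G$, $\mu_1 = \mu$, $\Gamma_1 = \Gamma$ yields an integer $t \in \N$ with $\tilde p(M_{nm}(\C) \ot M)\tilde p \cong M_t(\C) \ot M$, together with a unitary conjugation after which $\psi = \bigoplus_j \psi_{\pi_j,\delta_j}$ for faithful group homomorphisms $\delta_j : G \to G$ of the structured form described there.

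To apply Proposition \ref{prop.intermediate}, each $\delta_j$ must be promoted to an automorphism of $G$, since formula \eqref{eq.psi-pi-delta} and the proposition both require $\delta \in \Aut G$; this is the main obstacle. It is resolved as follows: each summand $\psi_{\pi_j,\delta_j}$ has finite Jones index, obtained by compressing $\psi(M) \subset M_t(\C) \ot M$ by the central block projection $p_j \in \psi(M)' \cap M_t(\C) \ot M$, which forces $[G : \delta_j(G)] < \infty$. Writing $\delta_j$ via its associated $\al_j : \Gamma \to \Gamma$, one has $\delta_j((\Z/2\Z)^{(\Gamma)}) = (\Z/2\Z)^{(\al_j(\Gamma))}$; if $\al_j$ were not surjective, the complement $\Gamma \setminus \al_j(\Gamma)$ would be infinite, making $[(\Z/2\Z)^{(\Gamma)} : (\Z/2\Z)^{(\al_j(\Gamma))}]$ and hence $[G : \delta_j(G)]$ infinite, a contradiction. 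Thus each $\delta_j \in \Aut G$, and Proposition \ref{prop.intermediate} applies to the intermediate factor $Q := (\id \ot \iota)(p(M_n(\C) \ot N)p)$ sitting between $\psi(M)$ and $M_t(\C) \ot M$. It yields a subgroup $G_1 < \Aut G$ with $G \cap G_1$ of finite index in both, a $2$-cocycle $\mu_1 \in Z^2(G_1,\T)$ with $\mu_1|_{G \cap G_1}\, \overline{\mu}|_{G \cap G_1}$ of finite type, and an isomorphism $Q \cong L_{\mu_1}(G_1)^{t'}$. Since $Q$ is an amplification of $N$, this gives $N \cong L_{\mu_1}(G_1)^s$ for some $s > 0$, as claimed.
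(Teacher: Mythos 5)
Your proof is correct and follows essentially the same route as the paper: form the bifinite $L_\mu(G)$-$L_\mu(G)$-bimodule $H\ot_N\overline{H}$ (presented here as the composed finite-index embedding $(\id\ot\iota)\circ\vphi$), apply Theorem~\ref{thm.coarse-embedding-cocycle-twisted-wreath-product} with $k=1$, and feed the intermediate factor corresponding to $N$ into Proposition~\ref{prop.intermediate}. Your explicit verification that each $\delta_j$ is an automorphism (via finite index of the $j$-th block forcing $\al_j$ surjective, since otherwise $\Gamma\setminus\al_j(\Gamma)$ is infinite and $(\Z/2\Z)^{(\al_j(\Gamma))}$ has infinite index) fills in a step that the paper's terse proof leaves implicit, and is a correct alternative to invoking the last sentence of Theorem~\ref{thm.coarse-embedding-cocycle-twisted-wreath-product}.
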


\begin{proof}
Let $\bim{L_\mu(G)}{H}{N}$ be a nonzero bifinite bimodule. Then $K = H \ot_N \overline{H}$ is a bifinite $L_\mu(G)$-$L_\mu(G)$-bimodule. By Theorem \ref{thm.coarse-embedding-cocycle-twisted-wreath-product}, $K$ is isomorphic with a finite direct sum of bimodules given by $\psi_{\pi,\delta}$ as in \eqref{eq.psi-pi-delta}. The conclusion then follows from Proposition \ref{prop.intermediate}.
\end{proof}

We also have the following constructive converse of Proposition \ref{prop.intermediate}.

\begin{lemma}\label{lem.good-bimodule}
Let $\cG$ be a countable group with subgroups $G,G_1 < \cG$ such that $G \cap G_1$ has finite index in both $G$ and $G_1$. Let $\mu \in Z^2(G,\T)$ and $\mu_1 \in Z^2(G_1,\T)$ be scalar $2$-cocycles such that the cocycle $\mu_1|_{G \cap G_1} \, \overline{\mu}|_{G \cap G_1}$ on $G \cap G_1$ is of finite type.

There exist $n \in \N$ and an embedding $\vphi : L_{\mu_1}(G_1) \to M_n(\C) \ot L_\mu(G)$ of finite index with the following property: whenever $(a_n)$ is a bounded sequence in $L_{\mu_1}(G_1)$ and $h_{G_1}(a_n) \to 0$, also $h_G(\vphi(a_n)) \to 0$.
\end{lemma}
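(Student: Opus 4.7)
The plan is to build $\vphi$ as a two-step composition. Set $H := G \cap G_1$, which by hypothesis has finite index $k := [G_1 : H]$ in $G_1$ and finite index in $G$. Using that $\mu_1|_H \, \overline{\mu}|_H$ is of finite type, fix a finite-dimensional projective representation $\pi : H \to \cU(d)$ with $\om_\pi = \mu_1|_H \, \overline{\mu}|_H$. Then
$$\vphi_1 : L_{\mu_1|_H}(H) \to M_d(\C) \ot L_{\mu|_H}(H) \subset M_d(\C) \ot L_\mu(G), \qquad u^{\mu_1}_h \mapsto \pi(h) \ot u^\mu_h,$$
is a unital trace-preserving finite-index $*$-embedding, since the $2$-cocycle of $h \mapsto \pi(h) \ot u^\mu_h$ equals $\om_\pi \cdot \mu|_H = \mu_1|_H$.

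Next, I would realize the finite-index inclusion $L_{\mu_1|_H}(H) \subset L_{\mu_1}(G_1)$ in matrix form by the standard left regular construction: choose coset representatives $t_1, \ldots, t_k$ for $G_1/H$, decompose $L^2(L_{\mu_1}(G_1)) = \bigoplus_{i=1}^k u^{\mu_1}_{t_i} \cdot L^2(L_{\mu_1|_H}(H))$ as a right Hilbert $L_{\mu_1|_H}(H)$-module, and let $\iota_1 : L_{\mu_1}(G_1) \to M_k(\C) \ot L_{\mu_1|_H}(H)$ be left multiplication in this basis, so that $a \, u^{\mu_1}_{t_j} = \sum_i u^{\mu_1}_{t_i} \, (\iota_1(a))_{ij}$. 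Then $\vphi := (\id \ot \vphi_1) \circ \iota_1$ is a finite-index $*$-embedding $L_{\mu_1}(G_1) \to M_n(\C) \ot L_\mu(G)$ with $n = kd$.

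It remains to establish the height control. The crucial observation is that each $g \in G_1$ admits, for each fixed $j$, a unique decomposition $g = t_i h t_j^{-1}$ with $h \in H$, so a direct computation yields
$$(\iota_1(a))_{ij} = \sum_{h \in H} (a)_{t_i h t_j^{-1}} \, \lambda_{ij}(h) \, u^{\mu_1}_h$$
where $\lambda_{ij}(h) \in \T$ is a scalar arising from the $\mu_1$-cocycle. Consequently $\vphi(a)$ lies in $M_n(\C) \ot L_{\mu|_H}(H)$, so $(\vphi(a))_{g'} = 0$ for $g' \in G \setminus H$, and for $h \in H$ the coefficient $(\vphi(a))_h \in M_n(\C)$ is a sum of $k^2$ terms of the form (scalar of modulus one) $\cdot (a)_{t_i h t_j^{-1}} \cdot (e_{ij} \ot \pi(h))$. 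Since $\pi(h)$ is unitary, a direct estimate with respect to the normalized trace on $M_n(\C)$ gives $\|(\vphi(a))_h\|_2^2 \leq k \cdot h_{G_1}(a)^2$, hence $h_G(\vphi(a)) \leq \sqrt{k} \cdot h_{G_1}(a)$, which yields the required implication for sequences.

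The only step requiring genuine care is the explicit matrix formula for $\iota_1$: the $\mu_1$-twisting shows up as scalar cocycle factors $\lambda_{ij}(h)$ of modulus one, which drop out of the $\|\cdot\|_2$-norm estimate. The finite-type hypothesis enters precisely at the very first step, as it is what makes $\vphi_1$ land inside an amplification of $L_\mu(G)$ by a finite-dimensional matrix algebra rather than something larger.
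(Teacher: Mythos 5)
Your construction is exactly the one in the paper: compose the matrix realization of the finite-index inclusion $L_{\mu_1}(H) \subset L_{\mu_1}(G_1)$ (given by a choice of coset representatives and the associated $1$-cocycle) with the embedding $L_{\mu_1}(H) \to M_d(\C) \ot L_\mu(G)$, $u_h \mapsto \pi(h) \ot u_h$ coming from the finite-type projective representation $\pi$, so that $n = kd$. The paper is terse and records only the formulas for the two maps; you additionally spell out the height estimate $h_G(\vphi(a)) \leq \sqrt{k}\, h_{G_1}(a)$, which is the correct and expected computation (the cocycle factors $\lambda_{ij}(h)$ have modulus one, the matrices $e_{ij} \ot \pi(h)$ are an orthogonal family, and each Fourier coefficient of $\vphi(a)$ at $h \in H$ collects at most $k^2$ coefficients $(a)_{t_i h t_j^{-1}}$ of $a$), and it is what the paper implicitly relies on.
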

\begin{proof}
Let $\{s_1,\ldots,s_k\}$ be representatives of $G_1 / (G \cap G_1)$. Define $G_1 \overset{\ast}{\actson} \{1,\ldots,k\}$ and the $1$-cocycle $\Om : G_1 \times \{1,\ldots,k\} \to G \cap G_1$ such that
$$g s_i = s_{g \ast i} \, \Om(g,i) \quad\text{for all $g \in G_1$ and $i \in \{1,\ldots,k\}$.}$$
Then
\begin{multline}\label{eq.induction-cocycle}
\qquad\vphi_1 : L_{\mu_1}(G_1) \to M_k(\C) \ot L_{\mu_1}(G \cap G_1) : \\ \vphi_1(u_g) = \sum_{i=1}^k \mu_1(g,s_i) \, \overline{\mu_1}(s_{g \ast i},\Om(g,i)) \, (e_{g \ast i,i} \ot u_{\Om(g,i)})\qquad
\end{multline}
is a finite index embedding. Choosing a projective representation $\pi : G \cap G_1 \to \cU(d)$ with $2$-cocycle $\om_\pi = \mu_1|_{G \cap G_1} \, \overline{\mu}|_{G \cap G_1}$, we also have the finite index embedding
$$\vphi_2 : L_{\mu_1}(G \cap G_1) \to M_d(\C) \ot L_\mu(G) : \vphi_2(u_g) = \pi(g) \ot u_g \; .$$
Defining $n = kd$ and $\vphi = (\id  \ot \vphi_2) \circ \vphi_1$, the lemma is proven.
\end{proof}

The same induction formula \eqref{eq.induction-cocycle} also gives the following.

\begin{lemma}\label{lem.induce-finite-type}
Let $G$ be a countable group and $G_1 < G$ a finite index subgroup. Let $\mu \in Z^2(G,\T)$. If $\mu|_{G_1}$ is of finite type on $G_1$, then $\mu$ is of finite type.
\end{lemma}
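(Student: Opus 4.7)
The plan is to reuse the induction formula \eqref{eq.induction-cocycle} from Lemma \ref{lem.good-bimodule} to build, out of any finite dimensional projective representation witnessing finite type of $\mu|_{G_1}$, a finite dimensional projective representation of all of $G$ with $2$-cocycle $\mu$.

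First, I would apply the setup of Lemma \ref{lem.good-bimodule} in the degenerate case where the ambient group and the ``outer'' subgroup both coincide with $G$, while the ``inner'' subgroup is our $G_1$. Then $G \cap G_1 = G_1$ has finite index in both, so the hypotheses are met with $\mu_1$ there playing the role of $\mu$ here. Formula \eqref{eq.induction-cocycle}, applied with a choice of coset representatives $\{s_1,\ldots,s_k\}$ of $G/G_1$ and the associated $1$-cocycle $\Omega : G \times \{1,\ldots,k\} \to G_1$ defined by $gs_i = s_{g \ast i}\,\Omega(g,i)$, produces a unital (finite index) $*$-embedding
$$\vphi_1 : L_\mu(G) \to M_k(\C) \ot L_\mu(G_1) \; .$$

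Next, use the assumption that $\mu|_{G_1}$ is of finite type to pick a finite dimensional projective representation $\pi_1 : G_1 \to \cU(d)$ with $\om_{\pi_1} = \mu|_{G_1}$, and linearize it to a unital $*$-homomorphism $\widetilde{\pi_1} : L_\mu(G_1) \to M_d(\C)$ sending $u_g \mapsto \pi_1(g)$. Composing, I obtain the unital $*$-homomorphism
$$\Pi := (\id_{M_k(\C)} \ot \widetilde{\pi_1}) \circ \vphi_1 : L_\mu(G) \to M_{kd}(\C) \; .$$
Since $\Pi(u_g) \in \cU(kd)$ for every $g \in G$, the map $g \mapsto \Pi(u_g)$ is a $kd$-dimensional projective representation of $G$, and because both $\vphi_1$ and $\widetilde{\pi_1}$ respect the cocycle-twisted multiplication, its $2$-cocycle is precisely $\mu$. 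Thus $\mu$ is of finite type.

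There is no serious obstacle: all cocycle bookkeeping is already absorbed into the formula \eqref{eq.induction-cocycle}, and the only thing to verify is that the composition $\Pi$ sends canonical generators to unitaries, which is immediate from the fact that $\vphi_1$ is a $*$-homomorphism into a matrix algebra over $L_\mu(G_1)$ and $\widetilde{\pi_1}$ maps the canonical unitaries to unitaries.
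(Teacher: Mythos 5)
Your argument is correct and follows essentially the same route as the paper: the paper simply writes down the composite $\theta(g)=\sum_{i=1}^k \mu(g,s_i)\,\overline{\mu}(s_{g\ast i},\Om(g,i))\,(e_{g\ast i,i}\ot\theta_1(\Om(g,i)))$ directly and observes it is a projective representation of $G$ with $2$-cocycle $\mu$, which is exactly what your $\Pi$ computes to. One small imprecision worth noting: the linearization $\widetilde{\pi_1}$ should be understood as a $*$-homomorphism from the twisted group $*$-algebra $\C_\mu[G_1]$, not from the von Neumann algebra $L_\mu(G_1)$, since a finite dimensional projective representation of an infinite group does not in general extend to a normal $*$-homomorphism of $L_\mu(G_1)$; this does not affect the conclusion, because you only evaluate $\Pi$ on the generators $u_g$, and the resulting formula for $g\mapsto\Pi(u_g)$ coincides with the paper's $\theta$.
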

\begin{proof}
Choose a finite dimensional projective representation $\theta_1 : G_1 \to \cU(n)$ with $2$-cocycle $\om_{\theta_1} = \mu|_{G_1}$. Using the same notation as in \eqref{eq.induction-cocycle}, where $\{s_1,\ldots,s_k\}$ are representatives for $G/G_1$, the formula
$$\theta : G \to \cU(M_k(\C) \ot M_n(\C)) : \theta(g) = \sum_{i=1}^k \mu(g,s_i) \, \overline{\mu}(s_{g \ast i},\Om(g,i)) \, (e_{g \ast i,i} \ot \theta_1(\Om(g,i)))$$
is a projective representation with $2$-cocycle $\om_\theta = \mu$.
\end{proof}

In the proof of Proposition \ref{prop.intermediate}, we needed the following elementary lemma.

\begin{lemma}\label{lem.projection-onto-non-weak-mixing}
Let $G$ be a group and $\rho : G \to \cU(K)$ a projective representation. Denote by $K_0$ the closed linear span of all finite dimensional $\rho(G)$-invariant subspaces. For every $\xi \in K$, we have that
$$P_{K_0}(\xi) \in \overline{\operatorname{span}} \{\rho(g) \xi \mid g \in G\} \; .$$
\end{lemma}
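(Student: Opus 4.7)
My plan is to decompose $\xi$ along the orthogonal splitting $K = K_0 \oplus K_0^\perp$ and show that each piece generates its own cyclic subspace inside $L := \overline{\operatorname{span}}\{\rho(g)\xi \mid g \in G\}$, from which $P_{K_0}(\xi) \in L$ will follow immediately.

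First I would observe that $K_0$ is $\rho(G)$-invariant: if $V \subset K$ is a finite-dimensional $\rho(G)$-invariant subspace, then so is $\rho(h)V$ for every $h \in G$ (since projective reps permute finite-dimensional invariant subspaces), so $K_0$ is stable under each $\rho(h)$. Because the $\rho(h)$ are unitaries, the orthogonal complement $K_0^\perp$ is $\rho(G)$-invariant as well.

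Write $\xi = \xi_0 + \xi_1$ with $\xi_0 = P_{K_0}(\xi) \in K_0$ and $\xi_1 \in K_0^\perp$. Set
\[
L_0 = \overline{\operatorname{span}}\{\rho(g)\xi_0 \mid g \in G\}, \qquad L_1 = \overline{\operatorname{span}}\{\rho(g)\xi_1 \mid g \in G\}.
\]
By invariance, $L_0 \subset K_0$ and $L_1 \subset K_0^\perp$, so $L_0 \perp L_1$. Since $\rho(g)\xi = \rho(g)\xi_0 + \rho(g)\xi_1$ for every $g$, we get $L = L_0 \oplus L_1$ as an orthogonal direct sum. In particular $\xi_0 = \rho(e)\xi_0 \in L_0 \subset L$, which is the claim.

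There is no real obstacle; the only point to be slightly careful about is that $K_0$ really is $\rho(G)$-invariant in the projective setting, which is immediate since $\rho(g)\rho(h) \in \mathbb{T}\cdot\rho(gh)$ so $\rho(g)$ maps $\rho(G)$-invariant subspaces to $\rho(G)$-invariant subspaces of the same dimension.
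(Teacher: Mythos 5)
Your decomposition $\xi = \xi_0 + \xi_1$ and the orthogonality $L_0 \perp L_1$ are fine, and the inclusion $L \subset L_0 \oplus L_1$ is clear. But the key assertion ``we get $L = L_0 \oplus L_1$'' is exactly the reverse inclusion $L_0 \oplus L_1 \subset L$, which you state without argument and which is in fact equivalent to the lemma you are trying to prove ($L_0 \subset L$ gives $\xi_0 \in L$). Orthogonality of the pieces alone does not yield it: if $\eta \in L_0$ is approximated by $\sum_n c_n^{(k)} \rho(g_n^{(k)})\xi_0$, the corresponding combinations $\sum_n c_n^{(k)} \rho(g_n^{(k)})\xi \in L$ have $K_0^\perp$-components $\sum_n c_n^{(k)} \rho(g_n^{(k)})\xi_1$ that need not converge, so you cannot pass to the limit inside $L$. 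In general, for vectors $v_g \perp w_g$ lying in two fixed orthogonal subspaces, $\overline{\operatorname{span}}\{v_g + w_g\}$ can be a proper subspace of $\overline{\operatorname{span}}\{v_g\} \oplus \overline{\operatorname{span}}\{w_g\}$.

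The missing ingredient is that $P_{K_0}$ commutes not merely with $\rho(G)$ but with the entire commutant $\rho(G)'$, i.e.\ $P_{K_0} \in \cZ(\rho(G)')$. This holds because $K_0$ is canonically defined: every unitary $V \in \rho(G)'$ sends finite-dimensional $\rho(G)$-invariant subspaces to finite-dimensional $\rho(G)$-invariant subspaces, hence $V(K_0) = K_0$ and $V P_{K_0} = P_{K_0} V$. The projection $P$ onto $L = \overline{\operatorname{span}}\{\rho(g)\xi\}$ lies in $\rho(G)'$, so it commutes with $P_{K_0}$, and then $P(P_{K_0}\xi) = P_{K_0}(P\xi) = P_{K_0}\xi$, i.e.\ $P_{K_0}\xi \in L$. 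That commutation with $\rho(G)'$ (not just with $\rho(G)$) is precisely what makes $P_{K_0}$ preserve every $\rho(G)$-cyclic subspace, and it is the step your proposal needs and lacks.
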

\begin{proof}
For every $V \in \cU(K) \cap \rho(G)'$, we have that $V(K_0) = K_0$, so that $V P_{K_0} = P_{K_0} V$. This means that $P_{K_0}$ belongs to the center of $\rho(G)' \cap B(K)$. Fix $\xi \in K$. Denote by $P$ the projection onto $\overline{\operatorname{span}} \{\rho(g) \xi \mid g \in G\}$. Then $P \in \rho(G)' \cap B(K)$, so that $P$ commutes with $P_{K_0}$. We conclude that $P(P_{K_0}(\xi)) = P_{K_0}(P(\xi)) = P_{K_0}(\xi)$.
\end{proof}

\subsection{Commensurators of left-right wreath products}

Let $\Gamma$ be an icc group. The semidirect product group $\Gamma \rtimes \Aut \Gamma$ with group operation
$$(g,\al)(g',\al') = (g\al(g'),\al \circ \al')$$
contains $\Gamma \times \Gamma$ as the normal subgroup of elements $(gh^{-1},\Ad h)$, $(g,h) \in \Gamma \times \Gamma$ and admits the period $2$ automorphism $\zeta$ given by $\zeta(g,\al) = (g^{-1},\Ad g \circ \al)$. Consider the semidirect product $H = (\Gamma \rtimes \Aut \Gamma) \rtimes \Z/2\Z$.

Note that $H$ is naturally identified with the subgroup of $\Aut(\Gamma \times \Gamma)$ consisting of the automorphisms $\delta$ that are either of the form $\delta(k,k') = (g \al(k) g^{-1},\al(k'))$ or the form $\delta(k,k') =  (\al(k'),g \al(k) g^{-1})$, for some $g \in \Gamma$ and $\al \in \Aut \Gamma$. Also note that $H$ acts on the set $\Gamma$ by $(g,\delta) \cdot k = g \delta(k)$ and $\zeta \cdot k = k^{-1}$. Writing
\begin{equation}\label{eq.G-cG}
G = (\Z/2\Z)^{(\Gamma)} \rtimes (\Gamma \times \Gamma) \quad\text{and}\quad \cG = (\Z/2\Z)^{(\Gamma)} \rtimes ((\Gamma \rtimes \Aut \Gamma) \rtimes \Z/2\Z) \; ,
\end{equation}
we have that $G$ is a normal subgroup of $\cG$ and $G < \cG$ is relatively icc. For many groups $\Gamma$, one has $\cG = \Aut G$. This statement comes down to saying that for most icc groups $\Gamma$, every automorphism of $G$ can be written as the composition of an inner automorphism and an automorphism that is given by a symmetric automorphism of $\Gamma \times \Gamma$. We only need this result when $\Gamma \in \cC$.

The following result can certainly be proven by elementary means and for a much larger class of groups $\Gamma$, but for brevity, we give a short proof as a consequence of Theorem \ref{thm.coarse-embedding-cocycle-twisted-wreath-product}.

\begin{proposition}\label{prop.commensurators-wreath}
Let $\Gamma$ be a group in $\cC$ and denote $G = (\Z/2\Z)^{(\Gamma)} \rtimes (\Gamma \times \Gamma)$. If $G_0,G_1 < G$ are finite index subgroups and $\delta_0 : G_0 \to G_1$ is an isomorphism, then $\delta_0$ uniquely extends to an automorphism $\delta \in \Aut G$. Also, $\Aut G = \cG$ with $\cG$ defined by \eqref{eq.G-cG}.
\end{proposition}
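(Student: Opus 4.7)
Both assertions will follow from Theorem~\ref{thm.coarse-embedding-cocycle-twisted-wreath-product} applied with $k = 1$ (so coarseness is automatic) and $\mu = \mu_1 = 1$. Since $\Gamma$ is icc and nonamenable, $G$ is icc with trivial centre, and every finite index subgroup of $G$ is relatively icc in $G$ by Lemma~\ref{lem.remarks-icc.i}. Uniqueness of the extension is then immediate: two extensions of $\delta_0$ to $\Aut G$ would agree on the finite index subgroup $G_0$, hence on all of $G$ by the last clause of Lemma~\ref{lem.remarks-icc.ii}.

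For existence, the plan is to build a finite index embedding
$$\Phi : L(G) \to M_n(\C) \ot L(G), \qquad n = [G:G_0],$$
by composing the induction-type embedding $\vphi_1 : L(G) \to M_n(\C) \ot L(G_0)$ from formula~\eqref{eq.induction-cocycle} (applied to $L(G_0) \subset L(G)$ with trivial cocycle) with $\id \ot \delta_0$ and then the inclusion $L(G_1) \subset L(G)$. Since $\Phi$ has finite Jones index it cannot intertwine into any infinite index subfactor, so Theorem~\ref{thm.coarse-embedding-cocycle-twisted-wreath-product} will produce, after unitary conjugation by some $W$, a decomposition $\Phi = \bigoplus_j (\pi_j \ot u_{\delta_j})$ in which each $\pi_j$ is an ordinary unitary representation (because $\om_{\pi_j} = 1$) and each $\delta_j \in \Aut G$ fixes $\pi_e(\Z/2\Z)$ pointwise and restricts to a symmetric automorphism of $\Gamma \times \Gamma$. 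Every such $\delta_j$ therefore already lies in $\cG \subseteq \Aut G$.

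To extract $\delta_0$ itself, I would use uniqueness to shrink $G_0$ to a finite index normal subgroup of $G$ with $e$ among the coset representatives $\{s_1, \ldots, s_n\}$; formula~\eqref{eq.induction-cocycle} then forces $\Phi(u_h) = \operatorname{diag}_i\!\bigl(u_{\delta_0(s_i^{-1} h s_i)}\bigr)$ for all $h \in G_0$. Conjugating by $W$ and matching irreducible summands of these two representations of $G_0$ into $M_n(\C) \ot L(G)$, together with the fact that two faithful homomorphisms from $G_0$ into $G$ with relatively icc image which induce unitarily equivalent $*$-homomorphisms to $L(G)$ must differ by an inner automorphism (Lemma~\ref{lem.remarks-icc.ii}), should yield indices $i, j$ and an element $s \in G$ with $\delta_j|_{G_0}(h) = s\, \delta_0(s_i^{-1} h s_i)\, s^{-1}$ for all $h \in G_0$. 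The desired extension is then $\delta := (\Ad s^{-1}) \circ \delta_j \circ (\Ad s_i) \in \cG$, which restricts to $\delta_0$ on $G_0$.

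Finally, for $\Aut G = \cG$, the inclusion $\cG \hookrightarrow \Aut G$ is given by the conjugation action of $\cG$ on its normal subgroup $G$ and is injective because $Z(G) = \{e\}$. Conversely, applying Theorem~\ref{thm.coarse-embedding-cocycle-twisted-wreath-product} directly to an arbitrary $\delta \in \Aut G$ viewed as an automorphism of $L(G)$ (with $n = t = 1$) will produce a unitary $u \in L(G)$, a character $\chi$ of $G$, and $\delta' \in \cG$ satisfying $u\, u_{\delta(g)}\, u^* = \chi(g)\, u_{\delta'(g)}$ for all $g \in G$. Setting $\beta = \delta' \circ \delta^{-1}$ and Fourier-expanding $u = \sum_k \hat u_k\, u_k$ gives $\hat u_k = \chi(g)\, \hat u_{\beta(g)^{-1} k g}$ for all $g, k$. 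Since $\|u\|_2 < \infty$, every $k$ in the support of $\hat u$ has finite orbit under $g \cdot k = \beta(g)^{-1} k g$, and its stabiliser is the fixed-point set of the automorphism $(\Ad k^{-1}) \circ \beta$ of $G$; by Lemma~\ref{lem.remarks-icc.ii} this forces $\beta = \Ad k$, which in turn forces $\chi = 1$ and $\hat u$ to be supported at a single point, so $u \in \T \cdot G$ and $\delta \in \operatorname{Inn}(G) \cdot \delta' \subseteq \cG$. The main obstacle throughout will be the block-matching step in the third paragraph, where relative icc must be used to promote a mere unitary equivalence of group representations into a genuine conjugacy of the underlying homomorphisms.
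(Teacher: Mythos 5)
Your proposal follows essentially the same route as the paper's proof: build the induction-type finite index embedding $\vphi(u_g) = \sum_i e_{g\ast i,i}\ot u_{\delta_0(\Om(g,i))}$, invoke Theorem~\ref{thm.coarse-embedding-cocycle-twisted-wreath-product} (noting as the paper does that \cite[Theorem 3.2]{PV21} already suffices when $k=1$) to obtain an intertwiner $X$ and an element $\delta\in\cG$, restrict to the normal core $\bigcap_i s_i G_0 s_i^{-1}$ so that $\vphi$ becomes block-diagonal, and use non-weak-mixing of one block together with Lemma~\ref{lem.remarks-icc.ii} to conclude that $\delta$ (after an inner twist) agrees with $\delta_0$ on a finite index subgroup and hence on $G_0$; your ``block-matching'' paragraph is exactly this step, and the self-identified vagueness there is the only place where detail is missing. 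The concluding Fourier-expansion argument for $\Aut G = \cG$ is correct but superfluous: that equality is precisely the case $G_0 = G_1 = G$ of the main assertion (and note that injectivity of $\cG\to\Aut G$ uses $C_\cG(G)=\{e\}$, i.e.\ relative icc-ness, rather than just $Z(G)=\{e\}$), which is why the paper does not give a separate argument.
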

\begin{proof}
Choose representatives $\{s_1,\ldots,s_n\}$ for $G / G_0$. Define the action $G \overset{\ast}{\actson} \{1,\ldots,n\}$ and the $1$-cocycle $\Om : G \times \{1,\ldots,n\} \to G_0$ by $g s_i = s_{g \ast i} \Om(g,i)$ for all $g \in G$ and $i \in \{1,\ldots,n\}$. Then
$$\vphi : L(G) \to M_n(\C) \ot L(G) : \vphi(u_g) = \sum_{i=1}^n e_{g \ast i,i} \ot u_{\delta_0(\Om(g,i))}$$
is a finite index embedding. By Theorem \ref{thm.coarse-embedding-cocycle-twisted-wreath-product} (and since we only have one tensor factor at the right, this actually also follows from \cite[Theorem 3.2]{PV21}), we find a finite dimensional representation $\pi : G \to \cU(d)$, an automorphism $\delta \in \Aut G$ that belongs to $\cG$ and a nonzero element $X \in M_{n,d}(\C) \ot L(G)$ such that $\vphi(u_g) X = X (\pi(g) \ot u_{\delta(g)})$ for all $g \in G$.

Define the finite index subgroup $G_2 < G$ by $G_2 = \bigcap_{i=1}^n s_i G_0 s_i^{-1}$. For all $g \in G_2$ and $i \in \{1,\ldots,n\}$, we have that $g \ast i = i$ and $\Om(g,i) = s_i^{-1} g s_i$. So,
$$\vphi(u_g) = \sum_{i=1}^n e_{i,i} \ot u_{\delta_0(s_i^{-1} g s_i)} \quad\text{for all $g \in G_2$.}$$
Because we have the intertwiner $X$, there must exist an $i \in \{1,\ldots,n\}$ such that the unitary representation
$$\zeta_i : G_2 \to \cU(\ell^2(G)) : \zeta_i(g) \xi = u_{\delta_0(s_i^{-1} g s_i)} \xi u_{\delta(g)}^*$$
is not weakly mixing. There thus exists an $h \in G$ such that the set $\{\delta_0(s_i^{-1} g s_i) h \delta(g)^{-1} \mid g \in G_2\}$ is finite. By Lemma \ref{lem.remarks-icc.ii}, there exists an $h \in G$ such that $\delta_0(s_i^{-1} g s_i) = (\Ad h \circ \delta)(g)$ for all $g \in G_2$. Defining $G_3 = s_i^{-1} G_2 s_i$ and replacing $\delta$ by $\Ad h \circ \delta \circ \Ad s_i$, we find that $G_3 < G_0$ is a finite index subgroup and $\delta_0(g) = \delta(g)$ for all $g \in G_3$. By Lemma \ref{lem.remarks-icc.ii}, we get that $\delta_0(g) = \delta(g)$ for all $g \in G_0$. So $\delta_0$ can be extended to an automorphism $\delta$ of $G$, which moreover belongs to $\cG$. Also by Lemma \ref{lem.remarks-icc.ii}, this extension is unique.
\end{proof}

\subsection{Proof of Theorem \ref{thm.A}}\label{sec.proof-theorem-A}

The two main results in this section are Theorems \ref{thm.virtual-iso-superrigidity-precise} and \ref{thm.iso-superrigidity-precise}. The first statement of Theorem \ref{thm.A} is contained in Theorem \ref{thm.iso-superrigidity-precise} below. The second statement of Theorem \ref{thm.A} is contained in Corollary \ref{cor.nonzero-or-faithful-bimodules.i} below.

\begin{lemma}\label{lem.virtual-center-finite}
Let $\Gamma$ be a group in $\cC$ and write $G = (\Z/2\Z)^{(\Gamma)} \rtimes (\Gamma \times \Gamma)$. Let $\Lambda$ be any countable group and let $\mu \in Z^2(G,\T)$ and $\om \in Z^2(\Lambda,\T)$ be arbitrary scalar $2$-cocycles.

If there exists a nonzero bifinite $L_\mu(G)$-$L_\om(\Lambda)$-bimodule, then the virtual center $\Lambda\fc$ is finite.
\end{lemma}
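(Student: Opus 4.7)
The proof will proceed by contradiction: assume $\Lambda\fc$ is infinite, write $M = L_\mu(G)$ and $N = L_\om(\Lambda)$, and use the given nonzero bifinite $M$-$N$-bimodule together with the triple comultiplication of Proposition~\ref{prop.triple-comult} to contradict Theorem~\ref{thm.coarse-embedding-cocycle-twisted-wreath-product}. Note that $L_\mu(G)\op$ is again a twisted group von Neumann algebra of the wreath product $G$ (since $g \mapsto g^{-1}$ is an anti-isomorphism of $G$), so Theorem~\ref{thm.coarse-embedding-cocycle-twisted-wreath-product} will apply to any coarse embedding $M \to (M \ovt M\op \ovt M)^t$ with $k=3$ and $G_1 = G_2 = G_3 = G$.

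The first step is to build such a coarse embedding from the bimodule. The bifinite bimodule $\bim{M}{H}{N}$ and its conjugate $\bim{N}{\overline{H}}{M}$ yield finite-index embeddings $\theta : M \to p(M_m(\C) \ot N)p$ and $\eta : N \to q(M_n(\C) \ot M)q$. One then sets
$$\psi = (\id \ot (\eta \ot \eta\op \ot \eta)) \circ (\id \ot \Delta_3) \circ \theta,$$
which lands in a corner of $M_{mn^3} \ot M \ovt M\op \ovt M$. Coarseness of $\psi$ is established by writing $\Delta_3 = (\id \ot \Delta_2) \circ \Delta$, verifying directly that $\Delta_3$ is a coarse embedding of $N$ into $N \ovt N\op \ovt N$ (each of the three single-factor bimodules is coarse by the explicit form $\Delta_3(u_g) = u_g \ot \overline{u_g} \ot u_g$), and then iteratively invoking Lemma~\ref{lem.stable-coarse}. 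Applying Theorem~\ref{thm.coarse-embedding-cocycle-twisted-wreath-product} then writes $\psi$ as a finite direct sum of embeddings $u_g \mapsto \pi_j(g) \ot u_{\delta^j(g)}$, with each $\delta^j : G \to G \times G \times G$ a triple of faithful homomorphisms whose image is relatively icc. Consequently, $\psi(M)' \cap (M \ovt M\op \ovt M)^t$ is finite-dimensional, being a direct sum of the finite-dimensional endomorphism rings $\End(\pi_j)$.

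On the other hand, applying Proposition~\ref{prop.triple-comult}(iii) with $z = 1$ (so that $z_0 = 1$ and $p = 1$ by injectivity of $\Delta_3$), the assumption that $\Lambda\fc$ is infinite yields that $\Delta_3(N)' \cap N \ovt N\op \ovt N$ is infinite-dimensional. For any $X$ in this relative commutant, the element $1_{M_m} \ot X$ commutes with $(\id \ot \Delta_3)(\theta(u_g))$ for every $g \in G$; pushing forward through $\id \ot (\eta \ot \eta\op \ot \eta)$ and cutting by the support projection $\hat p := \psi(1)$ therefore produces elements of $\psi(M)' \cap (M \ovt M\op \ovt M)^t$.

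The main obstacle is ensuring that this corner cut does not collapse the infinite-dimensional algebra to a finite-dimensional quotient. The cleanest remedy is to use the explicit sequence $V(a_{C_n}) \in \Delta_3(N)' \cap N \ovt N\op \ovt N$ constructed in the proof of Proposition~\ref{prop.triple-comult}(iii), which is bounded, weakly null, and bounded below in $\|\cdot\|_2$. Injectivity of $\eta \ot \eta\op \ot \eta$ combined with a $\|\cdot\|_2$-estimate via the partial trace $(\tr \ot \id)(\hat p)$ (which is a nonzero positive element since $\tr(\hat p) > 0$) shows that the image sequence in $\psi(M)'$ remains bounded, weakly null, and bounded below in $\|\cdot\|_2$, hence spans an infinite-dimensional subspace of $\psi(M)' \cap (M \ovt M\op \ovt M)^t$. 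This contradicts the finite-dimensional conclusion of the previous step, completing the proof.
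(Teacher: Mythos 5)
Your overall strategy is the same as the paper's: transfer the triple comultiplication of $N=L_\om(\Lambda)$ through the bifinite bimodule to obtain a tensor coarse $L_\mu(G)$-$(L_\mu(G)\ovt L_\mu(G)\op\ovt L_\mu(G))$-bimodule, apply Theorem~\ref{thm.coarse-embedding-cocycle-twisted-wreath-product} to get a finite-dimensional endomorphism/relative commutant, and contradict the infinite-dimensionality coming from Proposition~\ref{prop.triple-comult}(iii). However, there is a genuine gap.

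You implicitly assume the given bifinite $M$-$N$-bimodule is right $N$-faithful, i.e.\ that $z=1$ in the notation of Proposition~\ref{prop.triple-comult}. This is what lets you speak of a finite-index embedding $\eta : N \to q(M_n(\C)\ot M)q$ and apply Proposition~\ref{prop.triple-comult}(iii) ``with $z=1$''. But a nonzero bifinite $L_\mu(G)$-$L_\om(\Lambda)$-bimodule need not be right-faithful: the hypotheses permit $L_\om(\Lambda)$ to have a large center (indeed, $\Lambda\fc$ could be infinite with $\Lambda$ acting non-ergodically on $\cZ(L_\om(\Lambda\fc))$), and then the right action of $N$ on $H$ has a proper support projection $z\in\cZ(N)$. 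In that case $\eta$ is not injective, the composition $\psi = (\id\ot\eta^{\ot 3})\circ(\id\ot\Delta_3)\circ\theta$ is degenerate (and it is not even clear that $\psi(1)\neq 0$, since that amounts to $(\id\ot\Delta_3)(p)(1\ot z\ot z\op\ot z)\neq 0$, which is a nontrivial assertion), and Proposition~\ref{prop.triple-comult} must be invoked with the actual support projection $z$, producing the central projection $z_0\leq z$, the projection $p=\Delta_3(z)(z\ot z\op\ot z)$, and the restricted coarse embedding $\Delta_z : Nz_0\to p(N\ovt N\op\ovt N)p$. Handling this non-faithful case is exactly what the central-projection machinery of Proposition~\ref{prop.triple-comult} is designed for, and it is why the paper's proof begins by taking the support projection of the $N$-action on the bimodule.

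A secondary, less serious point: even in the faithful case, your argument that ``the corner cut does not collapse the infinite-dimensional algebra'' is not airtight as sketched, because $(\Tr\ot\id)(p)$ need not be bounded below. The clean way around this, which the paper uses, is the bimodule identification: the tensor coarse bimodule $\cH=\overline{L}z_0\ot_{Nz_0}H\ot_{N\ovt N\op\ovt N}(L\ot L\op\ot L)$ is obtained from $H$ by tensoring with bifinite bimodules that are faithful on both sides, so that $\End(\cH)\cong\End(H)=\Delta_z(Nz_0)'\cap p(N\ovt N\op\ovt N)p$ directly, with no separate collapse estimate needed.
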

\begin{proof}
Write $M = L_\mu(G)$ and $N = L_\om(\Lambda)$. Let $\bim{N}{L}{M}$ be a nonzero bifinite $N$-$M$-bimodule. Let $z \in \cZ(N)$ be the support projection of the left action of $N$ on $L$, so that $\bim{Nz}{L}{M}$ has faithful left and right actions. Denote by $\Delta_z : N z_0 \to p(N \ovt N\op \ovt N)p$ the coarse embedding given by Proposition \ref{prop.triple-comult}. Denote by
$$\bim{N z_0}{H}{N \ovt N\op \ovt N} = \bim{\Delta_z(N z_0)}{pL^2(N \ovt N\op \ovt N)}{N \ovt N\op \ovt N}$$
the corresponding tensor coarse bimodule (see Definition \ref{def.tensor-coarse-bimodule}). By Lemma \ref{lem.stability-of-coarse},
$$\cH := \overline{L} z_0 \ot_{N z_0} H \ot_{N \ovt N\op \ovt N} (L \ot L\op \ot L)$$
is an $M$-$(M \ovt M\op \ovt M)$-bimodule that is tensor coarse. It follows from Theorem \ref{thm.coarse-embedding-cocycle-twisted-wreath-product} that $\cH$ is a finite direct sum of irreducible bimodules. So, the space of endomorphisms of $\cH$ is finite dimensional. This implies that $\Delta_z(Nz_0)' \cap p(N \ovt N\op \ovt N)p$, which is the endomorphism algebra of $H$, is finite dimensional. It follows from Proposition \ref{prop.triple-comult} that $\Lambda\fc$ is finite.
\end{proof}

In the following result, we use the notations introduced in Proposition \ref{prop.decompose-virtual-center}. As above, we identify $G$ with a subgroup of $\Aut G$ via $g \mapsto \Ad g$.

\begin{theorem}\label{thm.virtual-iso-superrigidity-precise}
Let $\Gamma$ be a group in $\cC$ and write $G = (\Z/2\Z)^{(\Gamma)} \rtimes (\Gamma \times \Gamma)$. Let $\Lambda$ be any countable group and let $\mu \in Z^2(G,\T)$ and $\om \in Z^2(\Lambda,\T)$ be arbitrary scalar $2$-cocycles.

Let $\bim{L_\mu(G)}{\cH}{L_\om(\Lambda)}$ be any nonzero bifinite bimodule. Then the virtual center $\Lambda\fc$ is finite and for every minimal projection $z \in \cZ(L_\om(\Lambda\fc))$ with $\cH z \neq 0$, there exists
\begin{itemlist}
\item a subgroup $G_1 < \Aut G$ such that $G \cap G_1$ has finite index in both $G$ and $G_1$,
\item a $2$-cocycle $\mu_1 \in Z^2(G_1,\T)$ s.t.\ the $2$-cocycle $\mu|_{G \cap G_1} \, \overline{\mu_1}|_{G \cap G_1}$ on $G \cap G_1$ is of finite type,
\item an isomorphism of groups $\delta : \Lambda_z / \Lambda\fc \to G_1$ satisfying $\mu_1 \circ \delta \sim \om_z$.
\end{itemlist}
\end{theorem}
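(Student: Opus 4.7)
The plan is to reduce to the icc factor setting, apply the factor-level structure theorem \ref{thm.virtual-isom-factor} to produce the candidate subgroup and cocycle, and then use the height machinery of Theorem \ref{thm.height-1} to promote the resulting $*$-isomorphism of factors to a group isomorphism.

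Lemma \ref{lem.virtual-center-finite} immediately yields that $\Lambda\fc$ is finite. Fix a minimal $z \in \cZ(L_\om(\Lambda\fc))$ with $\cH z \neq 0$, let $q \in \cZ(L_\om(\Lambda))$ be the unique minimal central projection with $z \leq q$, and set $\Lambda_0 := \Lambda_z/\Lambda\fc$ and $N_z := L_{\om_z}(\Lambda_0)$. By Proposition \ref{prop.decompose-virtual-center}, $L_\om(\Lambda)q \cong M_n(\C) \ot N_z$ with $N_z$ a II$_1$ factor, so cutting $\cH q$ by a rank-one matrix unit yields a nonzero bifinite $L_\mu(G)$-$N_z$-bimodule. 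Theorem \ref{thm.virtual-isom-factor} then produces the candidate $G_1 < \Aut G$ with $G \cap G_1$ of finite index in both $G$ and $G_1$, the candidate $\mu_1 \in Z^2(G_1,\T)$ with $\mu|_{G \cap G_1}\,\overline{\mu_1}|_{G \cap G_1}$ of finite type, and a $*$-isomorphism $\alpha : N_z \cong L_{\mu_1}(G_1)^t$ for some $t > 0$.

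What remains is to promote $\alpha$ to a group isomorphism. I would view $\Lambda_0$ as a subgroup of $\cU(L_{\mu_1}(G_1)^t)$ via $\alpha$ applied to the canonical generators $(u_g)_{g \in \Lambda_0}$. After first arranging that $G_1$ is icc (by applying Proposition \ref{prop.decompose-virtual-center} on the $G_1$ side if needed and replacing $(G_1,\mu_1)$ accordingly, while preserving the commensurability with $G$ and the finite-type condition via Lemma \ref{lem.induce-finite-type}), the plan is to apply Theorem \ref{thm.height-1} to $\alpha(\Lambda_0) \subset \cU(L_{\mu_1}(G_1)^t)$. This would force $t = 1$ and unitarily conjugate $\alpha(\Lambda_0)$ into $\T \cdot G_1$, delivering the desired group isomorphism $\delta : \Lambda_0 \to G_1$; the cocycle relation $\mu_1 \circ \delta \sim \om_z$ then follows by computing the commutation relations satisfied by the conjugated generators.

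The main obstacle will be verifying the hypotheses of Theorem \ref{thm.height-1}, particularly the positive height $h_{G_1}(\alpha(\Lambda_0)) > 0$ and the non-intertwining $\alpha(\Lambda_0)\dpr \not\prec L_{\mu_1}(C_{G_1}(k))$ for every $k \in G_1 \setminus \{e\}$; weak mixing of $(\Ad u_g)_{g \in \Lambda_0}$ on $L^2(L_{\mu_1}(G_1)^t) \ominus \C 1$ is immediate from $\Lambda_0$ being icc. For these, I would leverage the tensor coarse $L_\mu(G)$-$(L_\mu(G) \ovt L_\mu(G)\op \ovt L_\mu(G))$-bimodule built in the proof of Lemma \ref{lem.virtual-center-finite} from the bifinite bimodule combined with the triple comultiplication of $N_z$, decompose it via Theorem \ref{thm.coarse-embedding-cocycle-twisted-wreath-product} as a finite direct sum of standard embeddings $u_g \mapsto \pi(g) \ot u_{\delta_1(g)} \ot \overline{u_{\delta_2(g)}} \ot u_{\delta_3(g)}$, and use the explicit matrix coefficients to track how the unitaries $\Delta_3^{N_z}(u_h) = u_h \ot \overline{u_h} \ot u_h$ for $h \in \Lambda_0$ are transported to the $L_\mu(G) \ovt L_\mu(G)\op \ovt L_\mu(G)$-side; the standard form should yield both positive height and the required non-intertwining by contradicting the explicit form of $\psi$ if the conclusion were to fail.
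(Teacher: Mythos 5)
The overall strategy matches the paper: reduce to the icc corner via Proposition \ref{prop.decompose-virtual-center}, invoke Theorem \ref{thm.virtual-isom-factor} to produce the commensurated subgroup $G_1 < \Aut G$ and the cocycle $\mu_1$ together with a $*$-isomorphism $\al : N_z \to L_{\mu_1}(G_1)^t$, and upgrade $\al$ to a group isomorphism via Theorem \ref{thm.height-1}. A minor simplification: you do not need to apply Proposition \ref{prop.decompose-virtual-center} on the $G_1$-side; since $G < \Aut G$ is relatively icc and $G \cap G_1$ has finite index in both $G$ and $G_1$, the group $G_1$ is automatically icc.

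The step your outline glosses over is the pivotal one, namely how to actually prove $h_{G_1}(\al(\Lambda_z/\Lambda\fc)) > 0$. You propose to reuse the tensor coarse $L_\mu(G)$-$(L_\mu(G) \ovt L_\mu(G)\op \ovt L_\mu(G))$-bimodule constructed in the proof of Lemma \ref{lem.virtual-center-finite}, which is built from the \emph{original} bifinite bimodule $\cH$. But that bimodule translates the left $N_z$-action into an $M$-action via an arbitrary finite-index embedding $\gamma : N_z \to M_n(\C) \ot L_\mu(G)$ coming from $\cH$, and nothing guarantees that $h_{G_1}(\al(v_{g_n})) \to 0$ forces $h_G(\gamma(v_{g_n})) \to 0$; the height is not preserved under unitary conjugation on the $M$-side, so a generic finite-index embedding destroys the height information from $G_1$. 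This is exactly why the paper introduces Lemma \ref{lem.good-bimodule}, which builds an \emph{explicit} finite-index embedding $\vphi : L_{\mu_1}(G_1) \to M_n(\C) \ot L_\mu(G)$ (through induction across $G_1/(G\cap G_1)$ and tensoring by a projective representation realizing the finite-type cocycle $\mu_1\,\overline{\mu}$) with the property that bounded sequences with $h_{G_1} \to 0$ are sent to sequences with $h_G \to 0$. The paper then \emph{discards} the original $\cH$, constructs a new bifinite $N_z$-$L_\mu(G)$-bimodule $L = \overline{H_\al} \ot_{L_{\mu_1}(G_1)^t} K$ from $\al$ and $\vphi$, and only then forms the tensor coarse bimodule $\overline{L} \ot_{N_z} H_{\Delta_3} \ot_{N_z \ovt N_z\op \ovt N_z}(L \ot L\op \ot L)$ that is fed into Theorem \ref{thm.coarse-embedding-cocycle-twisted-wreath-product} and the matrix coefficient estimate. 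Without Lemma \ref{lem.good-bimodule} and this replacement of the bimodule, the matrix coefficient argument does not close, so this is a genuine gap in your sketch rather than a routine verification.
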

\begin{proof}
By Lemma \ref{lem.virtual-center-finite}, the virtual center $\Lambda\fc$ is finite. Fix a minimal projection $z \in \cZ(L_\om(\Lambda\fc))$ with $\cH z \neq 0$. By Proposition \ref{prop.decompose-virtual-center}, $z L_\om(\Lambda) z$ is isomorphic with an amplification of $L_{\om_z}(\Lambda_z/\Lambda\fc)$. Since the group $\Lambda_z/\Lambda\fc$ is icc, we may thus assume from the start that $\Lambda$ is an icc group.

By Theorem \ref{thm.virtual-isom-factor}, we can choose $G_1 < \Aut G$ and $\mu_1 \in Z^2(G_1,\T)$ satisfying the first two statements in the theorem and such that there exists a $*$-isomorphism $\al : L_\om(\Lambda) \to L_{\mu_1}(G_1)^t$ for some $t > 0$. We denote by $(v_h)_{h \in \Lambda}$ the canonical unitaries in $L_\om(\Lambda)$. Below we prove that $h_{G_1}(\al(\Lambda)) > 0$.

Since $\Lambda$ is an icc group, the action $(\Ad v_h)_{h \in \Lambda}$ on $L^2(L_\om(\Lambda)) \ominus \C 1$ is weakly mixing. Since $G_1$ is icc, we have that $\al(L_\om(\Lambda)) \not\prec L_{\mu_1}(C_{G_1}(k))$ whenever $k \in G_1 \setminus \{e\}$. So we can apply Theorem \ref{thm.height-1}. We conclude that $t=1$ and that there exists a unitary $W \in L_{\mu_1}(G_1)$ and a group isomorphism $\delta : \Lambda \to G_1$ such that $W \al(v_h) W^* \in \T \cdot u_{\delta(h)}$ for all $h \in \Lambda$. Writing $\al(v_h) = \vphi(h) \, u_{\delta(h)}$ with $\vphi(h) \in \T$, it follows that $\mu_1 \circ \delta \sim \om$. It thus remains to prove that $h_{G_1}(\al(\Lambda)) > 0$.

Since $G < \Aut G$ is relatively icc, also $G_1$ is an icc group. We denote by $\vphi : L_{\mu_1}(G_1) \to M_n(\C) \ot L_\mu(G)$ the finite index embedding given by Lemma \ref{lem.good-bimodule}.

From now on, we write $N = L_\om(\Lambda)$, $M = L_\mu(G)$ and $M_1 = L_{\mu_1}(G_1)$. We amplify $\vphi$ to the finite index embedding $\Phi : M_1^t \to p (M_k(\C) \ot M)p$, where $p$ is a projection with $(\Tr \ot \tau)(p) = nt$. We denote by
$$\bim{M_1^t}{K}{M} = \bim{\Phi(M_1^t)}{p(\C^k \ot L^2(M))}{M}$$
the corresponding bifinite bimodule.

We denote by $\Delta_3 : N \to N \ovt N\op \ovt N$ the triple comultiplication given by Proposition \ref{prop.triple-comult} and let
$$\bim{N}{\bigl(H_{\Delta_3}\bigr)}{N \ovt N\op \ovt N} = \bim{\Delta_3(N)}{L^2(N \ovt N\op \ovt N)}{N \ovt N\op \ovt N}$$
be the corresponding tensor coarse bimodule.

We translate the $*$-isomorphism $\al : N \to M_1^t$ to the bifinite bimodule
$$\bim{M_1^t}{(H_\al)}{N} = \bim{M_1^t}{L^2(M_1^t)}{\al(N)} \; .$$
Define the bifinite $N$-$M$-bimodule $L = \overline{H_\al} \ot_{M_1^t} K$. Since $H_{\Delta_3}$ is tensor coarse, by Lemma \ref{lem.stability-of-coarse}, also the $M$-$(M \ovt M\op \ovt M)$-bimodule
$$H = \overline{L} \ot_N H_{\Delta_3} \ot_{N \ovt N\op \ovt N} (L \ot L\op \ot L)$$
is tensor coarse.

We claim that whenever $P \subset M$ is an irreducible subfactor such that $H$ admits a nonzero $M$-$(P \ovt M\op \ovt M)$-subbimodule $H_0$ that is finitely generated as a right $(P \ovt M\op \ovt M)$-module, then $P \subset M$ must be of finite index. To prove this claim, choose a finite index embedding $\gamma : M \to q (M_k(\C) \ot N)q$ such that
$$\bim{N}{L}{M} \cong \bim{N}{(\overline{\C^k} \ot L^2(N))q}{\gamma(M)} \; .$$
We can then identify $H$ with
$$H = (\id \ot \Delta_3)(q) \; (M_{k}(\C) \ot L^2(N) \ot M_{1,k}(\C) \ot L^2(N\op) \ot M_{1,k}(\C) \ot L^2(N)) \; (q \ot q\op \ot q)$$
such that the left action of $a \in M$ is given by left multiplication with $(\id \ot \Delta_3)\gamma(a)$, while the right action of $a_1 \ot a_2\op \ot a_3$ is given by right multiplication with $\gamma(a_1) \ot \gamma(a_2)\op \ot \gamma(a_3)$. Since $\gamma(M) \subset q (M_k(\C) \ot N)q$ has finite index, the closed linear span $H_1$ of
$$(\overline{\C^k} \ot \Delta_3(N)) \cdot H_0 \cdot (1 \ot 1 \ot (\C^k \ot N\op) \ot (\C^k \ot N))$$
then is a $\Delta_3(N)$-$(\gamma(P) \ovt N\op \ovt N)$-subbimodule of $(M_{1,k}(\C) \ot L^2(N \ovt N\op \ovt N))(q \ot 1 \ot 1)$ that is finitely generated as a right $(\gamma(P) \ovt N\op \ovt N)$-module.

Since $\Delta_3$ is the triple comultiplication, it follows from Proposition \ref{prop.properties-triple-comult.ii} that there exists a nonzero projection $q_0 \in \gamma(P)' \cap q (M_k(\C) \ot N)q$ such that $\gamma(P) q_0 \subset q_0 (M_k(\C) \ot N)q_0$ has finite index. The bounded linear map
$$V : L^2(M) \to q_0(M_k(\C) \ot L^2(N))q : V(x) = q_0 \gamma(x)$$
satisfies $V(axb) = \gamma(a) V(x) \gamma(b)$ for all $a \in P$, $b \in M$ and $x \in L^2(M)$. So the kernel $\Ker V$ is a Hilbert $P$-$M$-subbimodule of $L^2(M)$. Since $P \subset M$ is irreducible, either $\Ker V = \{0\}$ or $\Ker V = L^2(M)$. Since $V \neq 0$, it follows that $V$ is injective. Since $V(ax) = \gamma(a) V(x)$ and $\gamma(P) q_0 \subset q_0 (M_k(\C) \ot N)q_0$ has finite index, we conclude that $L^2(M)$ is finitely generated as a left $P$-module. This means that $P \subset M$ has finite index and the claim is proven.

Having proven the claim, it follows from Theorem \ref{thm.coarse-embedding-cocycle-twisted-wreath-product} that $H$ is unitarily conjugate to a finite direct sum of bimodules of the form
\begin{equation}\label{eq.H-pi-delta-ref-here}
\begin{split}
& H_{\pi,\delta} = \bim{\psi_{\pi,\delta}(M)}{(\C^d \ot L^2(M \ovt M\op \ovt M))}{M \ovt M\op \ovt M} \;, \quad\text{where}\\
& \psi_{\pi,\delta} : M \to M_d(\C) \ovt M \ovt M\op \ovt M : u_g \mapsto \pi(g) \ot u_{\delta_1(g)} \ot \overline{u_{\delta_2(g)}} \ot u_{\delta_3(g)}
\end{split}
\end{equation}
and $\delta$ is a triple of automorphisms of $G$ and $\pi : G \to \cU(d)$ a projective representation with $2$-cocycle $\om_\pi = \mu \, (\overline{\mu} \circ \delta_1) \, (\mu \circ \delta_2) \, (\overline{\mu} \circ \delta_3)$.

We are now ready to prove that $h_{G_1}(\al(\Lambda))>0$. Assume the contrary. As above, we denote by $(v_h)_{h \in \Lambda}$ the canonical unitaries in $L_\om(\Lambda)$. We can then take a sequence $(g_n)$ in $\Lambda$ such that $h_{G_1}(\al(v_{g_n})) \to 0$.

Let $H_{\pi,\delta}$ be the bimodule given by \eqref{eq.H-pi-delta-ref-here} and let $a_n,a_{1,n},a_{2,n}$ and $a_{3,n}$ be bounded sequences in $M$ such that $h_G(a_{3,n}) \to 0$. We claim that
\begin{equation}\label{eq.claim-to-control-height}
\langle a_n \cdot \xi \cdot (a_{1,n} \ot a_{2,n}\op \ot a_{3,n}) , \eta \rangle \to 0 \quad\text{for all $\xi,\eta \in H_{\pi,\delta}$.}
\end{equation}
By totality, it suffices to prove this claim for $\xi = \xi_0 \ot u_{g_1} \ot u_{g_2}\op \ot u_{g_3}$ and $\eta = \eta_0 \ot u_{h_1} \ot u_{h_2}\op \ot u_{h_3}$ with $\|\xi_0\| \leq 1$ and $\|\eta_0\|\leq 1$. We may also assume that all the elements $a_n,a_{1,n},a_{2,n}$ and $a_{3,n}$ belong to the unit ball of $M$. Decomposing an arbitrary element $a \in M$ as $a = \sum_{g \in G} (a)_g u_g$, one computes that for these special $\xi$ and $\eta$ and all $k \in G$,
\begin{align*}
|\langle u_k \cdot \xi & \cdot (a_{1,n} \ot a_{2,n}\op \ot a_{3,n}) , \eta \rangle|
\\ &= |\langle \pi(k) \xi_0,\eta_0\rangle| \; |(a_{1,n})_{g_1^{-1} \delta_1(k^{-1}) h_1}| \; |(a_{2,n})_{h_2 \delta_2(k^{-1}) g_2^{-1}}| \; |(a_{3,n})_{g_3^{-1} \delta_3(k^{-1}) h_3}|
\\ &\leq |(a_{1,n})_{g_1^{-1} \delta_1(k^{-1}) h_1}| \; h_G(a_{3,n}) \; .
\end{align*}
Since
$$|\langle a_n \cdot \xi  \cdot (a_{1,n} \ot a_{2,n}\op \ot a_{3,n}) , \eta \rangle| \leq \sum_{k \in G} |(a_n)_k| \; |\langle u_k \cdot \xi  \cdot (a_{1,n} \ot a_{2,n}\op \ot a_{3,n}) , \eta \rangle| \; ,$$
it follows that
\begin{align*}
|\langle a_n \cdot \xi  \cdot (a_{1,n} \ot a_{2,n}\op \ot a_{3,n}) , \eta \rangle|^2 &\leq \Bigl(\sum_{k \in G} |(a_n)_k|^2 \Bigr) \; \Bigl(\sum_{k \in G} |(a_{1,n})_{g_1^{-1} \delta_1(k^{-1}) h_1}|^2 \; h_G(a_{3,n})^2\Bigr) \\
& \leq h_G(a_{3,n})^2 \to 0 \; ,
\end{align*}
so that the claim is proven.

Since $H$ is unitarily conjugate to a direct sum of bimodules of the form $H_{\pi,\delta}$, it follows that
$$\langle a_n \cdot \xi \cdot (a_{1,n} \ot a_{2,n}\op \ot a_{3,n}) , \eta \rangle \to 0 \quad\text{for all $\xi,\eta \in H$,}$$
whenever $a_n,a_{1,n},a_{2,n}$ and $a_{3,n}$ are bounded sequences in $M$ such that $h_G(a_{3,n}) \to 0$.

By Lemma \ref{lem.good-bimodule}, $h_G(\Phi(\al(v_{g_n}))) \to 0$. We thus find that
\begin{equation}\label{eq.my-contra}
\langle v_{g_n}^* \cdot \xi_1 \cdot (v_{g_n} \ot \overline{v_{g_n}} \ot v_{g_n}),\xi_2 \rangle \to 0 \quad\text{for all $\xi_1,\xi_2 \in L \ot_M H \ot_{M \ovt M\op \ovt M} (\overline{L} \ot {\overline{L}}\op \ot \overline{L})$.}
\end{equation}
By construction, this last bimodule contains $H_{\Delta_3}$, so that the same holds for $\xi_1,\xi_2 \in H_{\Delta_3}$. But taking $\xi_1=\xi_2 = 1 \ot 1 \ot 1 \in H_{\Delta_3}$, the left hand side of \eqref{eq.my-contra} equals $1$ for all $n$. So we have reached a contradiction and the theorem is proven.
\end{proof}

\begin{corollary}\label{cor.nonzero-or-faithful-bimodules}
Let $\Gamma$ be a group in $\cC$ and write $G = (\Z/2\Z)^{(\Gamma)} \rtimes (\Gamma \times \Gamma)$. Let $\Lambda$ be any countable group and let $\mu \in Z^2(G,\T)$ and $\om \in Z^2(\Lambda,\T)$ be arbitrary scalar $2$-cocycles. Denote by $\cZ \subset Z^2(G,\T)$ the set of $2$-cocycles $\cZ = \{\mu \circ \delta \mid \delta \in \Aut G\}$.
\begin{enumlist}
\item\label{cor.nonzero-or-faithful-bimodules.i} There exists a \emph{nonzero} bifinite $L_\mu(G)$-$L_\om(\Lambda)$-bimodule if and only if there exist a finite index subgroup $\Lambda_0 < \Lambda$ and a group homomorphism $\delta : \Lambda_0 \to G$ with $\Ker \delta$ finite, $\delta(\Lambda_0) < G$ finite index and $\om \, (\overline{\mu} \circ \delta)$ of finite type on $\Lambda_0$.

\item\label{cor.nonzero-or-faithful-bimodules.ii} There exists a \emph{faithful} bifinite $L_\mu(G)$-$L_\om(\Lambda)$-bimodule if and only if there exist a finite index subgroup $\Lambda_0 < \Lambda$ and a group homomorphism $\delta : \Lambda_0 \to G$ with $\Ker \delta$ finite, $\delta(\Lambda_0) < G$ finite index and such that every irreducible $\om$-representation of $\Ker \delta$ appears in a finite dimensional projective representation of $\Lambda_0$ with $2$-cocycle $\om \, (\overline{\mu'} \circ \delta)$ for some $\mu' \in \cZ$.
\end{enumlist}
\end{corollary}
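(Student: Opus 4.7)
The plan is to prove both parts by combining Theorem \ref{thm.virtual-iso-superrigidity-precise} (the hard direction via coarse embeddings), the cohomological criterion of Lemma \ref{lem.cohom}, and a bimodule construction modeled on Lemma \ref{lem.good-bimodule}. The organizing observation is that a bifinite $L_\mu(G)$-$L_\om(\Lambda)$-bimodule $\cH$ splits via the finite-dimensional algebra $\cZ(L_\om(\Lambda\fc))$: nonzero-ness of $\cH$ corresponds to $\cH z \neq 0$ at some minimal projection $z \in \cZ(L_\om(\Lambda\fc))$ (part (i)), while faithfulness of the right $L_\om(\Lambda)$-action forces $\cH z \neq 0$ at every such $z$ (part (ii)), since faithfulness of the $*$-homomorphism $L_\om(\Lambda) \to M_N(\C) \ot L_\mu(G)$ corresponding to $\cH$ is determined by its behavior on the virtual center $L_\om(\Lambda\fc)$.

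For part (i) forward, I apply Theorem \ref{thm.virtual-iso-superrigidity-precise} at a minimal $z$ with $\cH z \neq 0$ to obtain $(G_1,\mu_1,\delta:\Lambda_z/\Lambda\fc \to G_1)$. Setting $\Lambda_1 := \delta^{-1}(G \cap G_1)$, $\Lambda_0 := q^{-1}(\Lambda_1)$ (finite index in $\Lambda$), and $\delta' := \delta \circ q|_{\Lambda_0} : \Lambda_0 \to G \cap G_1 \subset G$, I obtain a homomorphism with kernel $\Lambda\fc$ and image $G \cap G_1$ of finite index in $G$. Multiplying the relation $\mu_1 \circ \delta \sim \om_z$ on $\Lambda_1$ by $\overline{\mu \circ \delta|_{\Lambda_1}}$ shows that $\om_z|_{\Lambda_1} \cdot \overline{\mu \circ \delta|_{\Lambda_1}}$ is cohomologous to the finite-type cocycle $(\mu_1 \cdot \overline{\mu})|_{G \cap G_1} \circ \delta|_{\Lambda_1}$, hence is itself of finite type. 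Lemma \ref{lem.cohom} then produces a finite-dimensional projective representation of $\Lambda_0$ with $2$-cocycle $\om \cdot (\overline{\mu} \circ \delta')$, establishing finite-typeness. For part (i) reverse, I take $\pi : \Lambda_0 \to \cU(n)$ with $\om_\pi = \om \cdot (\overline{\mu} \circ \delta')$ and build $\Phi : L_\om(\Lambda_0) \to M_n(\C) \ot L_\mu(G)$ via $\Phi(v_g) = \pi(g) \ot u_{\delta'(g)}$, where the cocycle identity $\om_\pi \cdot (\mu \circ \delta') = \om$ makes $\Phi$ well-defined. Composing with the induction embedding $L_\om(\Lambda) \hookrightarrow M_k(\C) \ot L_\om(\Lambda_0)$ of \eqref{eq.induction-cocycle} yields a $*$-homomorphism whose restriction to a nonzero corner determined by the finite-dimensional algebra $L_\om(\Ker \delta')$ realizes a finite-index inclusion into an amplification of the factor $L_\mu(G)$, giving the required nonzero bifinite bimodule.

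For part (ii) the strategy is to apply part (i) at every minimal $z \in \cZ(L_\om(\Lambda\fc))$ and coordinate the resulting data using Proposition \ref{prop.commensurators-wreath}, which gives $\Aut G = \cG$. For the forward direction each $z$ yields data $(G_{1,z},\mu_{1,z},\delta_z)$; the commensurability of every $G_{1,z}$ with $G$ inside $\Aut G$ combined with the explicit structure of $\cG$ allows $\delta_z$ to be rewritten in the form $\delta_z = \delta''_z \circ \delta$ for a common $\delta : \Lambda_0 \to G$ (obtained by intersecting finite-index subgroups over the finitely many $z$'s) and $z$-dependent $\delta''_z \in \Aut G$; correspondingly, up to a finite-type correction that is absorbed, $\mu_{1,z}$ becomes $\mu'_z := \mu \circ \delta''_z \in \cZ$. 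Lemma \ref{lem.cohom} at each $z$ then translates finite-typeness into the statement that the irreducible $\om$-representation of $\Ker \delta$ corresponding to $z$ appears in a finite-dimensional projective representation with cocycle $\om \cdot (\overline{\mu'_z} \circ \delta)$. For the reverse direction, for each irreducible $\om$-representation $\rho$ of $\Ker \delta$ the hypothesis supplies (via Lemma \ref{lem.cohom}) a finite-type cocycle allowing the reverse construction of part (i) applied with $\mu'_\rho$ in place of $\mu$ to produce a bimodule nonvanishing at the corresponding $z$; the canonical isomorphism $L_{\mu'_\rho}(G) \cong L_\mu(G)$ induced by $\delta''_\rho \in \Aut G$ with $\mu'_\rho = \mu \circ \delta''_\rho$ converts this into an $L_\om(\Lambda)$-$L_\mu(G)$-bimodule, and the direct sum over all $\rho$ yields a faithful bifinite bimodule.

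The main technical obstacle lies in the forward direction of part (ii): the $z$-dependent data $(\delta_z,\mu_{1,z})$ produced by Theorem \ref{thm.virtual-iso-superrigidity-precise} at different $z$'s must be packaged as a single homomorphism $\delta : \Lambda_0 \to G$ paired with a family of cocycles $\mu'_z \in \cZ$. This rests on Proposition \ref{prop.commensurators-wreath}'s identification $\Aut G = \cG$ and its extension statement, which together let every partial isomorphism between finite-index subgroups of $G$ extend to an automorphism; the orbit $\cZ = \{\mu \circ \delta'' : \delta'' \in \Aut G\}$ is exactly the bookkeeping device that absorbs the choices made during coordination.
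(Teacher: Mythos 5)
Your proposal follows the same overall strategy as the paper: apply Theorem \ref{thm.virtual-iso-superrigidity-precise} at the minimal projections $z\in\cZ(L_\om(\Lambda\fc))$, convert the conclusion into cocycle data on $\Lambda_z/\Lambda\fc$ via Lemma \ref{lem.cohom}, and coordinate the $z$-dependent homomorphisms through Proposition \ref{prop.commensurators-wreath} ($\Aut G = \cG$) so that they all factor as $\al_z\circ\delta_1$ for a common $\delta_1$ and $z$-dependent $\al_z\in\Aut G$. The observation that the orbit $\cZ$ is the right bookkeeping device for absorbing these twists, and the characterization of faithfulness as $\cH z\neq 0$ at every $z$ (via $\cZ(L_\om(\Lambda))\subset\cZ(L_\om(\Lambda\fc))$), both match the paper exactly.

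The one stylistic difference is in the constructive directions. For (i) reverse you build a $*$-homomorphism $\Phi:L_\om(\Lambda_0)\to M_n(\C)\ot L_\mu(G)$, $\Phi(v_g)=\pi(g)\ot u_{\delta'(g)}$, compose with the induction embedding \eqref{eq.induction-cocycle}, and cut by a corner. This is correct but slightly delicate: $\Phi$ is not injective (its restriction to $L_\om(\Ker\delta')$ is the finite-dimensional representation $\pi$), so to get a bifinite $L_\mu(G)$-$L_\om(\Lambda)$-bimodule you must verify that the central support projection of $\Phi$ is nonzero and that the resulting embedding has finite index, which requires tracking the finite-dimensional corrections. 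The paper sidesteps this by instead invoking Proposition \ref{prop.decompose-virtual-center} to pass to the icc quotient $L_{\om_z}(\Lambda_z/\Lambda\fc)$ and deducing a virtual isomorphism between II$_1$ factors, then transporting through the isomorphism $L_\om(\Lambda)q\cong M_{dk}(\C)\ot L_{\om_z}(\Lambda_z/\Lambda\fc)$. This also makes (ii) reverse cleaner, since you need to know exactly at which $z$ each constructed bimodule is supported; the paper reads that off from the factor decomposition, whereas in your approach it is implicit in the choice of $z$ with $\theta(z)\neq 0$ in Lemma \ref{lem.cohom}. None of this is a gap, but a careful write-up of your construction would need to address the support and finite-index issues explicitly.
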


In Section \ref{sec.examples}, we will see that the distinction between the existence of a nonzero bifinite bimodule and the existence of a faithful bifinite bimodule is essential. We will actually construct for every $\Gamma \in \cC$ and $G = (\Z/2\Z)^{(\Gamma)} \rtimes (\Gamma \times \Gamma)$ a group $\Gtil$ such that $L(G)$ is isomorphic with a corner of $L(\Gtil)$ and such that there does not exist a faithful bifinite $L(G)$-$L(\Gtil)$-bimodule.

\begin{proof}
Throughout the proof, we say that two tracial von Neumann algebras are \emph{virtually isomorphic} if they admit a \emph{faithful} bifinite bimodule.

(i)\ If there exists a nonzero bifinite $L_\mu(G)$-$L_\om(\Lambda)$-bimodule, we apply Theorem \ref{thm.virtual-iso-superrigidity-precise}. We get that $\Lambda\fc$ is finite. Denote $G_1 < \Aut G$ and $\mu_1 \in Z^2(G_1,\T)$ as in the conclusion of Theorem \ref{thm.virtual-iso-superrigidity-precise}, and denote by $\delta_1 : \Lambda_z/\Lambda\fc \to G_1$ the isomorphism such that $\mu_1 \circ \delta_1 \sim \om_z$. We write $\Lambda_1 = \delta_1^{-1}(G \cap G_1)$. Denote by $q : \Lambda \to \Lambda/\Lambda\fc$ the quotient map, define $\Lambda_0 = q^{-1}(\Lambda_1)$ and put $\delta = \delta_1 \circ q$.

Since $\mu_1 \, \overline{\mu}$ is of finite type on $G \cap G_1$, also $(\mu_1 \circ \delta_1) \, (\overline{\mu} \circ \delta_1)$ is of finite type on $\Lambda_1$. Since $\mu_1 \circ \delta_1 \sim \om_z$, it follows that $\om_z \, (\overline{\mu} \circ \delta_1)$ is of finite type on $\Lambda_1$. By Lemma \ref{lem.cohom}, the $2$-cocycle $\om \, (\overline{\mu} \circ \delta)$ on $\Lambda_0$ is of finite type.

Conversely, assume that we have a finite index subgroup $\Lambda_0 < \Lambda$ and a group homomorphism $\delta : \Lambda_0 \to G$ such that $\Ker \delta$ is finite, $\delta(\Lambda_0) < G$ has finite index and $\om \, (\overline{\mu} \circ \delta)$ is of finite type on $\Lambda_0$. Since $G$ is icc and $\Lambda_0 < \Lambda$ has finite index, it follows that the virtual center $\Lambda\fc$ is finite and that $\Ker \delta = \Lambda_0 \cap \Lambda\fc$. Defining $\delta(g) = e$ for all $g \in \Lambda\fc$, we can uniquely extend $\delta$ to a group homomorphism $\Lambda_0 \Lambda\fc \to G$ with kernel equal to $\Lambda\fc$. Since the $2$-cocycle $\om \, (\overline{\mu} \circ \delta)$ is defined on $\Lambda_0 \Lambda\fc$, it remains of finite type on $\Lambda_0\Lambda\fc$ by Lemma \ref{lem.induce-finite-type}. We may thus assume from the start that $\Lambda\fc < \Lambda_0$, so that $\Lambda_0 = q^{-1}(\Lambda_1)$, where $\Lambda_1 < \Lambda/\Lambda\fc$ has finite index. Define the injective group homomorphism $\delta_1 : \Lambda_1 \to G$ such that $\delta = \delta_1 \circ q$.

Since $\om \, (\overline{\mu} \circ \delta)$ is of finite type on $\Lambda_0$, we can choose a finite dimensional projective representation $\theta$ of $\Lambda_0$ with $2$-cocycle $\om_\theta = \om \, (\overline{\mu} \circ \delta)$. Then the restriction of $\theta$ to $\Lambda\fc$ is a nonzero $\om$-representation. Choose a minimal central projection $z \in \cZ(L_\om(\Lambda\fc))$ such that $\theta(z) \neq 0$.

Using the notation of Proposition \ref{prop.decompose-virtual-center}, we replace $\Lambda_1$ by $(\Lambda_1 \cap \Lambda_z)/\Lambda\fc$ and we may assume that $\Lambda_1 \subset \Lambda_z / \Lambda\fc$. By Lemma \ref{lem.cohom}, the $2$-cocycle $\om_z \, (\overline{\mu} \circ \delta_1)$ on $\Lambda_1$ is of finite type.

So $L_{\om_z}(\Lambda_1)$ is virtually isomorphic with $L_{\mu \circ \delta_1}(\Lambda_1) \cong L_\mu(G_0)$, where $G_0 = \delta_1(\Lambda_1)$. Since $\Lambda_1 < \Lambda_z/\Lambda\fc$ and $G_0 < G$ have finite index, we also get that $L_{\om_z}(\Lambda_1)$ is virtually isomorphic with $L_{\om_z}(\Lambda_z/\Lambda\fc)$ and that $L_\mu(G_0)$ is virtually isomorphic with $L_\mu(G)$. So it follows that $L_{\om_z}(\Lambda_z/\Lambda\fc)$ and $L_\mu(G)$ are virtually isomorphic. By Proposition \ref{prop.decompose-virtual-center}, $L_{\om_z}(\Lambda_z/\Lambda\fc)$ is isomorphic with a corner of $L_\om(\Lambda)$, so that there exists a nonzero bifinite $L_\mu(G)$-$L_\om(\Lambda)$-bimodule.

(ii)\ First assume that there exists a \emph{faithful} bifinite $L_\mu(G)$-$L_\om(\Lambda)$-bimodule. We can apply Theorem \ref{thm.virtual-iso-superrigidity-precise} to every minimal projection $z \in \cZ(L_\om(\Lambda\fc))$. We find subgroups $G_z < \Aut G$ such that $G \cap G_z$ has finite index in both $G$ and $G_z$, $2$-cocycles $\mu_z \in Z^2(G_z,\T)$ and group isomorphisms $\delta_z : \Lambda_z/\Lambda\fc \to G_z$ such that the $2$-cocycle $\mu \, \overline{\mu_z}$ on $G \cap G_z$ is of finite type and $\mu_z \circ \delta_z \sim \om_z$.

Denote by $\{z_1,\ldots,z_n\}$ the minimal projections in $\cZ(L_\om(\Lambda\fc))$. Define
$$\Lambda_1 = \bigcap_{i=1}^n \delta_{z_i}^{-1}(G \cap G_{z_i})$$
and note that $\Lambda_1$ is a finite index subgroup of $\Lambda/\Lambda\fc$. Write $\delta_1 = \delta_{z_1}$ and $G_0 = \delta_1(\Lambda_1)$. Then $G_0 < G$ has finite index and $\delta_1 : \Lambda_1 \to G_0$ is an isomorphism.

For every $i \in \{1,\ldots,n\}$, $\delta_{z_i} \circ \delta_1^{-1}$ is an isomorphism between the finite index subgroups $G_0$ and $\delta_{z_i}(\Lambda_1)$ of $G$. By Proposition \ref{prop.commensurators-wreath}, there is a unique automorphism $\al_i \in \Aut G$ such that $\delta_{z_i}(g) = \al_i(\delta(g))$ for all $g \in \Lambda_1$.

As in the proof of (i), we get that the $2$-cocycle $\om_{z_i} \, (\overline{\mu} \circ \al_i \circ \delta_1)$ on $\Lambda_1$ is of finite type. Define $\mu_i \in \cZ$ by $\mu_i = \mu \circ \al_i$. Write $\Lambda_0 = q^{-1}(\Lambda_1)$ and $\delta = \delta_1 \circ q$. By Lemma \ref{lem.cohom}, we find finite dimensional projective representations $\theta_i$ of $\Lambda_0$ with $2$-cocycle $\om_{\theta_i} = \om \, (\overline{\mu_i} \circ \delta)$ on $\Lambda_0$ and such that $\theta_i(z_i) \neq 0$. So we have proven that every irreducible $\om$-representation of $\Lambda\fc$ appears in an appropriate finite dimensional projective representation of $\Lambda_0$.

To prove that converse, it suffices to note that the argument in (i) gives that $L_\om(\Lambda)$ is isomorphic to a direct sum of II$_1$ factors that are virtually isomorphic to $L_{\mu'}(G)$ with $\mu' \in \cZ$. Since $L_{\mu'}(G) \cong L_\mu(G)$ for every $\mu' \in \cZ$, it follows that $L_\om(\Lambda)$ is virtually isomorphic to $L_\mu(G)$.
\end{proof}

The nuance between the existence of nonzero bifinite bimodules and faithful bifinite bimodules is even more clear in the following corollary. In Section \ref{sec.examples}, we provide concrete examples to further illustrate these phenomena. We say that a $2$-cocycle $\mu$ is of \emph{finite order} if there exists an integer $n \geq 1$ such that $\mu^n \sim 1$.

\begin{corollary}\label{cor.untwisting-2-cocycles}
Let $\Gamma$ be a group in $\cC$ and write $G = (\Z/2\Z)^{(\Gamma)} \rtimes (\Gamma \times \Gamma)$. Let $\mu \in Z^2(G,\T)$. Then the following are equivalent.
\begin{enumlist}
\item There exists a countable group $\Lambda$ and a nonzero bifinite $L_\mu(G)$-$L(\Lambda)$-bimodule.
\item There exists a finite index subgroup $G_0 < G$ such that $\mu|_{G_0}$ is the product of a $2$-cocycle of finite type and a $2$-cocycle of finite order.
\end{enumlist}
Also the following are equivalent.
\begin{enumlist}
\item There exists a countable group $\Lambda$ and a faithful bifinite $L_\mu(G)$-$L(\Lambda)$-bimodule.
\item The $2$-cocycle $\mu$ is of finite type.
\item There exists a bifinite $L_\mu(G)$-$L(G)$-bimodule.
\end{enumlist}
\end{corollary}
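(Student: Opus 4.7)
The plan is to reduce both equivalences to Corollary~\ref{cor.nonzero-or-faithful-bimodules}, applied with the trivial cocycle $\om = 1$ on $\Lambda$, using throughout that the finite type cocycles (and separately, the finite order cocycles) form subgroups of $Z^2(-,\T)$ that are closed under pullback along group homomorphisms and under inverses---the former by composition of representations, the latter via the contragredient representation.

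The second set of equivalences is the easier half. Direction (iii)$\Rightarrow$(i) is trivial. For (ii)$\Rightarrow$(iii), I apply Corollary~\ref{cor.nonzero-or-faithful-bimodules.ii} with $\Lambda = G$, $\delta = \id_G$ and $\mu' = \mu$: the only irreducible representation of $\Ker\delta = \{e\}$ is trivial and appears in any finite dimensional projective representation of $G$ with cocycle $\overline{\mu}$, which exists because $\mu$ is of finite type. For (i)$\Rightarrow$(ii), Corollary~\ref{cor.nonzero-or-faithful-bimodules.ii} yields a finite index $\Lambda_0 < \Lambda$ and $\delta : \Lambda_0 \to G$ with finite kernel $F$ and finite index image $G_0$, together with a finite dimensional projective representation $\pi$ of $\Lambda_0$ with cocycle $\overline{\mu'} \circ \delta$ (where $\mu' = \mu \circ \al$ for some $\al \in \Aut G$) containing the trivial $F$-representation. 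Since $\overline{\mu'} \circ \delta$ vanishes on $F \times \Lambda_0 \cup \Lambda_0 \times F$, the projection $e_0 := |F|^{-1} \sum_{f \in F} \pi(f)$ is nonzero and commutes with $\pi(\Lambda_0)$, so reducing by $e_0$ gives a finite dimensional projective representation of $G_0$ with cocycle $\overline{\mu'}|_{G_0}$. Hence $\mu|_{\al(G_0)}$ is of finite type, and Lemma~\ref{lem.induce-finite-type} promotes this to $\mu$ itself being of finite type.

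For the first set of equivalences, (ii)$\Rightarrow$(i) is a direct construction. Writing $\mu|_{G_0} = \mu_1 \mu_2$ with $\mu_1$ of finite type and $\mu_2^n \sim 1$, choose a normalized $\vphi : G_0 \to \T$ with $\mu_2^n = \partial \vphi$ and form the countable group
\[
\Lambda_0 = \{(g,\lambda) \in G_0 \times \T \mid \lambda^n = \vphi(g)\}, \qquad (g,\lambda)(h,\sigma) = (gh, \lambda \sigma \, \mu_2(g,h)^{-1}),
\]
whose projection $\delta : \Lambda_0 \to G_0 \hookrightarrow G$ has finite kernel $\Z/n\Z$ and finite index image. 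The function $\eta(g,\lambda) = \lambda$ satisfies $\partial \eta = \mu_2 \circ \delta$, so $\mu \circ \delta$ is cohomologous to $\mu_1 \circ \delta$ and therefore of finite type, and Corollary~\ref{cor.nonzero-or-faithful-bimodules.i} with $\Lambda = \Lambda_0$, $\om = 1$ delivers the required bimodule.

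The main obstacle is (i)$\Rightarrow$(ii) of the first equivalence, which I approach through a Clifford-theoretic decomposition. Corollary~\ref{cor.nonzero-or-faithful-bimodules.i} again provides $\Lambda_0, \delta, F, G_0$ and a finite dimensional projective representation $\pi$ of $\Lambda_0$ with $\om_\pi = \overline{\mu} \circ \delta$, which restricts to a genuine representation of the finite group $F$. The plan is to pick an irreducible $F$-subrepresentation $\rho \subset \pi|_F$, pass to the finite index stabilizer $\Lambda_1 \supset F$ of $[\rho]$ under the $\Ad$-action with $G_1 := \delta(\Lambda_1)$ of finite index in $G$, and decompose $\pi|_{\Lambda_1}$ on the $\rho$-isotypic component $V_\rho \cong \rho \otimes W$ as $\pi(g) = T_g \otimes S_g$, where $T_g$ is a Schur-unique-up-to-scalar intertwiner $\rho \to \rho \circ \Ad g$ and $S_g \in \End W$. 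With the normalization $T_f = \rho(f)$ for $f \in F$, the cocycle relation for $\om_\pi$ (which vanishes on $F \times \Lambda_0 \cup \Lambda_0 \times F$) forces the cocycle $\eta$ of $T$ to satisfy $\eta|_{F \times \Lambda_1 \cup \Lambda_1 \times F} = 1$ and hence descend to $\bar{\eta} \in Z^2(G_1,\T)$, while $S|_F = \id_W$ lets $S$ descend to a finite dimensional projective representation of $G_1$ with cocycle $\bar{\om}_S$ of finite type. The identity $\om_\pi = \eta \, \om_S$ then descends to $\overline{\mu}|_{G_1} = \bar{\eta} \cdot \bar{\om}_S$, so the remaining key step is to show that $\bar{\eta}$ has finite order. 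For this, taking determinants in $V_\rho$ gives $\eta^{\dim \rho} = \partial((\det T)^{-1})$ on $\Lambda_1$, and combining with $(\det \rho(f))^{|F|} = 1$ for $f \in F$ shows that $(\det T)^{|F|}$ descends to $G_1$, so that $\bar{\eta}^{|F|\dim \rho}$ is a coboundary on $G_1$. This exhibits $\mu|_{G_1}$ as the product of a finite order and a finite type cocycle, giving (ii).
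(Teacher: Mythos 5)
Your proof is correct. The second equivalence and the direction (ii)$\Rightarrow$(i) of the first follow essentially the paper's route: the faithful case is handled through Corollary~\ref{cor.nonzero-or-faithful-bimodules.ii} with $\om=1$ and the trivial representation of $\Ker\delta$, followed by Lemma~\ref{lem.induce-finite-type}, and (ii)$\Rightarrow$(i) builds a cyclic central extension exactly as the paper does (note a small sign slip: with your normalization $\lambda^n=\vphi(g)$ and $\mu_2^n=\partial\vphi$, the multiplication must be $(g,\lambda)(h,\sigma)=(gh,\lambda\sigma\,\mu_2(g,h))$, not with $\mu_2(g,h)^{-1}$, for closure to hold and for $\partial\eta=\mu_2\circ\delta$).

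Where you genuinely diverge from the paper is in (i)$\Rightarrow$(ii) of the first equivalence. The paper first passes to a further finite index subgroup $\Gtil_0<\Gtil_1=\Lambda_0$ on which the conjugation action on $\Sigma_1=\Ker\delta$ is trivial, so that the extension becomes \emph{central}, and then cuts $\theta$ down to an irreducible component of $\theta|_{\Sigma_0}$, which is a scalar character $\eta$; the error term $\mu_0=\mu|_{G_0}\overline{\om_\gamma}$ then visibly takes values in the finite group $\eta(\Sigma_0)$. You instead keep the possibly non-central, non-abelian $F=\Ker\delta$ and run a Clifford-theoretic decomposition $\pi|_{V_\rho}=T\otimes S$ on the isotypic component of an irreducible $\rho\subset\pi|_F$ inside the stabilizer $\Lambda_1$, then extract the finite order of $\bar\eta=\overline{\om_T}$ from a determinant computation together with $(\det\rho)^{|F|}=1$. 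Both routes are valid; the paper's central-extension reduction is a bit shorter and avoids the determinant trick. One point you should make explicit: the assertion that $\om_T$ and $\om_S$ vanish on $F\times\Lambda_1\cup\Lambda_1\times F$ is not automatic from the single condition $T_f=\rho(f)$. You need to pin down $T$ on a set of coset representatives and then define $T_{g_if}:=T_{g_i}\rho(f)$, $S_{g_if}:=S_{g_i}$; with that coherent choice, the vanishing of $\om_\pi$ on $F\times\Lambda_0\cup\Lambda_0\times F$ together with the intertwining relation $T_g\rho(f)T_g^{-1}=\rho(gfg^{-1})$ indeed yields $\om_T=\om_S=1$ on $F\times\Lambda_1\cup\Lambda_1\times F$ and the descent of $(\det T)^{|F|}$. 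With that sentence added, the argument is complete.
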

\begin{proof}
First assume that $\Lambda$ is a countable group and that there exists a nonzero bifinite $L_\mu(G)$-$L(\Lambda)$-bimodule. By Corollary \ref{cor.nonzero-or-faithful-bimodules.i}, we find a finite index subgroup $G_1 < G$ and a finite extension $e \to \Sigma_1 \to \Gtil_1 \overset{q_1}{\to} G_1 \to e$ such that $\mu \circ q_1$ is of finite type on $\Gtil_1$.

Since $(\Ad g)_{g \in \Gtil_1}$ is an action on the finite group $\Sigma_1$, we find a finite index subgroup $\Gtil_0 < \Gtil_1$ such that every $g \in \Gtil_0$ commutes with $\Sigma_1$. Define $G_0 = q_1(\Gtil_0)$ and note that $G_0 < G_1$ has finite index. Write $\Sigma_0 = \Sigma_1 \cap \Gtil_0$. We have found the finite central extension $e \to \Sigma_0 \to \Gtil_0 \overset{q_0}{\to} G_0 \to e$.

Since $\mu \circ q_0$ is the restriction of $\mu \circ q_1$, we still have that $\mu \circ q_0$ is of finite type. Choose a finite dimensional projective representation $\theta : \Gtil_0 \to \cU(d)$ with $2$-cocycle $\om_{\theta} = \mu \circ q_0$. In particular $\om_{\theta}$ is equal to $1$ on $\Gtil_0 \times \Sigma_0$ and on $\Sigma_0 \times \Gtil_0$. So, $\theta|_{\Sigma_0}$ is an ordinary representation of the finite central subgroup $\Sigma_0$. Taking one of its irreducible components and reducing $\theta$, we may assume that $\theta(a) = \eta(a) 1$ for all $a \in \Sigma_0$ where $\eta : \Sigma_0 \to \T$ is a character on the finite abelian group $\Sigma_0$.

Choose a lift $\phi : G_0 \to \Gtil_0$. Then $\gamma = \theta \circ \phi$ is a finite dimensional projective representation of $G_0$. Since $\phi(g)\phi(h) \in \phi(gh) \Sigma_0$, we find that
$$\gamma(g) \gamma(h) \in \mu(g,h) \eta(\Sigma_0) \gamma(gh) \quad\text{for all $g,h \in G_0$.}$$
Denote by $\om_\gamma$ the $2$-cocycle of $\gamma$ and define $\mu_0 = \mu|_{G_0} \, \overline{\om_\gamma}$. Then $\mu_0(g,h) \in \eta(\Sigma_0)$ for all $g,h \in G_0$. Taking an integer $n \geq 1$ such that $\eta^n = 1$, we have written $\mu|_{G_0}$ as the product of the finite type $2$-cocycle $\om_\gamma$ and the $2$-cocycle $\mu_0$ that is of finite order because $\mu_0^n = 1$.

Conversely assume that $G_0 < G$ is a finite index subgroup such that $\mu|_{G_0} = \om \, \mu_0$, where $\om$ is of finite type and $\mu_0^n = 1$. Denote by $\Lambda$ the degree $n$ central extension defined by $\mu_0$, with quotient map $q : \Lambda \to G_0$. By construction, $\mu \circ q$ is of finite type, so that there exists a nonzero bifinite $L_\mu(G)$-$L(\Lambda)$-bimodule.

For the second part of the corollary, we first prove (i) $\Rightarrow$ (ii). Assume that $\Lambda$ is a countable group and that there exists a faithful bifinite $L_\mu(G)$-$L(\Lambda)$-bimodule. Applying the conclusion of Corollary \ref{cor.nonzero-or-faithful-bimodules.ii} to the trivial representation of $\Lambda\fc$, it follows that $\mu$ is of finite type. If $\mu$ is of finite type, associated with the finite-dimensional projective representation $\pi : G \to \cU(n)$, the finite index embedding $L_\mu(G) \to M_n(\C) \ot L(G) : u_g \mapsto \pi(g) \ot u_g$ defines a faithful bifinite $L_\mu(G)$-$L(G)$-bimodule, so that (iii) holds. Finally, (iii) $\Rightarrow$ (i) is trivial.
\end{proof}

\begin{theorem}\label{thm.iso-superrigidity-precise}
Let $\Gamma$ be a group in $\cC$ and write $G = (\Z/2\Z)^{(\Gamma)} \rtimes (\Gamma \times \Gamma)$. Let $\Lambda$ be any countable group and let $\mu \in Z^2(G,\T)$ and $\om \in Z^2(\Lambda,\T)$ be arbitrary scalar $2$-cocycles. Let $p \in M_n(\C) \ot L_\om(\Lambda)$ be any projection.

Then $L_\mu(G) \cong p(M_n(\C) \ot L_\om(\Lambda))p$ if and only if the following holds.
\begin{itemlist}
\item The virtual center $\Lambda\fc$ is finite.
\item $p \leq 1 \ot q$ where $q$ is a minimal projection in $\cZ(L_\om(\Lambda))$.
\item $(\Tr \ot \tau)(p) = \tau(q)/(d k)$ where $L_\om(\Lambda\fc) q \cong M_d(\C) \ot \C^k$.
\item When $z \in \cZ(L_\om(\Lambda\fc)) q$ is a minimal projection, there exists an isomorphism $\delta : \Lambda_z/\Lambda\fc \to G$ such that $\mu \circ \delta \sim \om_z$.
\end{itemlist}
In particular, $L_\mu(G) \cong L_\om(\Lambda)$ if and only if there exists an isomorphism $\delta : \Lambda \to G$ such that $\mu \circ \delta \sim \om$.
\end{theorem}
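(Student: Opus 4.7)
Suppose (i)--(iv) hold. Proposition~\ref{prop.decompose-virtual-center} identifies $L_\om(\Lambda) q \cong M_{dk}(\C) \ot L_{\om_z}(\Lambda_z/\Lambda\fc)$, where the restriction of $\tau$ corresponds to $\tau(q) \cdot (\tr_{dk} \ot \tau)$. The isomorphism $\delta$ in~(iv) together with $\mu \circ \delta \sim \om_z$ converts this into $L_\om(\Lambda) q \cong M_{dk}(\C) \ot L_\mu(G)$. Since $p \leq 1 \ot q$ by~(ii), the corner $p(M_n(\C) \ot L_\om(\Lambda))p$ is an amplification $L_\mu(G)^t$ with $t = (\Tr \ot \tau)(p) \cdot dk / \tau(q)$, and this equals $1$ by~(iii).

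\textbf{Necessity, reduction to a virtual isomorphism.} Conversely, fix an isomorphism $\al : L_\mu(G) \to p(M_n(\C) \ot L_\om(\Lambda))p$. Factoriality of $L_\mu(G)$ (as $G$ is icc) forces the corner to be a factor, which gives~(ii) with $q \in \cZ(L_\om(\Lambda))$ the minimal central support of $p$. Viewing $\al$ as a faithful bifinite $L_\mu(G)$-$L_\om(\Lambda)$-bimodule, Theorem~\ref{thm.virtual-iso-superrigidity-precise} yields~(i) and, for every minimal $z \in \cZ(L_\om(\Lambda\fc))q$, a subgroup $G_1 < \Aut G$ with $G \cap G_1$ of finite index in both $G$ and $G_1$, a cocycle $\mu_1 \in Z^2(G_1,\T)$ such that $\mu|_{G \cap G_1}\,\overline{\mu_1}|_{G \cap G_1}$ is of finite type, and an isomorphism $\delta_1 : \Lambda_z/\Lambda\fc \to G_1$ satisfying $\mu_1 \circ \delta_1 \sim \om_z$. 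Reversing the trace computation of the sufficiency direction then transports $\al$ to an isomorphism $\al' : L_\mu(G) \to L_{\mu_1}(G_1)^t$ with $t = (\Tr \ot \tau)(p) \cdot dk / \tau(q)$.

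\textbf{Upgrading $G_1$ to $G$.} It remains to prove that $G_1 \cong G$ in a manner compatible with $\mu_1$, and that $t = 1$. Both conclusions will follow from Theorem~\ref{thm.height-1} applied to the subgroup $\al'(G) \subset \cU(L_{\mu_1}(G_1)^t)$ with ambient group $G_1$. Weak mixing of $(\Ad v)_{v \in \al'(G)}$ on $L^2(L_{\mu_1}(G_1)^t) \ominus \C 1$ is immediate from $G$ being icc. Because $G \cap G_1$ is a relatively icc finite index subgroup of $G_1$, the group $G_1$ is also icc, so every centralizer $C_{G_1}(k)$ with $k \neq e$ has infinite index in $G_1$ and $L_{\mu_1}(G_1)^t \not\prec L_{\mu_1}(C_{G_1}(k))$. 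Once positive height $h_{G_1}(\al'(G)) > 0$ is established, Theorem~\ref{thm.height-1} yields $t = 1$ and a unitary $w$ with $w \al'(G) w^* \subset \T \cdot G_1$, hence a group homomorphism $\delta_0 : G \to G_1$ with $\mu_1 \circ \delta_0 \sim \mu$; surjectivity of $\al'$ (once $t = 1$) forces $\delta_0$ to be an isomorphism, so $\delta := \delta_0^{-1} \circ \delta_1 : \Lambda_z/\Lambda\fc \to G$ satisfies $\mu \circ \delta \sim \om_z$, yielding~(iv), while $t = 1$ yields~(iii). The final statement of the theorem is the special case $n = 1$, $p = 1$, in which~(iii) forces $dk = 1$, hence $\Lambda\fc = \{e\}$ and $\Lambda_z/\Lambda\fc = \Lambda$.

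\textbf{Main obstacle.} The crux is the positive-height step $h_{G_1}(\al'(G)) > 0$. Theorem~\ref{thm.virtual-iso-superrigidity-precise} establishes the analogous height positivity for $\al(\Lambda) \subset \cU(L_{\mu_1}(G_1)^t)$ in order to recognize $\Lambda_z/\Lambda\fc$ as a group in $\Aut G$, but the same reasoning must now be replayed on the $G$ side to rule out that $G_1$ is a strict virtual enlargement of $G$ inside $\Aut G$. Concretely, Lemma~\ref{lem.good-bimodule} supplies a finite index embedding $\vphi : L_{\mu_1}(G_1) \to M_m(\C) \ot L_\mu(G)$ that converts vanishing-height sequences to vanishing-height sequences; the triple comultiplication of $L_{\mu_1}(G_1)$ from Proposition~\ref{prop.triple-comult}, precomposed with $\al'$ and tensored, through Lemma~\ref{lem.stability-of-coarse}, with the bimodule induced by $\vphi$ on each of the three legs, produces a tensor coarse $L_\mu(G)$-bimodule over $L_\mu(G) \ovt L_\mu(G)\op \ovt L_\mu(G)$. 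Theorem~\ref{thm.coarse-embedding-cocycle-twisted-wreath-product} decomposes this bimodule into finitely many standard summands, and a hypothetical sequence $g_n \in G$ with $h_{G_1}(\al'(u_{g_n})) \to 0$ yields inside each summand the matrix coefficient decay $\langle u_{g_n}^* \cdot \xi \cdot (u_{g_n} \ot \overline{u_{g_n}} \ot u_{g_n}), \eta\rangle \to 0$, contradicting the fact that the diagonal vector provides coefficients identically equal to $1$. This symmetrized positive-height argument is what closes the gap between the virtual-isomorphism output of Theorem~\ref{thm.virtual-iso-superrigidity-precise} and the exact-isomorphism conclusion required here.
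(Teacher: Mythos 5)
Your high-level strategy — invoke Theorem~\ref{thm.virtual-iso-superrigidity-precise} to obtain a virtual isomorphism with some pair $(G_1,\mu_1)$, transport the given $*$-isomorphism to $\al':L_\mu(G)\to L_{\mu_1}(G_1)^t$, and then upgrade by proving $h_{G_1}(\al'(G))>0$ so that Theorem~\ref{thm.height-1} with ambient group $G_1$ yields $t=1$ and a group isomorphism $G\to G_1$ — is a genuinely different route from the paper's. The paper does not pass through $G_1$ at all: from $L_\mu(G)\cong p(M_n(\C)\ot L_\om(\Lambda))p$ it extracts, via Proposition~\ref{prop.decompose-virtual-center}, an isomorphism $\al:L_{\om_z}(\Lambda_z/\Lambda\fc)\to L_\mu(G)^s$ with $s=(tdk)^{-1}$ and directly proves $h_G(\al(\Lambda_z/\Lambda\fc))>0$ by re-running the positive-height argument from the proof of Theorem~\ref{thm.virtual-iso-superrigidity-precise} in the simplified setting $G_1=G$, $\mu_1=\mu$, $\vphi=\id$; then Theorem~\ref{thm.height-1} is applied with ambient group $G$, so that $s=1$ comes out simultaneously with the isomorphism $\delta$. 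The paper's route is shorter because it avoids the additional transit through the virtual enlargement $G_1$ and the subsequent upgrading step, but your route would work provided the positive-height step is carried out correctly.

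The positive-height step, however, has a genuine gap. You build the tensor coarse bimodule from the \emph{triple comultiplication of $L_{\mu_1}(G_1)$} precomposed with $\al'$. This is the wrong object: the analogue of the paper's argument requires the triple comultiplication of $L_\mu(G)$, the \emph{domain} of $\al'$. Concretely, the contradiction in the paper is obtained because the diagonal vector $1\ot 1\ot 1$ in $H_{\Delta_3}$ satisfies $v_g^*\cdot(1\ot 1\ot 1)\cdot(v_g\ot\overline{v_g}\ot v_g)=1\ot 1\ot 1$, giving the constant coefficient $1$; this works only when the unitaries acting on both sides are the group unitaries $v_g$ of the algebra whose comultiplication is being used. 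In your setup the corresponding computation with the vector $1\ot 1\ot 1\in H_{\Delta_3^{G_1}}$ gives, writing $\al'(u_{g_n})=\sum_h c_h^{(n)}u_h$, the quantity $\sum_h |c_h^{(n)}|^4$, which tends to $0$ precisely when $h_{G_1}(\al'(u_{g_n}))\to 0$ — so the supposed contradiction evaporates. The fix is to take $\Delta_3^G:L_\mu(G)\to L_\mu(G)^{\ovt 3}$, build the bifinite $L_\mu(G)$-$L_\mu(G)$-bimodule $L=\overline{H_{\al'}}\ot_{L_{\mu_1}(G_1)^t}K$ from $\al'$ and $\vphi$, form $H=\overline{L}\ot H_{\Delta_3^G}\ot L^{\ot 3}$ exactly as in the proof of Theorem~\ref{thm.virtual-iso-superrigidity-precise}, decompose $H$, and derive the decay on the wrapped bimodule $L\ot_M H\ot\overline{L}^{\ot 3}\supset H_{\Delta_3^G}$, where the diagonal vector really does give constant coefficient $1$. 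As stated, your paragraph also conflates the summands of $H$ with the wrapped bimodule containing the diagonal vector, and a matrix-coefficient decay claim in the summands of $H$ with $u_{g_n}$ acting on both sides is not implied by $h_{G_1}(\al'(u_{g_n}))\to 0$; the decay has to be established via the transfer through $L$, as in the paper.
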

\begin{proof}
Assume that $L_\mu(G) \cong p(M_n(\C) \ot L_\om(\Lambda))p$. By Lemma \ref{lem.virtual-center-finite}, the virtual center $\Lambda\fc$ is finite. Since $L_\mu(G)$ is a factor, we must have $p \leq 1 \ot q$ where $q$ is a minimal projection in $\cZ(L_\om(\Lambda))$. So, $L_\mu(G) \cong (L_\om(\Lambda)q)^t$ with $t = (\Tr \ot \tau)(p) / \tau(q)$.

Write $L_\om(\Lambda\fc)q \cong M_d(\C) \ot \C^k$ and choose a minimal projection $z \in \cZ(L_\om(\Lambda\fc)) q$. By Proposition \ref{prop.decompose-virtual-center}, we have that $L_\om(\Lambda) q \cong L_{\om_z}(\Lambda_z / \Lambda\fc)^{dk}$. Writing $s = (t d k)^{-1}$, we find a $*$-isomorphism $\al : L_{\om_z}(\Lambda_z/\Lambda\fc) \to L_\mu(G)^s$.

As in the proof of Theorem \ref{thm.virtual-iso-superrigidity-precise}, we find that $h_G(\al(\Lambda_z/\Lambda\fc))>0$ and we can use Theorem \ref{thm.height-1} to conclude that $s=1$ and that there exists an isomorphism $\delta : \Lambda_z/\Lambda\fc \to G$ such that $\mu \circ \delta \sim \om_z$. Since the converse follows from Proposition \ref{prop.decompose-virtual-center}, this concludes the proof of the first part of the theorem.

In the special case where $L_\mu(G) \cong L_\om(\Lambda)$, we have $p=q=1$, which forces $1 = \tau(p) = \tau(q) /(dk) = 1 / (dk)$, so that $d=k=1$. This means that $\Lambda\fc = \{e\}$ and we find an isomorphism $\delta : \Lambda \to G$ such that $\mu \circ \delta \sim \om$. Here, the converse is trivial.
\end{proof}

\subsection{Examples}\label{sec.examples}

Although it is straightforward to compute the $2$-cohomology $H^2(G,\T)$ for an arbitrary wreath product $G$, we only state the following explicit version for our left-right wreath products $G = (\Z/2\Z)^{(\Gamma)} \rtimes (\Gamma \times \Gamma)$ with base of order $2$. We denote by $a_g \in G$ the generator of $(\Z/2\Z)^{(\Gamma)}$ sitting in coordinate $g$. We denote by $\Bchar(\Gamma_1,\Gamma_2)$ the group of bicharacters $\Gamma_1 \times \Gamma_2 \to \T$.

\begin{proposition}\label{prop.cohom-left-right-wreath}
Let $\Gamma$ be a group and $G = (\Z/2\Z)^{(\Gamma)} \rtimes (\Gamma \times \Gamma)$ the left-right wreath product. Denote by $S$ the group of conjugation invariant maps $s : \Gamma \to \{\pm 1\}$ satisfying $s(e)=1$. Then,
\begin{multline*}
\Theta : H^2(G,\T) \to H^2(\Gamma,\T) \times H^2(\Gamma,\T) \times \Bchar(\Gamma,\Gamma) \times \Hom(\Gamma,\{\pm 1\})  \times S :
\\ \mu \mapsto (\mu|_{\Gamma \times e},\mu|_{e \times \Gamma},\Om_\mu,\eta_\mu,s_\mu)
\end{multline*}
is an isomorphism of groups, where the components $\Om_\mu$, $\eta_\mu$ and $s_\mu$ are explicitly given by
\begin{align*}
& \Om_\mu(g,h) = \mu((g,e),(e,h)) \, \overline{\mu}((e,h),(g,e)) \;\; , \quad \eta_\mu(g) = \mu((g,g),a_e) \, \overline{\mu}(a_e,(g,g)) \quad\text{and}\\
& s_\mu(g) = \mu(a_e,a_g) \, \overline{\mu}(a_g,a_e) \quad\text{for all $g,h \in \Gamma$.}
\end{align*}
A $2$-cocycle $\mu \in H^2(G,\T)$ is of finite type if and only if $\mu|_{\Gamma \times e}$ and $\mu|_{e \times \Gamma}$ are both of finite type and there exists a finite index normal subgroup $\Gamma_0 \lhd \Gamma$ such that $\Om_\mu$ factors through $\Gamma/\Gamma_0$ and $s_\mu$ is left and right $\Gamma_0$-invariant, so that in particular $s_\mu(g) = 1$ for all $g \in \Gamma_0$.
\end{proposition}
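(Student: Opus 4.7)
My plan proceeds in two stages: first establish that $\Theta$ is a group isomorphism, then characterize finite type cocycles.

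For the isomorphism, I would use the Lyndon--Hochschild--Serre spectral sequence for the split extension $1 \to A \to G \to \Gamma \times \Gamma \to 1$ with $A = (\Z/2\Z)^{(\Gamma)}$ and trivial coefficients in $\T$. Three terms $E_2^{p,q}$ with $p+q=2$ contribute to $H^2(G,\T)$. By K\"unneth, $E_2^{2,0} = H^2(\Gamma\times\Gamma,\T) = H^2(\Gamma,\T)^{\oplus 2} \oplus \Bchar(\Gamma,\Gamma)$, giving the first three components of $\Theta$. Since $\hat A = \{\pm 1\}^\Gamma$ is coinduced from the diagonal subgroup $\Gamma_d < \Gamma \times \Gamma$ acting trivially on $\{\pm 1\}$, Shapiro's lemma gives $E_2^{1,1} = H^1(\Gamma_d,\{\pm 1\}) = \Hom(\Gamma,\{\pm 1\})$, corresponding to $\eta_\mu$. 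Finally, since $A$ is elementary abelian $2$ and $\T$ is divisible, $H^2(A,\T) = \wedge^2 \hat A$, and the $\Gamma\times\Gamma$-invariants under the left--right action are naturally parametrized by $S$ via $s \leftrightarrow c(a_g,a_h) = s(g^{-1}h)$. The splitting $G \to \Gamma\times\Gamma$ forces degeneration at $E_2$ and splits the filtration. In practice I would bypass the spectral sequence and verify everything directly: each component is coboundary-invariant using the explicit formulas, $\Theta$ is a group homomorphism (immediate), injectivity follows by constructing a coboundary trivializing $\mu$ when $\Theta(\mu) = 0$, and surjectivity by writing down, for each $5$-tuple, an explicit cocycle realizing it, using the semidirect product structure to specify the pieces independently.

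For the finite type characterization, in the forward direction suppose $\pi : G \to \cU(n)$ is a finite-dimensional projective representation with cocycle $\mu$. Restrictions immediately give finite type of $\mu|_{\Gamma\times e}$ and $\mu|_{e\times\Gamma}$, and applying Lemma \ref{lem.finite-type-cocycles} to $\pi|_{\Gamma\times e}$ and $\pi|_{e\times\Gamma}$, whose commutation bicharacter is $\Om_\mu$, yields a finite index normal subgroup of $\Gamma$ through which $\Om_\mu$ factors. The subtler step concerns $s_\mu$. The key observation is that $\pi|_A$ is a finite-dimensional projective representation of the elementary abelian $2$-group $A$, so every element of $\pi(A)/\T \subset \PU(n)$ has order dividing $2$. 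Any such subgroup of $\PU(n)$ is finite: its closure is a compact Lie subgroup in which every element is an involution, so its identity component is a connected Lie group where squaring is identically the identity, forcing the Lie algebra to be trivial; hence the closure is discrete and hence finite. Consequently the radical $R$ of the commutation bicharacter $c$ on $A$ has finite index. Since $R$ is $\Gamma\times\Gamma$-invariant, the $\Gamma\times\Gamma$-action on the finite symplectic $\F_2$-space $A/R$ has a kernel $K$ of finite index in $\Gamma\times\Gamma$. Setting $\Gamma_0^s = \{g \in \Gamma : (g,e),(e,g) \in K\}$ produces a finite index subgroup of $\Gamma$: for any $g_0 \in \Gamma_0^s$ and any $g \in \Gamma$, the automorphism $(g_0,e)$ acts trivially on $A/R$, so $a_{g_0 g} - a_g \in R$ and hence $s_\mu(g_0 g) = c(a_e, a_{g_0 g}) = c(a_e, a_g) = s_\mu(g)$. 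Intersecting $\Gamma_0^s$ with the analogous subgroup controlling $\Om_\mu$ and passing to the normal core yields the required $\Gamma_0$.

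For the reverse direction, I would construct a finite-dimensional projective representation of $G$ realizing $\mu$ up to a coboundary, which suffices since finite type is a cohomological invariant, by combining four pieces: projective representations $\pi_1,\pi_2$ of $\Gamma$ witnessing finite type of $\mu|_{\Gamma\times e}$ and $\mu|_{e\times\Gamma}$; a finite-dimensional representation of $\Gamma/\Gamma_0 \times \Gamma/\Gamma_0$ implementing $\Om_\mu$ through its finite quotient; and a finite-dimensional projective representation of $A$ realizing the bicharacter determined by $s_\mu$, built as a tensor product of Pauli-like matrices indexed by the $\Gamma_0$-orbits in $\Gamma$, which is a finite set by the $\Gamma_0$-bi-invariance of $s_\mu$. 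The semidirect product structure assembles these pieces into a projective representation of $G$ whose cocycle equals $\mu$ up to a coboundary. The main obstacle is the forward direction for $s_\mu$, where one must translate the Lie-theoretic finiteness of elementary abelian $2$-subgroups of $\PU(n)$ into a genuinely group-theoretic bi-invariance condition by carefully identifying the $\Gamma\times\Gamma$-action on the finite symplectic quotient $A/R$; the reverse direction requires bookkeeping to ensure the tensor product of the various pieces reconstructs $\mu$ up to a controlled coboundary rather than an arbitrary cohomologous cocycle.
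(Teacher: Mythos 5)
Your plan for the isomorphism part (show coboundary-invariance of each component, verify $\Theta$ is a homomorphism, construct a trivializing coboundary for injectivity, and exhibit explicit cocycles for surjectivity using the semidirect product structure) is precisely the route the paper takes; the spectral-sequence digression is consistent but, as you note, unnecessary. For the forward implication of the finite-type criterion, you argue differently for the $s_\mu$ component: you observe that $\pi(A)\T/\T\subset\PU(n)$ has exponent~$2$, invoke the Lie-theoretic fact that such a subgroup is finite (trivial Lie algebra of its closure), and deduce that the radical $R$ of the commutation bicharacter on $A$ has finite index. The paper instead applies its Lemma~\ref{lem.finite-type-cocycles} to the symmetric bicharacter $\si_\mu$ on $A\times A$, which rests on an essentially equivalent compactness argument; your version is a valid and arguably more transparent alternative for this special case, while the paper's is uniform with the $\Om_\mu$ argument. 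Your subsequent step of transporting finiteness of $A/R$ through the $\Gamma\times\Gamma$-action to obtain the normal subgroup $\Gamma_0$ matches the paper exactly. One small correction: the finiteness of $\Gamma_0$-double cosets in $\Gamma$ comes from $[\Gamma:\Gamma_0]<\infty$, not from the bi-invariance of $s_\mu$; for $\Gamma_0$ normal the double cosets simply coincide with $\Gamma/\Gamma_0$.

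The reverse implication is where your sketch has a genuine gap. First, you never address the component $\eta_\mu$, which is not independent of the $s_\mu$ piece: in any projective representation of $G$, conjugation of $\pi(a_e)$ by $\pi(g,g)$ produces the scalar $\eta_\mu(g)$, so the intertwiners for the $\Gamma\times\Gamma$-part must be arranged to produce exactly this sign when acting on the $A$-part, and your tensor-product assembly does not account for it. Second, your plan builds a projective representation of all of $G$, which forces you into the intertwining (Weil-type) subtleties of choosing unitaries compatible with the permutation action of $\Gamma/\Gamma_0\times\Gamma/\Gamma_0$ on the Pauli-type representation, together with the $\eta$-twist; this is manageable but nontrivial and you flag it only as ``bookkeeping''. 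The paper avoids both issues by first shrinking $\Gamma_0$ so that $\eta_\mu|_{\Gamma_0}=1$, then restricting $\mu$ to the finite-index subgroup $G_0=(\Z/2\Z)^{(\Gamma)}\rtimes(\Gamma_0\times\Gamma_0)$ and exhibiting a homomorphism $G_0\to(\Z/2\Z)^{\Gamma/\Gamma_0}$ through which $\mu|_{G_0}$ factors, so that $\mu|_{G_0}$ is automatically of finite type (any $2$-cocycle on a finite group is), and finally invoking Lemma~\ref{lem.induce-finite-type} to promote finite type from $G_0$ to $G$. If you adopt that same restrict-and-induce strategy, the $\eta_\mu$-issue disappears and your Pauli-matrix picture becomes literally the Stone--von Neumann representation of the finite group $(\Z/2\Z)^{\Gamma/\Gamma_0}$, with no Weil-representation bookkeeping needed.
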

\begin{proof}
When $G$ is any group, $\mu \in Z^2(G,\T)$ any $2$-cocycle and $\Lambda_1,\Lambda_2 < G$ commuting subgroups, the map
$$\si : \Lambda_1 \times \Lambda_2 \to \T : (a,b) \mapsto \mu(a,b) \, \overline{\mu}(b,a)$$
is a bicharacter that remains unchanged if we replace $\mu$ by a cohomologous $2$-cocycle. Moreover, if $g \in G$ normalizes both $\Lambda_1$ and $\Lambda_2$, we have that $\si(gag^{-1},gbg^{-1}) = \si(a,b)$ for all $a \in \Lambda_1$, $b \in \Lambda_2$.

Denote $\Lambda = (\Z/2\Z)^{(\Gamma)}$ and let $\Achar(\Lambda,\Lambda)$ be the group of bicharacters $\si : \Lambda \times \Lambda \to \{\pm 1\}$ satisfying $\si(a,a) = 1$ for all $a \in \Lambda$. In particular, $\si(a,b) = \si(b,a)$ for all $a,b \in \Lambda$. We consider the action of $r \in \Gamma \times \Gamma$ on $\si \in \Achar(\Lambda,\Lambda)$ by $(r \cdot \si)(a,b) = \si(r^{-1} \cdot a,r^{-1} \cdot b)$. By the previous paragraph, replacing in the definition of $\Theta$, the group $S$ at the right hand side by the group of $(\Gamma \times \Gamma)$-invariant elements $\Achar(\Lambda,\Lambda)^{\Gamma \times \Gamma}$ and the corresponding component of $\Theta$ by
$$\si_\mu : \Lambda \times \Lambda \to \{\pm 1\} : \si_\mu(a,b) = \mu(a,b) \, \overline{\mu}(b,a) \; ,$$
the modified $\Theta$ is a well-defined group homomorphism. One checks easily that
$$\Achar(\Lambda,\Lambda)^{\Gamma \times \Gamma} \to S : \si \mapsto s_\si : s_\si(g) = \si(a_e,a_g)$$
is an isomorphism of groups, so that it suffices to prove the proposition for our modified $\Theta$.

Assume that $\mu \in \Ker \Theta$. Denote by $G_\mu$ the central extension of $G$ by $\T$ given by $\mu$. Since the restriction of $\mu$ to $\Gamma \times e$ and $e \times \Gamma$ is trivial, we can choose homomorphic lifts $\phi_1 : \Gamma \times e \to G_\mu$ and $\phi_2 : e \times \Gamma \to G_\mu$. Since $\Om_\mu = 1$, we find that $\phi_1(g)$ and $\phi_2(h)$ commute for all $g,h \in \Gamma$. They thus combine into a homomorphic lift $\phi : \Gamma \times \Gamma \to G_\mu$ given by $\phi(g,h) = \phi_1(g) \phi_2(h)$. Since we can take square roots in $\T$, we can choose a lift $\phi(a_e)$ of order $2$ for the element $a_e \in G$. Since $\eta_\mu = 1$, we find that $\phi(a_e)$ commutes with $\phi(g,g)$ for all $g \in \Gamma$. We can thus unambiguously define the order $2$ elements $\phi(a_k) \in G_\mu$ satisfying
\begin{equation}\label{eq.equivariance}
\phi(a_{gkh^{-1}}) = \phi(g,h) \phi(a_k) \phi(g,h)^{-1} \quad \text{for all $g,h,k \in \Gamma$.}
\end{equation}
By construction, $\phi(a_k)$ is a lift of $a_k$ for all $k \in \Gamma$. Since $\si_\mu = 1$, the elements $\phi(a_k)$ all commute and thus together define a homomorphic lift $\phi : \Lambda \to G_\mu$. By \eqref{eq.equivariance}, the homomorphisms $\phi|_\Lambda$ and $\phi|_{\Gamma \times \Gamma}$ together define a homomorphic lift $G \to G_\mu$. It follows that $\mu \sim 1$.

Conversely, assume that we are given $\mu_1,\mu_2 \in Z^2(\Gamma,\T)$, $\Om \in \Bchar(\Gamma,\Gamma)$, $\eta \in \Hom(\Gamma,\{\pm 1\})$ and $\si \in \Achar(\Lambda,\Lambda)^{\Gamma \times \Gamma}$. First, the formula
\begin{equation}\label{eq.this-is-mu-0}
\mu_0\bigl((g,h),(g',h')\bigr) = \Om(g,h') \, \mu_1(g,g') \, \mu_2(h,h')
\end{equation}
defines a $2$-cocycle $\mu_0 \in Z^2(\Gamma \times \Gamma,\T)$ and its composition with the quotient homomorphism $q : G \to \Gamma \times \Gamma$ gives $\mu_0 \circ q \in Z^2(G,\T)$ with $\Theta(\mu_0 \circ q) = (\mu_1,\mu_2,\Om,1,1)$. It thus suffices to construct $\mu \in H^2(G,\T)$ with $\Theta(\mu) = (1,1,1,\eta,\si)$.

Define $\Lambda_\si$ as the group generated by $\T$ and elements $(b_g)_{g \in \Gamma}$ satisfying the relations
$$\T \;\;\text{is central,}\quad b_g^2=e \quad\text{and}\quad b_g b_h = \si(a_g,a_h) b_h b_g \quad\text{for all $g,h \in \Gamma$.}$$
Since $\si$ is $(\Gamma \times \Gamma)$-invariant, we can define the action $\be$ of $\Gamma \times \Gamma$ on $\Lambda_\si$ by
$$\be_{(g,h)}(z) = z \quad\text{and}\quad \be_{(g,h)}(b_k) = \eta(h) \, b_{gkh^{-1}} \quad\text{for all $z \in \T$, $g,h,k \in \Gamma$.}$$
Consider the semidirect product $\cG = \Lambda_\si \rtimes_\be (\Gamma \times \Gamma)$ and the homomorphism
\begin{equation}\label{eq.central-extension-with-theta}
\theta : \cG \to G : \theta(z) = e \;\; , \;\; \theta(b_k) = a_k \;\; , \;\; \theta(g,h) = (g,h) \quad\text{for all $z \in \T$, $g,h,k \in \Gamma$.}
\end{equation}
Then $e \to \T \to \cG \to G \to e$ is a central extension. By construction, the associated $2$-cocycle $\mu \in H^2(G,\T)$ satisfies $\Theta(\mu) = (1,1,1,\eta,\si)$. So, $\Theta$ is surjective.

Assume that $\mu \in H^2(G,\T)$ is a $2$-cocycle of finite type. Then the restrictions of $\mu$ to $\Gamma \times e$ and $e \times \Gamma$ are of finite type and, by Lemma \ref{lem.finite-type-cocycles}, the bicharacter $\Om_\mu$ factors through a finite quotient $\Gamma/\Gamma_0$ of $\Gamma$.

By Lemma \ref{lem.finite-type-cocycles}, also the bicharacter $\si_\mu \in \Achar(\Lambda,\Lambda)$ factors through a finite quotient of $\Lambda$. This means that $\Lambda_0 = \{a \in \Lambda \mid \forall b \in \Lambda : \si_\mu(a,b) = 1\}$ is a finite index subgroup of $\Lambda$. Since $(g,h) \cdot \Lambda_0 = \Lambda_0$, we find an action of $\Gamma \times \Gamma$ on the finite group $\Lambda/\Lambda_0$. Making the finite index normal subgroup $\Gamma_0 \lhd \Gamma$ smaller, we may assume that the action of $(g,h)$ on $\Lambda/\Lambda_0$ is trivial for all $g,h \in \Gamma_0$. So whenever $g,h \in \Gamma_0$ and $b \in \Lambda$, we can take $b_0 \in \Lambda_0$ such that $(g,h) \cdot b = b b_0$, implying that
$$\si_\mu(a,b) = \si_\mu(a,bb_0) = \si_\mu(a,(g,h) \cdot b) \; .$$
It follows that the map $s_\mu \in S$ given by $s_\mu(g) = \si_\mu(a_e,a_g)$ is left and right $\Gamma_0$-invariant.

Conversely, assume that $\mu \in H^2(G,\T)$ is a $2$-cocycle such that the restrictions $\mu_1 = \mu|_{\Gamma \times e}$ and $\mu_2 = \mu|_{e \times \Gamma}$ are of finite type and such that for some finite index normal subgroup $\Gamma_0 \lhd \Gamma$, we have that $\Om_\mu$ factors through $\Gamma/\Gamma_0$ and $s_\mu$ is left and right $\Gamma_0$-invariant. We have to prove that $\mu$ is of finite type. Since the kernel of $\eta_\mu$ has index at most $2$, we may assume that also $\eta_\mu(g) = 1$ for all $g \in \Gamma_0$.

Since the restriction of $\Om_\mu$ to $\Gamma_0 \times \Gamma_0$ is equal to $1$, the restriction to $\Gamma_0 \times \Gamma_0$ of the $2$-cocycle $\mu_0 \in Z^2(\Gamma \times \Gamma,\T)$ defined by \eqref{eq.this-is-mu-0} is of finite type. By Lemma \ref{lem.induce-finite-type}, $\mu_0$ is of finite type and hence, also $\mu_0 \circ q$ is of finite type, where $q : G \to \Gamma \times \Gamma$ is the quotient homomorphism defined above.

It thus suffices to prove that when $\si \in \Achar(\Lambda,\Lambda)^{\Gamma \times \Gamma}$ is $(\Gamma_0 \times \Gamma_0)$-invariant in both variables and $\eta_\mu(g) = 1$ for all $g \in \Gamma_0$, then the $2$-cocycle $\mu \in H^2(G,\T)$ associated with the central extension $e \to \T \to \cG \to G \to e$ given by \eqref{eq.central-extension-with-theta} is of finite type.

Since the map $(g,h) \mapsto \si(a_g,a_h)$ is $\Gamma_0$-invariant in both variables, we can define a group $\Lambda_{0,\si}$ generated by $\T$ and elements $(d_g)_{g \in \Gamma/\Gamma_0}$ satisfying the relations
$$\T \;\;\text{is central,}\quad d_g^2=e \quad\text{and}\quad d_g d_h = \si(a_g,a_h) d_h d_g \quad\text{for all $g,h \in \Gamma/\Gamma_0$.}$$
Define $\Lambda_0 = (\Z/2\Z)^{\Gamma/\Gamma_0}$ with canonical generators $(c_g)_{g \in \Gamma/\Gamma_0}$. Define $\theta_0 : \Lambda_{0,\si} \to \Lambda_0$ by $\theta_0(d_g) = c_g$ for all $g \in \Gamma/\Gamma_0$. Then $e \to \T \to \Lambda_{0,\si} \to \Lambda_0 \to e$ is a central extension. We get a commutative diagram
$$
\begin{array}{ccc}
\Lambda_\si \rtimes_\be (\Gamma_0 \times \Gamma_0) & \to & \Lambda \rtimes (\Gamma_0 \times \Gamma_0)  \\ \downarrow & & \downarrow \\ \Lambda_{0,\si} & \to & \Lambda_0
\end{array}
$$
and it follows that the restriction of $\mu$ to the finite index subgroup $G_0 = \Lambda \rtimes (\Gamma_0 \times \Gamma_0)$ is a $2$-cocycle that factors through the finite group $\Lambda_0$. We conclude that $\mu|_{G_0}$ is of finite type. By Lemma \ref{lem.induce-finite-type}, also $\mu$ is of finite type.
\end{proof}

Using Proposition \ref{prop.cohom-left-right-wreath}, we immediately get the following concrete examples.

\begin{example}\label{ex.generic-degree-2-extension}
Let $\Gamma$ be any group in $\cC$ and write $G = (\Z/2\Z)^{(\Gamma)} \rtimes (\Gamma \times \Gamma)$. There is a central extension
$$e \to \Z/2\Z \to \Gtil \to G \to e$$
and $\mu \in Z^2(G,\T)$ such that
\begin{itemlist}
\item $L(\Gtil) \cong L(G) \oplus L_\mu(G)$ and in particular, there is a nonzero bifinite $L_\mu(G)$-$L(\Gtil)$-bimodule~;
\item there is no \emph{faithful} bifinite $L_\mu(G)$-$L(\Lambda)$-bimodule for any countable group $\Lambda$.
\end{itemlist}
\end{example}
\begin{proof}
Define $\mu \in Z^2(G,\{\pm 1\})$ such that, in the notation of Proposition \ref{prop.cohom-left-right-wreath}, we have $\Theta(\mu) =(1,1,1,1,s)$ where $s(g) = -1$ for all $g \neq e$ and $s(e) = 1$. Denote by $\Gtil$ the corresponding central extension of degree $2$. By construction, $L(\Gtil) \cong L(G) \oplus L_\mu(G)$. By Proposition \ref{prop.cohom-left-right-wreath}, the $2$-cocycle $\mu$ is not of finite type. It then follows from Corollary \ref{cor.untwisting-2-cocycles} that there is no faithful bifinite $L_\mu(G)$-$L(\Lambda)$-bimodule for any countable group $\Lambda$.
\end{proof}

\begin{example}\label{ex.specific-F2-examples}
Denote $\Gamma = \F_2$ and $G = (\Z/2\Z)^{(\Gamma)} \rtimes (\Gamma \times \Gamma)$.
\begin{enumlist}
\item For every countable group $\cG$ and $\mu \in Z^2(\cG,\T)$ such that $\overline{\mu} \sim \mu$, we have $L_\mu(\cG) \cong L_\mu(\cG)\op$ and we have a canonical nonzero bifinite $L_\mu(\cG)$-$L(\cGtil)$-bimodule, where the countable group $\cGtil$ is a degree $2$ central extension of $\cG$.

    Nevertheless, there exists $\mu \in Z^2(G,\T)$ such that $L_\mu(G) \cong L_\mu(G)\op$ and such that there is \emph{no} nonzero bifinite $L_\mu(G)$-$L(\Lambda)$-bimodule with \emph{any} countable group $\Lambda$. The difference with the previous point lies in the fact that in these examples $\overline{\mu} \not\sim \mu$, even though we will have $\overline{\mu} \sim \mu \circ \delta$ for some $\delta \in \Aut G$.

    And nevertheless, by Example \ref{ex.generic-degree-2-extension}, there also exists $\mu \in Z^2(G,\T)$ such that $\overline{\mu} = \mu$ and such that there is \emph{no faithful} bifinite $L_\mu(G)$-$L(\Lambda)$-bimodule with \emph{any} countable group $\Lambda$.

\item There also exist $\mu \in Z^2(G,\T)$ such that there is no nonzero bifinite $L_\mu(G)$-$L_\mu(G)\op$-bimodule and such that there is no nonzero bifinite $L_\mu(G)$-$L(\Lambda)$-bimodule with any countable group $\Lambda$. Also, there is no nonzero bifinite $L_\mu(G)$-$N$-bimodule with any II$_1$ factor $N$ that is virtually isomorphic to its opposite $N\op$.
\end{enumlist}
\end{example}

\begin{proof}
Let $\cG$ be any countable group and $\mu \in Z^2(\cG,\T)$ such that $\overline{\mu} \sim \mu$. Take $\vphi_1 : \cG \to \T$ such that $\mu^2(g,h) = \vphi_1(gh) \overline{\vphi_1}(g) \overline{\vphi_1}(h)$ for all $g,h \in \cG$. Choose a function $\vphi : \cG \to \T$ such that $\vphi^2 = \vphi_1$. Replacing $\mu$ by the cohomologous $2$-cocycle $(g,h) \mapsto \mu(g,h) \overline{\vphi}(gh) \vphi(g) \vphi(h)$, we may assume that $\mu^2 = 1$, i.e.\ $\mu : G \times G \to \{\pm 1\}$. The degree $2$ central extension $\cGtil$ defined by $\mu$ now satisfies $L(\cGtil) \cong L(\cG) \oplus L_\mu(\cG)$.

To construct the examples, denote by $q_0 : \F_2 \to \Z^2$ the quotient homomorphism that corresponds to taking the abelianization and define $q : G \to \Z^2 \times \Z^2$ by composing $G \to \F_2 \times \F_2$ with $q_0 \times q_0$. Note that by Proposition \ref{prop.commensurators-wreath}, for every automorphism $\delta \in \Aut G$, there is an automorphism $\delta_1 \in \Aut(\Z^2 \times \Z^2)$ such that $\delta_1 \circ q = q \circ \delta$ and such that $\delta_1$ is symmetric, which means that either $\delta_1(a,b) = (\delta_0(a),\delta_0(b))$ or $\delta_1(a,b) = (\delta_0(b),\delta_0(a))$ for all $a,b \in \Z^2$ and $\delta_0 \in \Aut \Z^2 = \GL(2,\Z)$.

For every bicharacter $\Om : \Z^2 \times \Z^2 \to \T$, we define $\mu_0 \in Z^2(\Z^2 \times \Z^2,\T)$ by $\mu_0((a,b),(a',b')) = \Om(a,b')$ and then consider $\mu \in Z^2(G,\T)$ given by $\mu = \mu_0 \circ q$. In the notations of Proposition \ref{prop.cohom-left-right-wreath}, we have that $\Theta(\mu) = (1,1,\om,1,1)$ with $\om = \Om \circ (q_0 \times q_0)$. Note that this $\om$ factors through a finite quotient of $\F_2$ if and only if $\Om^n = 1$ for some integer $n \geq 1$. So by Proposition \ref{prop.cohom-left-right-wreath}, we have that $\mu$ is of finite type if and only if $\Om^n = 1$ for some integer $n \geq 1$.

We can now easily prove the following claims.
\begin{enumlist}
\item If for all integers $n \geq 1$, $\Om^n \neq 1$, there is no nonzero bifinite $L_\mu(G)$-$L(\Lambda)$-bimodule with any countable group $\Lambda$.
\item If $\Om(a,b) = \Om(b,a)$ for all $a,b \in \Z^2$ and if $\zeta \in \Aut G$ is the automorphism satisfying $\zeta(g,h) = (h,g)$ for all $(g,h) \in \Gamma \times \Gamma < G$, then $\overline{\mu} \circ \zeta \sim \mu$, so that $L_\mu(G) \cong L_\mu(G)\op$.
\item Given a bicharacter $\Om : \Z^2 \times \Z^2 \to \T$, write $\Oms(a,b) = \Om(b,a)$. If for every integer $n \geq 1$ and every $\delta_0 \in \Aut \Z^2$, we have
\begin{equation}\label{eq.this-is-what-we-need}
\Om^n \neq {\overline{\Om}}^n \circ (\delta_0 \times \delta_0) \quad\text{and}\quad \Om^n \neq \Oms^n \circ (\delta_0 \times \delta_0) \; ,
\end{equation}
then there is no nonzero bifinite $L_\mu(G)$-$L_\mu(G)\op$-bimodule and there is no nonzero bifinite $L_\mu(G)$-$L(\Lambda)$-bimodule with any countable group $\Lambda$. In particular, there is no nonzero bifinite $L_\mu(G)$-$N$-bimodule with any II$_1$ factor $N$ that is virtually isomorphic to its opposite $N\op$.
\end{enumlist}

To prove (i), assume that $\Om^n \neq 1$ for all integers $n \geq 1$. By the discussion above, no power $\mu^k$ is of finite type. The conclusion then follows from the first part of Corollary \ref{cor.untwisting-2-cocycles}.

To prove (ii) and (iii), denote for every bicharacter $\Om : \Z^2 \times \Z^2 \to \T$ the associated $2$-cocycle on $G$ as $\mu_\Om$. Taking the coboundary of the map $\vphi : G \to \T : \vphi(g) = \Om(q(g))$, we get that
$$\mu_\Om \circ \zeta \sim \mu_{\Omtil} \quad\text{where}\quad \Omtil(a,b) = \overline{\Om}(b,a) \; .$$
From this, point (ii) follows.

Given a bicharacter satisfying the condition in point~(iii), there is no automorphism $\delta \in \Aut G$ such that $\mu \, (\mu \circ \delta)$ is of finite type. By Corollary \ref{cor.nonzero-or-faithful-bimodules.i} and Proposition \ref{prop.commensurators-wreath}, there is no nonzero (equivalently, faithful) bifinite $L_\mu(G)$-$L_\mu(G)\op$-bimodule. Since also $\Om^n \neq 1$, it follows from point~(i) that there is no nonzero bifinite $L_\mu(G)$-$L(\Lambda)$-bimodule with any countable group $\Lambda$. Also if $N$ is a II$_1$ factor, any nonzero bifinite $L_\mu(G)$-$N$-bimodule gives a virtual isomorphism between $L_\mu(G)$ and $N$, and also between $N\op$ and $L_\mu(G)\op$. So if $N$ and $N\op$ are virtually isomorphic, it follows that $L_\mu(G)$ is virtually isomorphic to $L_\mu(G)\op$, which is not the case.

To conclude the proof Example \ref{ex.specific-F2-examples}, it thus suffices to produce bicharacters on $\Z^2 \times \Z^2$ satisfying the appropriate conditions from (i), (ii) and (iii).

Whenever $\al \in \R \setminus \Q$, we can take $\Om(a,b) = \exp(2\pi i \al a_1 b_1)$, we have that $\Om^n \neq 1$ for all integers $n \geq 1$ and $\Om(a,b) = \Om(b,a)$ for all $a,b \in \Z^2$, providing the first example.

Viewing elements $a \in \Z^2$ as $1 \times 2$ matrices with entries in $\Z$, we can define for every $X \in \R^{2 \times 2}$ the bicharacter $\Om$ by $\Om(a,b) = \exp(2 \pi i a X b^\top)$. Then \eqref{eq.this-is-what-we-need} is equivalent to the requirement that for every $P \in \GL(2,\Z)$, we have
$$P^{-1} X + X P^\top \not\in \Q^{2 \times 2} \quad\text{and}\quad P^{-1} X - X^\top P^\top \not\in \Q^{2 \times 2} \; .$$
Choosing $\al,\be \in \R$ such that $\{\al,\be,1\}$ are linearly independent over $\Q$, the matrix $X = \bigl(\begin{smallmatrix} \al & \be \\ 0 & 0\end{smallmatrix}\bigr)$ satisfies these conditions. So the associated $\Om$ satisfies \eqref{eq.this-is-what-we-need}, providing the second example.
\end{proof}

\section{Proof of Theorem \ref{thm.B}}

We deduce from \cite{IPP05,Ioa12} the following ad hoc lemma.

\begin{lemma}\label{lem.conjugacy-to-M}
Let $\Gamma$ be a nonamenable group and $(A_0,\tau_0)$ a tracial von Neumann algebra. Denote by $M = (A_0,\tau_0)^\Gamma \rtimes (\Gamma \times \Gamma)$ the left-right Bernoulli crossed product.

Let $P$ and $Q$ be II$_1$ factors and $\Lambda$ a weakly amenable, biexact group. Write $\cN = P \ovt (Q \ast L(\Lambda))$.

If $\psi : M \to \cN$ is an embedding such that the bimodule $\bim{\psi(M)}{L^2(\cN)}{P \ot 1}$ is coarse, then $\psi(M)$ can be unitarily conjugated into $P \ovt Q$.
\end{lemma}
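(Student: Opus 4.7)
The plan is to realize $\cN$ as an amalgamated free product over $P \ot 1$ and combine the rigidity of such decompositions from \cite{IPP05,Ioa12} with the relative $\omega$-solidity machinery of Section~3. Set $A = \psi(L(\Gamma \times e))$ and $B = \psi(L(e \times \Gamma))$, commuting subalgebras of $\psi(M) \subset \cN$. Since $Q \ast L(\Lambda)$ is a tracial free product, I first identify
$$\cN = (P \ovt Q) \ast_{P \ot 1} (P \ovt L(\Lambda))$$
as an amalgamated free product over $P \ot 1$.

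The coarseness hypothesis, via Lemma \ref{lem.coarse-gives-nonintertwining}, yields two crucial facts: by \ref{lem.coarse-gives-nonintertwining.i}, no diffuse von Neumann subalgebra of $\psi(M)$ intertwines into $P \ot 1$ (so in particular $\psi(M) \not\prec_\cN P$ and $B \not\prec_\cN P$), and by \ref{lem.coarse-gives-nonintertwining.ii}, since $\Gamma$ is nonamenable and $L(\Gamma)$ has no amenable direct summand, both $A$ and $B$ (and \emph{a fortiori} $\psi(M)$ itself) are strongly nonamenable relative to $P \ot 1$ in $\cN$. With the amalgamated free product structure of the preceding paragraph at hand, I now invoke the free-product rigidity theorems of \cite{IPP05,Ioa12}: since $\psi(M)$ has no amenable direct summand relative to $P$ and $\psi(M) \not\prec_\cN P$, these results furnish a unitary $u \in \cN$ such that either $u\psi(M)u^* \subset P \ovt Q$ or $u\psi(M)u^* \subset P \ovt L(\Lambda)$.

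To exclude the second alternative, suppose $u\psi(M)u^* \subset P \ovt L(\Lambda)$, so that $\psi(L(\Gamma \times \Gamma)) \prec_\cN P \ovt L(\Lambda)$. Writing $P \ovt L(\Lambda) = P \rtimes \Lambda$ with the trivial action and using that $\Lambda$ is weakly amenable and biexact, I apply Corollary \ref{cor.relative-omega-solidity} with $\cM = \cN$, $M = P \ovt L(\Lambda)$, and the commuting pair $(A,B)$: since $A$ is strongly nonamenable relative to $P$ and $B \not\prec_\cN P$, the corollary yields $A \vee B = \psi(L(\Gamma \times \Gamma)) \not\prec_\cN P \ovt L(\Lambda)$, a contradiction. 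Hence $u\psi(M)u^* \subset P \ovt Q$, as required.

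The main obstacle is Step~3: one must extract from \cite{IPP05,Ioa12} precisely the variant of the amalgamated free-product rigidity theorem that upgrades mere intertwining into an honest unitary conjugacy, and verify that the combination of ``no amenable direct summand relative to the amalgam $P$'' and ``$\psi(M) \not\prec_\cN P$'' supplied by coarseness is exactly the hypothesis these theorems need. Once that is in place, the relative $\omega$-solidity of $P \ovt L(\Lambda)$, expressed through Corollary \ref{cor.relative-omega-solidity}, disposes of the unwanted alternative almost automatically.
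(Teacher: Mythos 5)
There is a genuine gap in Step~3. You assert that the amalgamated free-product rigidity theorems of \cite{IPP05,Ioa12} ``furnish a unitary $u$ such that either $u\psi(M)u^* \subset P \ovt Q$ or $u\psi(M)u^* \subset P \ovt L(\Lambda)$,'' and then acknowledge that extracting ``precisely the variant $\ldots$ that upgrades mere intertwining into an honest unitary conjugacy'' is the ``main obstacle.'' But acknowledging the obstacle is not resolving it: those theorems only give \emph{intertwining} conclusions of the form $\psi(L(\Gamma\times\Gamma)) \prec_\cN P\ovt Q$ (actually the paper first rules out the $P\ovt L(\Lambda)$ branch using Corollary \ref{cor.relative-omega-solidity} and then gets $\prec^f$ from \cite[Theorem 6.4]{Ioa12}), and passing from $\prec^f$ to a unitary conjugacy of a subalgebra requires control over the relative commutant inside $\cN$, which the paper obtains from \cite[Theorem 1.1]{IPP05} together with Lemma \ref{lem.intertwine-transitivity.i} and factoriality of $P\ovt Q$. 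This is a substantive piece of the argument, not a citation to look up.

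Moreover, even once the group part $S = \psi(L(\Gamma\times\Gamma))$ is unitarily conjugated into $P\ovt Q$, you have not conjugated \emph{all} of $\psi(M)$; you still must show that the Bernoulli part $\psi(\pi_e(A_0))$ lands in $P\ovt Q$ as well. Your proposal implicitly treats the AFP rigidity theorem as applying directly to $\psi(M)$, but it applies to subalgebras with appropriate (non)amenability, not to $\psi(M)$ wholesale. The paper handles this by a separate step: it takes the diagonal subalgebra $S_0 \subset S$ generated by $\psi(u_{(g,g)})$, $g\in\Gamma$, notes $S_0 \not\prec_\cN P\ot 1$ by coarseness and Lemma \ref{lem.coarse-gives-nonintertwining.i}, applies \cite[Theorem 1.1]{IPP05} to $S_0'\cap\cN$, and deduces $\psi(\pi_e(A_0)) \subset P\ovt Q$. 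This step is entirely absent from your proposal. Your use of coarseness via Lemma \ref{lem.coarse-gives-nonintertwining} and of Corollary \ref{cor.relative-omega-solidity} to rule out the $P\ovt L(\Lambda)$ branch is correct and matches the paper, but the proof is not complete without the two missing ingredients above.
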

\begin{proof}
Write $S_1 = \psi(L(\Gamma \times e))$ and $S_2 = \psi(L(e \times \Gamma))$. Also write $S = S_1 \vee S_2 = \psi(L(\Gamma \times \Gamma))$. By the coarseness assumption and Lemma \ref{lem.coarse-gives-nonintertwining.ii}, we have that $S_1$ and $S_2$ are strongly nonamenable relative to $P \ot 1$. By Corollary \ref{cor.relative-omega-solidity}, $S \not\prec_\cN P \ovt L(\Lambda)$. It then follows from \cite[Theorem 6.4]{Ioa12} that $S \prec_\cN^f P \ovt Q$.

By \cite[Theorem 1.1]{IPP05}, if $S_0 \subset p (M_n(\C) \ovt P \ovt Q)p$ is a von Neumann subalgebra such that $S_0 \not\prec_{P \ovt Q} P \ot 1$, then
$$S_0' \cap p (M_n(\C) \ot \cN) p \subset p (M_n(\C) \ovt P \ovt Q)p \; .$$
In combination with Lemma \ref{lem.intertwine-transitivity.i} and the facts that $P \ovt Q$ is a factor and $S \prec_\cN^f P \ovt Q$ and $S \not\prec_\cN P \ot 1$, it follows that $S$ can be unitarily conjugated into $P \ovt Q$. So we may assume that $S \subset P \ovt Q$.

Denote by $S_0 \subset S$ the von Neumann algebra generated by $\psi(u_{(g,g)})$, $g \in \Gamma$. Again by the coarseness assumption on $\psi$ and Lemma \ref{lem.coarse-gives-nonintertwining.i}, we have that $S_0 \not\prec_\cN P \ot 1$. Considering the relative commutant $S_0' \cap \cN$ and again using \cite[Theorem 1.1]{IPP05} in the way explained above, it follows that $\psi(\pi_e(A_0)) \subset P \ovt Q$. Since $\psi(\pi_e(A_0))$ and $S$ generate $\psi(M)$, we have proven that $\psi(M) \subset P \ovt Q$.
\end{proof}

\begin{theorem}\label{thm.no-coarse-embedding}
Let $N$ be defined as in Theorem \ref{thm.B} and assume that $\tau_0$ is not uniform. There is no coarse embedding $N \to (N \ovt N\op \ovt N)^t$ for any $t > 0$.
\end{theorem}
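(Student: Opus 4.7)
The plan is to argue by contradiction. Assume that $\psi : N \to (N \ovt N\op \ovt N)^t$ is a coarse embedding and restrict it to $\psi|_M : M \to (N \ovt N\op \ovt N)^t$, which is still a coarse embedding in the sense of Definition \ref{def.coarse-embedding}. The aim is to conjugate $\psi(M)$ inside an amplification of $M \ovt M\op \ovt M$ by three successive unitary conjugacies using Lemma \ref{lem.conjugacy-to-M}, and then to derive a contradiction from Theorem \ref{thm.coarse-embedding-generalized-Bernoulli} combined with the non-uniformity of $\tau_0$.

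For the conjugacy step, I would apply Lemma \ref{lem.conjugacy-to-M} three times, each time replacing one of the three tensor factors: $N$ by $M$ in the first and third slots (via $N = M \ast L(\Lambda)$) and $N\op$ by $M\op$ in the middle slot (via $N\op = M\op \ast L(\Lambda)$). In the first application, decompose $\cN = (N \ovt N\op \ovt N)^t = (N \ovt N\op)^t \ovt N$ as $P \ovt (Q \ast L(\Lambda))$ with $P = (N \ovt N\op)^t$ and $Q = M$; the coarseness of $\bim{\psi(M)}{L^2(\cN)}{(P \ot 1)}$ is precisely the $i=3$ coarseness condition of $\psi$, so the lemma yields $\psi(M) \subset P \ovt M$ after unitary conjugacy. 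Iterating for the middle and first factors gives $\psi(M) \subset (M \ovt M\op \ovt M)^{t_0}$. For the iteration to go through, the coarseness hypothesis of Lemma \ref{lem.conjugacy-to-M} must survive each previous conjugacy; the key observation is that, since $M \subset N = M \ast L(\Lambda)$ has infinite Jones index, $L^2(N)$ is a free right $M$-module isomorphic to $\ell^2(\N) \ot L^2(M)$, and so restricting the right action on a single tensor factor from $N$ to $M$ preserves coarseness.

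After these three conjugacies, $\psi|_M : M \to (M \ovt M\op \ovt M)^{t_0}$ is a coarse embedding. Since $M\op$ is isomorphic to a left-right Bernoulli crossed product $(A_0\op, \tau_0)^\Gamma \rtimes (\Gamma \times \Gamma)$ with base of the same non-uniform trace as $A_0$, Theorem \ref{thm.coarse-embedding-generalized-Bernoulli} applies. After a further unitary conjugacy, $\psi|_M$ decomposes as a direct sum of embeddings $\psi_0 : M \to M_n(\C) \ovt M \ovt M\op \ovt M$ with $\psi_0(\pi_e(A_0)) \subset D \ovt \pi_e(A_0) \ovt \pi_e(A_0)\op \ovt \pi_e(A_0)$, for some finite-dimensional abelian $*$-subalgebra $D \subset M_n(\C)$, such that the three marginal partial traces $(\id \ot \tau \ot \id \ot \id) \psi_0(\pi_e(a))$, $(\id \ot \id \ot \tau \ot \id) \psi_0(\pi_e(a))$ and $(\id \ot \id \ot \id \ot \tau) \psi_0(\pi_e(a))$ all equal $\tau(a) \cdot 1$ for every $a \in A_0$.

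The final step is combinatorial. Write $A_0 = C(X, \mu)$ for the finite probability space $(X, \mu)$ with $\mu = \tau_0$, and decompose $D = \bigoplus_k \C f_k$. For each $k$, the compression $f_k \, \psi_0(\pi_e(\cdot))$ defines a unital $*$-homomorphism $A_0 \to A_0 \ot A_0\op \ot A_0$, equivalently a map $\phi_k : X^3 \to X$, and the three marginal identities say that for every fixed choice of two coordinates the resulting one-variable slice of $\phi_k$ is a measure-preserving self-map of $(X, \mu)$. Since $X$ is finite with strictly positive $\mu$, each such slice must be a bijection of $X$ preserving each level set $L_j = \{x \in X : \mu(x) = \mu_j\}$ of $\mu$ setwise. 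Hence $\phi_k(x_1, x_2, x_3)$ must simultaneously lie in the level set of $x_1$, of $x_2$, and of $x_3$. Since $\tau_0$ is not uniform, one can pick $x_1, x_2, x_3 \in X$ in pairwise distinct level sets, making this triple intersection empty and yielding the desired contradiction. The main technical obstacle is the careful bookkeeping of coarseness through the three iterated conjugacies; once that is in place, both the application of Theorem \ref{thm.coarse-embedding-generalized-Bernoulli} and the final level-set argument proceed cleanly.
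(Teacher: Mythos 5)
Your proposal is correct and follows the paper's overall strategy: three applications of Lemma~\ref{lem.conjugacy-to-M} to conjugate $\psi|_M$ into an amplification of $M \ovt M\op \ovt M$ (using the canonical identification $N\op \cong N$ coming from $A_0$ abelian), then Theorem~\ref{thm.coarse-embedding-generalized-Bernoulli}. Your concluding combinatorial step, however, is genuinely different from the paper's. The paper only uses the $i=1$ marginal condition of Theorem~\ref{thm.coarse-embedding-generalized-Bernoulli} to conclude that $\psi_1(p_0) = 1 \ot p_0 \ot 1 \ot 1$ for the level-set projection $p_0$, and then passes to the diffuse von Neumann algebra generated by the conjugates $\pi_g(p_0)$ and invokes coarseness a second time through Lemma~\ref{lem.coarse-gives-nonintertwining.i}. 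You instead use all three marginal conditions simultaneously, which forces each slice of the function $\phi_k : X^3 \to X$ to be measure-preserving, and hence a level-set-preserving bijection, so that $\phi_k(x_1,x_2,x_3)$ must lie in the level sets of $x_1$, $x_2$ and $x_3$ at once. This bypasses the second appeal to coarseness and is arguably cleaner. You also spell out why coarseness survives the iterated conjugacies, which the paper glosses over; the essential point is indeed that $L^2(N)$ decomposes as a direct sum of copies of $L^2(M)$ as a right $M$-module (freeness is not actually needed, only that $N$ is a II$_1$ factor containing $M$).

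Two minor imprecisions. First, you describe $(X,\mu)$ as a \emph{finite} probability space. The base algebra $A_0$ in Theorem~\ref{thm.B} is allowed to be $\ell^\infty(I)$ with $I$ countably infinite when $\tau_0$ is non-uniform, so $X$ need not be finite. The statement that a measure-preserving self-map of a countable discrete probability space with strictly positive atoms is automatically a level-set-preserving bijection is still true, but requires a slightly more careful argument in the infinite case (peel off the level sets from the top down); the paper itself elides this when it asserts that the slice map $\psi_2$ permutes the minimal projections within a level set. Second, you claim one can pick $x_1,x_2,x_3$ in \emph{pairwise distinct} level sets. Non-uniformity only guarantees that at least \emph{two} distinct level sets exist. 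But this is enough: since level sets partition $X$, the triple intersection is already empty as soon as two of the three level sets differ, so it suffices to pick $x_1,x_2$ with $\mu(x_1) \neq \mu(x_2)$ and $x_3$ arbitrary.
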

\begin{proof}
Define $M = (A_0,\tau_0)^\Gamma \rtimes (\Gamma \times \Gamma)$ as in Theorem \ref{thm.B}, so that $N = M \ast L(\Lambda)$. Assume that $\Psi : N \to (N \ovt N\op \ovt N)^t$ is a coarse embedding. Since $A_0$ is abelian, we canonically identify $N\op = N$.

Applying three times Lemma \ref{lem.conjugacy-to-M}, after a unitary conjugacy, we have $\Psi(M) \subset (M \ovt M \ovt M)^t$. We denote by $\psi$ the restriction to $M$ of this unitary conjugacy of $\Psi$, so that $\psi$ is a coarse embedding of $M$ into $(M \ovt M \ovt M)^t$. We now apply Theorem \ref{thm.coarse-embedding-generalized-Bernoulli} to $\psi$ and find a direct summand $\psi_0 : M \to M_n(\C) \ovt M \ovt M \ovt M$ of the special form described there.

In particular, we find an abelian $*$-algebra $D \subset M_n(\C)$ and a unital $*$-homomorphism $\psi_1 : A_0 \to D \ot A_0 \ot A_0 \ot A_0$ such that
$$\psi_0(\pi_e(b)) = (\id \ot \pi_e \ot \pi_e \ot \pi_e)\psi_1(b) \quad\text{and}\quad (\id \ot \tau_0 \ot \id \ot \id)\psi_1(b) = \tau_0(b) 1$$
for all $b \in A_0$.

Denote by $(p_i)_{i \in I}$ the minimal projections of $A_0$. Since $\tau_0$ is not uniform, we can take $0 < s < 1$ such that the set $J = \{i \in I \mid \tau_0(p_i)=s\}$ satisfies $\emptyset \neq J \neq I$. Define $p_0 = \sum_{i \in J} p_i$ and note that $0 < p_0 < 1$.

Fix arbitrary minimal projections $p \in D$ and $q,q' \in A_0$. Since $D$ and $A_0$ are abelian, we find the unital $*$-homomorphism $\psi_2 : A_0 \to A_0$ such that
\begin{equation}\label{eq.psi-in-every-component}
\psi_1(a)(p \ot 1 \ot q \ot q') = p \ot \psi_2(a) \ot q \ot q' \quad\text{and}\quad \tau_0(\psi_2(a)) = \tau_0(a)
\end{equation}
for all $a \in A_0$. In particular, $\psi_2$ is faithful and the projections $(\psi_2(p_i))_{i \in J}$ must be a permutation of the projections $(p_i)_{i \in J}$. It follows that $\psi_2(p_0) = p_0$. Since this holds for all $p,q,q'$, we have proven that $\psi_1(p_0) = 1 \ot p_0 \ot 1 \ot 1$.

Define the diffuse von Neumann subalgebra $\cA \subset (A_0,\tau_0)^\Gamma$ generated by $\pi_g(p_0)$, $g \in \Gamma$. Looking at the special form of $\psi_0$ in Theorem \ref{thm.coarse-embedding-generalized-Bernoulli}, it follows that $\psi_0(a) = 1 \ot a \ot 1 \ot 1$ for all $a \in \cA$. By Lemma \ref{lem.coarse-gives-nonintertwining.i}, this contradicts the coarseness of $\psi_0$.
\end{proof}

We are now ready to prove Theorem \ref{thm.B}.

\begin{proof}[{Proof of Theorem \ref{thm.B}}]
First assume that $\tau_0$ is not uniform and define $N = M \ast L(\Lambda)$ as in the formulation of the theorem. Let $\cG$ be a discrete pmp groupoid with space of units $(X,\mu)$. Let $\om \in Z^2(\cG,\T)$ and write $P = L_\om(\cG)$. Let $\bim{N}{K}{P}$ be a nonzero bifinite bimodule. Denote by $z \in \cZ(P)$ the support projection of the right $P$-action on $K$.

Write $B = L^\infty(X,\mu)$ and denote by $z_1 \in B$ the projection that corresponds to the atomic part of $(X,\mu)$. Note that $z_1 \in \cZ(P)$. We claim that $z \leq z_1$. Assume the contrary and write $p = z (1-z_1)$. Then $p$ is a nonzero projection in $\cZ(P)$, $K p \neq \{0\}$ and $B p$ is diffuse abelian.

As explained in Section \ref{sec.twisted-groupoid-vNalg}, $B \subset P$ is regular. Since $p \in \cZ(P)$, also $Bp \subset Pp$ is regular. The nonzero bifinite bimodule $\bim{N}{(Kp)}{Pp}$ defines a finite index embedding $\vphi : P p \to N^t$ for some $t > 0$. It follows that $\vphi(Bp) \subset (M \ast L(\Lambda))^t$ is a diffuse abelian von Neumann subalgebra whose normalizer contains $\vphi(Pp)$ and thus has finite index. This contradicts \cite[Corollary 9.1]{Ioa12}. So the claim is proven.

By the claim, $K z_1 \neq \{0\}$. Since $B z_1$ is atomic, we can choose a minimal projection $q \in B$ such that $K q \neq \{0\}$. As explained in Section \ref{sec.twisted-groupoid-vNalg}, $q P q \cong L_\om(\cG_q)$, where $\cG_q$ is the isotropy group of the atom in the unit space that corresponds to $q$. Since $Kq \neq \{0\}$, we have found a nonzero bifinite $N$-$L_\om(\cG_q)$-bimodule.

Repeating the first paragraphs of the proof of Lemma \ref{lem.virtual-center-finite}, we find a coarse embedding $N \to (N \ovt N\op \ovt N)^t$ for some $t > 0$. This contradicts Theorem \ref{thm.no-coarse-embedding}.

If $\tau_0$ is uniform, then $A_0$ is finite dimensional and with $n = \dim A_0$, we find a trace preserving isomorphism $(A_0,\tau_0) \cong (L(\Z/n\Z),\tau)$. It then follows that $N \cong L(\cG)$ with $\cG$ the countable group as stated in the theorem.
\end{proof}

\section{Proof of Theorem \ref{thm.C}}

We prove the following theorem, which is more general than Theorem \ref{thm.C}. We again use the notation of Proposition \ref{prop.decompose-virtual-center}. Recall from \eqref{eq.diagonal-2-cocycle} the definition of the diagonal $2$-cocycle $\om_0^\Gamma$ on a wreath product group.

\begin{theorem}
Let $\Gamma$ be a group in $\cC$ and $(A_0,\tau_0)$ any nontrivial amenable tracial von Neumann algebra. Denote by $M = (A_0,\tau_0)^\Gamma \rtimes (\Gamma \times \Gamma)$ the left-right Bernoulli crossed product. Let $\Lambda$ be any countable group and $\om \in Z^2(\Lambda,\T)$ any $2$-cocycle. Let $p \in M_n(\C) \ot L_\om(\Lambda)$ be a projection.

Then $M \cong p(M_n(\C) \ot L_\om(\Lambda))p$ if and only if the following holds.
\begin{itemlist}
\item The virtual center $\Lambda\fc$ is finite, $p \leq 1 \ot q$ where $q$ is a minimal projection in $\cZ(L_\om(\Lambda))$ and $(\Tr \ot \tau)(p) = \tau(q)/(d k)$ where $L_\om(\Lambda\fc) q \cong M_d(\C) \ot \C^k$.
\item Fix a minimal projection $z \in \cZ(L_\om(\Lambda\fc)) q$. Then there exists a countable group $\Lambda_0$, an isomorphism $\delta : \Lambda_z/\Lambda\fc \to \Lambda_0^{(\Gamma)} \rtimes (\Gamma \times \Gamma)$ and a $2$-cocycle $\om_0 \in Z^2(\Lambda_0,\T)$ such that $\om_0^\Gamma \circ \delta \sim \om_z$ and $(A_0,\tau_0) \cong (L_{\om_0}(\Lambda_0),\tau_0)$.
\end{itemlist}
\end{theorem}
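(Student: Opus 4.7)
The plan is to adapt the strategy of Theorem \ref{thm.iso-superrigidity-precise} (the precise version of Theorem A), replacing Theorem \ref{thm.coarse-embedding-cocycle-twisted-wreath-product} by Theorem \ref{thm.coarse-embedding-generalized-Bernoulli} throughout. The ``if'' direction is straightforward from Proposition \ref{prop.decompose-virtual-center} together with the canonical identification $L_{\om_0^\Gamma}(\Lambda_0^{(\Gamma)} \rtimes (\Gamma \times \Gamma)) \cong (L_{\om_0}(\Lambda_0),\tau_0)^\Gamma \rtimes (\Gamma \times \Gamma)$. For the ``only if'' direction, assume $M \cong p(M_n(\C) \ot L_\om(\Lambda))p$ and write $N = L_\om(\Lambda)$.

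First I would establish that $\Lambda\fc$ is finite along the lines of Lemma \ref{lem.virtual-center-finite}: the given isomorphism provides a nonzero bifinite $M$-$N$-bimodule, which combined with the triple comultiplication coarse embedding from Proposition \ref{prop.triple-comult} and Lemma \ref{lem.stability-of-coarse} yields a tensor coarse $M$-$(M \ovt M\op \ovt M)$-bimodule. Applying Theorem \ref{thm.coarse-embedding-generalized-Bernoulli} in place of Theorem \ref{thm.coarse-embedding-cocycle-twisted-wreath-product}, this bimodule decomposes as a finite direct sum of irreducibles, forcing the relative commutant $\Delta_z(Nz_0)' \cap p(N \ovt N\op \ovt N)p$ to be finite dimensional; by part~(iii) of Proposition \ref{prop.triple-comult} this forces $\Lambda\fc$ finite. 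The conditions on $p$ listed in the first bullet of the theorem then follow from the factoriality of $M$ and the decomposition of Proposition \ref{prop.decompose-virtual-center}. The problem thus reduces to producing, given a fixed minimal projection $z \in \cZ(L_\om(\Lambda\fc))q$ and a $*$-isomorphism $\al : L_{\om_1}(\Lambda_1) \to M^s$ with $\Lambda_1 = \Lambda_z/\Lambda\fc$ icc and $\om_1 = \om_z$, the wreath product structure $\Lambda_1 \cong \Lambda_0^{(\Gamma)} \rtimes (\Gamma \times \Gamma)$ with $\om_0^\Gamma \circ \delta \sim \om_1$.

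Next I would transport the triple comultiplication $\Delta_3$ of $L_{\om_1}(\Lambda_1)$ through $\al$ and, after cutting down by a suitable corner, obtain a coarse embedding $\Psi : M \to (M \ovt M\op \ovt M)^{t'}$. Applying Theorem \ref{thm.coarse-embedding-generalized-Bernoulli} to $\Psi$ gives, after unitary conjugacy, a direct sum of embeddings $\psi_0 : u_r \mapsto \pi(r) \ot u_{\delta_1(r)} \ot u_{\delta_2(r)} \ot u_{\delta_3(r)}$ for symmetric $\delta_i \in \Aut(\Gamma \times \Gamma)$ (automorphisms because the non-intertwining hypothesis is supplied by Proposition \ref{prop.properties-triple-comult.ii}), together with $\psi_0(\pi_e(A_0)) \subset D \ovt \pi_e(A_0) \ovt \pi_e(A_0) \ovt \pi_e(A_0)$. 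Setting $P = \al^{-1}(\pi_e(A_0))$, the latter clause forces $\Delta_3(P)$ to land in a subalgebra of the form (scalars)$\,\ovt P \ovt P \ovt P$ of $L_{\om_1}(\Lambda_1)^{\ovt 3}$. Since $\Delta_3$ is diagonal on canonical generators, a Fourier-expansion argument inside $L_{\om_1}(\Lambda_1)$ would then force $P$ to be of the form $L_{\om_0}(\Lambda_0)$ for a subgroup $\Lambda_0 < \Lambda_1$ with $\om_0 = \om_1|_{\Lambda_0}$, giving $(A_0,\tau_0) \cong (L_{\om_0}(\Lambda_0),\tau_0)$. The same analysis applied to $\pi_g(A_0)$ for $g \in \Gamma$ produces pairwise commuting conjugate subgroups $\Lambda_0^{(g)} < \Lambda_1$, indexed by $\Gamma$.

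To recover the acting group $\Gamma \times \Gamma$, I would apply Theorem \ref{thm.height-1} to the projective representation $r \mapsto \al^{-1}(u_r)$ valued in the appropriate amplification of $L_{\om_1}(\Lambda_1)$: positive height is inherited from the explicit form of $\Psi(u_r)$ via the simplicity of the triple comultiplication, while weak mixing on $L^2 \ominus \C$ and absence of centralizer intertwining follow from the icc property of $\Lambda_1$ together with the faithfulness of the $\delta_i$. Theorem \ref{thm.height-1} would then yield an injective group homomorphism $\phi : \Gamma \times \Gamma \to \Lambda_1$ with $\al^{-1}(u_r) \in \T \cdot u_{\phi(r)}$. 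Combining, $\phi(\Gamma \times \Gamma)$ acts on the commuting family $\{\Lambda_0^{(g)}\}_{g \in \Gamma}$ by the left-right permutation action on the index, and together they generate $\Lambda_1$, giving the desired isomorphism $\delta : \Lambda_1 \to \Lambda_0^{(\Gamma)} \rtimes (\Gamma \times \Gamma)$; the cohomology identity $\om_0^\Gamma \circ \delta \sim \om_1$ is then obtained by tracking the phases arising in the intertwiners $\al^{-1}(u_r) = \vphi(r) u_{\phi(r)}$ and in the identifications $\al^{-1}(\pi_g(A_0)) \cong L_{\om_0}(\Lambda_0^{(g)})$. The main obstacle I anticipate is the verification of positive height of $\al^{-1}(u_r)$ purely from the coarse-embedding form of $\Psi$, which should require a clustering estimate in the spirit of Step~4 of the proof of Theorem \ref{thm.coarse-embedding-Bernoulli-like}, now run in reverse through the triple comultiplication to transfer height from the target back to the source.
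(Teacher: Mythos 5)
Your overall architecture matches the paper: reduce to $\Lambda$ icc via Proposition \ref{prop.decompose-virtual-center}, transport the triple comultiplication through $\al$ to get a coarse embedding $\Psi : M \to (M \ovt M\op \ovt M)^{t'}$, apply Theorem \ref{thm.coarse-embedding-generalized-Bernoulli}, then Theorem \ref{thm.height-1}, then assemble the wreath-product structure. However, two steps in your outline gloss over real difficulties.

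First, your justification of the non-intertwining hypothesis of Theorem \ref{thm.height-1} — ``absence of centralizer intertwining follow[s] from the icc property of $\Lambda_1$ together with the faithfulness of the $\delta_i$'' — does not hold up. The required claim is that for every $k \in \Lambda_1 \setminus \{e\}$, the algebra generated by $\al^{-1}(u_r)$, $r \in \Gamma\times\Gamma$, does not embed into $L_{\om_1}(C_{\Lambda_1}(k))$, and this is genuinely non-trivial. The paper proves it by factoring $\Delta_3 = (\id \ot \Delta) \circ \zeta$ through the intermediate map $\zeta : N \to N \ovt L(\Lambda)$, producing from a putative intertwiner $X$ an element $Z = Y^*(1\ot 1\ot v_k)Y$ that commutes with $\zeta(w_r)$, showing via triviality of the relative commutant of $\Delta_3(w_r)$ that $Z \in \C 1$, and then computing $(\tau\ot\id)(Z) = \sum_{i,h} |(X)_{i,h}|^2 v_{h^{-1}kh}$ to conclude $Z = 0$ (since $k\neq e$), hence $X = 0$, a contradiction. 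None of this is an automatic consequence of icc plus injectivity of the $\delta_i$, so this is a gap you need to fill.

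Second, your ``Fourier-expansion argument'' is run \emph{before} Theorem \ref{thm.height-1}, at a point where Theorem \ref{thm.coarse-embedding-generalized-Bernoulli} only gives the special form of $\psi$ \emph{up to a unitary conjugacy} of $\Psi$ inside $M_n(\C)\ovt M\ovt M\op\ovt M$. Writing $P = \al(\pi_e(A_0))$, what you actually obtain is $\Delta_3(P) \subset V^*(P \ovt P\op \ovt P)V$ for some unitary $V$, not $\Delta_3(P) \subset P\ovt P\op\ovt P$. The Fourier computation — projecting onto $L^2(P)$ in the first leg and using orthogonality of $\overline{v_h}\ot v_h$ — requires the latter, so it does not apply directly. (You also implicitly use $D = \C 1$ and $n = 1$, which the paper obtains from Proposition \ref{prop.properties-triple-comult.i}: you cite \ref{prop.properties-triple-comult.ii} for a different purpose, but do not establish $D = \C 1$ or $t=1$.) The paper avoids all of this by first extracting the group homomorphism $\delta : \Gamma\times\Gamma \to \Lambda_1$ via Theorem \ref{thm.height-1}, then \emph{defining} $\Lambda_0 = \{h \in \Lambda_1 : \{\delta(g,g)h\delta(g,g)^{-1} : g\in\Gamma\}$ is finite$\}$, and using weak mixing of $(\Ad u_{(g,g)})_g$ on $L^2(M\ominus\pi_e(A_0))$ and of $(\Ad v_{\delta(g,g)})_g$ on $L^2(N\ominus L_\om(\Lambda_0))$ in both directions to identify $\al(\pi_e(A_0)) = L_\om(\Lambda_0)$. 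If you want to retain your Fourier route, you should reorder it after Theorem \ref{thm.height-1}, where the conjugating unitary can be absorbed into $\al$ and the inclusion $\Delta_3(P)\subset P\ovt P\op\ovt P$ becomes available on the nose.
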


\begin{proof}
The easy constructive implication of the theorem follows from Proposition \ref{prop.decompose-virtual-center}. So assume that $\Lambda$ is a countable group, $\om \in Z^2(\Lambda,\T)$ a $2$-cocycle, $p \in M_n(\C) \ot L_\om(\Lambda)$ a projection and $\al : M \to p(M_n(\C) \ot L_\om(\Lambda))p$ a $*$-isomorphism.

We write $N = L_\om(\Lambda)$. Since $M$ is a factor, there is a unique minimal projection $q \in \cZ(N)$ such that $p \leq 1 \ot q$. We identify $p(M_n(\C) \ot L_\om(\Lambda))p = (Nq)^t$ where $t = (\Tr \ot \tau)(p)/\tau(q)$. Denote by $\Delta_3 : N \to N \ovt N\op \ovt N$ the triple comultiplication given by Proposition \ref{prop.triple-comult}. Write $p_1 = (q \ot q\op \ot q)\Delta_3(q)$. By Proposition \ref{prop.triple-comult}, $p_1 \neq 0$ and
$$\Delta_q : Nq \to p_1(Nq \ovt (Nq)\op \ovt Nq)p_1 : \Delta_q(a) = \Delta_3(a)(q \ot q\op \ot q)$$
is a coarse embedding. Viewing $p_1(Nq \ovt (Nq)\op \ovt Nq)p_1 = (Nq \ovt (Nq)\op \ovt Nq)^s$ for the appropriate value of $s > 0$, we amplify $\Delta_q$ to a coarse embedding
$$\Phi : (Nq)^t \to (Nq \ovt (Nq)\op \ovt Nq)^{ts} \; ,$$
so that
$$\Psi : M \to (M \ovt M\op \ovt M)^{st^{-2}} : \Psi = (\al \ot \al\op \ot \al)^{-1} \circ \Phi \circ \al$$
is a well-defined coarse embedding. By Theorem \ref{thm.coarse-embedding-generalized-Bernoulli}, the relative commutant of $\Psi(M)$ is finite dimensional. So also the relative commutant of $\Delta_q(Nq)$ is finite dimensional. By Proposition \ref{prop.triple-comult}, the virtual center $\Lambda\fc$ is finite.

Choosing a minimal projection $z \in \cZ(L_\om(\Lambda\fc)) q$, we know from Proposition \ref{prop.decompose-virtual-center} that $N q \cong M_{dk}(\C) \ot L_{\om_z}(\Lambda_z/\Lambda\fc)$, where the integers $d$ and $k$ are such that $L_\om(\Lambda\fc) q \cong M_d(\C) \ot \C^k$.

Since $\Lambda_z/\Lambda\fc$ is an icc group, we may thus assume from the start that $\Lambda$ is icc and that $\al : M \to L_\om(\Lambda)^t$ is a $*$-isomorphism. We have to prove that $t=1$ and that there exists a countable group $\Lambda_0$, an isomorphism $\delta : \Lambda \to \Lambda_0^{(\Gamma)} \rtimes (\Gamma \times \Gamma)$ and a $2$-cocycle $\om_0 \in Z^2(\Lambda_0,\T)$ such that $\om_0^\Gamma \circ \delta \sim \om$ and $(A_0,\tau_0) \cong (L_{\om_0}(\Lambda_0),\tau_0)$.

We still write $N = L_\om(\Lambda)$ and denote by $\Phi : N^t \to (N \ovt N\op \ovt N)^t$ the amplification of the triple comultiplication $\Delta_3$. We consider the coarse embedding
$$\Psi : M \to (M \ovt M\op \ovt M)^{t^{-2}} : \Psi = (\al \ot \al\op \ot \al)^{-1} \circ \Phi \circ \al \; .$$
Theorem \ref{thm.coarse-embedding-generalized-Bernoulli} provides a very precise description of how $\Psi$ looks like. By Proposition \ref{prop.properties-triple-comult.i}, $\Psi(M)$ has trivial relative commutant. So in the description of Theorem \ref{thm.coarse-embedding-generalized-Bernoulli}, there is only one direct summand, $t^{-2}$ is an integer $n \in \N$ and $\Psi$ can be unitarily conjugated to a coarse embedding $\psi : M \to M_n(\C) \ovt M \ovt M\op \ovt M$ of the form given in Theorem \ref{thm.coarse-embedding-generalized-Bernoulli}.

Write $A = (A_0,\tau_0)^\Gamma$. We can choose a finite index subgroup $\Gamma_0 < \Gamma$ so that $\pi(g,h)$ commutes with $D$ for all $(g,h) \in \Gamma_0 \times \Gamma_0$. It follows that $D \ot 1 \ot 1 \ot 1$ commutes with $\psi(A \rtimes (\Gamma_0 \times \Gamma_0))$. Since $A \rtimes (\Gamma_0 \times \Gamma_0)$ is an irreducible finite index subfactor of $M$, it follows from Proposition \ref{prop.properties-triple-comult.i} that the relative commutant of $\psi(A \rtimes (\Gamma_0 \times \Gamma_0))$ is trivial. So, $D = \C 1$.

Assume that $n \geq 1$ and denote by $W \subset M_n(\C)$ the matrices of trace zero. It follows that $(W \ot 1 \ot 1 \ot 1)\psi(L^2(M))$ defines a $\psi(M)$-$\psi(M)$-bimodule that is finitely generated as a right Hilbert $\psi(M)$-module and that is orthogonal to $\psi(L^2(M))$. This contradicts Proposition \ref{prop.properties-triple-comult.i}. So $n=1$, which means that $t=1$.

We now have that $\psi : M \to M \ovt M\op \ovt M$ is unitarily conjugate to $(\al \ot \al\op \ot \al)^{-1} \circ \Delta_3 \circ \al$, where $\Delta_3 : N \to N \ovt N\op \ovt N$ is the triple comultiplication. We also have that $\psi(u_r) = u_{\delta_1(r)} \ot \overline{u_{\delta_2(r)}} \ot u_{\delta_3(r)}$ for all $r \in \Gamma \times \Gamma$, where $\delta_1,\delta_2,\delta_3$ are symmetric automorphisms of $\Gamma \times \Gamma$.

In particular, we have a unitary $W \in N \ovt N\op \ovt N$ such that
$$(\al(u_{\delta_1(r)}) \ot \overline{\al(u_{\delta_2(r)})} \ot \al(u_{\delta_3(r)})) W = W \Delta_3(\al(u_r)) \quad\text{for all $r \in \Gamma \times \Gamma$.}$$
The same computation as the one to prove \eqref{eq.claim-to-control-height} now gives that $h_\Lambda(\al(\Gamma \times \Gamma))>0$. Since $(\Ad u_r)_{r \in \Gamma \times \Gamma}$ is weakly mixing on $L^2(M \ominus \C 1)$, also $(\Ad \al(u_r))_{r \in \Gamma \times \Gamma}$ is weakly mixing on $L^2(N \ominus \C 1)$.

{\bf Claim.} For every $k \in \Lambda \setminus \{e\}$, we have that $\al(L(\Gamma \times \Gamma)) \not\prec_N L_\om(C_\Lambda(k))$.

Once this claim is proven, all hypotheses of Theorem \ref{thm.height-1} are satisfied. So after replacing $\al$ by $(\Ad w) \circ \al$ for some unitary $w \in \cU(N)$, we then have that $\al(u_r) = \eta(r) v_{\delta(r)}$ for all $r \in \Gamma \times \Gamma$, where $\delta : \Gamma \times \Gamma \to \Lambda$ is an injective group homomorphism and $\eta : \Gamma \times \Gamma \to \T$. Here and later in the proof, we denote by $(v_h)_{h \in \Lambda}$ the canonical unitaries in $N = L_\om(\Lambda)$.

To prove the claim, assume the contrary. Writing $w_r = \al(u_r)$ for all $r \in \Gamma \times \Gamma$, we find a projection $p \in M_n(\C) \ot L_\om(C_\Lambda(k))$, a nonzero $X \in p(\C^n \ot N)$ and a group homomorphism $\theta : \Gamma \times \Gamma \to \cU(p(M_n(\C) \ot L_\om(C_\Lambda(k)))p)$ satisfying $X w_r = \theta(r)X$ for all $r \in \Gamma \times \Gamma$.

Using the same notation $(v_h)_{h \in \Lambda}$ for the canonical unitaries in $L(\Lambda)$, we define $\zeta : N \to N \ovt L(\Lambda) : \zeta(v_h) = v_h \ot v_h$ for all $h \in \Lambda$. Note that together with $\Delta : L(\Lambda) \to N\op \ovt N : \Delta(v_h) = \overline{v_h} \ot v_h$, we get that $\Delta_3 = (\id \ot \Delta) \circ \zeta$. We write
$$Y := (\id \ot \zeta)(X) \in \C^n \ovt N \ovt L(\Lambda) \quad\text{and}\quad Z := Y^* (1 \ot 1 \ot v_k) Y \in N \ovt L(\Lambda) \; .$$
Since $(\id \ot \zeta)\theta(r) \in M_n(\C) \ovt N \ovt L(C_\Lambda(k))$ for all $r \in \Gamma \times \Gamma$, we see that $Z$ commutes with $\zeta(w_r)$ for all $r \in \Gamma \times \Gamma$.

Since $\Delta_3(w_r)$ is unitarily conjugate to $\al(u_{\delta_1(r)}) \ot \overline{\al(u_{\delta_2(r)})} \ot \al(u_{\delta_3(r)})$ and since
$$(u_{\delta_1(r)} \ot \overline{u_{\delta_2(r)}} \ot u_{\delta_3(r)})_{r \in \Lambda \times \Lambda}$$
has trivial relative commutant in $M \ovt M\op \ovt M$, also the relative commutant of $(\Delta_3(w_r))_{r \in \Gamma \times \Gamma}$ is trivial. Since $\Delta_3(w_r) = (\id \ot \Delta)\zeta(w_r)$, a fortiori, the relative commutant of $(\zeta(w_r))_{r \in \Gamma \times \Gamma}$ inside $N \ovt L(\Lambda)$ is trivial. This means that $Z \in \C 1$.

Writing $X = \sum_{i=1}^n \sum_{h \in \Lambda} (X)_{i,h} (e_i \ot v_h)$, we have that
$$Z = \sum_{i=1}^n \sum_{g,h \in \Lambda} \overline{(X)_{i,g}} \, (X)_{i,h} \, (v_g^* v_h \ot v_{g^{-1} k h}) \; .$$
It follows that
\begin{equation}\label{eq.Z-with-positive-terms}
(\tau \ot \id)(Z) = \sum_{i=1}^n \sum_{h \in \Lambda} |(X)_{i,h}|^2 \, v_{h^{-1}kh} \; .
\end{equation}
Since $k \neq e$, we get that $(\tau \ot \tau)(Z) = 0$. Since $Z \in \C 1$, it then follows that $(\tau \ot \id)(Z) = 0$. But now \eqref{eq.Z-with-positive-terms} implies that $(X)_{i,h} = 0$ for all $i$ and $h$. We arrive at the contradiction that $X = 0$. So the claim is proven.

As explained above, we may now assume that $\al(u_r) = \eta(r) v_{\delta(r)}$ for all $r \in \Gamma \times \Gamma$, where $\delta : \Gamma \times \Gamma \to \Lambda$ is an injective group homomorphism and $\eta : \Gamma \times \Gamma \to \T$. Define the subgroup $\Lambda_0 < \Lambda$ consisting of those elements $h \in \Lambda$ for which $\{\delta(g,g) h \delta(g,g)^{-1} \mid g \in \Gamma\}$ is finite.

Because $(\Ad u_{(g,g)})_{g \in \Gamma}$ is weakly mixing on $L^2(M \ominus \pi_e(A_0))$, we have $\al^{-1}(v_h) \in \pi_e(A_0)$ for all $h \in \Lambda_0$. Since by definition, $(\Ad v_{\delta(g,g)})_{g \in \Gamma}$ is weakly mixing on $L^2(L_\om(\Lambda) \ominus L_\om(\Lambda_0))$ and $\al(\pi_e(A_0))$ commutes with $v_{\delta(g,g)}$ for all $g \in \Gamma$, also $\al(\pi_e(A_0)) \subset L_\om(\Lambda_0)$. Denoting by $\om_0$ the restriction of $\om$ to $\Lambda_0$, we have thus found a trace preserving isomorphism $\al_0 : (A_0,\tau_0) \to (L_{\om_0}(\Lambda_0),\tau)$ such that $\al \circ \pi_e = \al_0$.

Denote $w_s = \al_0^{-1}(v_s)$ for all $s \in \Lambda_0$. Since the subalgebras $\al(\pi_g(A_0))$ commute, we can unambiguously define the projective representation
$$\theta : \Lambda_0^{(\Gamma)} \to \cU(N) : \theta(s) = \prod_{g \in \Gamma} \al(\pi_g(w_{s_g}))$$
whose $2$-cocycle $\om_\theta$ equals $\om_0^\Gamma$. We extend $\theta$ to a projective representation $\Lambda_0^{(\Gamma)} \rtimes (\Gamma \times \Gamma)$ by putting $\theta(r) = \al(u_r)$ for all $r \in \Gamma \times \Gamma$. The associated $2$-cocycle is still given by $\om_0^\Gamma$.

By construction, $\theta(r) \in \T \cdot \Lambda \subset \cU(N)$ for all $r \in \Lambda_0^{(\Gamma)} \rtimes (\Gamma \times \Gamma)$ and $\tau(\theta(r)) = 0$ when $r \neq e$. Also by construction, the image of $\theta$ generates $N$. So, $\theta(r) = \eta(r) v_{\delta(r)}$ for all $r \in \Lambda_0^{(\Gamma)} \rtimes (\Gamma \times \Gamma)$, where $\delta : \Lambda_0^{(\Gamma)} \rtimes (\Gamma \times \Gamma) \to \Lambda$ is an isomorphism of groups and $\eta$ is the map that realizes $\om \circ \delta \sim \om_0^\Gamma$.
\end{proof}

To state our final result, we first give the following definition, as stated in the introduction.

\begin{definition}\label{def.cocycle-Wstar-superrigid}
We say that a pair $(G,\mu)$ of a countable group $G$ and a $2$-cocycle $\mu \in Z^2(G,\T)$ is \emph{cocycle W$^*$-superrigid} if $L_\mu(G) \cong L_\om(\Lambda)$ with $\Lambda$ an arbitrary countable group and $\om \in Z^2(\Lambda,\T)$ an arbitrary $2$-cocycle implies that $(G,\mu) \cong (\Lambda,\om)$, i.e.\ there exists a group isomorphism $\delta : \Lambda \to G$ with $\mu \circ \delta \sim \om$.
\end{definition}

\begin{corollary}\label{cor.further-superrigidity}
If $C$ is a nontrivial finite group and $\mu_0 \in Z^2(C,\T)$ such that $(C,\mu_0)$ is cocycle W$^*$-superrigid, then for every $\Gamma$ in $\cC$ the group $G = C^{(\Gamma)} \rtimes (\Gamma \times \Gamma)$ together with the $2$-cocycle $\mu = \mu_0^\Gamma \in Z^2(G,\T)$ are cocycle W$^*$-superrigid.

This applies in the following cases:
\begin{enumlist}
\item $|C| = p$ with $p$ prime,
\item $|C| = pq$ with $p,q$ distinct primes,
\item $|C| = p^2$ with $p$ prime and $\mu_0\not\sim 1$.
\end{enumlist}
\end{corollary}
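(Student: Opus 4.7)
The first step is to apply Theorem \ref{thm.C} to the amenable (in fact finite dimensional) base pair $(A_0,\tau_0) = (L_{\mu_0}(C),\tau)$. Directly from the definition \eqref{eq.diagonal-2-cocycle} of the diagonal $2$-cocycle, the Bernoulli crossed product appearing in Theorem \ref{thm.C} is exactly $L_{\mu_0^\Gamma}(C^{(\Gamma)} \rtimes (\Gamma \times \Gamma)) = L_\mu(G)$. Starting from an isomorphism $L_\mu(G) \cong L_\om(\Lambda)$ for some countable group $\Lambda$ and $\om \in Z^2(\Lambda,\T)$, Theorem \ref{thm.C} produces a countable group $\Lambda_0$, a group isomorphism $\delta : \Lambda \to \Lambda_0^{(\Gamma)} \rtimes (\Gamma \times \Gamma)$ and a $2$-cocycle $\om_0 \in Z^2(\Lambda_0,\T)$ such that $\om_0^\Gamma \circ \delta \sim \om$ and $L_{\om_0}(\Lambda_0) \cong L_{\mu_0}(C)$. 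Since $L_{\mu_0}(C)$ is finite dimensional of dimension $|C|$, the group $\Lambda_0$ is automatically finite of order $|C|$.

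The second step invokes the assumed cocycle W$^*$-superrigidity of $(C,\mu_0)$: the isomorphism $L_{\om_0}(\Lambda_0) \cong L_{\mu_0}(C)$ yields a group isomorphism $\gamma : \Lambda_0 \to C$ with $\mu_0 \circ \gamma \sim \om_0$. Extending $\gamma$ componentwise on the base and by the identity on the $(\Gamma \times \Gamma)$-factor defines a group isomorphism
$$\tilde\gamma : \Lambda_0^{(\Gamma)} \rtimes (\Gamma \times \Gamma) \to C^{(\Gamma)} \rtimes (\Gamma \times \Gamma) = G \; .$$
Because \eqref{eq.diagonal-2-cocycle} is functorial in the base pair, the cohomology relation $\mu_0 \circ \gamma \sim \om_0$ upgrades coordinate-by-coordinate to $\mu_0^\Gamma \circ \tilde\gamma \sim \om_0^\Gamma$, i.e.\ $\mu \circ \tilde\gamma \sim \om_0^\Gamma$. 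Composing, the isomorphism $\tilde\gamma \circ \delta : \Lambda \to G$ satisfies $\mu \circ (\tilde\gamma \circ \delta) \sim \om_0^\Gamma \circ \delta \sim \om$, which is exactly cocycle W$^*$-superrigidity of $(G,\mu)$.

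The third step is to verify, in each of the three listed situations, that $(C,\mu_0)$ is itself cocycle W$^*$-superrigid. In every case $L_{\mu_0}(C)$ is finite dimensional, so any isomorphism $L_\om(\Lambda) \cong L_{\mu_0}(C)$ forces $\Lambda$ to be finite with $|\Lambda| = |C|$. In case~(i), the only group of order $p$ is $\Z/p\Z$ and $H^2(\Z/p\Z,\T) = 0$, so $\om \sim 1 \sim \mu_0$ and any group isomorphism works. In case~(ii), every group of squarefree order $pq$ has trivial Schur multiplier, so both $\mu_0$ and $\om$ are cohomologous to the trivial cocycle; the block decomposition of $L(C) \cong L(\Lambda)$ then distinguishes the abelian group $\Z/pq\Z$ from the (possibly nonexistent, requiring $p \mid q-1$) nonabelian group of order $pq$, and in each type there is a unique group up to isomorphism. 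In case~(iii), $\mu_0 \not\sim 1$ forces $C = (\Z/p\Z)^2$ with $\mu_0$ non-degenerate, so $L_{\mu_0}(C) \cong M_p(\C)$; the same dimension reasoning forces $\Lambda = (\Z/p\Z)^2$ with $\om$ non-degenerate; and since $\Aut((\Z/p\Z)^2) = \GL(2,\F_p)$ acts on $H^2((\Z/p\Z)^2,\T) \cong \Z/p\Z$ through the surjective determinant $\GL(2,\F_p) \to \F_p^\times$, a suitable automorphism $\gamma$ of $(\Z/p\Z)^2$ realises $\mu_0 \circ \gamma \sim \om$.

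The entire analytic substance of the corollary is carried by Theorem \ref{thm.C}; everything else is elementary finite group bookkeeping. The only step that warrants any attention is the transitivity claim in case~(iii), which ultimately reduces to the surjectivity of $\det : \GL(2,\F_p) \to \F_p^\times$ and the regular action of $\F_p^\times$ on $\Z/p\Z \setminus \{0\}$.
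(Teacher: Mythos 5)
Your proposal is correct and follows essentially the same route as the paper: the generic implication is the identification $L_\mu(G) = (L_{\mu_0}(C),\tau_0)^\Gamma \rtimes (\Gamma\times\Gamma)$ combined with Theorem~\ref{thm.C}, and the three cases are the same elementary finite-group and Schur multiplier bookkeeping (the paper phrases the generic part as ``an immediate consequence of Theorem~\ref{thm.C}'' while you spell it out). One small remark: the ``coordinate-by-coordinate'' upgrade of $\mu_0\circ\gamma\sim\om_0$ to $\mu_0^\Gamma\circ\tilde\gamma\sim\om_0^\Gamma$ via $\vphi((a,r))=\prod_k\vphi_0(a_k)$ does hold, but only once one interprets \eqref{eq.diagonal-2-cocycle} with the second argument shifted, i.e.\ $\om_0^\Gamma((a,r),(a',r'))=\prod_k\om_0(a_k,(r\cdot a')_k)$, which is the form that actually satisfies the $2$-cocycle identity; with that reading your verification goes through exactly as claimed.
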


The proof will be obviously ad hoc and concrete, because we know all groups of the orders described in the statement and can easily describe their $H^2(C,\T)$, which will always be trivial in the first two cases and nontrivial in the third case iff $C = \Z/p\Z \times \Z/p\Z$. These basic results can for instance be found in \cite[Cor.\ 4.5, Thm.\ 4.20 and Cor.\ 7.70]{Rot95}.

\begin{proof}
The generic statement in the corollary is an immediate consequence of Theorem \ref{thm.C}.

Assume that $C$ is a finite group, $\mu_0 \in Z^2(C,\T)$ and that $L_{\mu_0}(C) \cong L_{\om_0}(\Lambda_0)$ for some countable group $\Lambda_0$ and $\om_0 \in Z^2(\Lambda_0,\T)$. By looking at the dimensions of these algebras, we get that $|\Lambda_0| = |C|$. We now distinguish the three cases.

(i)\ Since $|C|=p$ with $p$ prime, we first conclude that $C \cong \Z/p\Z \cong \Lambda_0$ and then note that this forces $\mu_0 \sim 1$ and $\om_0 \sim 1$ because $H^2(\Z/p\Z,\T)$ is trivial.

(ii)\ By symmetry, we may assume that $p<q$. If $p$ does not divide $q-1$, there is only one group of order $pq$ and that is the cyclic group of order $pq$. Since $H^2(\Z/n\Z,\T)$ is trivial for all integers $n \geq 1$, the same reasoning as in the previous case gives the conclusion.

Next assume that $p$ divides $q-1$. There are exactly two nonisomorphic groups of order $pq$: the cyclic group $\Z/(pq)\Z$ and the nonabelian group $C = \Z/q\Z \rtimes_\al \Z/p\Z$ where $\al$ is a period $p$ automorphisms of $\Z/q\Z$. In both cases, $H^2(C,\T)$ is trivial. The isomorphism $L_{\mu_0}(C) \cong L_{\om_0}(\Lambda_0)$ forces $\Lambda_0$ to be abelian, resp.\ nonabelian. So, $\Lambda_0 \cong C$ and also $\om_0 \sim 1$.

(iii)\ There are two nonisomorphic groups of order $p^2$: the cyclic one and $\Z/p\Z \times \Z/p\Z$. Since we assume that $\mu_0 \not\sim 1$, we get that $C$ is not cyclic and that $L_{\mu_0}(C)$ is not abelian. Since $|\Lambda_0| = p^2$, also $\Lambda_0$ is either cyclic or isomorphic to $C$. If $\Lambda_0$ would be cyclic, we have that $\om_0 \sim 1$ and $L_{\om_0}(\Lambda_0)$ abelian, which is impossible because $L_{\mu_0}(C)$ is not abelian. So $\Lambda_0 \cong C$. Because $L_{\om_0}(\Lambda_0)$ must be nonabelian, we have that $\om_0 \not\sim 1$.

To conclude the proof, note that with $C = \Z/p\Z \times \Z/p\Z$, we have $H^2(C,\T) \cong \Z/p\Z$ and the action of $\Aut(C)$ on the nontrivial elements of $H^2(C,\T)$ is transitive. It thus follows that $(\Lambda_0,\om_0) \cong (C,\mu_0)$.
\end{proof}

\end{document}